\documentclass[11pt, twoside]{article}
\usepackage{mathrsfs}
\usepackage{amssymb}
\usepackage{amsmath}
\usepackage{amsthm}
\usepackage{amsfonts}
\usepackage{color}
\usepackage{latexsym}

\usepackage{txfonts}

\usepackage{indentfirst}
\usepackage{soul}
\usepackage[normalem]{ulem}
\usepackage{enumitem}

\usepackage[colorlinks=true,
  linkcolor=red,
  citecolor=blue,
  urlcolor=magenta]{hyperref}

\usepackage{anysize}

\allowdisplaybreaks

\def\rr{\mathbb{R}}
\def\rn{\mathbb{R}^{n}}
\def\zz{\mathbb{Z}}

\def\cc{\mathbb{C}}

\def\nn{\mathbb{N}}
\def\dd{\mathbb{D}}

\def\ca{\mathcal {A}}

\def\cg{\mathcal {G}}
\def\ch{\mathcal {H}}
\def\ci{\mathcal {I}}

\def\cm{\mathcal{ M}}

\def\cp{\mathcal {P}}
\def\cq{\mathcal {Q}}

\def\cx{\mathcal {X}}

\def\vp{{\varepsilon}}

\def\fz{\infty}

\def\diag{{\mathop\mathrm{\,diag\,}}}

\def\loc{{\mathop\mathrm{\,loc\,}}}
\def\essinf{\mathop\mathrm{\,ess\,inf\,}}
\def\esssup{\mathop\mathrm{\,ess\,sup\,}}
\def\diam{{\mathop\mathrm{\,diam\,}}}

\def\ls{\lesssim}

\def\wz{\widetilde}
\def\r{\right}
\def\lf{\left}
\def\f{\frac}
\def\rar{\rightarrow}

\def\dsum{\displaystyle\sum}

\def\gfz{\genfrac{}{}{0pt}{}}

\newtheorem{theorem}{Theorem}[section]
\newtheorem{lemma}[theorem]{Lemma}
\newtheorem{corollary}[theorem]{Corollary}

\theoremstyle{definition}
\newtheorem{remark}[theorem]{Remark}

\newtheorem{definition}[theorem]{Definition}
\renewcommand{\appendix}{\par
   \setcounter{section}{0}%
   \setcounter{subsection}{0}%
   \setcounter{subsubsection}{0}%
   \gdef\thesection{\@Alph\c@section}%
   \gdef\thesubsection{\@Alph\c@section.\@arabic\c@subsection}%
   \gdef\theHsection{\@Alph\c@section.}%
   \gdef\theHsubsection{\@Alph\c@section.\@arabic\c@subsection}%
   \csname appendixmore\endcsname
 }

\numberwithin{equation}{section}

\begin{document}

\arraycolsep=1pt

\title{\bf\Large Matrix-Weighted Poincar\'{e}-Type Inequalities
with  Applications to Logarithmic Haj\l asz--Besov Spaces
on Spaces of Homogeneous Type \footnotetext{\hspace{-0.35cm} 
2020 {\it Mathematics Subject Classification}.
Primary 46E36; Secondary 46E35, 42B35, 26D10, 26A33, 30L99.
\endgraf {\it Key words and phrases.} space of homogeneous type, matrix weight,
$A_p$ dimension, Poincar\'{e}-type inequality, Haj{\l}asz gradient,
wavelet reproducing formula with exponential decay, logarithmic matrix-weighted Besov space.
\endgraf This project is partially supported by the National
Natural Science Foundation of China (Grant Nos. 12371093 and 12431006), 
the Beijing Natural Science Foundation
(Grant No. 1262011),
and the Fundamental Research Funds
for the Central Universities (Grant No. 2253200028).}}
\author{Ziwei Li, Dachun Yang
and Wen Yuan\,\footnote{Corresponding author,
E-mail: \texttt{wenyuan@bnu.edu.cn}/{\color{red} \today}/Final version.}}
\date{}
\maketitle

\vspace{-0.8cm}

\begin{center}
\begin{minipage}{13cm}
{\small{\textbf{Abstract}}\quad Let ${\mathcal {X}}$
be a space of homogeneous type. In this article, based on the reducing
operators of matrix $A_p$-weights, the authors
introduce the vector-valued Haj\l asz gradient sequences  and
establish some related matrix-weighted Poincar\'{e}-type inequalities on
${\mathcal {X}}$. As an application, the authors introduce the matrix-weighted logarithmic
Besov spaces on ${\mathcal {X}}$ and establish their
pointwise characterization via  Haj\l asz gradient sequences.
The novelty of this article lies in that, by means of both the
$A_p$ dimension and its properties of matrix $A_p$-weights
and the wavelet reproducing formula with exponential decay
of P. Auscher and T. Hyt\"{o}nen, all the main results get rid of the dependence
on the reverse doubling conditions of both weights and ${\mathcal {X}}$ under consideration
and these results are also completely new even for unweighted logarithmic Besov spaces
on ${\mathcal {X}}$.}
\end{minipage}
\end{center}

\vspace{0.2cm}

\tableofcontents

%\vspace{0.2cm}

\section{Introduction}

As a wide framework containing many classical function spaces,
Besov--Triebel--Lizorkin spaces have been intensively studied
and widely applied in analysis; see, for instance, \cite{t83,T01,Sa18,Sa20} for some introduction of these spaces. In recent decades, to meet the needs of studies about those problems
from different areas of analysis such
as harmonic analysis and partial differential equations,
many variants or generalizations of classical Besov--Triebel--Lizorkin spaces are further introduced and  developed. For instance, to investigate the
embedding and the interpolation properties of function spaces in the critical case,
Besov and Triebel--Lizorkin spaces with generalized smoothness were frequently used,
whose smoothness is described via a nonnegative weight function; see, for instance,  \cite{G76,G80,M84,KL87,CF88,M01,T01,FL06,M07,HM04,HM08,LYY22}.
In particular, as a special case of Besov spaces with generalized smoothness,
the logarithmic Besov spaces, that is,  Besov spaces with a classical smoothness
and an extra logarithmic smoothness, have received an increasing interest in recent years.
For some representative works on these logarithmic Besov spaces,
we refer the reader to \cite{CD14,CD15,CD15b,CD16,CDT16,D17,DHT20,DT}.
The other important and useful generalization of classical Besov and Triebel--Lizorkin spaces
is their weighted variants, including the matrix-weighted ones.
The matrix $A_p$-weights were introduced and considered in \cite{TV97,NT97,V97}
in order to study some problems from the multivariate random stationary process,
the invertibility of Toeplitz operators,
and  the vector-valued Hunt--Muckenhoupt--Wheeden theorem (see \cite{HMW73} for the classical Hunt--Muckenhoupt--Wheeden theorem).
From then on, there exists an increasing interest in developing function spaces with matrix $A_p$-weights and studying the boundedness of operators on them.
In particular, Frazier and Roudenko \cite{R03,R04,FR04,FR08,FR21} developed and studied the  matrix-weighted Besov spaces
and the matrix-weighted Triebel--Lizorkin spaces  (see also \cite{I21}).
Very recently,  Bu et al. \cite{BHYY,BHYY2,BHYY3} introduced and systematically studied the
matrix-weighted Besov-type and Triebel--Lizorkin-type spaces on $\mathbb{R}^n$,
and the authors of the present article introduced the matrix-weighted
Besov--Triebel--Lizorkin spaces with logarithmic smoothness on $\mathbb{R}^n$ in \cite{LYY23}.

The space of homogeneous type introduced by Coifman and Weiss \cite{CW71} has
proved a natural and general setting for the study of function spaces and the boundedness of  operators; see, for instance, \cite{HMY06,HMY08,MY09,GLY08,GLY09,GLY09-2,YZ08,YZ10,YZ11}.
However, in many works of early stage, some extra geometrical assumptions
on the underlying space under consideration, such as
the reverse doubling property of the equipped measure and
the triangle inequality of the equipped metric, are usually needed.
A breakthrough of this aspect was made by Auscher and Hyt\"{o}nen \cite{AH13,AH15},
who constructed a wavelet system having the exponential decay on spaces of homogeneous type
without any extra assumption on the underlying space. The exponential decay
of this wavelet system has now proved the key to help one to get rid of the
dependence of the reverse doubling property of the equipped measure under consideration.
Based on this wavelet system, Han et al. \cite{HHHLP21} introduced Besov--Triebel--Lizorkin spaces on
spaces of homogeneous type. Also motivated by \cite{AH13,AH15},
He et al.  \cite{HLYY19} introduced approximations of the identity with exponential decay
(for short, exp-ATIs) and established the related Calder\'{o}n
reproducing formulae.
The exp-ATIs have been further used to systematically develop
the real-variable theories of Hardy spaces in \cite{HHLLYY19} and
 Besov--Triebel--Lizorkin spaces in \cite{WHHY21}
(which were proved in \cite{HWYY21} to coincide with those in \cite{HHHLP21} defined via wavelets).
Recently, via exp-ATIs, Bu et al. \cite{BYY23} introduced  the
matrix-weighted Besov spaces on spaces of homogeneous type
and established a series of characterizations of these spaces.
The Besov and Triebel--Lizorkin spaces associated with operators on spaces of homogeneous type
were also well developed in \cite{KP15,GN17,GKKP17,GKKP19,GK20,GK23,BBD20,BD21,BD23}.

In 1996, Haj\l asz \cite{Haj96} introduced the concept of Haj\l asz gradients,
whose fractional variants were later introduced in  \cite{Hu03,Ya03}.
In 2011, Koskela et al. \cite{KYZ11} introduced the  Haj\l asz gradient sequences and,
by establishing some Poincar\'e type inequalities related to Haj\l asz gradient sequences,
they established the pointwise characterization of Besov--Triebel--Lizorkin
spaces via some pointwise inequalities
involving Haj{\l}asz gradient sequences. These pointwise characterizations of
Besov--Triebel--Lizorkin spaces via  Haj\l asz gradient sequences have proved a powerful tool
to develop Besov--Triebel--Lizorkin spaces
on spaces of homogeneous type, especially in the study of the invariance of quasi-conformal
mappings on these spaces; see, for instance, \cite{KYZ11,k19,k20,AYY22,AYY23,AWYY23}.
In these studies,  the Poincar\'{e}-type inequalities
(with respect to the  Haj{\l}asz gradient sequences)
occupy a key position and have found some other
applications in the Sobolev embeddings and the Sobolev extensions.
However, we hardly find much information about the  related investigations on matrix-weighted
Besov or Triebel--Lizorkin spaces on spaces of homogeneous type in the literature.

Let ${\mathcal {X}}$ be a space of homogeneous type. In this article, based on
the aforementioned works, we introduce the Haj\l asz gradient sequences
in the vector-valued version and
establish the related matrix-weighted Poincar\'{e}-type inequalities on $\cx$.
As an application, we introduce the matrix-weighted logarithmic Besov spaces on $\cx$ and
establish their Haj\l asz pointwise characterization via vector-valued
Haj\l asz gradient sequences.

As known to all, compared with the scalar case,
one main difference of  matrix-weighted function spaces is the inseparability
of the matrix weight and the concerned vector-valued function,
which brings an essential obstacle for introducing
Haj\l asz gradient sequences in the matrix-weighted setting.
To overcome this obstacle,
we employ the concept of reducing operators
and find a suitable way to define the related vector-valued Haj\l asz gradient sequence.
Based on this, we establish the Poincar\'{e}-type
inequalities with respect to both the reducing operators
and the vector-valued Haj\l asz gradient sequences
and, furthermore, obtain the related Poincar\'{e}-type inequalities.
In this way, we make full use of the local properties of both
the pointwise inequalities  involving gradient sequences and the Poincar\'{e}-type inequalities
as well as the close connection between the matrix $A_p$-weight and its reducing operators.
In addition, the major novelty of this article is that,
by means of both the $A_p$ dimension and its properties of matrix $A_p$-weights
and the wavelet reproducing formula with exponential decay constructed by Auscher and Hyt\"{o}nen in
\cite{AH13,AH15}, all the main results get rid of the dependence
on the reverse doubling conditions of both weights and ${\mathcal {X}}$ under consideration
and can completely recover the corresponding classical unweighted
scalar-valued results in \cite{KYZ10,KYZ11,AWYY23}.
Moreover, these results are also completely new even
for the case when the logarithmic parameters of
matrix-weighted logarithmic Besov spaces are zero, namely for the
matrix-weighted Besov spaces introduced in \cite{BYY23}.
Finally, we mention that the results of this article might be also true for the related
matrix-weighted logarithmic  Triebel--Lizorkin spaces and, to limit the length of this article,
we will present these in the other forthcoming article.

The remainder of this article is organized as follows.

In Section \ref{sec-pre}, we first make some notational conventions in Subsection \ref{subsec-notion}
and then, in Subsection \ref{subsec-Apweight}, recall some basic concepts and properties of
matrix $A_p$-weights on $\cx$ and also extend some properties of the $A_p$
dimension to $\cx$.

In Section \ref{sec-poincare}, we first introduce Haj{\l}asz gradient sequences related to reducing operators on $\cx$, which prove to be suitable for the matrix-weighted vector-valued  case.
Via establishing the related Poincar\'{e}-type inequalities with respect to reducing operators as well as some estimates of the averaging operators $E_k$ [see \eqref{eq-def-Ek}], we obtain
the matrix-weighted Poincar\'{e}-type inequalities (see Theorem \ref{lem-Poincare}).
As an application, we introduce the homogeneous matrix-weighted logarithmic Haj{\l}asz--Besov spaces
and prove that all its elements are locally integrable.

In Section \ref{sec-chara}, we apply the matrix-weighted Poincar\'{e}-type inequalities
to establish the Haj{\l}asz pointwise characterization of logarithmic matrix-weighted Besov spaces
on $\cx$. Specifically speaking, in Subsection \ref{sec-grand},
we first introduce the logarithmic matrix-weighted Besov space
on $\cx$ via approximations of the identity with exponential decay,
by using the orthonormal wavelet basis from \cite{AH13,AH15}, some technical estimates,
and the $L^p(\cx)$-boundedness of matrix-weighted maximal operators on $\cx$,
we further establish the grand maximal function characterization
of the logarithmic matrix-weighted Besov space.
Finally, in Subsection \ref{sec-haj}, under an extra assumption that the measure
under consideration has a weak lower bound,
we apply the matrix-weighted Poincar\'{e}-type
inequalities and the grand maximal function  characterization of
logarithmic matrix-weighted Besov spaces to
establish their Haj{\l}asz pointwise characterization,
that is, we establish the equivalence between logarithmic matrix-weighted Besov spaces
and logarithmic matrix-weighted Haj\l asz--Besov spaces.

\section{Preliminaries}\label{sec-pre}

This section is divided into two parts. In Subsection \ref{subsec-notion},
we do some notational conventions and, in Subsection \ref{subsec-Apweight},
we state some basic concepts and properties of matrix $A_p$-weight on spaces of homogeneous type.

\subsection{Basic notation and notions}\label{subsec-notion}

\emph{Throughout this article}, we denote by $\rr$ the collection of all real numbers,
$\zz$ the collection of all integers, $\nn$ the collection of all positive integers,
and $\zz_{+}:=\nn\cup\{0\}$.
We use $C$ to denote some positive constant which is independent of the main parameters involved
but might be different from line to line.
We also use   $A\ls B$ to represent $A\leq CB$ and $A\sim B$ to represent $A\ls B\ls A$.
For any $a,b\in\rr$, we use $a\vee b$ to denote $\max\{a,b\}$ and
$a\wedge b$ to denote $\min\{a,b\}$.
For any $p\in(1,\fz)$, we denote by $p'$ its \emph{conjugate index}, that is, $1/p+1/p'=1$.

A tuple $(\cx,d)$ is called a \emph{quasi-metric space} if $\cx$ is a non-empty set
and $d$ is a \emph{quasi-metric}, namely
a nonnegative function defined on $\cx\times\cx$ such that, for any $x,y,z\in\cx$,
\begin{enumerate}
\item[\rm(i)] $d(x,y)$=0 if and only if $x=y$;
\item[\rm(ii)] $d(x,y)=d(y,x)$;
\item[\rm(iii)] there exists a positive constant $A_0\in[1,\fz)$, independent of $x,y,z$, such that
\begin{equation}\label{eq-def-A0}
d(x,y)\le A_0\lf[d(x,z)+d(z,y)\r].
\end{equation}
\end{enumerate}
The ball $B$ in $\cx$, which is centered at $x_0\in\cx$ with the radius $r\in(0,\fz)$, is defined by setting
$$
B:=B(x_0,r):=\lf\{x\in\cx:\ d(x,x_0)\in[0,r)\r\}.
$$
For any $\lambda\in(0,\fz)$, we use $\lambda B$ to denote the ball
with the same center as $B$ but $\lambda$-times radius of $B$.
Furthermore, a triple $(\cx,d,\mu)$ is called a \emph{space of homogeneous type}
if $(\cx,d)$ is a quasi-metric space equipped with a \emph{doubling} measure $\mu$,
that is, $\mu$ is a regular Borel measure on $\cx$ such that all of the balls defined by $d$ have
finite and positive measures and
there exists a positive constant $C_\mu\in[1,\fz)$ such that, for any ball $B\subset\cx$,
$$\mu(2B)\leq C_\mu\mu(B).$$
The doubling property of $\mu$ implies that, for any ball $B\subset\cx$ and any $\lambda\in [1,\infty)$,
\begin{align}\label{eq-doub}
	\mu(\lambda B)\leq C_\mu\lambda^{D}\mu(B),
\end{align}
where $D:=\log_2 C_\mu$ is called the \emph{doubling exponent} of $\mu$.
We always assume that
$C_\mu$ is the smallest positive constant such that \eqref{eq-doub} holds.
Without loss of generality, we may further assume that, for any $x\in\cx$,
$\mu(\{x\})=0$. For any $x,y\in\cx$ with $x\neq y$ and for any $r\in(0,\fz)$,
define
$$V(x,y):=\mu\lf(B(x,d(x,y))\r)\ \text{and}\ V_r(x):=\mu\lf(B(x,r)\r).$$

Let $(\cx,d,\mu)$ be a space of homogeneous type.
We denote by $L^{0}(\cx)$ the collection of all  measurable functions on $\cx$
which are finite almost everywhere,
$L^1_{\loc}(\cx)$ the collection of all  locally integrable functions on $\cx$,
and, for any $p\in(0,\infty]$, denote by $L^p(\cx)$ the classical Lebesgue space.
Recall that a measurable function is said to be locally integrable if,
for any $x_0\in\cx$, there exists $r_0\in(0,\fz)$ such that $f\mathbf{1}_{B(x_0,r_0)}\in L^1(\cx)$.
Here and thereafter, for any subset $E\subset\cx$, we denote by $\mathbf{1}_E$ the \emph{characteristic function} of $E$.
For any measurable set $E\subset\cx$ with $\mu(E)\in(0,\fz)$ and for any $u\in L^{0}(\cx)$, let
\begin{equation*}
u_E:=\fint_{E}u(x)\,d\mu(x):=\f{1}{\mu(E)}\int_{E}u(x)\,d\mu(x).
\end{equation*}
We also use $\cm$ to denote the classical \emph{Hardy--Littlewood maximal operator}, which is defined by setting,
for any $u\in L^0(\cx)$ and $x\in\cx$,
$$
\cm u(x):=\sup_{B\ni x}\fint_B \lf|u(y)\r|\,d\mu(y),
$$
where the supremum is taken over all balls in $\cx$ that contain $x$.

\emph{Throughout this article}, we let $m\in\nn$ and $\ch$ be a Hilbert space of dimension $m$.
We use $\langle\cdot,\cdot\rangle$ to denote the inner product of $\ch$,
$(.,\ldots,.)^T$ to denote the \emph{transpose} of the row vector,
and $\|\cdot\|$ to denote the norm of $\ch$.
In this article, we always assume $m\in\nn$ and $\ch:=\cc^m$.
Then,
for any vector $\vec a:=(a_1,\ldots,a_m)^T\in\cc^m$,
$$
\lf\|\vec a\r\|:=\sqrt{|a_1|^2+\cdots+|a_m|^2}.
$$
For any given space $X$,
we denote by $[X]^m$ the \emph{Cartesian product} of the space $X$, namely
$$
[X]^m:=X\times\cdots\times X:=\lf\{(x_1,\ldots,x_m)^T:\ x_i\in X,\ \forall\, i\in\{1,\ldots,m\}\r\}.
$$
For any given function $g$ and any vector-valued function $\vec{f}:=(f_1,\ldots,f_m)^T$,
we formally define
$
g\ast\vec{f}:=\lf(g\ast f_1,\ldots,g\ast f_m\r)^T,
$
$
g\vec{f}:=\lf(g f_1,\ldots,g f_m\r)^T,
$
$
\int\vec{f}\,d\mu:=(\int f_1\,d\mu,\ldots,\int f_m\,d\mu)^T,
$
and,
for any measurable set $E\subset\cx$ with $\mu(E)\in(0,\fz)$,
\begin{equation*}
\vec{f}_E:=\fint_{E}\vec{f}\,d\mu:=\f{1}{\mu(E)}\int_{E}\vec{f}(x)\,d\mu(x).
\end{equation*}

We frequently use the following elementary inequality that, for any $q\in(0,1]$,
\begin{align}\label{triangle}
\lf(\sum_{i\in\zz}|a_{i}|\r)^{q}\leq\sum_{i\in\zz}|a_{i}|^{q},
\ \ \forall\,\{a_{i}\}_{i\in\zz}\subset\cc.
\end{align}
We also use the following basic inequality that,
for any sequence $\{\vec{\eta}_k\}_{k\in\zz}\subset\cc^m$,
\begin{equation*}
\lf\|\sum_{k\in\zz}\vec{\eta}_k\r\|
\le\sum_{k\in\zz}\lf\|\vec{\eta}_k\r\|.
\end{equation*}

\emph{Throughout this article}, we always assume that
$(\cx,d,\mu)$ is a space of homogeneous type with $A_0$ the same as in \eqref{eq-def-A0}
and both $C_\mu$ and $D$ the same as in \eqref{eq-doub}.
we also let $\delta$, which comes from the construction of the system
of dyadic cubes on $\cx$ in Lemma \ref{lem-dyacube}, be fixed and can be very small
and let both $c_0$ and $C_0$ in Lemma \ref{lem-dyacube} also be fixed.
For any $\beta,\gamma\in(0,1)$ and $s\in(-(\beta\wedge\gamma),\beta\wedge\gamma)$,
we use the symbol
\begin{equation*}
p(s,\beta\wedge\gamma):=\max\lf\{\frac{D}{D+(\beta\wedge\gamma)},\frac{D}{D+(\beta\wedge\gamma)+s}\r\}.
\end{equation*}

\subsection{Matrix $A_p$-Weights on Spaces of Homogeneous Type}\label{subsec-Apweight}

We begin this subsection with recalling some necessary knowledge about matrices.
Let $A$ be an $m\times m$ complex-valued matrix.
The matrix $A$ is said to be \emph{positive definite} if,
for any $\vec{\eta}\in\cc^m\setminus\{\mathbf{0}\}$, $\langle A\vec{\eta},\vec{\eta}\rangle>0$;
$A$ is said to be \emph{nonnegative definite} if,
for any $\vec{\eta}\in\cc^m$, $\langle A\vec{\eta},\vec{\eta}\rangle\ge0$.
In what follows, we \emph{always} denote by $M(\cc^m)$ the collection
of all  $m\times m$ nonnegative definite complex-valued matrices.

Any matrix $A$ can be regarded as a linear bounded operator on $\cc^m$
with its operator norm defined by setting
$$
\lf\|A\r\|:=\sup_{\vec{\eta}\in\cc^m,\,\|\vec{\eta}\|=1}\lf\|A\vec{\eta}\r\|.
$$

For any $m\times m$ complex-valued matrix $A:=(a_{ij})_m$,
we use $A^*$ to denote its conjugate transpose, namely
$
A^*:=(\overline{a_{ji}})_m,
$
where, for any $i,j\in\{1,\ldots,m\}$, $\overline{a_{ji}}$ is the conjugate of $a_{ji}$.
The matrix $A$ is called a \emph{unitary matrix} if $A^*A=I_m$,
where $I_m$ denotes the $m\times m$ identity matrix;
$A$ is called a \emph{Hermitian matrix} if $A^*=A$.
It is well known that (see, for instance, \cite[Theorem 4.1.4]{HJ13})
any $A\in M(\cc^m)$ is a Hermitian matrix.

Let $A$ be an $m\times m$ positive definite complex-valued matrix.
As argued in \cite[Theorems 2.5.6(c) and 4.1.8]{HJ13},
there exists an $m\times m$ complex-valued unitary matrix $U$ and
a diagonal matrix $\Lambda:=\diag(\lambda_1,\ldots,\lambda_m)$ with
$\lambda_i\in(0,\fz),\ i\in\{1,\ldots,m\}$,
such that $A$ has the decomposition
\begin{equation}\label{eq-decom-W}
A=U\Lambda U^{*},
\end{equation}
where $\{\lambda_i\}_{i=1}^m$ are just all the eigenvalues of $A$.
This enables us to define the power of matrices; see \cite[(6.2.1)]{HJ94}.

\begin{definition}\label{def-W^r}
Let  $A$ be an $m\times m$ positive definite complex-valued matrix
and both $U$ and $\Lambda$ be the same as in \eqref{eq-decom-W}. For any given $r\in\rr$,
$A^r$ is defined by setting
$$
A^r:=U\Lambda^r U^{*}:=U\diag\lf(\lambda_1^r,\ldots,\lambda_m^r\r)U^{*}.
$$
\end{definition}

\begin{remark}
As pointed out in \cite[p.\,408]{HJ94}, $A^r$ is independent of
the order of the eigenvalues of $A$ and the choice of $U$,
which implies the rationality of the definition of $A^r$.
\end{remark}

Now, we recall the concept of  matrix weights; see, for instance, \cite{TV97,V97}.

\begin{definition}\label{def-matrix weight}
A matrix-valued function $W:\ \cx\rar M(\cc^m)$ is called a \emph{matrix weight} on $\cx$
if
\begin{enumerate}
\item[\rm(i)] for any $x\in\cx,$ $W(x)$ is nonnegative definite;
\item[\rm(ii)] for almost every $x\in\cx$, $W(x)$ is invertible;
\item[\rm(iii)] every entry of $W$ is a locally integrable function on $\cx$.
\end{enumerate}
\end{definition}

For any given matrix weight $W$ and $p\in(0,\infty]$, we use
$L^p(W)$ to denote the matrix-weighted Lebesgue space equipped with the (quasi-)norm,
for any $\vec{f}\in L^p(W)$,
\begin{align*}
\lf\|\vec{f}\,\r\|_{L^{p}(W)}:=
\begin{cases}
\ \lf[\displaystyle\int_{\cx}\Big\|W^{\frac{1}{p}}(x)\vec{f}\,\Big\|^p\,d\mu(x)\r]^{\frac 1 p}\ \ &\text{when}\ p\in(0,\fz),\\
\ \displaystyle\esssup_{x\in\cx}\lf\|\vec{f}(x)\r\|\ \ &\text{when}\ p=\fz.
\end{cases}
\end{align*}
To simplify the presentation,  for any $s\in\rr$ and any pair $b:=(b_+,b_-)$ of real numbers,
we use the symbol $L_{s,b}$ to denote the logarithmic smoothness weight function:
for any $t\in(0,\fz)$,
\begin{equation}\label{eq-def-Log}
L_{s,b}(t):=t^s\lf[1+\log_\delta(1\wedge t)\r]^{-b_+}\lf[1+\log_\delta(1\vee t)\r]^{-b_-}.
\end{equation}
Then we let $\ell_{s,b}^{q}(W,L^{p})$ be the space of all sequences $\{\vec{f}_k\}_{k\in\zz}$
of vector-valued functions such that
\begin{align*}
\lf\|\lf\{\vec{f}_k\r\}_{k\in\zz}\r\|_{\ell_{s,b}^{q}(W,L^{p})}
&:=\lf\|\lf\{\lf\|W^{\frac 1 p}\vec{f}_k\r\|\r\}_{k\in\zz}\r\|_{\ell_{s,b}^{q}(\cx,L^{p})}\\
&:=\lf[\sum_{k\in\zz}\lf\{\f{1}{L_{s,b}(\delta^k)}
\lf[\int_\cx\big\|W^{\frac 1 p}(x)\vec{f}_k(x)\big\|^p\,d\mu(x)\r]^{\frac 1 p}\r\}^{q}\r]^{1/q}
\end{align*}
is finite.
We also simply write $\ell^{q}(W,L^{p}):=\ell_{0,0}^{q}(W,L^{p})$.
Here and thereafter, when we say $b=0$, we mean that $b=(0,0).$

We also need the following concept of  reducing operators.

\begin{definition}\label{def-RQ-X}
Let $p\in(0,\fz)$, $W$ be a matrix weight, and $E$ be a measurable set of $\cx$ with $\mu(E)\in(0,\fz)$.
A positive definite $m\times m$ matrix $A_E$ is called a
\emph{reducing operator for $W$ of order $p$} if, for any $\vec{\eta}\in\cc^m$,
\begin{equation*}
\lf\|A_E\vec{\eta}\r\|_\ch\sim\lf\{\fint_E \lf\|W^{1/p}(x)\vec{\eta}\r\|^p\,d\mu(x)\r\}^{1/p},
\end{equation*}
where the positive equivalence constants are independent of both $\vec{\eta}$ and $E$.
For any given family $\mathcal{P}$ of bounded measurable sets,
denote by $\mathcal{RS}_{\mathcal{P}}^{W,p}$ the collection of all
reducing operators for $W$ of order $p$ with respect to $\mathcal{P}$,
namely $\mathcal{RS}_{\mathcal{P}}^{W,p}:=\{A_E:\ E\in\cp\}$.
\end{definition}

\begin{remark}
When $\cx:=\rn$, as argued in  \cite[p.\,450]{V97} for any $p\in(1,\fz)$ and \cite[p.\,1237]{FR04} for any $p\in(0,1)$,
we conclude that, for any $p\in(0,\fz)$ and $W\in A_p(\rn,\cc^m)$,
the sequence $\{A_Q\}_{Q\in\cq}\in\mathcal{RS}_\cq^{W,p}$ as in Definition \ref{def-RQ-X} exists.
The case for spaces of homogeneous type follows from a similar way
and has been proved in \cite[Lemma 2.20]{BYY23}.
\end{remark}

The following lemma is just a variant of \cite[Lemma 2.15 and Corollary 2.17]{BHYY}
on spaces of homogeneous type. Since its proof is similar, we omit the details.

\begin{lemma}\label{lem-AEM}
Let $p\in(0,\fz)$, $W\in A_p(\cx,\cc^m)$,
$\mathcal{P}$ be a given family of measurable subsets with finite positive measure of $\cx$
and $\mathcal{RS}_{\mathcal{P}}^{W,p}$ a family of reducing operators of order $p$ for $W$
with respect to $\mathcal{P}$.
Then, for any bounded measurable set $E\in\mathcal{P}$ and any $m\times m$ complex-valued matrix $M$,
\begin{equation}\label{eq-lem.AEM1}
\lf\|A_E M\r\|\sim\lf[\fint_E\lf\|W^{1/p}(x)M\r\|^p\,d\mu(x)\r]^{1/p}
\end{equation}
and,
especially, when $p\in(1,\fz)$,
\begin{equation}\label{eq-lem.AEM2}
\lf\|A_E^{-1} M\r\|\sim\lf[\fint_E\lf\|W^{-1/p}(x)M\r\|^{p'}\,d\mu(x)\r]^{1/p'},
\end{equation}
where the positive equivalence constants are all independent of both $E$ and $M$.
\end{lemma}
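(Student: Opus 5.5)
The plan is to pass from vectors to matrices column by column, using the defining equivalence of reducing operators in Definition \ref{def-RQ-X} together with the fact that all norms on the finite-dimensional space of $m\times m$ matrices are equivalent, with constants depending only on $m$.

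\emph{Proof of \eqref{eq-lem.AEM1}.} Let $\vec e_1,\ldots,\vec e_m$ be the standard basis of $\cc^m$. For any $m\times m$ matrix $N$ we have
$$\|N\|\sim \sum_{j=1}^m\|N\vec e_j\|,$$
with constants depending only on $m$. Apply this with $N:=A_EM$ and with $N:=W^{1/p}(x)M$ for each $x$. Writing $\vec m_j:=M\vec e_j$, Definition \ref{def-RQ-X} applied to $\vec\eta:=\vec m_j$ gives
$$\|A_E\vec m_j\|\sim\lf[\fint_E\lf\|W^{1/p}(x)\vec m_j\r\|^p\,d\mu(x)\r]^{1/p}.$$
Summing over $j$ gives
$$\|A_EM\|\sim\sum_{j=1}^m\lf[\fint_E\lf\|W^{1/p}(x)\vec m_j\r\|^p\,d\mu(x)\r]^{1/p}.$$
For finitely many nonnegative terms, $\sum_j a_j^{1/p}\sim(\sum_j a_j)^{1/p}$ and $\sum_j b_j^p\sim(\sum_j b_j)^p$ for every $p\in(0,\fz)$, with constants depending only on $m$ and $p$. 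Using these, the right-hand side is comparable to
$$\lf[\fint_E\lf(\sum_{j=1}^m\lf\|W^{1/p}(x)\vec m_j\r\|\r)^p\,d\mu(x)\r]^{1/p}\sim\lf[\fint_E\lf\|W^{1/p}(x)M\r\|^p\,d\mu(x)\r]^{1/p}.$$
This proves \eqref{eq-lem.AEM1}, and it needs no $A_p$ assumption.

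\emph{Proof of \eqref{eq-lem.AEM2}.} By the column reduction just used, it suffices to prove the vector version
$$\lf\|A_E^{-1}\vec\eta\r\|\sim\lf[\fint_E\lf\|W^{-1/p}(x)\vec\eta\r\|^{p'}\,d\mu(x)\r]^{1/p'}\qquad\text{for all }\vec\eta\in\cc^m,$$
that is, that $A_E^{-1}$ is a reducing operator of order $p'$ for the dual weight $W^{-p'/p}$ on $E$.

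\emph{Lower bound (duality).} For $\vec\eta,\vec\xi\in\cc^m$, since $W(x)$ is Hermitian, Hölder's inequality and Definition \ref{def-RQ-X} give
$$|\langle\vec\eta,\vec\xi\rangle|=\lf|\fint_E\langle W^{-1/p}(x)\vec\eta,W^{1/p}(x)\vec\xi\rangle\,d\mu(x)\r|\le\lf[\fint_E\lf\|W^{-1/p}\vec\eta\r\|^{p'}d\mu\r]^{1/p'}\lf[\fint_E\lf\|W^{1/p}\vec\xi\r\|^{p}d\mu\r]^{1/p}.$$
The last factor is $\lesssim\|A_E\vec\xi\|$. Take the supremum over $\vec\xi$ with $\|A_E\vec\xi\|\le1$. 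Because $A_E$ is Hermitian, $\sup\{|\langle\vec\eta,\vec\xi\rangle|:\|A_E\vec\xi\|\le1\}=\|A_E^{-1}\vec\eta\|$. Hence $\|A_E^{-1}\vec\eta\|\lesssim[\fint_E\|W^{-1/p}\vec\eta\|^{p'}d\mu]^{1/p'}$.

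\emph{Upper bound (the main obstacle).} This direction is where $W\in A_p(\cx,\cc^m)$ is needed. The plan is to use the reducing-operator form of the matrix $A_p$ condition: if $A'_E$ denotes a reducing operator of order $p'$ for $W^{-p'/p}$ on $E$, then
$$\sup_E\lf\|A_EA'_E\r\|<\fz.$$
This form is equivalent to the matrix $A_p$ condition on spaces of homogeneous type; it follows from the definition exactly as in \cite{BHYY} and \cite[Lemma 2.20]{BYY23}. Granting it,
$$\lf[\fint_E\lf\|W^{-1/p}\vec\eta\r\|^{p'}d\mu\r]^{1/p'}\sim\|A'_E\vec\eta\|=\lf\|A'_EA_EA_E^{-1}\vec\eta\r\|\le\lf\|A'_EA_E\r\|\,\lf\|A_E^{-1}\vec\eta\r\|.$$
Since $A_E$ and $A'_E$ are Hermitian, $\|A'_EA_E\|=\|(A_EA'_E)^*\|=\|A_EA'_E\|\lesssim1$. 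This gives the upper bound.

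The constants in every step depend only on $m$, $p$, and the $A_p$ constant of $W$, so they are independent of $E$ and $M$ as required.
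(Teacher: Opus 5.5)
Your column-by-column proof of \eqref{eq-lem.AEM1} and your duality proof of the lower bound $\|A_E^{-1}\vec\eta\|\lesssim[\fint_E\|W^{-1/p}\vec\eta\|^{p'}\,d\mu]^{1/p'}$ are correct for every $E$, and no $A_p$ assumption is needed for them. This is also the route of \cite[Lemma 2.15 and Corollary 2.17]{BHYY}, to which the paper defers instead of giving a proof.

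The gap is the step you grant: $\sup_E\|A_EA'_E\|<\fz$ with $E$ ranging over an arbitrary family $\mathcal P$. You can compute this quantity directly. Apply \eqref{eq-lem.AEM1} twice: first to $W$ with $M:=A'_E$, then to $\wz W:=W^{-p'/p}$ in order $p'$ with $M:=W^{1/p}(x)$, using $\wz W^{1/p'}=W^{-1/p}$ and $\|N\|=\|N^*\|$. This gives
\[
\|A_EA'_E\|^p\sim\fint_E\Big[\fint_E\big\|W^{1/p}(x)W^{-1/p}(y)\big\|^{p'}\,d\mu(y)\Big]^{p/p'}\,d\mu(x),
\]
which is the $A_p$ characteristic of $W$ taken over $E$. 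Definition \ref{def-Apweight} bounds this only over balls. It therefore bounds it over any $E$ contained in a ball $B$ with $\mu(B)\le C\mu(E)$, with bound $C^{1+p/p'}$ times the $A_p$ constant. It gives nothing for general bounded measurable $E$.

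In fact the upper half of \eqref{eq-lem.AEM2} is false in the stated generality, so no argument can close this step as posed. Take $m=1$, $\cx=\rr$ with Lebesgue measure, $p=2$, $w(x):=|x|^{1/2}\in A_2$, and $\mathcal P:=\{E_\epsilon\}_{\epsilon\in(0,1)}$ with $E_\epsilon:=[0,\epsilon]\cup[1,1+\epsilon]$. Then $A_{E_\epsilon}^{-1}\sim(\fint_{E_\epsilon}w)^{-1/2}$, which stays bounded (it tends to $\sqrt2$). On the other hand, $(\fint_{E_\epsilon}w^{-1})^{1/2}\sim\epsilon^{-1/4}\to\fz$.

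The fix is to assume that each $E\in\mathcal P$ is comparable to a ball in the sense above. Balls and dyadic cubes qualify, and this covers every place the paper uses the upper bound, namely $E=P,Q\in\cq$ in the proof of Lemma \ref{lem-sharp est}. With that hypothesis, your granted bound is the computation displayed above, and the rest of your argument is complete. The constants then depend also on the comparability constant $C$.
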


The following concept of   matrix $A_p$-weights  for any $p\in(0,\fz)$
on $(\cx,d,\mu)$ was introduced in \cite[Lemma 2.29 and Definition 5.1]{BYY23},
which is a generalization of the classical one defined on $\rn$
(see, \cite[Lemma 1.3]{R03} and \cite[p.\,1227]{FR04}).

\begin{definition}\label{def-Apweight}
Let  $p\in(0,\fz)$. A matrix weight $W$ is called a \emph{matrix $A_p$-weight} on $\cx$,
denoted by $W\in A_p(\cx,\cc^m)$, if
\begin{enumerate}
\item[\rm(i)] when $p\in(0,1]$, there exists a positive constant $C$ such that, for any ball $B\subset \cx$,
\begin{equation*}
\esssup_{y\in B}\fint_B\lf\|W^{1/p}(x)W^{-1/p}(y)\r\|^{p}\,d\mu(x)\le C;
\end{equation*}

\item[\rm(ii)] when $p\in(1,\fz)$,
there exists a positive constant $C$ such that, for any ball $B\subset \cx$,
$$
\fint_B\lf[\fint_B\lf\|W^{1/p}(x)W^{-1/p}(y)\r\|^{p'}\,d\mu(y)\r]^{p/p'}\,d\mu(x)\le C,
$$
where $1/p+1/p'=1$.
\end{enumerate}
\end{definition}

Next, we  recall the system of dyadic cubes on spaces of homogeneous type.
The following lemma is just \cite[Theorem 2.2]{HK12}, which guarantees the existence of
the dyadic cube decomposition of $\cx$.

\begin{lemma}\label{lem-dyacube}
Let $\delta\in(0,1)$ and $0<c_0\le C_0<\fz$ satisfy $12A_0^3C_0\delta\le c_0$.
Let $\{z_{\alpha}^k:\ \alpha\in\ci_k,\ k\in\zz\}\subset \cx$
(for any $k\in\zz$, $\ci_k$ denotes some index set)
be a set of points satisfying that, for any $k\in\zz$,
$$
d(z_{\alpha}^k,z_{\beta}^k)\ge c_0\delta^k\ \ \text{if}\ \alpha\neq\beta
$$
and, for any $x\in\cx$,
$$
\min_{\alpha\in\ci_k}d(x,z_{\alpha}^k)< C_0\delta^k.
$$
Then there exists a family of sets,
\begin{equation*}
\cq:=\bigcup_{k\in\zz}\cq_k:=\bigcup_{k\in\zz}\lf\{Q_\alpha^k\subset \cx:\ \alpha\in\ci_k\r\},
\end{equation*}
such that
\begin{enumerate}
\item[\rm(i)] for any $k\in\zz$,
$$
\cx= \bigcup_{\alpha\in\ci_k}Q_\alpha^k\quad\text{(pairwise disjoint union)};
$$
\item[\rm(ii)] for any $j,k\in\zz$ with $j\ge k$, $\alpha\in \ci_k$, and $\beta\in\ci_j$,
either $Q_\beta^j\subset Q_\alpha^k$ or $Q_\beta^j\cap Q_\alpha^k=\emptyset$;
\item[\rm(iii)] for any $k\in\zz$ and $\alpha\in \ci_k$, there exists a point $z_\alpha^k\in\cx$
such that
\begin{equation}\label{eq-defB(Q)}
B(z_\alpha^k,(3A_0^2)^{-1} c_0\delta^{k})\subset Q_\alpha^k\subset B(z_\alpha^k,2A_0C_0\delta^{k})=:B(Q_\alpha^k).
\end{equation}
\end{enumerate}
\end{lemma}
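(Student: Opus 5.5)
The plan is to follow the construction of Hyt\"onen and Kairema \cite{HK12}. First I build a tree structure on the index pairs $(k,\alpha)$. Then I define closed preliminary ``cubes'' as closures of sets of descendant centres. Finally I select a genuine partition from these closed sets in a way compatible with the tree.

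\textbf{Step 1: the parent relation.} For each $k\in\zz$ and $\beta\in\ci_{k+1}$, I choose a parent $\alpha\in\ci_k$ with $d(z^{k+1}_\beta,z^k_\alpha)<C_0\delta^k$. Such an $\alpha$ exists by the covering hypothesis.
\begin{itemize}
\item I impose one rule: if some $\alpha$ satisfies $d(z^{k+1}_\beta,z^k_\alpha)<c_0\delta^k/(2A_0)$, that $\alpha$ is taken as the parent.
\item This $\alpha$ is unique. Indeed, two such indices $\alpha\neq\alpha'$ would give $d(z^k_\alpha,z^k_{\alpha'})<c_0\delta^k$, contradicting the separation hypothesis.
\end{itemize}
Let $\le$ be the reflexive--transitive closure of ``is a child of'', so that $(l,\beta)\le(k,\alpha)$ means $(l,\beta)$ is a descendant of $(k,\alpha)$.

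\textbf{Step 2: distance of descendants to the ancestor.} For a descendant $(l,\beta)\le(k,\alpha)$ with chain $z^k_\alpha=z_0,z_1,\dots,z_{l-k}=z^l_\beta$, iterating \eqref{eq-def-A0} gives
$$d\bigl(z^l_\beta,z^k_\alpha\bigr)\le \sum_{j\ge0}A_0^{j+1}C_0\delta^{k+j}\le 2A_0C_0\delta^k .$$
The sum converges because $A_0\delta\le 1/2$, which follows from $12A_0^3C_0\delta\le c_0\le C_0$. I then set
$$\overline{Q}{}^k_\alpha:=\overline{\{z^l_\beta:\ (l,\beta)\le(k,\alpha)\}}.$$
Using the density of the centres at fine levels, I would check three properties:
\begin{itemize}
\item $\bigcup_\alpha\overline{Q}{}^k_\alpha=\cx$;
\item $\overline{Q}{}^{k+1}_\beta\subset \overline{Q}{}^k_\alpha$ whenever $\alpha$ is the parent of $\beta$;
\item $\overline{Q}{}^k_\alpha\subset B(z^k_\alpha,2A_0C_0\delta^k)$.
\end{itemize}
The last inclusion is the outer half of \eqref{eq-defB(Q)}. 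Any constant lost at the boundary is absorbed by slightly sharpening the geometric-series estimate.

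\textbf{Step 3: the inner ball.} I show that $B(z^k_\alpha,(3A_0^2)^{-1}c_0\delta^k)$ misses $\overline{Q}{}^k_{\alpha'}$ for every $\alpha'\ne\alpha$. Take $x$ with $d(x,z^k_\alpha)<(3A_0^2)^{-1}c_0\delta^k$, and trace a descending chain of centres approaching $x$. The rule in Step 1, together with $12A_0^3C_0\delta\le c_0$, forces every link of this chain to have an ancestor equal to $(k,\alpha)$. The quantitative input is that the geometric tail of Step 2, scaled to level $k+1$, is smaller than the separation margin. Hence any limit point of descendants of another $(k,\alpha')$ stays at distance at least roughly $c_0\delta^k/(2A_0)$ from $z^k_\alpha$.

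\textbf{Step 4: choosing a partition.} The main obstacle is this step: turning the overlapping closed sets into pairwise disjoint sets $Q^k_\alpha$ satisfying (i)--(iii) simultaneously at all levels. The difficulty is that there is no coarsest level from which to choose top-down.
\begin{itemize}
\item For fixed $x$, the set $S_k(x):=\{\alpha:\ x\in\overline{Q}{}^k_\alpha\}$ is finite and nonempty, by doubling and Step 2.
\item By the nesting in Step 2, the parent map sends $S_{k+1}(x)$ into $S_k(x)$.
\item Fix a well-ordering of each $\ci_k$; each $\ci_k$ is countable by doubling.
\item Define the label of $x$ at level $k$ as follows. Among the coherent branches $(\alpha_j)_{j\in\zz}$, with $\alpha_j\in S_j(x)$ and $\alpha_j$ the parent of $\alpha_{j+1}$, take the one that is minimal in the order at each finer level successively. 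Such branches exist by a K\"onig-lemma/compactness argument on the inverse system of finite sets $\{S_j(x)\}_j$.
\item Set $Q^k_\alpha:=\{x:\ \text{the label of }x\text{ at level }k\text{ is }\alpha\}$.
\end{itemize}
Properties (i) and (ii) are then immediate from the coherence of the chosen branch. Property (iii) follows from Steps 2 and 3, since $Q^k_\alpha\subset\overline{Q}{}^k_\alpha$ and the inner ball lies in no other $\overline{Q}{}^k_{\alpha'}$. The delicate points I expect to verify carefully are the measurability of the $Q^k_\alpha$ and the well-definedness of the minimal branch across $k\to-\infty$.
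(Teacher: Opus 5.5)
The paper does not prove this lemma; it quotes it as Theorem~2.2 of Hyt\"{o}nen and Kairema. Your Steps~1--3 faithfully reconstruct their construction: the parent rule with threshold $c_0\delta^k/(2A_0)$, the closed cubes as closures of descendant centres, and the inner-ball argument.

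\textbf{Main gap: Step~4 has no starting level.} This is exactly the point you flagged, and it cannot be fixed by checking details later. The rule ``take the coherent branch that is minimal at each finer level successively'' has no level from which to begin. Two coherent branches through $x$ that share one node share all coarser nodes, but they may never share any node.

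Here is a concrete instance satisfying the hypotheses.
\begin{enumerate}
\item[(a)] Take $\cx=\rr$, $\delta=1/N$ with $N\ge 12$ even, and $z^k_\alpha:=(\alpha+\frac12)\delta^k$. The hypotheses hold with $c_0=C_0=1$.
\item[(b)] The rule of Step~1 forces the parent of $\beta$ to be $\lfloor(\beta+\frac12)/N\rfloor$, so the closed cubes are the $N$-adic intervals $[\alpha\delta^k,(\alpha+1)\delta^k]$.
\item[(c)] The point $0$ lies in $[-\delta^k,0]$ and in $[0,\delta^k]$ for every $k$. These form two chains with no common ancestor.
\end{enumerate}
So there is no lexicographic comparison starting from the coarse end. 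Minimising from level $-M$ downward need not stabilise as $M\to\infty$, because the well-orders on the various $\ci_{-M}$ are unrelated and the choice can alternate between the two chains. K\"{o}nig's lemma gives existence of branches, not a coherent selection.

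The missing idea is to anchor the choice at a fixed level $k_0$. This is why the half-open cubes of Hyt\"{o}nen--Kairema depend on a preassigned $k_0$. Define:
\begin{enumerate}
\item[(1)] at the anchor level, $Q^{k_0}_\alpha:=\overline{Q}{}^{k_0}_\alpha\setminus\bigcup_{\gamma<\alpha}\overline{Q}{}^{k_0}_\gamma$;
\item[(2)] at finer levels, $Q^{k+1}_\beta:=Q^k_\alpha\cap\overline{Q}{}^{k+1}_\beta\setminus\bigcup_{\beta'<\beta}\overline{Q}{}^{k+1}_{\beta'}$, where $\alpha$ is the parent of $\beta$ and $\beta'$ runs over the siblings of $\beta$;
\item[(3)] at coarser levels $k<k_0$, $Q^k_\alpha:=\bigcup_{(k_0,\beta)\le(k,\alpha)}Q^{k_0}_\beta$.
\end{enumerate}
Then (i) and (ii) are immediate, and all the sets are Borel, which settles your measurability concern. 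Property (iii) holds because each $Q^k_\alpha$ lies between $\cx\setminus\bigcup_{\gamma\ne\alpha}\overline{Q}{}^k_\gamma$ and $\overline{Q}{}^k_\alpha$.

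For the finer-level step never to get stuck, you need every $x\in\overline{Q}{}^k_\alpha$ to lie in $\overline{Q}{}^{k+1}_\beta$ for some child $\beta$ of $\alpha$. This follows from two facts:
\begin{enumerate}
\item[(a)] the identity $\overline{Q}{}^k_\alpha=\{z^k_\alpha\}\cup\bigcup_\beta\overline{Q}{}^{k+1}_\beta$, a finite union by doubling;
\item[(b)] Step~3 applied at $z^k_\alpha$, which forces any level-$(k+1)$ closed cube containing $z^k_\alpha$ to belong to a child of $\alpha$.
\end{enumerate}

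\textbf{Secondary point: the outer bound in Step~2.} In a quasi-metric space, passing to a limit point costs a full factor $A_0$. From $d(z_i,z^k_\alpha)\le r$ and $z_i\to x$ you only get $d(x,z^k_\alpha)\le A_0 r$. For $A_0\ge 2$, no sharpening of the geometric series absorbs this factor.

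What works is to run the chain through $x$ itself. If $x\in\overline{Q}{}^k_\alpha$ is not itself a descendant centre, the finite-union identity gives a descending chain with $x\in\overline{Q}{}^{k+n}_{\beta_n}$ for every $n$. Hence
$$d(x,z^k_\alpha)\le\sum_{j<n}A_0^{j+1}C_0\delta^{k+j}+A_0^{n}\,d\bigl(z^{k+n}_{\beta_n},x\bigr),$$
and the last term is $O((A_0\delta)^n)$ by the crude bound. Letting $n\to\infty$ gives $d(x,z^k_\alpha)\le A_0C_0\delta^k/(1-A_0\delta)<2A_0C_0\delta^k$.

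Step~3 also needs this sharp bound, at level $k+1$, to obtain
$$A_0\,d(z^{k+1}_\beta,x)+A_0\,d(x,z^k_\alpha)<c_0\delta^k/(2A_0).$$
With the crude bound that margin fails once $A_0$ is moderately large.
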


In Lemma \ref{lem-dyacube}, the family $\cq$ of sets, gives the (half-open) dyadic cube system of $(\cx,d,\mu)$.
\emph{Throughout this article}, the point $z_\alpha^k=:z_{Q_\alpha^k}$ in (iii)
is called the \emph{center} of $Q_\alpha^k$
and $\ell (Q_\alpha^k):=\delta^{k}$ is called the \emph{diameter} of $Q_\alpha^k$.
Moreover, for any $k\in\zz$, define
\begin{equation*}
\cg_k:=\mathcal{I}_{k+1}\setminus\mathcal{I}_{k}
\end{equation*}
with $\mathcal{I}_{k}$ the same index set coming from the construction of the dyadic cube system as above and
define
\begin{equation}\label{eq-def-yak}
\mathcal{Y}^k:=\lf\{y_\alpha^{k}\r\}_{\alpha\in\cg_k}:=\lf\{z_\alpha^{k+1}\r\}_{\alpha\in\cg_k}.
\end{equation}

\begin{remark}\label{remark-dyadic}
The study on the system of dyadic cubes started from 1990s and
we refer the reader to \cite{C90,D91} for some earlier references.
In what follows, we always fix
$$
\delta\in\left(0,\min\left\{A_0^{-D},\frac{1}{2}A_0^{-1},\frac{1}{12}A_0^{-3}\right\}\right)
$$
and $c_0,C_0\in(0,\fz)$ with $c_0\le C_0$
and use $\cq$ to denote such a system of dyadic cubes as in Lemma \ref{lem-dyacube}.
\end{remark}

The following lemma reveals some relations between matrix $A_p$-weights
and their reducing operators,
which is just a variant on $\cx$ of \cite[Proposition 2.4 and Lemma 3.3]{G03}
and \cite[Lemma 3.3]{FR21} on $\rn$; we omit its proof  due to the
independence of the original proofs of both the reverse doubling
property of the measure and the metric condition under consideration.

\begin{lemma}\label{lem-AQW}
Let $p\in(0,\fz)$, $W\in A_p(\cx,\cc^m)$,
and $\{A_Q\}\in\mathcal{RS}_{\mathcal{\cq}}^{W,p}$.
\begin{enumerate}
\item[\rm(i)]
Let $p\in(0,1]$. Then
\begin{equation}\label{lem-eq-AP,W,p<1}
\sup_{Q\in\cq}\esssup_{x\in Q}\lf\|A_Q W^{-\frac 1 p}(x)\r\|<\fz
\end{equation}
and there exists $\epsilon\in(0,\fz)$, depending only on $W$, such that, for any $r\in(0,p+\epsilon)$,
\begin{equation*}
\sup_{Q\in\cq}\fint_Q\lf\|W^{\frac 1 p}(x)A_Q^{-1}\r\|^r\,d\mu(x)\le C_{(r)}
\end{equation*}
and
\begin{equation}\label{lem-eq-W,AP',p<1}
\sup_{Q\in\cq}\fint_Q\sup_{P\in\cq:\ x\in P\subset Q}\lf\|W^{\frac 1 p}(x)A_P^{-1}\r\|^r\,d\mu(x)\le C_{(r)}
\end{equation}
for some positive constant $C_{(r)}$.
\item[\rm(ii)]
Let $p\in(1,\fz)$. Then there exists $\epsilon\in(0,\fz)$ such that,
for any $r\in(0,p'+\epsilon)$,
\begin{equation}\label{lem-eq-AP,W,p>1}
\sup_{Q\in\cq}\fint_Q\lf\|A_Q W^{-\frac 1 p}(x)\r\|^r\,d\mu(x)\le C_{(r)}
\end{equation}
and,  for any $r<p+\epsilon$,
\begin{equation*}
\sup_{Q\in\cq}\fint_Q\lf\|W^{\frac 1 p}(x)A_Q^{-1}\r\|^r\,d\mu(x)\le C_{(r)}
\end{equation*}
and
\begin{equation}\label{lem-eq-W,AP',p>1}
\sup_{Q\in\cq}\fint_Q\sup_{P\in\cq:\ x\in P\subset Q}\lf\|W^{\frac 1 p}(x)A_P^{-1}\r\|^r\,d\mu(x)\le C_{(r)}
\end{equation}
for some positive constant $C_{(r)}$.
\end{enumerate}
\end{lemma}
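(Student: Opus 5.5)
The plan is to reduce every matrix statement to scalar facts about the family of scalar weights
\[
w_{\vec e}(x):=\|W^{1/p}(x)\vec e\|^p,\qquad \vec e\in\cc^m .
\]
For these one can use doubling-measure tools (reverse H\"older, Calder\'on--Zygmund stopping times) that never need reverse doubling. First, every dyadic cube $Q$ satisfies $Q\subset B(Q)$ and $\mu(B(Q))\le C\mu(Q)$ by \eqref{eq-defB(Q)} and \eqref{eq-doub}. Hence the ball conditions of Definition \ref{def-Apweight} hold on cubes with comparable constants. Also $A_Q$ and $A_{B(Q)}$ are comparable: $\|A_Q\vec\eta\|\sim\|A_{B(Q)}\vec\eta\|$ when $p\le1$, and $\|A_Q\vec\eta\|\ls\|A_{B(Q)}\vec\eta\|$ in general, with the reverse following from the $A_p$ duality below.

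For \eqref{lem-eq-AP,W,p<1}, apply \eqref{eq-lem.AEM1} with $M:=W^{-1/p}(y)$. This gives
\[
\|A_QW^{-1/p}(y)\|\sim\Big[\fint_Q\|W^{1/p}(x)W^{-1/p}(y)\|^p\,d\mu(x)\Big]^{1/p}
\]
for a.e.\ $y\in Q$. Enlarging $Q$ to $B(Q)$ and using Definition \ref{def-Apweight}(i) bounds the right side uniformly. For \eqref{lem-eq-AP,W,p>1}, apply \eqref{eq-lem.AEM2} in the same way, i.e.\ use duality: $\|A_QA_Q'\|\ls1$, where $A_Q'$ is the reducing operator of order $p'$ for $W^{-p'/p}$, and this is exactly the $A_p$ condition (ii) rewritten. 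We then get $\fint_Q\|A_QW^{-1/p}\|^{p'}\ls1$.

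Next I will show that $w_{\vec e}$ is a scalar $A_1$ weight for $p\le1$ (from \eqref{lem-eq-AP,W,p<1}) and a scalar $A_p$ weight for $p>1$ (from the previous step), each with constants uniform in $\vec e$. Likewise $\|W^{-1/p}(\cdot)\vec e\|^{p'}$ is a scalar $A_{p'}$ weight for $p>1$. On a space of homogeneous type, scalar $A_q$ weights satisfy a reverse H\"older inequality on balls, and the exponent $1+\epsilon$ depends only on the $A_q$ constant and $C_\mu$. This follows from the usual Calder\'on--Zygmund/Gehring argument with the dyadic cubes of Lemma \ref{lem-dyacube}, and it uses doubling only. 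Taking $\vec e:=A_Q^{-1}\vec e_i$ for a basis $\{\vec e_i\}$ and using $\|W^{1/p}(x)A_Q^{-1}\|\le\sum_i\|W^{1/p}(x)A_Q^{-1}\vec e_i\|$ gives
\[
\fint_Q\|W^{1/p}A_Q^{-1}\vec e_i\|^{p(1+\epsilon)}\ls\Big(\fint_Q\|W^{1/p}A_Q^{-1}\vec e_i\|^{p}\Big)^{1+\epsilon}\sim\|A_QA_Q^{-1}\vec e_i\|^{p(1+\epsilon)}=1 .
\]
This yields the non-maximal bounds for all $r<p+\epsilon$, since smaller $r$ follows by H\"older. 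The same argument with $p'$ gives \eqref{lem-eq-AP,W,p>1} for $r<p'+\epsilon$.

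The main obstacle is the maximal versions \eqref{lem-eq-W,AP',p<1} and \eqref{lem-eq-W,AP',p>1}. I would follow Goldberg's stopping-time scheme. Fix $Q$ and a large constant $\Lambda$. Let $\mathcal S_1$ be the maximal $P\subsetneq Q$ with $\|A_PA_Q^{-1}\|>\Lambda$, then iterate inside each selected cube to get generations $\mathcal S_j$. For $x\in P\subset Q$, let $R\in\bigcup_j\mathcal S_j\cup\{Q\}$ be the smallest stopping cube containing $P$. Then
\[
\|W^{1/p}(x)A_P^{-1}\|\le\|W^{1/p}(x)A_R^{-1}\|\,\|A_RA_P^{-1}\|,
\]
and the second factor should be bounded via the $A_p$ duality $\|A_RA_P^{-1}\|\sim\|A_P'A_R'^{-1}\|\ls\Lambda'$ by the stopping rule; I need to check this carefully. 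This gives the pointwise bound $\sup_P\|W^{1/p}(x)A_P^{-1}\|\ls\sum_{R\ni x}\|W^{1/p}(x)A_R^{-1}\|\mathbf 1_R(x)$, summed over stopping cubes. The key estimate is that each generation has small total measure, $\sum_{R\in\mathcal S_{j+1},\,R\subset R_0}\mu(R)\le\frac12\mu(R_0)$ for $\Lambda$ large. I expect to prove it by writing
\[
\mu(R)\le\Lambda^{-r}\int_R\|A_RA_{R_0}^{-1}\|^r\,d\mu\ls\Lambda^{-r}\int_R\|W^{1/p}(x)A_{R_0}^{-1}\|^r\,d\mu(x),
\]
via \eqref{eq-lem.AEM1} and H\"older, and then applying the non-maximal bound on $R_0$. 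Summing the geometric series of generations, together with H\"older and the non-maximal estimate with a slightly larger exponent $r<\tilde r<p+\epsilon$, then gives the claim for every $r<p+\epsilon$.
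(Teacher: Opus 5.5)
Your proofs of \eqref{lem-eq-AP,W,p<1} and \eqref{lem-eq-AP,W,p>1}, and of the non-maximal reverse H\"older bounds, are sound. You pass to scalar weights $w_{\vec e}$ with uniform constants and then take $\vec e:=A_Q^{-1}\vec e_i$. This is the Goldberg route that the paper cites in place of a proof.

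The gap is in the maximal bounds \eqref{lem-eq-W,AP',p<1} and \eqref{lem-eq-W,AP',p>1}. Your stopping rule selects the maximal $P\subsetneq R$ with $\|A_PA_R^{-1}\|>\Lambda$, so on non-selected cubes it controls $\|A_PA_R^{-1}\|$. The factorization $\|W^{1/p}(x)A_P^{-1}\|\le\|W^{1/p}(x)A_R^{-1}\|\,\|A_RA_P^{-1}\|$ needs the reverse quantity $\|A_RA_P^{-1}\|$. The identity $\|A_RA_P^{-1}\|\sim\|A_P'(A_R')^{-1}\|$ is true, but it only rewrites that quantity in dual form. Nothing links it to $\|A_PA_R^{-1}\|$, and for $p\le1$ there is no $A_P'$ at all. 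Concretely, take $m=1$, $\cx=\rr$, $W(x)=|x|^a$ with $0<a<p-1$, and $Q=[0,1)$. Every $P\subset Q$ has $\fint_PW\le1=(1+a)\fint_QW$, so for $\Lambda^p>1+a$ no cube is ever selected. Yet $\|A_QA_P^{-1}\|\sim2^{ka/p}$ for $P=[0,2^{-k})$. Your pointwise bound $\sup_{P}\|W^{1/p}(x)A_P^{-1}\|\ls\|W^{1/p}(x)A_Q^{-1}\|$ then reads $1\ls x^{a/p}$, which is false as $x\to0^+$.

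The repair is to stop on the quantity you actually need and get the smallness from the dual side; this is Goldberg's Lemma 3.3 argument. For $p\in(1,\fz)$, select the maximal $P\subsetneq R$ with $\|A_RA_P^{-1}\|=\|A_P^{-1}A_R\|>\Lambda$. By \eqref{eq-lem.AEM2} with $M:=A_R$ and \eqref{lem-eq-AP,W,p>1} with $r=p'$,
\[
\sum_{P}\mu(P)\le\Lambda^{-p'}\sum_{P}\|A_P^{-1}A_R\|^{p'}\mu(P)\ls\Lambda^{-p'}\int_R\|A_RW^{-1/p}(x)\|^{p'}\,d\mu(x)\ls\Lambda^{-p'}\mu(R),
\]
which is at most $\frac12\mu(R)$ for $\Lambda$ large. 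Let $\mathcal S$ be $Q$ together with all generations of selected cubes. If $R\in\mathcal S$ is the smallest one containing $P$, then $\|A_RA_P^{-1}\|\le\Lambda$. Hence $\sup_{x\in P\subset Q}\|W^{1/p}(x)A_P^{-1}\|^r\le\Lambda^r\sum_{R\in\mathcal S,\,R\ni x}\|W^{1/p}(x)A_R^{-1}\|^r$. Integrate, apply the non-maximal bound with the same $r$ on each $R\in\mathcal S$, and use $\sum_{R\in\mathcal S}\mu(R)\le2\mu(Q)$. This gives \eqref{lem-eq-W,AP',p>1} for every $r<p+\epsilon$, with no larger exponent or H\"older step needed.

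For $p\in(0,1]$ no stopping time is needed. For dyadic $P\subset R$ and a.e.\ $y\in P$, \eqref{lem-eq-AP,W,p<1} gives $\|A_RA_P^{-1}\|\le\|A_RW^{-1/p}(y)\|\,\|W^{1/p}(y)A_P^{-1}\|\ls\|W^{1/p}(y)A_P^{-1}\|$. Averaging the $p$-th power over $P$ with \eqref{eq-lem.AEM1} yields $\|A_RA_P^{-1}\|\ls1$. So \eqref{lem-eq-W,AP',p<1} follows directly from the non-maximal bound on $Q$.
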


Now,  we introduce the $A_p$ dimension
on $\cx$ as follows,
which is a generalization of \cite[Definition 2.20]{BHYY} from $\rn$ to $\cx$.

\begin{definition}\label{Ap dim lower}
Let $p\in(0,\infty)$, $d\in\mathbb{R}$, and $W$ be a matrix weight.
Then $W$ is said to have the \emph{$A_p$ dimension $d$},
denoted by $W\in\mathbb{D}_{p,d}(\cx,\mathbb{C}^m)$,
if
\begin{enumerate}
\item[\rm(i)] when $p\in(0,1]$,
there exists a positive constant $C$ such that,
for any ball $B\subset\cx$ and any $i\in\mathbb{Z}_+$,
\begin{equation}\label{eq-ApD-p<1}
\mathop{\mathrm{\,ess\,sup\,}}_{x\in\delta^{-i}B}\fint_B \left\|W^{\frac{1}{p}}(y)W^{-\frac{1}{p}}(x)\right\|^p\,d\mu(y)
\leq C\delta^{-id};
\end{equation}

\item[\rm(ii)] when $p\in(1,\infty)$,
there exists a positive constant $C$ such that,
for any ball $B\subset\cx$ and any $i\in\mathbb{Z}_+$,
\begin{equation}\label{eq-ApD-p>1-low}
\fint_B\left[\fint_{\delta^{-i}B}\left\|W^{\frac{1}{p}}(x)W^{-\frac{1}{p}}(y)\right\|^{p'}
\,d\mu(y)\right]^{\frac{p}{p'}}\,d\mu(x)
\leq C\delta^{-id},
\end{equation}
where $1/p+1/p'=1$.
\end{enumerate}
\end{definition}

\begin{lemma}\label{lem-d<D}
Let $p\in(0,\fz)$ and $W\in A_p(\cx,\cc^m)$.
Then there exists $d\in[0,D)$ such that
$W$ has the $A_p$ dimension $d$,
where $D$ is the same as in \eqref{eq-doub}.
\end{lemma}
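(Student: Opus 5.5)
We want to show that every $W\in A_p(\cx,\cc^m)$ has $A_p$ dimension $d$ for some $d\in[0,D)$. The plan follows the Euclidean argument of \cite[Proposition 2.21 / Lemma 2.23]{BHYY}: first reduce the matrix quantity to scalar doubling-type estimates via reducing operators, then gain the strict inequality $d<D$ from a reverse Hölder (self-improvement) property of $A_p$. Throughout, fix a ball $B$ and $i\in\zz_+$, and write $\wz B:=\delta^{-i}B$. Since $\mu$ is doubling, we may pass freely between balls and comparable dyadic cubes, and reducing operators exist for balls by \cite[Lemma 2.20]{BYY23}.

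\emph{Case $p\in(1,\fz)$.} Applying \eqref{eq-lem.AEM2} on $\wz B$ with $M=W^{1/p}(x)$ gives
\[
\lf[\fint_{\wz B}\lf\|W^{\frac1p}(x)W^{-\frac1p}(y)\r\|^{p'}d\mu(y)\r]^{1/p'}\sim \lf\|A_{\wz B}^{-1}W^{\frac1p}(x)\r\|.
\]
Then \eqref{eq-lem.AEM1} on $B$ bounds the left side of \eqref{eq-ApD-p>1-low} by a constant multiple of $\|A_BA_{\wz B}^{-1}\|^p$. It therefore suffices to show
\[
\lf\|A_BA_{\wz B}^{-1}\r\|^p\ls \delta^{-id}\quad\text{for some } d<D.
\]
The trivial bound is obtained by writing $\|A_B\vec\eta\|^p=\fint_B\|W^{1/p}\vec\eta\|^p\,d\mu\le\frac{\mu(\wz B)}{\mu(B)}\|A_{\wz B}\vec\eta\|^p$, which by \eqref{eq-doub} yields $d=D$. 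To improve this, we use the reverse Hölder inequality implicit in Lemma \ref{lem-AQW}: there is $r>p$ with $\fint_{\wz B}\|W^{1/p}(x)A_{\wz B}^{-1}\|^r\,d\mu(x)\ls1$. Hölder's inequality then gives
\[
\|A_BA_{\wz B}^{-1}\|^p\sim\fint_B\|W^{1/p}A_{\wz B}^{-1}\|^p\,d\mu\le\lf[\frac{\mu(\wz B)}{\mu(B)}\fint_{\wz B}\|W^{1/p}A_{\wz B}^{-1}\|^r\,d\mu\r]^{p/r}\ls\delta^{-iDp/r},
\]
so $d:=Dp/r<D$ works.

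\emph{Case $p\in(0,1]$.} By \eqref{eq-lem.AEM1}, for each $x$ the integral in \eqref{eq-ApD-p<1} is comparable to $\|A_BW^{-1/p}(x)\|^p\le\|A_BA_{\wz B}^{-1}\|^p\,\|A_{\wz B}W^{-1/p}(x)\|^p$. The essential supremum over $x\in\wz B$ of the last factor is bounded by \eqref{lem-eq-AP,W,p<1}, transferred from cubes to balls through doubling and the comparability of reducing operators on comparable sets. The factor $\|A_BA_{\wz B}^{-1}\|^p$ is handled exactly as before, now using the reverse Hölder exponent $r\in(p,p+\epsilon)$ from Lemma \ref{lem-AQW}(i).

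The main obstacle is justifying the cube results of Lemma \ref{lem-AQW} for balls $\wz B$. The plan is to cover $\wz B$ by a bounded number of dyadic cubes of comparable size, or alternatively to rerun the reverse-Hölder argument directly for balls. In either case, for $\vec\eta$ fixed, $x\mapsto\|W^{1/p}(x)\vec\eta\|^p$ is a scalar $A_\infty$-type weight on $\cx$, and the scalar reverse Hölder inequality on spaces of homogeneous type applies uniformly, with constants independent of the ball. Finally, $d\ge0$ can be arranged trivially by enlarging $d$, since $\delta^{-id}\ge1$ for $d\ge0$.
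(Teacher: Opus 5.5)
Your proposal is correct and follows essentially the same route as the paper. The paper's argument combines the reverse H\"older inequality for matrix $A_p$-weights (\cite[Proposition 2.32]{BYY23}) with doubling and the argument of \cite[Proposition 2.27]{BHYY}. That is: self-improve to an exponent $r>p$, then apply H\"older on $B\subset\delta^{-i}B$ and \eqref{eq-doub} to get $d=Dp/r<D$. The only difference is how this is phrased. You route the reverse H\"older bound through the reducing operators $A_{\delta^{-i}B}$ and the dyadic-cube statement of Lemma \ref{lem-AQW}, so you need the cube-to-ball transfer you describe, and either of your two fixes works. The paper instead takes a ball version of the reverse H\"older inequality directly.
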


Lemma \ref{lem-d<D} follows from the variant of a reverse H\"{o}lder inequality
(see \cite[Proposition 2.32]{BYY23}) on $\cx$, the doubling property of $\mu$,
and an argument similar to that used in the proof of \cite[Proposition 2.27]{BHYY};
we omit its detailed proof.

The following lemma, modified from \cite[Proposition 2.28]{BHYY}, gives an
estimate where the roles $B$ and $\delta^{-i}B$ play in Definition \ref{Ap dim lower} are exchanged.

\begin{lemma}\label{Ap dim upper}
Let $p\in(0,\infty)$ and $W\in A_p(\cx,\cc^m)$.
\begin{enumerate}
\item[\rm(i)] If $p\in(0,1]$, then
there exists a positive constant $C$ such that,
for any ball $B\subset\cx$ and any $i\in\mathbb{Z}_+$,
\begin{equation*}
\mathop{\mathrm{\,ess\,sup\,}}_{x\in B}\fint_{\delta^{-i}B} \left\|W^{\frac{1}{p}}(y)W^{-\frac{1}{p}}(x)\right\|^p\,d\mu(y)
\leq C.
\end{equation*}

\item[\rm(ii)]
If $p\in(1,\infty)$, then there exists a positive constant $C$ such that,
for any ball $B\subset\cx$ and any $i\in\mathbb{Z}_+$,
\begin{equation}\label{eq-ApD-p>1-up}
\fint_{\delta^{-i}B}\left[\fint_B\left\|W^{\frac{1}{p}}(x)
W^{-\frac{1}{p}}(y)\right\|^{p'}\,d\mu(y)\right]^{\frac{p}{p'}}\,d\mu(x)
\leq C\delta^{-i\wz{d}(p-1)}
\end{equation}
if and only if the dual weight $\wz{W}:=W^{-1/(p-1)}\in A_{p'}(\cx,\cc^m)$ has the $A_{p'}$-dimension $\wz{d}$.
\end{enumerate}
\end{lemma}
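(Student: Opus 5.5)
For (i) we do not need reducing operators; the matrix $A_p$ condition of Definition \ref{def-Apweight}(i) suffices. Fix a ball $B$ and $i\in\zz_+$, and put $B':=\delta^{-i}B$, which is again a ball containing $B$. Applying Definition \ref{def-Apweight}(i) to $B'$ gives
$$
\esssup_{x\in B'}\fint_{B'}\lf\|W^{\frac1p}(y)W^{-\frac1p}(x)\r\|^p\,d\mu(y)\le C .
$$
Since $B\subset B'$, the essential supremum over $x\in B$ is at most the one over $x\in B'$. This is exactly the claimed bound, with a constant independent of $B$ and $i$.

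For (ii) the plan is to express both sides through reducing operators. Let $\mathcal P$ be the family of all balls of $\cx$. By \cite[Lemma 2.20]{BYY23} there are reducing operators $\{A_E\}_{E\in\mathcal P}$ of order $p$ for $W$. Lemma \ref{lem-AEM} then holds for $E\in\mathcal P$ with constants uniform in $E$; its proof is insensitive to whether $E$ is a cube or a ball. We use two elementary facts: the operator norm is invariant under adjoints, and $W^{\pm1/p}(x)$ and $A_E$ are Hermitian.

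\emph{Rewriting the left-hand side of \eqref{eq-ApD-p>1-up}.} Fix $x$. Apply \eqref{eq-lem.AEM2} on $E=B$ with $M=W^{1/p}(x)$, after taking adjoints inside the norm:
$$
\fint_B\lf\|W^{\frac1p}(x)W^{-\frac1p}(y)\r\|^{p'}d\mu(y)\sim\lf\|A_B^{-1}W^{\frac1p}(x)\r\|^{p'} .
$$
Now apply \eqref{eq-lem.AEM1} on $E=\delta^{-i}B$ with $M=A_B^{-1}$, again after taking adjoints:
$$
\fint_{\delta^{-i}B}\lf\|W^{\frac1p}(x)A_B^{-1}\r\|^{p}d\mu(x)\sim\lf\|A_{\delta^{-i}B}A_B^{-1}\r\|^{p} .
$$
Together these give that the left-hand side of \eqref{eq-ApD-p>1-up} is comparable to $\|A_{\delta^{-i}B}A_B^{-1}\|^p$.

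\emph{Rewriting the dual dimension condition.} Since $\wz W^{1/p'}=W^{-1/p}$ and $\wz W^{-1/p'}=W^{1/p}$, condition \eqref{eq-ApD-p>1-low} for $\wz W$ with exponent $p'$ reads
$$
\fint_B\lf[\fint_{\delta^{-i}B}\lf\|W^{-\frac1p}(x)W^{\frac1p}(y)\r\|^{p}d\mu(y)\r]^{\frac{p'}{p}}d\mu(x)\le C\delta^{-i\wz d}.
$$
Take adjoints and apply \eqref{eq-lem.AEM1} on $\delta^{-i}B$ with $M=W^{-1/p}(x)$; the inner average becomes comparable to $\|A_{\delta^{-i}B}W^{-1/p}(x)\|^p$. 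Then apply \eqref{eq-lem.AEM2} on $B$ with $M=A_{\delta^{-i}B}$; the outer average becomes comparable to $\|A_B^{-1}A_{\delta^{-i}B}\|^{p'}$. Because $(A_B^{-1}A_{\delta^{-i}B})^*=A_{\delta^{-i}B}A_B^{-1}$, this equals $\|A_{\delta^{-i}B}A_B^{-1}\|^{p'}$.

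\emph{Conclusion.} With these two rewritings, the statement that $\wz W$ has $A_{p'}$ dimension $\wz d$ is equivalent to
$$
\lf\|A_{\delta^{-i}B}A_B^{-1}\r\|\ls\delta^{-i\wz d/p'}
$$
uniformly in $B$ and $i$. Raising this to the power $p$ and using $p/p'=p-1$ gives exactly \eqref{eq-ApD-p>1-up}, so the two conditions are equivalent. We also need the standard fact $\wz W\in A_{p'}(\cx,\cc^m)$, which follows by the same reducing-operator duality: $\|A_E^{-1}\vec\eta\|$ serves as a reducing operator of order $p'$ for $\wz W$.

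The main obstacle is the bookkeeping. One must check that every equivalence constant is independent of both $B$ and $i$. In particular, Lemma \ref{lem-AEM} must be used on balls rather than dyadic cubes, and the adjoint/Hermitian manipulations must put the matrix $M$ on the correct side each time Lemma \ref{lem-AEM} is invoked.
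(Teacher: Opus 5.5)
Your proof is correct and is essentially the standard reducing-operator argument behind [BHYY, Proposition 2.28], to which the paper defers without giving a proof of its own; part (i) is immediate from the $A_p$ condition applied to the larger ball $\delta^{-i}B\supset B$. For part (ii), you apply Lemma \ref{lem-AEM} twice on each side, together with adjoint invariance of the operator norm, to reduce both \eqref{eq-ApD-p>1-up} and the $A_{p'}$-dimension condition for $\wz{W}$ to the single uniform bound $\|A_{\delta^{-i}B}A_B^{-1}\|\lesssim\delta^{-i\wz{d}/p'}$, which is exactly the intended route.
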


We have the following estimate about reducing operators,
which, in some sense, is a variant for $\cx$ of \cite[Lemma 2.29]{BHYY} on $\rn$.

\begin{lemma}\label{lem-sharp est}
Let  $p\in(0,\infty)$, $W\in A_p(\cx,\cc^m)$ have the $A_p$ dimension $d\in[0,D)$,
and $\{A_Q\}_{Q\in\cq}\in \mathcal{RS}_{\mathcal{Q}}^{W,p}$.
\begin{enumerate}
\item[\rm(i)] If $p\in(0,1]$,
then there exists a positive constant $C$ such that,
for any $P,Q\in\cq$,
\begin{align}\label{eq-sharp-est-p<1}
\left\|A_QA_P^{-1}\right\|
&\leq C\max\left\{\left[\frac{\ell(P)}{\ell(Q)}\right]^{\frac d p},1\right\}
\left[1+\frac{d(z_P,z_Q)}{\max\{\ell(P),\ell(Q)\}}\right]^{\frac d p}.
\end{align}

\item[\rm(ii)] If $p\in(1,\fz)$ and the dual weight
$\wz{W}\in A_{p'}(\cx,\cc^m)$ has the $A_{p'}$ dimension $\wz{d}\in[0,D)$,
then there exists a positive constant $C$ such that,
for any $P,Q\in\cq$,
\begin{align}\label{eq-sharp-est-p>1}
\left\|A_QA_P^{-1}\right\|
&\leq C\max\left\{\left[\frac{\ell(P)}{\ell(Q)}\right]^{\frac d p},
\left[\frac{\ell(Q)}{\ell(P)}\right]^{\frac{\wz{d}}{p'}}\right\}
\left[1+\frac{d(z_P,z_Q)}{\max\{\ell(P),\ell(Q)\}}\right]^{\frac d p+\frac{\wz{d}}{p'}}.
\end{align}
\end{enumerate}
Here, $z_P$ and $z_Q$ are, respectively, the centers of the dyadic cubes $P$ and $Q$ defined in \eqref{eq-defB(Q)}.
\end{lemma}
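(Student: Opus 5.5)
The plan is to follow the scheme of \cite[Lemma 2.29]{BHYY}: bound $\|A_QA_P^{-1}\|$ by an average of $\|W^{1/p}(x)W^{-1/p}(y)\|$ over two comparable sets, and then use the $A_p$ dimension estimates (Definition \ref{Ap dim lower} and Lemma \ref{Ap dim upper}) on a common large ball containing both $P$ and $Q$. Let $r:=\max\{\ell(P),\ell(Q)\}$ and $t:=1+d(z_P,z_Q)/r$. Choose $i_P,i_Q\in\zz_+$ minimal such that $B(Q)\cup B(P)\subset\delta^{-i_P}B(P)$ and $B(Q)\cup B(P)\subset \delta^{-i_Q}B(Q)$. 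By \eqref{eq-def-A0} and \eqref{eq-defB(Q)}, $\delta^{-i_P}\sim t\,r/\ell(P)$ and $\delta^{-i_Q}\sim t\,r/\ell(Q)$, with constants depending only on $A_0,C_0,\delta$. Lemma \ref{lem-AEM} lets me pass freely between reducing operators on $P$ and on $B(P)$, since $\mu(P)\sim\mu(B(P))$ by doubling. Indeed, $A_P$ and a reducing operator $A_{B(P)}$ satisfy $\|A_P\vec\eta\|\sim\|A_{B(P)}\vec\eta\|$, because $P\subset B(P)$ and $B(P)\subset C B(z_P,c\delta^{k})\subset CP$-type comparisons hold up to doubling constants. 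The direction $\|A_{B(P)}\vec \eta\|\lesssim\|A_P\vec\eta\|$ needs the $A_p$ condition (a doubling property of $W$); I take it from the cited variants.

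\emph{Case $p\in(0,1]$.} By \eqref{eq-lem.AEM1} with $M=A_P^{-1}$,
$$\|A_QA_P^{-1}\|^p\sim\fint_Q\|W^{1/p}(y)A_P^{-1}\|^p\,d\mu(y)\le \fint_Q\|W^{1/p}(y)W^{-1/p}(x)\|^p\,d\mu(y)\,\|W^{1/p}(x)A_P^{-1}\|^p$$
for a.e.\ $x\in P$. Since $\|W^{1/p}(x)A_P^{-1}\|=\|A_P^{-1}W^{1/p}(x)\|$ by self-adjointness, \eqref{lem-eq-AP,W,p<1} does not directly give boundedness. I would instead average in $x$ over $P$ after raising to a suitable power, or, more simply, use \eqref{lem-eq-AP,W,p<1} in the form $\|A_PW^{-1/p}(x)\|\lesssim1$ by writing $A_QA_P^{-1}=(A_QW^{-1/p}(x))^{\ast\ast}$… Cleaner: take the essential infimum over $x\in P$ of the product. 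This gives $\|A_QA_P^{-1}\|^p\lesssim \operatorname{ess\,sup}_{x\in P}\fint_Q\|W^{1/p}(y)W^{-1/p}(x)\|^p d\mu(y)\cdot\operatorname{ess\,inf}_{x\in P}\|W^{1/p}(x)A_P^{-1}\|^p$, and the second factor is $\lesssim1$ by the averaged bound in Lemma \ref{lem-AQW}(i).

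If $\ell(Q)\le\ell(P)$, enlarge $Q$ to $B(Q)$ and note $P\subset\delta^{-i_Q}B(Q)$. Then \eqref{eq-ApD-p<1} gives a bound $\delta^{-i_Qd}\sim [t\ell(P)/\ell(Q)]^d$. If $\ell(Q)>\ell(P)$, enlarge both sets to the ball $\delta^{-i_P}B(P)\supset B(Q)$, whose measure is comparable to $\mu(B(Q))$ up to $t^D$. This is the delicate point. Here I would instead apply \eqref{eq-ApD-p<1} centered at $B(Q)$ with the ball containing $P$ of radius $\sim t\,\ell(Q)$, which yields $t^d$ without a size loss. Taking $p$-th roots gives \eqref{eq-sharp-est-p<1}.

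\emph{Case $p\in(1,\fz)$.} Using both \eqref{eq-lem.AEM1} and \eqref{eq-lem.AEM2}, I write
$$\|A_QA_P^{-1}\|\lesssim\left[\fint_Q\left(\fint_P\|W^{1/p}(x)W^{-1/p}(y)\|^{p'}d\mu(y)\right)^{p/p'}d\mu(x)\right]^{1/p},$$
which is the standard duality bound $\|A_QA_P^{-1}\|\sim\|A_Q\tilde A_P\|$ with $\tilde A_P$ the dual reducing operator. If $\ell(P)\ge\ell(Q)$, enlarge the inner set $P$ to a ball of radius $\sim t\ell(P)$ containing $B(Q)$ and apply \eqref{eq-ApD-p>1-low}. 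This gives the factor $[t\ell(P)/\ell(Q)]^{d/p}$, with a further harmless $t$-factor. If $\ell(Q)>\ell(P)$, enlarge the outer set $Q$ to $\delta^{-i}B(P)$ and apply \eqref{eq-ApD-p>1-up} from Lemma \ref{Ap dim upper}(ii), yielding $[t\ell(Q)/\ell(P)]^{\wz d(p-1)/p}=[\cdot]^{\wz d/p'}$. In each case the distance part is controlled by first moving to a ball of radius $\sim tr$ around one cube, at a cost $t^{d/p}$ or $t^{\wz d/p'}$. Combining gives the exponent $d/p+\wz d/p'$ in \eqref{eq-sharp-est-p>1}. The main obstacle I expect is the bookkeeping of the separation factor $t$ without losing a power of $t^D$ from doubling. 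Each enlargement has to be arranged so that the small set stays in the position the dimension inequality allows, namely the inner average for the lower estimate and the outer one for the upper estimate.
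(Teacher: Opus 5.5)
Your part (i) is correct and is essentially the paper's argument. The paper proves $\|A_QA_E^{-1}\|^p\ls\delta^{-id}$ for every $E\subset\delta^{-i}B(Q)$ by the same split into $\esssup_x\fint_{B(Q)}\|W^{1/p}(y)W^{-1/p}(x)\|^p\,d\mu(y)$ times $\essinf_x\|W^{1/p}(x)A_E^{-1}\|^p\ls1$, followed by \eqref{eq-ApD-p<1}. You apply this directly with $E=P\subset\delta^{-i_Q}B(Q)$, where $\delta^{-i_Q}\sim t\,r/\ell(Q)$; this one step covers both of your subcases. The paper instead passes through a ball $B\supset P\cup Q$, using $\|A_BA_P^{-1}\|\ls1$.

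Part (ii), however, has a genuine gap, and it sits exactly at the ``harmless $t$-factor.'' In $\|A_QA_P^{-1}\|^p\sim\fint_Q[\fint_P\|W^{1/p}(x)W^{-1/p}(y)\|^{p'}\,d\mu(y)]^{p/p'}\,d\mu(x)$, replacing the inner set $P$ by a ball of radius $\sim t\ell(P)$ costs $[\mu(\mathrm{ball})/\mu(P)]^{p/p'}$. Doubling only bounds this by $t^{D(p-1)}$, so for $\ell(P)\ge\ell(Q)$ you get $[\ell(P)/\ell(Q)]^{d/p}t^{d/p+D/p'}$. Likewise, for $\ell(Q)>\ell(P)$, replacing the outer set $Q$ by $\delta^{-i}B(P)$ (radius $\sim t\ell(Q)$) costs up to $t^{D}$ before the $p$-th root, giving $[\ell(Q)/\ell(P)]^{\wz d/p'}t^{D/p+\wz d/p'}$. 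Since $d,\wz d<D$, neither is \eqref{eq-sharp-est-p>1}.

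Your last paragraph names this obstacle but does not remove it. It also inverts the roles: in \eqref{eq-ApD-p>1-low} the small ball is the outer average, and in \eqref{eq-ApD-p>1-up} it is the inner one. Moreover, even when $\ell(P)=\ell(Q)$ the target exponent $d/p+\wz d/p'$ involves both dimensions. So every configuration needs both \eqref{eq-ApD-p>1-low} and \eqref{eq-ApD-p>1-up}, whereas your case split invokes only one per case.

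The missing idea is the paper's factorization $\|A_QA_P^{-1}\|\le\|A_QA_B^{-1}\|\,\|A_BA_P^{-1}\|$ through the reducing operator of a ball $B\supset P\cup Q$. Take $\ell(B)\sim A_0^2[\ell(P)+d(z_P,z_Q)+\ell(Q)]$, chosen so that $B\subset\delta^{-i_2}B(Q)$, $B\subset\delta^{-i_1}B(P)$, and $\mu(\delta^{-i_2}B(Q))+\mu(\delta^{-i_1}B(P))\ls\mu(B)$.

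For $\|A_QA_B^{-1}\|^p$, use your own identity with $P$ replaced by $B$. Enlarge the outer set $Q$ to $B(Q)$ and the inner set $B$ to $\delta^{-i_2}B(Q)$; both cost only a constant, and \eqref{eq-ApD-p>1-low} gives $[\ell(B)/\ell(Q)]^{d}$.

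For $\|A_BA_P^{-1}\|^p$, enlarge the outer set $B$ to $\delta^{-i_1}B(P)$ and the inner set $P$ to $B(P)$, again at constant cost. Then \eqref{eq-ApD-p>1-up} gives $[\ell(B)/\ell(P)]^{\wz d(p-1)}$.

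A small cube is only ever enlarged to its own ball $B(\cdot)$, and the large set is $B$ itself, so no $t^D$ appears. Multiplying the two bounds and taking $p$-th roots yields \eqref{eq-sharp-est-p>1} with no case distinction.
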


\begin{proof}
Let $p\in(0,\fz)$ and $\{A_E:\ E\subset \cx\ \text{bounded measurable}\}$
be a family of reducing operators of order $p$ for $W$.
We consider the following two cases on $p$.

\noindent\textbf{Case 1)} When $p\in(0,1]$,
we first claim that there exists a positive constant $C$ such that,
for any $i\in\zz_+$, any $Q\in\cq$, and any bounded measurable set $E\subset \cx$
with $E\subset \delta^{-i} B(Q)$,
\begin{equation}\label{eq-est-claim-1}
\lf\|A_Q A_E^{-1}\r\|^p\le C\delta^{-id},
\end{equation}
where $B(Q)$ is the same as in \eqref{eq-defB(Q)}.
Indeed, using \eqref{eq-lem.AEM1}, \eqref{eq-defB(Q)},
the doubling property of $\mu$, $E\subset \delta^{-i}B(Q)$, and \eqref{eq-ApD-p<1},
we find that
\begin{align*}
\lf\|A_Q A_E^{-1}\r\|^p&\le\essinf_{x\in E}\lf\|A_Q W^{-\frac 1 p}(x)\r\|^p\lf\|W^{\frac 1 p}(x) A_E^{-1}\r\|^p\\
&\sim\essinf_{x\in E}\fint_Q\lf\|W^{\frac 1 p}(y) W^{-\frac 1 p}(x)\r\|^p\,d\mu(y)\,\lf\|W^{\frac 1 p}(x) A_E^{-1}\r\|^p\\
&\ls\essinf_{x\in E}\fint_{B(Q)}\lf\|W^{\frac 1 p}(y) W^{-\frac 1 p}(x)\r\|^p\,d\mu(y)\,\lf\|W^{\frac 1 p}(x) A_E^{-1}\r\|^p\\
&\ls\esssup_{x\in \delta^{-i}B(Q)}\fint_{B(Q)}\lf\|W^{\frac 1 p}(y) W^{-\frac 1 p}(x)\r\|^p\,d\mu(y)
\essinf_{x\in E}\lf\|W^{\frac 1 p}(x) A_E^{-1}\r\|^p\\
&\ls\delta^{-id}\essinf_{x\in E}\lf\|W^{\frac 1 p}(x) A_E^{-1}\r\|^p
\ls\delta^{-id}\fint_E\lf\|W^{\frac 1 p}(x) A_E^{-1}\r\|^p\,d\mu(x)\\
&\sim\delta^{-id}\lf\|A_E A_E^{-1}\r\|^p
\sim\delta^{-id},
\end{align*}
which proves \eqref{eq-est-claim-1}.
Furthermore, we can similarly prove that \eqref{eq-est-claim-1} still holds
if the dyadic cube $Q$ therein is replaced by any ball $B$,
$B(Q)$ is replaced by $B$, and $E$ is still an arbitrary measurable subset of $\delta^{-i} B$.

Next, let $P,Q\in\cq$.
We choose a ball $B$ containing both $P$ and $Q$ such that
\begin{equation}\label{eq-diam(B)}
\ell(B)\sim A_0^2\lf[\ell(P)+d(z_P,z_Q)+\ell(Q)\r];
\end{equation}
here and thereafter, $z_P$ and $z_Q$ are, respectively, the centers of $P$ and $Q$, and $\ell(B)$ denotes
the diameter of $B$.
Observe that there exists some $i\in\zz_+$ with $4A_0C_0\delta^{-i} \sim\ell(B)/\ell(Q)$
such that $B\subset \delta^{-i} B(Q)$.
From this, \eqref{eq-est-claim-1}, $P\subset B$, and \eqref{eq-diam(B)}, we infer that
\begin{align*}
\lf\|A_Q A_P^{-1}\r\|^p&\le\lf\|A_Q A_B^{-1}\r\|^p\lf\|A_B A_P^{-1}\r\|^p
\ls\lf[\f{\ell(B)}{\ell(Q)}\r]^{d} \\
&\sim \max\lf\{\lf[\f{\ell(P)}{\ell(Q)}\r]^{d},1\r\}
\lf[1+\f{d(z_P,z_Q)}{\max\{\ell(P),\ell(Q)\}}\r]^{d}.
\end{align*}
This shows the case when $p\in(1,\fz)$.

\noindent\textbf{Case 2)} When $p\in(1,\fz)$,
we claim that there exist positive constants $C_1$ and $C_2$ such that,
 for any $i_1,i_2\in\zz_+$, any $P,Q\in\cq$, and any measurable set $E\subset \cx$
with $E\subset \delta^{-i_1} B(P)$, $E\subset \delta^{-i_2} B(Q)$,
$\mu(\delta^{-i_1} B(P))/\mu(E)\ls1$, and $\mu(\delta^{-i_2} B(Q))/\mu(E)\ls1$, it holds that
\begin{equation}\label{eq-est-claim-2}
\lf\|A_E A_P^{-1}\r\|^p\le C_2 \delta^{-i_1\wz{d}(p-1)}\
\text{and}\ \lf\|A_Q A_E^{-1}\r\|^p\le C_1\delta^{-i_2 d}.
\end{equation}
Indeed,
notice that, for any $A,B\in \mathcal{RS}_{\mathcal{Q}}^{W,p}$, $\|AB\|=\|BA\|$.
Thus, applying this, Lemma \ref{lem-AEM}, \eqref{eq-lem.AEM2} twice,
the doubling property of $\mu$, $E \subset \delta^{-i_1} B(P)$,
$\mu(\delta^{-i_1} B(P))/\mu(E)\ls1$,  \eqref{eq-defB(Q)}, and \eqref{eq-ApD-p>1-up}, we obtain
\begin{align*}
\lf\|A_E A_P^{-1}\r\|^p
&\ls\fint_E\lf[\fint_P\lf\|W^{-\frac 1 p}(y)W^{\frac 1 p}(x)\r\|^{p'}\,d\mu(y)\r]^{\frac{p}{p'}}\,d\mu(x)\\
&\ls\fint_{\delta^{-i_1}B(P)}\lf[\fint_{B(P)}\lf\|W^{\frac 1 p}(x)W^{-\frac 1 p}(y)\r\|^{p'}\,d\mu(y)\r]^{\frac{p}{p'}}\,d\mu(x)
\ls\delta^{-i_1\wz{d}(p-1)},
\end{align*}
which proves the first inequality of \eqref{eq-est-claim-2}.
Using \eqref{eq-lem.AEM2} twice, $\mu(\delta^{-i_2} B(Q))/\mu(E)\ls1$,
$E \subset \delta^{-i_2}B(Q)$, the doubling property of $\mu$, and \eqref{eq-ApD-p>1-low},
we also conclude that
\begin{align*}
\lf\|A_Q A_E^{-1}\r\|^p
&\le\lf[\fint_Q\lf\|A_Q W^{-\frac 1 p}(x)\r\|^{p'}\,d\mu(x)\r]^{\frac{p}{p'}}
\fint_Q\lf\|W^{\frac 1 p}(x) A_E^{-1}\r\|^p\,d\mu(x)\\
&\sim\lf\|A_Q^{-1}A_Q\r\|^p
\fint_Q\lf[\fint_E\lf\|W^{-\frac 1 p}(y)W^{\frac 1 p}(x)\r\|^{p'}\,d\mu(y)\r]^{\frac{p}{p'}}\,d\mu(x)\\
&\ls\fint_{B(Q)}\lf[\fint_{\delta^{-i_2} B(Q)}\lf\|W^{\frac 1 p}(x)W^{-\frac 1 p}(y)\r\|^{p'}\,d\mu(y)\r]^{\frac{p}{p'}}\,d\mu(x)
\ls\delta^{-i_2d}.
\end{align*}
This proves \eqref{eq-est-claim-2}.
Again, \eqref{eq-est-claim-2} still holds
if the dyadic cubes $Q,R$ therein are replaced by any two balls $B_1,B_2$
and if $B(Q),B(R)$ are replaced by $B_1,B_2$.

Let $P,Q\in\cq$.
We still choose a ball $B$ containing both $P$ and $Q$ and satisfying
\begin{equation*}
\ell(B)\sim A_0^2\lf[\ell(P)+d(z_P,z_Q)+\ell(Q)\r].
\end{equation*}
By a geometrical observation, we find that
there exist some $i_1, i_2\in\zz_+$ with
$$
4A_0C_0\delta^{-i_1}\sim\ell(B)/\ell(P)\ \ \text{and}\ \
4A_0C_0\delta^{-i_2}\sim\ell(B)/\ell(Q)
$$
such that
$$
B\subset \delta^{-i_1} B(P)\subset \delta^{-1}B\ \ \text{and}\ \
B\subset \delta^{-i_2}B(Q)\subset \delta^{-1}B.
$$
Furthermore, by the doubling property of $\mu$, we obtain both
$$\frac{\mu(\delta^{-i_1} B(P))}{\mu(B)}\ls1\ \ \text{and}\ \ \frac{\mu(\delta^{-i_2} B(Q))}{\mu(B)}\ls1.$$
Thus, by this and \eqref{eq-est-claim-2} with $E$ replaced by $B$, we conclude that
\begin{align*}
\lf\|A_Q A_P^{-1}\r\|^p&\le\lf\|A_Q A_B^{-1}\r\|^p\lf\|A_B A_P^{-1}\r\|^p
\ls\lf[\f{\ell(B)}{\ell(Q)}\r]^{d}\lf[\f{\ell(B)}{\ell(P)}\r]^{\wz{d}(p-1)}\\
&\sim \max\lf\{\lf[\f{\ell(P)}{\ell(Q)}\r]^{d},\lf[\f{\ell(Q)}{\ell(P)}\r]^{\wz{d}(p-1)}\r\}
\lf[1+\f{d(z_P,z_Q)}{\max\{\ell(P),\ell(Q)\}}\r]^{d+\wz{d}(p-1)}.
\end{align*}
This finishes the proof of the case $p\in(1,\fz)$ and hence  Lemma \ref{lem-sharp est}.
\end{proof}

In the following, we frequently use the same assumptions of $W$ as in Lemma \ref{lem-sharp est};
for simplicity, we use the concept of the $A_p$ dimension pair $(d,\wz{d})$.

\begin{definition}\label{def-Apdim pair}
Let $p\in(0,\infty)$. A weight $W\in A_p(\cx,\cc^m)$
is said to \emph{have the $A_p$ dimension pair $(d,\wz{d})$}
if
\begin{enumerate}
\item[\rm(i)] $W$ has the $A_p$ dimension $d\in[0,D)$;
\item[\rm(ii)] especially, when $p\in(1,\fz)$, its dual weight $\wz{W}\in A_{p'}(\cx,\cc^m)$
has the $A_{p'}$ dimension $\wz{d}\in[0,D)$.
\end{enumerate}
\end{definition}

\begin{remark}\label{remark-dp}
As argued in \cite[Lemmas 2.45 and 2.47]{BHYY}, both \eqref{eq-sharp-est-p<1} and \eqref{eq-sharp-est-p>1}
when $\cx:=\rn$ and $\delta:=1/2$ are sharp in the sense that,
if there exist $r_1,r_2,r_3\in(0,\fz)$ such that,
for any $W\in A_p(\rn,\cc^m)$ having the $A_p$ dimension pair $(d,\wz{d})$
and $\{A_Q\}_{Q\in\cq}\in\mathcal{RS}_\cq^{W,p}$,
there exists a positive constant $C$ satisfying, for any $P,Q\in\cq,$
$$
\lf\|A_Q\,A_P^{-1}\r\|\le C\max\lf\{\lf[\f{l(Q)}{l(P)}\r]^{r_1},\lf[\f{l(P)}{l(Q)}\r]^{r_2}\r\}
\lf[1+\f{|x_P-x_Q|}{\max\{l(Q),l(P)\}}\r]^{r_3},
$$
then $r_1\in[d/p,\fz)$, $r_2\in[\wz{d}_{(p)}/p,\fz)$, and $r_3\in[d/p+\wz{d}_{(p)}/p,\fz)$,
where
\begin{align}\label{eq-def-dp}
\wz{d}_{(p)}:=
\begin{cases}
0,&\ \text{when}\ p\in(0,1],\\
\wz{d}(p-1),&\ \text{when}\ p\in(1,\fz).
\end{cases}
\end{align}
\end{remark}

\begin{remark}
Let $p\in(0,\fz)$, $W\in A_p(\cx,\cc^m)$ have the $A_p$ dimension pair $(d,\wz{d})$,
and $\wz{d}_{(p)}$ be defined in \eqref{eq-def-dp}.
Then  Lemma \ref{lem-sharp est} implies that, for
any $\{A_Q\}_{Q\in\cq}\in\mathcal{RS}_\cq^{W,p}$,
there exists a positive constant $C$ such that,
for any $P,Q\in\cq$,
\begin{align}\label{eq-sharp-est-p}
\left\|A_QA_P^{-1}\right\|
&\leq C\max\left\{\left[\frac{\diam(P)}{\diam(Q)}\right]^{\frac d p},
\left[\frac{\diam(Q)}{\diam(P)}\right]^{\frac{\wz{d}_{(p)}}{p}}\right\}\nonumber\\
&\quad\times
\left[1+\frac{d(z_P,z_Q)}{\max\{\diam(P),\diam(Q)\}}\right]^{\frac d p+\frac{\wz{d}_{(p)}}{p}}.
\end{align}
Recall that, when $\cx=\mathbb{R}^d$, $\{A_Q\}_{Q\in\cq}$ satisfying \eqref{eq-sharp-est-p}
is said to be strongly doubling of order $(\beta,p)$ with $\beta=d+\wz{d}_{(p)}$ in the sense of
Frazier and Roudenko \cite[Definition 2.1]{FR21}.
\end{remark}

The following lemma implies that, for any two dyadic cubes $P$ and $Q$
which are not far away from each other,
the reducing operators $A_P$ and $A_Q$, related to some given matrix $A_p$-weight, are comparable.

\begin{lemma}\label{lem-AQ=AQ'}
Let $p\in(0,\fz)$, $W\in A_p(\cx,\cc^m)$,
and $\{A_Q\}_{Q\in\cq}\in \mathcal{RS}_{\mathcal{Q}}^{W,p}$.
Then, for any $k\in\zz$, $x\in\cx$, $B:=B(x,\delta^{k-\tau})$ for some fixed $\tau\in\zz$,
and $Q,Q'\in\cq_k$ with both $Q\cap B\neq\emptyset$ and $Q'\cap B\neq\emptyset$,
\begin{equation*}
\left\|A_{Q}A_{Q'}^{-1}\right\|\sim1
\end{equation*}
with the  positive equivalence constants independent of both $x$ and $k$.
\end{lemma}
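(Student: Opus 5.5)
The plan is to reduce the statement to Lemma \ref{lem-sharp est}, or more precisely to the uniform estimate \eqref{eq-sharp-est-p}. Lemma \ref{lem-d<D} guarantees that $W$ has some $A_p$ dimension $d\in[0,D)$. When $p\in(1,\fz)$, the dual weight $\wz{W}\in A_{p'}(\cx,\cc^m)$ is again a matrix $A_{p'}$-weight, so the same lemma gives it an $A_{p'}$ dimension $\wz{d}\in[0,D)$. Hence $W$ has an $A_p$ dimension pair $(d,\wz{d})$ in the sense of Definition \ref{def-Apdim pair}, and \eqref{eq-sharp-est-p} is available with some exponent $\beta:=(d+\wz{d}_{(p)})/p\ge 0$.

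Fix $k\in\zz$, $x\in\cx$, $B:=B(x,\delta^{k-\tau})$, and $Q,Q'\in\cq_k$ meeting $B$. Since both cubes lie in the same generation, $\ell(Q)=\ell(Q')=\delta^k$, so the maximum factor in \eqref{eq-sharp-est-p} equals $1$. It remains to bound $d(z_Q,z_{Q'})/\delta^k$ by a constant depending only on $A_0$, $C_0$, $\delta$, and $\tau$.

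To do this, I pick $y\in Q\cap B$ and $y'\in Q'\cap B$. By \eqref{eq-defB(Q)}, $d(z_Q,y)<2A_0C_0\delta^k$ and $d(z_{Q'},y')<2A_0C_0\delta^k$. Also $d(y,x)<\delta^{k-\tau}$ and $d(y',x)<\delta^{k-\tau}$. Applying the quasi-triangle inequality \eqref{eq-def-A0} three times gives
\begin{equation*}
d(z_Q,z_{Q'})\le A_0^3\lf[4A_0C_0\delta^k+2\delta^{k-\tau}\r]\ls\delta^k\lf(1+\delta^{-\tau}\r).
\end{equation*}
The implicit constant is independent of $x$ and $k$, because $\tau$ is fixed. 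Therefore $\|A_QA_{Q'}^{-1}\|\ls(1+\delta^{-\tau})^{\beta}\ls 1$.

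The reverse inequality needs no separate argument. Since $\|I_m\|=1$,
\begin{equation*}
1=\lf\|A_QA_{Q'}^{-1}A_{Q'}A_Q^{-1}\r\|\le\lf\|A_QA_{Q'}^{-1}\r\|\,\lf\|A_{Q'}A_Q^{-1}\r\|,
\end{equation*}
and the hypotheses are symmetric in $Q$ and $Q'$, so the bound just proved also gives $\|A_{Q'}A_Q^{-1}\|\ls1$. Combining the two yields $\|A_QA_{Q'}^{-1}\|\gtrsim1$, which proves the equivalence.

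There is no substantial obstacle. The only point needing care is checking that Lemma \ref{lem-sharp est} really applies to every $W\in A_p(\cx,\cc^m)$, and for $p>1$ this hinges on the dual weight lying in $A_{p'}$. If that is not taken as known, I would instead argue directly, as in the claims \eqref{eq-est-claim-1} and \eqref{eq-est-claim-2}. The idea there is to choose a ball $\wz B\supset Q\cup Q'$ of radius $\sim\delta^{k-\tau}$ and write $\|A_QA_{Q'}^{-1}\|\le\|A_QA_{\wz B}^{-1}\|\,\|A_{\wz B}A_{Q'}^{-1}\|$. Each factor is then controlled by the $A_p$ condition in Definition \ref{def-Apweight} on $\wz B$, together with the doubling comparability $\mu(\wz B)\sim\mu(Q)\sim\mu(Q')$.
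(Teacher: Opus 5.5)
Your proposal is correct and follows the paper's argument: the paper states that the lemma follows from applying \eqref{eq-sharp-est-p} and omits the proof, and your write-up supplies the omitted steps. Those steps are that Lemma \ref{lem-d<D} yields the $A_p$ dimension pair, that same-generation cubes meeting $B$ satisfy $\ell(Q)=\ell(Q')$ with $d(z_Q,z_{Q'})\lesssim\delta^k(1+\delta^{-\tau})$, and that $1\le\|A_QA_{Q'}^{-1}\|\,\|A_{Q'}A_Q^{-1}\|$ gives the lower bound by symmetry.
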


Lemma \ref{lem-AQ=AQ'} follows from applying \eqref{eq-sharp-est-p}; we omit its proof here.

\section{Matrix-Weighted Poincar\'{e}-Type Inequalities}\label{sec-poincare}

Poincar\'{e}-type inequalities play a key role in studying smoothness function spaces.
We refer the reader to, for instance, \cite{Haj96,HK00,Haj03,KYZ10,KYZ11,LYY22,LYY21,AWYY23}.
Especially, on the matrix-weighted Euclidean space,
Isralowitz and  Moen \cite{IM19} established some matrix-weighted variants of
the classical Poincar\'{e} inequalities for the pair $(\vec f, D\vec f)$, where
$D\vec f$ is the derivatives of $\vec f$,  and further applied them to the study on the degenerate systems
of elliptic equations.
The aim of this section is to establish some new matrix-weighted Poincar\'{e}-type inequalities
with respect to the
vector-valued $(s,b)$-Haj{\l}asz gradient sequences on the space $(\cx,d,\mu)$ of homogeneous type.
These  inequalities will be used to establish the pointwise characterization of
the matrix-weighted logarithmic Besov spaces.

We begin with some basic properties of the logarithmic smoothness weight functions $L_{s,b}$ under consideration.
Let $s\in\rr$ and $b:=(b_+,b_-)$.
Recall that
$L_{s,b}$, defined in \eqref{eq-def-Log}, is a particular example among those
generalized smoothness weight functions of admissible growth
considered in \cite{LYY22}
with the indices therein $\alpha_\phi=s=\beta_\phi$;
furthermore,
by \cite[(9)]{LYY21},  we find that,
for any given $s\in(0,\fz)$ and $r\in(0,\fz]$,
there exists a positive constant $C_0$ such that, for any $k_0\in\zz$,
\begin{equation*}
\sum_{k=-\fz}^{k_0}\lf[\f{L_{s,b}(\delta^{k_0})}{L_{s,b}(\delta^{k})}\r]^r\le C_0;
\end{equation*}
by \cite[(2.5) and (2.6)]{LYY22}, we also conclude that,
for any $s\in(0,1)$ and $N\in\zz$,
there exist positive constants $C_1$ and $C_2$, depending only on $s$, $b$, and $N$, such that
\begin{equation*}
C_1\le\delta^{Ns}\lf[\f{(1+k)\vee1}{(1+k+N)\vee1}\r]^{b_+}\lf[\f{(1+k)\wedge1}{(1+k+N)\wedge1}\r]^{b_-}\le C_2.
\end{equation*}

Now, we introduce the concept of the vector-valued
$(s,b)$-Haj{\l}asz gradient sequences on $\cx$.

\begin{definition}\label{def-gradient}
Let $s\in(0,\fz)$, $b:=(b_+,b_-)$ be a pair of real numbers,
$p\in(0,\fz)$, $W\in A_p(\cx,\cc^m)$,
$\{A_Q\}_{Q\in\cq}\in\mathcal{RS}_\cq^{W,p}$,
and $\vec{f}\in [L^0(\cx)]^m$.
A sequence $\vec{\mathbf{g}}:=\{\vec{g}_{k}\}_{k\in\zz}$ of vector-valued functions
is called a \emph{vector-valued $(s,b)$-Haj{\l}asz gradient sequence}
of $\vec{f}$ with respect to $\{A_Q\}_{Q\in\cq}$
if, for any $k\in\zz$, there exists a set $E_k\subset \cx$ with $\mu(E_k)=0$ such that,
for any $x,y\in \cx\setminus E_k$ with $d(x,y)\in[\delta^{k+1},\delta^{k})$,
\begin{align}\label{eq-def-gradient}
&\sum_{Q\in\cq_k}\lf\|A_{Q}\vec{f}(x)-A_{Q}\vec{f}(y)\r\|\mathbf{1}_Q(x)\nonumber\\
&\quad\le L_{s,b}\lf(d(x,y)\r)
\sum_{Q\in\cq_k}\lf[\lf\|A_Q\vec{g}_{k}(x)\r\|+\lf\|A_Q\vec{g}_{k}(y)\r\|\r]\mathbf{1}_Q(x).
\end{align}
Denote by $\dd^{s,b}_{\{A_Q\}}(\vec{f})$ the collection of all vector-valued $(s,b)$-Haj{\l}asz gradient sequences
of $\vec{f}$ with respect to $\{A_Q\}_{Q\in\cq}$ and, especially,
by $\dd^{s,0}_{\{A_Q\}}(\vec{f})$ the collection
when $b:=(0,0)$.
\end{definition}

\begin{remark}\label{remark-gradient}
\begin{enumerate}
\item[\rm(i)] In the unweighted scalar case,
\eqref{eq-def-gradient} in Definition \ref{def-gradient} becomes
\begin{equation}\label{eq-def-cla-gradients}
\lf|f(x)-f(y)\r|\le L_{s,b}\lf(d(x,y)\r)
\lf[g_k(x)+g_k(y)\r],
\end{equation}
and we use $\dd^{s,b}(f)$ to denote the collection of all  $(s,b)$-Haj{\l}asz gradient sequences in this case.
Furthermore, when $\delta=1/2$ and $b=0$, \eqref{eq-def-cla-gradients} reduces back to
the classical gradient inequality in \cite[Definition 1.1]{KYZ11}.
We also recall that
a nonnegative function $g$ is called an \emph{$s$-Haj{\l}asz gradient} of $f$ if,
for almost every $x,y\in\cx$,
$$
\lf|f(x)-f(y)\r|\le\lf[d(x,y)\r]^s\lf[g(x)+g(y)\r];
$$
see \cite[Definition 2.10]{AWYY23}.
 For other variants of the classical gradient inequality, we refer the reader to, for instance,
\cite[Definition 3.1]{LYY22} for the generalized smoothness and \cite[Definition 2.10]{AWYY23}
for $\cx$.

\item[\rm(ii)] We claim that  replacing  $\mathbf{1}_Q(x)$ in \eqref{eq-def-gradient}  into
$\mathbf{1}_Q(y)$ gives an equivalent definition of  vector-valued $(s,b)$-Haj{\l}asz gradient sequences.
Indeed, for any $x,y\in\cx$ with $d(x,y)\in[\delta^{k+1},\delta^{k})$
and $P\in\cq_k$ containing $y$, we have $x\in B(y,\delta^k)$ and hence,
for any $Q\in\cq_k$ containing $x$ (we may assume that $P\neq Q$),
$P\cap B(y,\delta^k)\neq\emptyset$
and $Q\cap B(y,\delta^k)\neq\emptyset$,
which, together with Lemma \ref{lem-AQ=AQ'}, further implies that
$\|A_PA_Q^{-1}\|\sim 1\sim\|A_QA_P^{-1}\|$.
Thus, if \eqref{eq-def-gradient} holds, applying this,
we conclude that there exists a positive constant $C$ such that,
for any $k\in\zz$ and almost every $x,y\in \cx$ with $d(x,y)\in[\delta^{k+1},\delta^{k})$,
\begin{align*}
&\sum_{P\in\cq_k}\lf\|A_{P}\vec{f}(x)-A_{P}\vec{f}(y)\r\|\mathbf{1}_P(y)\\
&\quad\le C\sum_{Q\in\cq_k}\lf\|A_{Q}\vec{f}(x)-A_{Q}\vec{f}(y)\r\|\mathbf{1}_Q(x)\\
&\quad\le C L_{s,b}\lf(d(x,y)\r)
\sum_{Q\in\cq_k}\lf[\lf\|A_Q\vec{g}_{k}(x)\r\|+\lf\|A_Q\vec{g}_{k}(y)\r\|\r]\mathbf{1}_Q(x)\\
&\quad\le C L_{s,b}\lf(d(x,y)\r)
\sum_{P\in\cq_k}\lf[\lf\|A_P\vec{g}_{k}(x)\r\|+\lf\|A_P\vec{g}_{k}(y)\r\|\r]\mathbf{1}_P(y),
\end{align*}
which proves the above claim.
\end{enumerate}
\end{remark}

We need the following lemma to reduce the transform from the vector-valued case to the scalar-valued case.

\begin{lemma}\label{lem-h,hj in D}
Let $s\in(0,\fz)$, $b:=(b_+,b_-)$ be a pair of real numbers,
$p\in(0,\fz)$, $W\in A_p(\cx,\cc^m)$ have the $A_p$ dimension pair $(d,\wz{d})$, and
$\{A_Q\}_{Q\in\cq}\in\mathcal{RS}_\cq^{W,p}$.
Then, for any given $k_0\in\zz$, $x_0\in\cx$, $B_0:=B(x_0,\delta^{k_0})$,
$Q_0\in \cq_{k_0}$ containing $x_0$,
$\vec{f}\in[L^0(\delta^{-1}B_0)]^m$,
and $\{\vec{g}_j\}_{j\in\zz}\in\dd^{s,b}_{\{A_Q\}}(\vec{f})$,
the sequence $\{h_j\}_{j\in\zz}$ of functions defined on $\delta^{-1}B_0$ by setting,
for any $j\in\zz$,
\begin{equation}\label{eq-def-gj}
h_j:=\begin{cases}
\ \delta^{[k_0-j]\wz{d}_{(p)}/p}\dsum_{Q\in\cq_j}
\lf\|A_{Q}\vec{g}_j\r\| \mathbf{1}_Q\ &\text{if}\ j\ge k_0-2,\\
\ 0\ &\text{if}\ j< k_0-2
\end{cases}
\end{equation}
is a positive constant multiple of an element in $\dd^{s,b}(u_\eta)$,
where, for any given $\vec{\eta}\in\cc^m$,
the function $u_\eta$ is defined on $\delta^{-1}B_0$ by setting, for any $y\in\delta^{-1}B_0$
\begin{equation}\label{eq-def-u}
u_\eta(y):=\lf\|A_{Q_0}\vec{f}(y)-\vec{\eta}\r\|
\end{equation}
and where the positive constant is independent of $x_0$, $k_0$, and $\vec{\eta}$.
\end{lemma}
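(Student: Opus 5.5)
The plan is to verify the defining inequality \eqref{eq-def-cla-gradients} directly for the pair $(u_\eta,\{h_j\}_{j\in\zz})$ on $\delta^{-1}B_0$. Fix $j\in\zz$ and let $E_j$ be the null set from Definition \ref{def-gradient} for $\{\vec g_j\}_{j\in\zz}$. Take $x,y\in\delta^{-1}B_0\setminus E_j$ with $d(x,y)\in[\delta^{j+1},\delta^j)$. The goal is
\begin{equation*}
|u_\eta(x)-u_\eta(y)|\le C L_{s,b}(d(x,y))\,[h_j(x)+h_j(y)]
\end{equation*}
with $C$ independent of $x_0$, $k_0$, $\vec\eta$ and $j$.

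\textbf{Step 1 (only $j\ge k_0-2$ matters).} By \eqref{eq-def-A0}, $d(x,y)<2A_0\delta^{k_0-1}$. Since $\delta<\frac12A_0^{-1}$ (Remark \ref{remark-dyadic}), this gives $d(x,y)<\delta^{k_0-2}$. Hence no admissible pair exists when $j<k_0-2$, and the choice $h_j=0$ there is harmless. From now on assume $j\ge k_0-2$.

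\textbf{Step 2 (transfer from $A_{Q_0}$ to $A_Q$).} By the reverse triangle inequality,
\begin{equation*}
|u_\eta(x)-u_\eta(y)|\le\|A_{Q_0}\vec f(x)-A_{Q_0}\vec f(y)\|\le\|A_{Q_0}A_Q^{-1}\|\,\|A_Q\vec f(x)-A_Q\vec f(y)\|,
\end{equation*}
where $Q\in\cq_j$ is the cube containing $x$. The main step is to show
\begin{equation*}
\|A_{Q_0}A_Q^{-1}\|\lesssim \delta^{(k_0-j)\wz d_{(p)}/p},
\end{equation*}
which is exactly the factor built into \eqref{eq-def-gj}. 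I would get this from \eqref{eq-sharp-est-p} with $P:=Q$, where $\ell(Q)=\delta^j$ and $\ell(Q_0)=\delta^{k_0}$.
\begin{enumerate}
\item[(a)] The term $[\ell(Q)/\ell(Q_0)]^{d/p}=\delta^{(j-k_0)d/p}$ is at most $\delta^{-2d/p}$, because $j\ge k_0-2$.
\item[(b)] The term $[\ell(Q_0)/\ell(Q)]^{\wz d_{(p)}/p}=\delta^{(k_0-j)\wz d_{(p)}/p}$ is the claimed factor. It is large only when $j>k_0$, and otherwise it is bounded above and below by constants.
\item[(c)] For the distance factor: $x\in Q\cap\delta^{-1}B_0$, $x_0\in Q_0$, and both $Q$ and $Q_0$ lie in their balls $B(\cdot)$ from \eqref{eq-defB(Q)}. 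Two applications of \eqref{eq-def-A0} then give $d(z_Q,z_{Q_0})\lesssim\delta^{k_0-2}\lesssim\max\{\ell(Q),\ell(Q_0)\}$ whenever $j\ge k_0-2$. So the factor $[1+\cdots]^{(d+\wz d_{(p)})/p}$ is bounded.
\end{enumerate}
Together these give the displayed bound on $\|A_{Q_0}A_Q^{-1}\|$.

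\textbf{Step 3 (apply the gradient inequality).} Apply \eqref{eq-def-gradient} at level $j$; only the term with $\mathbf 1_Q(x)=1$ survives. This gives
\begin{equation*}
\|A_Q\vec f(x)-A_Q\vec f(y)\|\le L_{s,b}(d(x,y))\bigl[\|A_Q\vec g_j(x)\|+\|A_Q\vec g_j(y)\|\bigr].
\end{equation*}
The first term equals $\sum_{Q'\in\cq_j}\|A_{Q'}\vec g_j(x)\|\mathbf 1_{Q'}(x)$. For the second term, let $Q'\in\cq_j$ be the cube containing $y$. Both $Q$ and $Q'$ meet $B(x,\delta^j)$, so Lemma \ref{lem-AQ=AQ'} (as used in Remark \ref{remark-gradient}(ii)) gives $\|A_Q\vec g_j(y)\|\le\|A_QA_{Q'}^{-1}\|\,\|A_{Q'}\vec g_j(y)\|\lesssim\|A_{Q'}\vec g_j(y)\|$. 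Combining with Step 2 yields
\begin{equation*}
|u_\eta(x)-u_\eta(y)|\lesssim L_{s,b}(d(x,y))\,[h_j(x)+h_j(y)].
\end{equation*}

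The implicit constants depend only on $A_0,\delta,c_0,C_0,d,\wz d,p$ and the constant of Lemma \ref{lem-AQ=AQ'}. They do not depend on $x_0$, $k_0$ or $\vec\eta$; note that $\vec\eta$ drops out already in Step 2. The main obstacle is the geometric bookkeeping in Step 2(c), keeping the distance factor of \eqref{eq-sharp-est-p} uniformly bounded in the quasi-metric setting.
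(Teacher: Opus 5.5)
Your proposal is correct and follows the paper's own argument. The chain is the same: the reverse triangle inequality, then inserting $A_Q^{-1}A_Q$ and bounding $\|A_{Q_0}A_Q^{-1}\|\lesssim\delta^{(k_0-j)\wz{d}_{(p)}/p}$ via \eqref{eq-sharp-est-p}, then applying \eqref{eq-def-gradient} at level $j$ and using Lemma \ref{lem-AQ=AQ'} to move the $y$-term to the cube containing $y$; your Steps 1 and 2(c) only spell out details the paper leaves implicit (vacuity for $j<k_0-2$ on $\delta^{-1}B_0$ via $\delta<(2A_0)^{-1}$, and uniform boundedness of the distance factor).
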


\begin{proof}
As $\delta<(2A_0)^{-1/2}$, for any $y,z\in B_0$, we have $d(y,z)<\delta^{k_0-2}$.
Applying Remark \ref{remark-dp},
\eqref{eq-def-gradient}, and  Lemma \ref{lem-AQ=AQ'},
we further conclude that,
for any $j\ge k_0-2$ and almost every $y,z\in \delta^{-1}B_0$ such that $d(y,z)\in[\delta^{j+1},\delta^{j})$,
\begin{align*}
&\lf|u_\eta(y)-u_\eta(z)\r|\\
&\quad\le
\lf\|A_{Q_{0}}\vec{f}(y)-A_{Q_{0}}\vec{f}(z)\r\|\\
&\quad\le\sum_{Q\in\cq_j}
\lf\|A_{Q_0}A_Q^{-1}\r\|
\lf\|A_{Q}\vec{f}(y)-A_{Q}\vec{f}(z)\r\|\mathbf{1}_Q(y)\\
&\quad\ls L_{s,b}\lf(d(y,z)\r)\delta^{[k_0-j]\wz{d}_{(p)}/p}
\sum_{Q\in\cq_j}\lf[\lf\|A_{Q}\vec{g}_j(y)\r\|+\lf\|A_Q\vec{g}_j(z)\r\|\r]\mathbf{1}_Q(y)\\
&\quad\ls L_{s,b}\lf(d(y,z)\r)\lf[h_j(y)+h_j(z)\r],
\end{align*}
where $\wz{d}_{(p)}$ is defined in \eqref{eq-def-dp}.
This implies that there exists a positive constant $C$ such that
$\{Ch_j\}_{j\in\zz}\in\dd^{s,b}(u_\eta)$.
This finishes the proof of Lemma \ref{lem-h,hj in D}.
\end{proof}

\begin{remark}\label{remark-independence}
The proof of Lemma \ref{lem-h,hj in D} implies that
the Haj{\l}asz gradient sequence $\{h_j\}_{j\in\zz}$ in \eqref{eq-def-gj} is independent of the choice of the vector $\vec{\eta}$.
\end{remark}

For any $k\in\zz$ and $Q\in\cq_k$, define $Q^*$ by setting
\begin{equation}\label{eq-def-Q*}
Q^*:=\bigcup\lf\{P\in\cq:\ \ell(P)=\ell(Q),\ P\cap\lf[\bigcup_{y\in Q}B(y,\delta^{k-2})\r]\neq\emptyset\r\}.
\end{equation}
Obviously, $Q^*$ is a union of some pairwise disjoint dyadic cubes and,
for any  $k\in\zz$, $Q\in\cq_k$, and $x\in Q$,
\begin{equation}\label{eq-B subset Q*}
B\left(x,\delta^{k-2}\right)\subset Q^*.
\end{equation}
We also claim that there exists a uniform positive constant $C$ such that,
for any $x\in\cx$, $k\in\zz$, and $Q\in\cq_k$ containing $x$,
\begin{equation}\label{eq-Q*<B}
\mu\lf(Q^*\r)\le C\mu\lf(B\lf(x,\delta^{k-2}\r)\r).
\end{equation}
Indeed, observe that, for any $z\in Q^*$, there exists a dyadic cube $P\in\cq_k$ containing $z$ and a point $y\in Q$
such that $P\cap B(y,\delta^{k-2})\neq\emptyset$. We suppose $z_P\in P\cap B(y,\delta^{k-2})$. Then
\begin{align*}
d(x,z)&\le A_0^2\lf[d(x,y)+d(y,z_P)+d(z_P,z)\r]\\
&\le A_0^2\lf[\ell(Q)+\delta^{k-2}+\ell(P)\r]\ls\delta^{k-2},
\end{align*}
which further implies $Q^*\subset B(x,\sigma\delta^{k-2})$ for some uniform $\sigma\in(1,\fz)$.
Thus, \eqref{eq-Q*<B} follows from this and the doubling property of $\mu$. This proves the above claim.

Let $W\in A_p(\cx,\cc^m)$ and $\{A_Q\}_{Q\in\cq}\in\mathcal{RS}_\cq^{W,p}$.
For any given $k\in\zz$, define $\gamma_k$
with respect to both $W$ and $\{A_Q\}_{Q\in\cq}$ by setting, for any $x\in\cx$,
\begin{equation}\label{eq-def-rj}
\gamma_k(x):=\sum_{Q\in\cq_k}\lf\|W^{\f1p}(x)A_Q^{-1}\r\|\mathbf{1}_Q(x).
\end{equation}
For any given $k\in\zz$, define the averaging operator $E_k$
at level $k$ by setting, for any $f\in L_\loc^1(\cx)$,
\begin{equation}\label{eq-def-Ek}
E_k(f):=\sum_{Q\in\cq_k}\lf[\fint_{Q^*}f(y)\,d\mu(y)\r]\mathbf{1}_Q.
\end{equation}

We establish an $L^p(\cx)$-estimate with respect to both $\gamma_k$ and $E_k$ as follows
(which can be regarded as a variant of part of the Nazarov theorem; see, for instance, \cite[Theorem 3.7]{FR21}).

\begin{lemma}\label{lem-rkEkf}
Let $\lambda\in(0,\fz)$, $p\in(1/\lambda,\fz)$,
$W\in A_p(\cx,\cc^m)$, $\{A_Q\}_{Q\in\cq}\in\mathcal{RS}_\cq^{W,p}$,
$\{\gamma_k\}_{k\in\zz}$ be defined in \eqref{eq-def-rj},
and $\{E_k\}_{k\in\zz}$  defined in \eqref{eq-def-Ek}.
Then there exists a positive constant $C_p$, depending on $p$, such that,
for any locally integrable function  $f$ and any $k\in\zz$,
\begin{equation*}
\lf\|\sup_{k\in\zz}\gamma_k \lf[E_k(f)\r]^\lambda\r\|_{L^p(\cx)}
\le C_p\lf\|f^\lambda\r\|_{L^p(\cx)}.
\end{equation*}
\end{lemma}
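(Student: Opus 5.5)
Set $u:=f^\lambda$ and $t:=1/\lambda$. We may take $f\ge0$, since $f^\lambda$ only makes sense in that case. The hypothesis $p>1/\lambda$ means exactly $t\in(0,p)$. For every $k\in\zz$ and every $Q\in\cq_k$, the value of $[E_k(f)]^\lambda$ on $Q$ is
$$a_Q:=\Big[\fint_{Q^*}u^t\,d\mu\Big]^{1/t}.$$
Consequently,
$$\sup_{k\in\zz}\gamma_k(x)[E_k(f)(x)]^\lambda=\sup_{Q\in\cq:\ x\in Q}\lf\|W^{\f1p}(x)A_Q^{-1}\r\|\,a_Q.$$
By \eqref{eq-Q*<B} and $Q^*\subset B(x,\sigma\delta^{k-2})$, we have $a_Q\ls [\cm(u^t)(x)]^{1/t}=:\cm_t u(x)$. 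Since $t<p$, the operator $\cm_t$ is bounded on $L^p(\cx)$. The difficulty is that the weight factor $\sup_{Q\ni x}\|W^{1/p}(x)A_Q^{-1}\|$ is not bounded, so it cannot simply be pulled out. I will therefore run a Nazarov-type stopping-time argument, and the quantitative input will be \eqref{lem-eq-W,AP',p<1} and \eqref{lem-eq-W,AP',p>1} of Lemma \ref{lem-AQW}.

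\textbf{Step 1: stopping cubes.} By monotone convergence, it suffices to restrict to cubes with $\ell(Q)\le\delta^{K}$ and to let $K\to-\fz$ at the end. Start from the maximal cubes. Inside each selected cube $R$, select the maximal dyadic subcubes $P\subsetneq R$ with $a_P>2a_R$. This produces a family $\mathcal S$ of stopping cubes. The sets $Q^*$ are not nested, so first I pass from $a_Q$ to an average over a dyadic ancestor $\widehat Q$ of $Q$ lying a fixed number of generations up, with $Q^*\subset B(\widehat Q)$ and $\mu(\widehat Q)\sim\mu(Q^*)$. This costs only a constant, by doubling, and by Lemma \ref{lem-AQ=AQ'} combined with \eqref{eq-sharp-est-p}, which give $\|A_QA_{\widehat Q}^{-1}\|\sim1$. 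If $\widehat Q$ is not dyadic-contained in a single cube, I would use a finite family of adjacent dyadic systems as an alternative.

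\textbf{Step 2: sparseness and the pointwise bound.} The usual weak-type $(1,1)$ estimate for $\cm$ gives $\mu(\bigcup\{P\in\mathcal S:\ P\subsetneq R \text{ maximal}\})\le\f12\mu(R)$. Hence the sets $E_R:=R\setminus\bigcup\{\text{children of } R \text{ in } \mathcal S\}$ are pairwise disjoint and satisfy $\mu(E_R)\ge\f12\mu(R)$. For $x\in Q$, let $R\in\mathcal S$ be the minimal stopping cube containing $Q$. Then $a_Q\le 2a_R$, and therefore
$$\sup_{Q\ni x}\lf\|W^{\f1p}(x)A_Q^{-1}\r\|a_Q\le 2\sum_{R\in\mathcal S} a_R\,G_R(x)\mathbf 1_R(x),\qquad G_R(x):=\sup_{P\in\cq:\ x\in P\subset R}\lf\|W^{\f1p}(x)A_P^{-1}\r\|.$$

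\textbf{Step 3: the $L^p$ estimate (main obstacle).} I must show $\|\sum_{R\in\mathcal S}a_RG_R\mathbf 1_R\|_{L^p(\cx)}^p\ls\sum_R a_R^p\mu(R)$. The overlap of the cubes $R$ is the genuine difficulty. Lemma \ref{lem-AQW} supplies $r>\max\{p,1\}$ with $\fint_R G_R^r\,d\mu\le C_{(r)}$, that is, a reverse-Hölder gain, uniformly in $R$. I would use it as follows.

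When $p\le1$, apply \eqref{triangle} and then Hölder on each cube; only $r\ge p$ is needed here.

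When $p>1$, dualize against $h\in L^{p'}(\cx)$ and apply Hölder on each cube with exponents $r$ and $r'$. This gives
$$\int_R a_R G_R|h|\,d\mu\ls a_R\mu(R)\Big[\fint_R|h|^{r'}d\mu\Big]^{1/r'}\ls a_R\,\mu(E_R)\inf_{E_R}\cm_{r'}h.$$
Summing over $R$ and applying Hölder with the disjointness of the sets $E_R$ yields
$$\lf(\sum_R a_R^p\mu(E_R)\r)^{1/p}\|\cm_{r'}h\|_{L^{p'}}.$$
Since $r>p$ gives $r'<p'$, the factor $\|\cm_{r'}h\|_{L^{p'}}$ is at most $\ls\|h\|_{L^{p'}}$; this step is exactly where the gain $\epsilon$ in Lemma \ref{lem-AQW} is used.

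\textbf{Conclusion.} Finally,
$$\sum_R a_R^p\mu(E_R)\ls\sum_R\int_{E_R}(\cm_t u)^p\,d\mu\le\|\cm_tu\|_{L^p}^p\ls\|u\|_{L^p}^p=\|f^\lambda\|_{L^p(\cx)}^p.$$
The first inequality holds because $a_R\ls\cm_tu$ on $R$, and the last one uses $t<p$. Letting $K\to-\fz$ completes the proof.
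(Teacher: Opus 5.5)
Your argument is correct in substance but takes a different route from the paper, and it needs one small repair. The paper builds no stopping family. It notes that \eqref{lem-eq-W,AP',p<1} and \eqref{lem-eq-W,AP',p>1} with $r=p$ say exactly that $\{\gamma_k^p\}_{k\in\zz}$ is a Carleson sequence, namely $\sup_{Q\in\cq}\fint_Q\sup_{j\ge j_Q}\gamma_j^p\,d\mu\le C_p$. It then quotes the Carleson-type lemma \cite[Lemma 3.6]{FR21}: for $E_k$ constant on the cubes of $\cq_k$, $\int\sup_k\gamma_k^p[E_k(f)]^{\lambda p}\,d\mu\ls\int\sup_k[E_k(f)]^{\lambda p}\,d\mu$. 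It finishes with $E_k(f)\ls\cm f$ and the $L^{\lambda p}$-boundedness of $\cm$. That lemma rests on a layer-cake decomposition of $\sup_kE_k$ into maximal dyadic cubes, so it does not care how the constants on cubes arise. Hence the non-nestedness of the $Q^*$ never matters, and only the exponent $r=p$ is used. Your stopping-time/sparse argument is a self-contained replacement for that black box.

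The repair concerns sparseness. The weak $(1,1)$ bound gives $\mu(E_R)\ge\frac12\mu(R)$ only if the threshold is a large constant $\Lambda$, depending on the doubling and weak-type constants, rather than $2$. It also needs averaging sets that are nested along the dyadic tree. Your passage to dyadic ancestors $\widehat Q$ is shaky, because $Q^*\not\subset\widehat Q$ in general, so $a_Q$ is not controlled by an average over $\widehat Q$. Instead, stop on averages over $\widetilde Q:=\bigcup_{y\in Q}B(y,c\,\ell(Q))$ with $c$ large. This works because $\widetilde Q\supset Q^*$, $\mu(\widetilde Q)\sim\mu(Q)\sim\mu(Q^*)$, and $\widetilde P\subset\widetilde R$ whenever $P\subset R$.

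Your Step 3 is also harder than it needs to be. The majorant is a supremum: $\sup_{Q\ni x}\|W^{1/p}(x)A_Q^{-1}\|a_Q\le\Lambda\sup_{R\in\mathcal S,\,R\ni x}a_RG_R(x)$. So for every $p\in(0,\fz)$ its $p$-th power is at most $\Lambda^p\sum_{R}a_R^pG_R^p\mathbf 1_R$. The $r=p$ case of Lemma \ref{lem-AQW} then gives $\ls\sum_Ra_R^p\mu(R)$ directly. The overlap obstacle, the duality, and the reverse-H\"older gain $r>p$ are therefore unnecessary, although your duality argument is valid as written. With these adjustments your proof uses exactly the same input as the paper and simply unpacks the Carleson lemma by hand.
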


\begin{proof}
Notice that, for any $f\in L^1_\loc(\cx)$ and $k\in\zz$,
$E_k(f)$ is constant and nonnegative on each dyadic cube $Q\in\cq_k$.
For simplicity and without loss of generality, we may define
$E_k(f):=\sum_{Q\in\cq_k}c_Q\mathbf{1}_Q$ with $c_Q\ge 0$ for any $Q\in\cq_k$.

For any sequence $\alpha:=\{\alpha_j\}_{j\in\zz}$ of nonnegative measurable functions,
define
\begin{align*}
\lf\|\alpha\r\|_{\mathcal{C}}:=
\sup_{Q\in\cq}\fint_Q\sup_{j\ge j_Q}\alpha_j(x)\,d\mu(x),
\end{align*}
where $j_Q:=\log_\delta\ell(Q)$.
Then, by both \eqref{lem-eq-W,AP',p<1} for any $p\in(0,1]$ and \eqref{lem-eq-W,AP',p>1} for any $p\in(1,\fz)$
with $r:=p$, we have
\begin{align*}
\lf\|\gamma^p\r\|_{\mathcal{C}}=
\sup_{P\in\cq}\fint_P\sup_{j\ge j_P}\sum_{Q\in\cq_j}\lf\|W^{\f1p}(x)A_Q^{-1}\r\|^p\mathbf{1}_Q(x)\,d\mu(x)
\le C_p,
\end{align*}
where $C_p$ is as Lemma \ref{lem-AQW} with $r$ replaced by $p$.
Thus, applying this and \cite[Lemma 3.6]{FR21} (which can be naturally generalized to doubling measures),
we conclude that, for any $k\in\zz$,
\begin{align*}
\lf\|\sup_{k\in\zz}\gamma_k \lf[E_k(f)\r]^\lambda\r\|_{L^p(\cx)}
&=\lf\|\sup_{k\in\zz}\gamma_k^p \lf[E_k(f)\r]^{\lambda p}\r\|_{L^1(\cx)}^{\frac 1 p}\\
&\le\lf\|\gamma^p\r\|_{\mathcal{C}}^{\frac 1 p}
\lf\|\sup_{k\in\zz}\lf[E_k(f)\r]^{\lambda p}\r\|_{L^1(\cx)}^{\frac 1 p}
\ls\lf\|\sup_{k\in\zz}\lf[E_k(f)\r]^{\lambda}\r\|_{L^p(\cx)}.
\end{align*}
By \eqref{eq-def-Ek}, we also find that, for any $k\in\zz$ and $x\in\cx$,
$$
E_k(f)(x)=\fint_{Q^*}f(y)\,d\mu(y)\le\cm(f)(x),
$$
where $Q$ is the dyadic cube in $\cq_k$ that contains $x$ and $Q^*$ is defined in \eqref{eq-def-Q*}.
Thus, from this, the $L^{\lambda p}(\cx)$-boundedness
of the Hardy--Littlewood maximal operator (see, for instance, \cite{S05,GLY09}), we deduce that,
for any $k\in\zz$,
\begin{align*}
\lf\|\sup_{k\in\zz}\gamma_k \lf[E_k(f)\r]^\lambda\r\|_{L^p(\cx)}
\ls\lf\|\cm(f)^\lambda\r\|_{L^{p}(\cx)}
\sim\lf\|\cm(f)\r\|_{L^{\lambda p}(\cx)}^\lambda\ls\lf\|f\r\|_{L^{\lambda p}(\cx)}^\lambda
\sim\lf\|f^\lambda\r\|_{L^p(\cx)},
\end{align*}
which completes the proof of Lemma \ref{lem-rkEkf}.
\end{proof}

Next, we arrive at the  matrix-weighted Poincar\'{e}-type inequalities.

\begin{theorem}[Poincar\'{e}-Type Inequalities]\label{lem-Poincare}
Let $s\in(0,\fz)$, $p\in(0,\fz)$, and $W\in A_p(\cx,\cc^m)$.
\begin{enumerate}
\item[\rm(i)] If $p\in(1,\fz)$, then
there exists a positive constant $C$ and  $\varepsilon\in(0,p-1)$ such that,
for any $\lambda\in[p-\varepsilon,p)$, $x\in\cx$, $k\in\zz$,
$\vec{f}\in [L^0(B(x,\delta^{k-2}))]^m$, $\{A_Q\}_{Q\in\cq}\in\mathcal{RS}_\cq^{W,p}$,
and $\{\vec{g}_j\}_{j\in\zz}\in\dd^{s,b}_{\{A_Q\}}(\vec{f})$,
\begin{align*}
&\inf_{\vec{\eta}\in\cc^m}\fint_{B(x,\delta^{k})}\lf\|W^{\f1p}(x)
\vec{f}(y)-\vec{\eta}\,\r\|\,d\mu(y)\\
&\quad\le C  L_{s,b}(\delta^{k})\gamma_k(x)\sum_{j=k-3}^{k-1}
\lf[E_k\lf(\lf\|W^{\frac 1 p}\vec{g}_j\r\|^{\lambda}\r)(x)\r]^{\frac{1}{\lambda}}.
\end{align*}

\item[\rm(ii)] If $p\in(0,1]$, $\lambda\in(0,1]$, and $A_0\delta^{\lambda/D}<1$, then, for any given $\varepsilon,\varepsilon'\in(0,\min\{1,s\})$ with $\varepsilon<\varepsilon'$,
there exists a positive constant $C$
such that, for any $x\in\cx$, $k\in\zz$,
 $\vec{f}\in[L^0(B(x,\delta^{k-1}))]^m$, $\{A_Q\}_{Q\in\cq}\in\mathcal{RS}_\cq^{W,p}$,
and $\{g_j\}_{j\in\zz}\in\dd^{s,b}_{\{A_Q\}}(\vec{f})$,
\begin{align*}
&\inf_{\vec{\eta}\in\cc^m}\lf[\fint_{B(x,\delta^{k})}\lf\|W^{\f1p}(x)\vec{f}(y)-\vec{\eta}\,\r\|
^{\lambda^*}\,d\mu(y)\r]^{\f{1}{\lambda^*}}\\
&\quad\le C\delta^{k\vp'}\gamma_k(x)\sum_{j=k-2}^{\fz}\delta^{-j\vp'}L_{s,b}(\delta^{j})
\lf[E_k\lf(\lf\|W^{\frac 1 p}\vec{g}_j\r\|^{\lambda}\r)\r]^{\frac{1}{\lambda}},
\end{align*}
where $\lambda^*:=D\lambda/(D-\varepsilon\lambda)$.
\end{enumerate}
\end{theorem}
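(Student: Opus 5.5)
The plan is to reduce the vector-valued, matrix-weighted statement to scalar Poincaré inequalities for Haj{\l}asz gradient sequences, via Lemma \ref{lem-h,hj in D}, and then to put the matrix weight back in through the factor $\gamma_k(x)$.

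Fix $x$ and $k$, and let $Q_0\in\cq_k$ be the cube containing $x$. For each $y$ we have $\|W^{1/p}(x)\vec f(y)-\vec\eta\|\le \|W^{1/p}(x)A_{Q_0}^{-1}\|\,\|A_{Q_0}\vec f(y)-A_{Q_0}W^{-1/p}(x)\vec\eta\|$. Since $\vec\eta$ ranges over all of $\cc^m$, the infimum on the left-hand side is at most
\[
\gamma_k(x)\inf_{\vec\eta}\fint_{B(x,\delta^k)} u_\eta\,d\mu ,
\]
where $u_\eta$ is as in \eqref{eq-def-u} with $k_0=k$. By Lemma \ref{lem-h,hj in D} and Remark \ref{remark-independence}, a fixed constant multiple of $\{h_j\}$ from \eqref{eq-def-gj} lies in $\dd^{s,b}(u_\eta)$, uniformly in $\vec\eta$. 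The problem is therefore reduced to scalar Poincaré inequalities for $u_\eta$ with gradient $h_j$.

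\textbf{Case (i), $p>1$.} I use only the levels $j=k-3,\dots,k-1$. For $y,z\in B(x,\delta^k)$ with $d(y,z)<\delta^{k-1}$ (true since $\delta$ is small), I pick the level $j$ with $d(y,z)\in[\delta^{j+1},\delta^j)$. Points that are too close are handled by passing through an auxiliary point $w$ in an annulus $\{w\in B(x,\delta^{k-2}): d(y,w)\in[\delta^{j+1},\delta^j)\}$ for $j\in\{k-3,k-2,k-1\}$. Averaging $|u(y)-u(z)|\le |u(y)-u(w)|+|u(w)-u(z)|$ over $w$ in such annuli (they have measure comparable to $\mu(B(x,\delta^k))$ by doubling, after adjusting the annulus choice), I get
\[
\fint_B|u-u_B|\ls L_{s,b}(\delta^k)\sum_{j=k-3}^{k-1}\fint_{B(x,\delta^{k-2})}h_j .
\]
Here $L_{s,b}(d(y,z))\sim L_{s,b}(\delta^k)$ for these levels, and the factor $\delta^{(k-j)\wz d_{(p)}/p}$ is bounded. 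Next I write $h_j=\sum_{Q\in\cq_j}\|A_Q\vec g_j\|\mathbf 1_Q\le \|A_QW^{-1/p}\|\,\|W^{1/p}\vec g_j\|$ and apply H\"older with exponents $\lambda$ and $\lambda'$. This needs $\fint\|A_QW^{-1/p}\|^{\lambda'}\ls 1$, which holds by \eqref{lem-eq-AP,W,p>1} once $\lambda'<p'+\epsilon$, i.e.\ $\lambda\ge p-\varepsilon$. This is exactly where $\varepsilon$ comes from. Lemma \ref{lem-AQ=AQ'} replaces $A_Q$ for the cubes $Q\in\cq_j$ meeting $B(x,\delta^{k-2})$ by comparable ones, and \eqref{eq-B subset Q*} together with \eqref{eq-Q*<B} converts the average over $B(x,\delta^{k-2})$ into an average over $Q_0^*$, that is, into $E_k(\cdot)(x)$.

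\textbf{Case (ii), $p\le1$.} This is the main obstacle. I would follow the telescoping argument of \cite{KYZ11,AWYY23}. For a point $y$, consider the chain of balls $B(y,\delta^j)$, $j\ge k$, and use the Lebesgue differentiation theorem. The chain sum $\sum_j L_{s,b}(\delta^j)\fint_{B(y,\delta^{j})}h_j$ is controlled with the weight $\delta^{(j-k)\vp'}$. To do this, I split $h_j$ into its $\lambda$-averages and run the fractional-maximal / weak-type Sobolev argument: this gives the $L^{\lambda^*}$ gain with $\lambda^*=D\lambda/(D-\varepsilon\lambda)$, where $\varepsilon<\varepsilon'$ leaves a geometric margin for summing over $j$. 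The hypothesis $A_0\delta^{\lambda/D}<1$ should be what keeps the chain balls nested.

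The weight is handled here by \eqref{lem-eq-AP,W,p<1}: the bound $\|A_QW^{-1/p}\|\ls1$ holds pointwise, so no H\"older step is needed, and for $p\le1$ we have $\wz d_{(p)}=0$. Lemma \ref{lem-sharp est} gives $\|A_{Q_0}A_Q^{-1}\|\ls1$ for $Q\subset Q_0^*$ at finer levels. The averages over small balls are then dominated by averages over $Q_0^*$, at the cost of a factor $[\mu(Q_0^*)/\mu(\text{small ball})]^{1/\lambda}$. This factor must be absorbed by the $\delta^{(j-k)\vp'}$ weights and the weak-type $L^{\lambda^*}$ estimate, and carrying out that absorption carefully is the delicate point.
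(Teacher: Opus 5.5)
Your part (i) follows the paper's argument. You reduce to $A_{Q_0}$ via $\gamma_k(x)$, prove a Poincar\'e inequality using only the levels $k-3,\dots,k-1$ by averaging against an annulus (Lemma \ref{lem-Poin-p>1} uses $B(x,\delta^{k-2})\setminus B(x,A_0\delta^{k-1})$), apply H\"older with $\lambda'$ slightly above $p'$ controlled by \eqref{lem-eq-AP,W,p>1} (which is where $\varepsilon$ comes from), and finish with $B(x,\delta^{k-2})\subset Q_0^*$ and \eqref{eq-Q*<B}.

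Two small remarks on (i):
\begin{enumerate}
\item[(a)] The paper applies \eqref{eq-def-gradient} to $A_{Q_0}\vec f$ directly. Your detour through Lemma \ref{lem-h,hj in D} needs a harmless extension, since \eqref{eq-def-gj} sets $h_{k-3}=0$ and the lemma only lives on $B(x,\delta^{k-1})$, whereas your annulus points lie in $B(x,\delta^{k-2})$ at level $k-3$.
\item[(b)] The lower bound on the measure of the annuli does not follow from doubling. If $\mathrm{diam}\,\mathcal{X}<\delta^k$ the annuli are empty, and then the inequality in (i) fails outright: the levels $j\le k-1$ impose no constraint, so one may take $\vec g_j=\vec 0$ there. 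The paper's Lemma \ref{lem-Poin-p>1} uses this reverse-doubling-type input tacitly as well, so it is a caveat on the statement rather than on your route.
\end{enumerate}

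Part (ii) has a genuine gap, located exactly at the ``delicate point'' you leave open. Comparing, level by level, $\lambda$-averages over $B(y,\delta^j)$ with averages over $Q_0^*$ costs $[\mu(Q_0^*)/\mu(B(y,\delta^j))]^{1/\lambda}$. This can be as large as $\delta^{-(j-k)D/\lambda}$, already for Lebesgue measure on $\mathbb{R}^n$. Relative to $L_{s,b}(\delta^j)$, the right-hand side only tolerates the factor $\delta^{-(j-k)\varepsilon'}$, so the cost cannot be absorbed once $D>\lambda\varepsilon'$. Moreover, a chain whose $j$-th link only sees $\vec g_j$ would need the annulus trick of (i) at every scale, which is exactly the reverse doubling that (ii) avoids.

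The missing idea is to bundle the whole tail into one scalar gradient before doing any Sobolev estimate:
\[
h:=\Big\{\sum_{j\ge k-2}\delta^{-j\varepsilon\lambda}\big[L_{s,b}(\delta^j)\big]^{\lambda}\sum_{P\in\mathcal{Q}_j}\big\|A_P\vec g_j\big\|^{\lambda}\mathbf 1_P\Big\}^{1/\lambda}.
\]
For $p\le1$ one has $\|A_{Q_0}A_P^{-1}\|\lesssim1$ for these $P$. Also $L_{s,b}(\delta^j)=\delta^{j\varepsilon}\cdot\delta^{-j\varepsilon}L_{s,b}(\delta^j)$, with $\delta^{j\varepsilon}\le\delta^{-\varepsilon}d(y,z)^{\varepsilon}$ when $d(y,z)\in[\delta^{j+1},\delta^j)$. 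Hence a constant multiple of $h$ is an $\varepsilon$-Haj{\l}asz gradient of $u_\eta$ on $B(x,\delta^{k-1})$, independent of $\vec\eta$. Here the smoothness drops from $s$ to $\varepsilon$, which is why $\lambda^*$ involves $\varepsilon$.

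The paper then proceeds as follows (Lemma \ref{lem-Poin-dd,s,b0}):
\begin{enumerate}
\item[(1)] Apply the single-gradient Sobolev--Poincar\'e inequality \eqref{eq-poin-cla}. It is proved by Haj{\l}asz's level-set argument with $G_j=\{h\le\delta^{-j}\}$, as in \cite[Lemma 3.11]{AWYY23}, and gives the bound $\lesssim\delta^{k\varepsilon}(\fint_{B(x,\delta^{k-1})}h^\lambda)^{1/\lambda}$. This integrated argument, not a pointwise comparison, is what produces the gain from $\lambda$ to $\lambda^*$. The hypothesis $A_0\delta^{\lambda/D}<1$ is inherited from it, where it controls the quasi-metric chaining between level sets; it is not about nesting of balls.
\item[(2)] Set $b_j:=\delta^{-j\varepsilon'}L_{s,b}(\delta^j)(\fint_{B(x,\delta^{k-1})}\sum_{P\in\mathcal{Q}_j}\|A_P\vec g_j\|^\lambda\mathbf 1_P)^{1/\lambda}$. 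The estimate $\sum_{j\ge k-2}\delta^{j(\varepsilon'-\varepsilon)\lambda}b_j^\lambda\lesssim\delta^{k(\varepsilon'-\varepsilon)\lambda}(\sum_j b_j)^\lambda$ yields the form $\delta^{k\varepsilon'}\sum_j\delta^{-j\varepsilon'}\cdots$.
\item[(3)] Every average is over the fixed ball $B(x,\delta^{k-1})\subset Q_0^*$, so the passage to $E_k$ via \eqref{lem-eq-AP,W,p<1} and \eqref{eq-Q*<B} costs only a constant. There is nothing left to absorb.
\end{enumerate}
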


To prove Theorem \ref{lem-Poincare}, we first need
 the following analogue in averaging version
with respect to a given sequence of reducing operators.

\begin{lemma}\label{lem-Poin-p>1}
Let $s\in(0,\fz)$, $p\in(0,\fz)$, $W\in A_p(\cx,\cc^m)$, and
$\{A_Q\}_{Q\in\cq}\in\mathcal{RS}_\cq^{W,p}$. Then
there exists a positive constant $C$ such that,
for any $x\in\cx$, $k\in\zz$, $Q\in\cq_k$ containing $x$,
$\vec{f}\in [L^0(B(x,\delta^{k-2}))]^m$,
and $\{\vec{g}_j\}_{j\in\zz}\in\dd^{s}_{\{A_Q\}}(\vec{f})$,
\begin{align}\label{eq-Poin-p>1}
\inf_{\vec{\eta}\in\cc^m}\fint_{B(x,\delta^{k})}\lf\|A_{Q}
\vec{f}(y)-\vec{\eta}\,\r\|\,d\mu(y)
\le CL_{s,b}(\delta^{k})\sum_{j=k-3}^{k-1} \fint_{B(x,\delta^{k-2})}\lf\|A_Q\vec{g}_j(y)\r\|\,d\mu(y).
\end{align}
\end{lemma}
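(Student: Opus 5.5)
We will prove \eqref{eq-Poin-p>1} with $\vec{\eta}$ replaced by the fixed choice $\vec{\eta}:=A_Q\vec f(z)$ averaged over $z$; that is, we bound the infimum by the double average
$$
\fint_{B(x,\delta^{k})}\fint_{B(x,\delta^{k})}\lf\|A_Q\vec f(y)-A_Q\vec f(z)\r\|\,d\mu(z)\,d\mu(y).
$$
Indeed, by the triangle inequality in $\cc^m$, $\|A_Q\vec f(y)-\vec\eta\|$ with $\vec\eta:=\fint_{B(x,\delta^k)}A_Q\vec f\,d\mu$ is at most $\fint_{B(x,\delta^k)}\|A_Q\vec f(y)-A_Q\vec f(z)\|\,d\mu(z)$, provided $A_Q\vec f$ is integrable on the ball. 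Otherwise we use any $\vec\eta=A_Q\vec f(z_0)$ with $z_0$ chosen so that the inner integral is minimal, or argue via a limiting/truncation argument. Write $B:=B(x,\delta^k)$.

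The key step is a pointwise estimate. For $y,z\in B$ we have $d(y,z)<2A_0\delta^k\le\delta^{k-1}$ by the choice of $\delta<(2A_0)^{-1}$. So, for a.e. $y\ne z$, there is a unique $j\ge k-1$ with $d(y,z)\in[\delta^{j+1},\delta^j)$. This only gives $j\ge k-1$, not a bounded range, so the summation over $j$ needs care.

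To avoid an infinite sum, I would use a chaining trick adapted to fixed scales, as in \cite{KYZ11}. We split according to the annuli $d(y,z)\in[\delta^{j+1},\delta^j)$. Only the three scales $j=k-3,\ldots,k-1$ appear on the right-hand side, so for pairs $(y,z)$ at smaller distance we pass through an intermediate point $w$. Choose $w\in B(x,\delta^{k-2})$ with $d(y,w)\in[\delta^{k-2},\delta^{k-3})$ and $d(z,w)$ in a comparable annulus at one of the levels $k-3,k-2,k-1$. Averaging over such $w$ in an annular region $R_{y,z}\subset B(x,\delta^{k-2})$ gives
$$
\|A_Q\vec f(y)-A_Q\vec f(z)\|\le \fint_{R_{y,z}}\big[\|A_Q\vec f(y)-A_Q\vec f(w)\|+\|A_Q\vec f(w)-A_Q\vec f(z)\|\big]\,d\mu(w).
$$
Each term is then handled by \eqref{eq-def-gradient} at a level $j\in\{k-3,k-2,k-1\}$. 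Before applying it, we transfer $A_Q$ to the cube $Q'\in\cq_j$ containing $y$ or $w$: since $Q'$ meets $B(x,\delta^{k-3})$, Lemma \ref{lem-AQ=AQ'} together with Remark \ref{remark-dp} gives $\|A_QA_{Q'}^{-1}\|\sim1$, and similarly $\|A_{Q'}A_Q^{-1}\|\sim1$. This yields
$$
\|A_Q\vec f(y)-A_Q\vec f(w)\|\ls L_{s,b}(\delta^{j})\big[\|A_Q\vec g_j(y)\|+\|A_Q\vec g_j(w)\|\big].
$$
Moreover, $L_{s,b}(\delta^j)\sim L_{s,b}(\delta^k)$ for $|j-k|\le3$ by the comparability estimate recalled before Definition \ref{def-gradient}.

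Integrating, we get averages of $\|A_Q\vec g_j\|$ over $B$ and over $R_{y,z}$. The latter are bounded by $\fint_{B(x,\delta^{k-2})}\|A_Q\vec g_j\|\,d\mu$, provided $\mu(R_{y,z})\gtrsim\mu(B(x,\delta^{k-2}))$. The expected main obstacle is exactly this lower bound, since without reverse doubling an annulus $\{w:\ \delta^{k-2}\le d(y,w)<\delta^{k-3}\}$ may be empty or tiny. The point of allowing three levels $k-3,\ldots,k-1$ is to circumvent this.

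I would first try the following fix. Instead of an annulus, take the whole region $\{w\in B(x,\delta^{k-2}):\ d(y,w)\ge\delta^{k-1}\}$ together with its counterpart for $z$. If this set has small measure, then $B(x,\delta^{k-2})$ is essentially contained in $B(y,\delta^{k-1})$. In that case, use $w$ ranging over $B(x,\delta^{k-2})\setminus B(y,\delta^{k})$, whose distance from $y$ lies in $[\delta^k,\delta^{k-3})$ and hence within the three admissible levels. A short case analysis using doubling and $\delta<A_0^{-D}$ should show that one of these choices always has measure $\gtrsim\mu(B(x,\delta^{k-2}))$. This is where the constraint on $\delta$ in Remark \ref{remark-dyadic} would be used.
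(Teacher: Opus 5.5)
Your preliminary steps (bounding the infimum by a double average, and moving $A_Q$ to nearby cubes of levels $k-3,\dots,k-1$ via Lemma~\ref{lem-AQ=AQ'} and \eqref{eq-sharp-est-p}) are fine. The decisive last step fails, however: no case analysis based on doubling and $\delta<A_0^{-D}$ can produce a set $R_{y,z}\subset B(x,\delta^{k-2})$ of points $w$ with $d(y,w),d(z,w)\in[\delta^{k},\delta^{k-3})$ and $\mu(R_{y,z})\gtrsim\mu(B(x,\delta^{k-2}))$. Doubling only gives upper bounds $\mu(\lambda B)\le C_\mu\lambda^D\mu(B)$. A lower bound for the measure of an annulus is a reverse-doubling property, and the condition $\delta<A_0^{-D}$ is vacuous on metric spaces ($A_0=1$).

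In fact the inequality itself fails without such a property. Let $\cx:=\bigcup_{n\in\zz}[nL,nL+1]\subset\rr$ with the Euclidean distance and Lebesgue measure, where $L>\delta^{-4}+1$. This is a doubling metric measure space, with doubling constant bounded independently of $L$, and $\mu(\cx)=\infty$. Take $m=1$, $W\equiv1$ (so $A_Q=1$), $x:=1/2$, $k:=-1$, and $f(y):=y$ on $[0,1]$, $f:=0$ elsewhere. No two points of $\cx$ are at distance in $[\delta^{-1},\delta^{-4})\subset(1,L-1)$. Hence both of your candidate regions are empty, the gradient inequality at the levels $j=-4,-3,-2$ is vacuous, and $g_{-4}=g_{-3}=g_{-2}:=0$ is admissible (with suitable constants $g_j$ at all other levels). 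The right-hand side of \eqref{eq-Poin-p>1} then vanishes, whereas $B(x,\delta^{-1})=[0,1]$ and the left-hand side equals $\inf_{\eta}\int_0^1|y-\eta|\,dy=1/4$.

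The paper's proof does not chain. It takes $\vec\eta:=A_Q\vec f_{R}$ for the fixed annulus $R:=B(x,\delta^{k-2})\setminus B(x,A_0\delta^{k-1})$. Every $(y,z)\in B(x,\delta^{k})\times R$ then satisfies $d(y,z)\in[\delta^{k},\delta^{k-3})$, which is where the three levels come from. It applies \eqref{eq-def-gradient} and Lemma~\ref{lem-AQ=AQ'}, and finally replaces $\fint_R$ by $\fint_{B(x,\delta^{k-2})}$ ``by the doubling property''. Your $R_{y,z}$ could simply be taken to be this $R$, and your argument would then coincide with the paper's.

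That final replacement, however, requires exactly $\mu(R)\gtrsim\mu(B(x,\delta^{k-2}))$, the bound you correctly flagged as the obstacle. It does not follow from doubling, and in the example above $R=\emptyset$. So the gap you ran into is a gap in the statement itself; it propagates to Theorem~\ref{lem-Poincare}(i), whose right-hand side also vanishes in this example. One needs an extra hypothesis such as $\mu(B(x,\delta^{k-2})\setminus B(x,A_0\delta^{k-1}))\ge c\,\mu(B(x,\delta^{k-2}))$ uniformly in $x\in\cx$ and $k\in\zz$. This is a reverse-doubling condition, which holds, e.g., on connected unbounded metric spaces. Under that hypothesis the paper's single choice of $\vec\eta$ finishes the proof in a few lines, and your chaining is unnecessary.
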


\begin{proof}
For any $y\in B(x,\delta^{k})$ and $z\in B(x,\delta^{k-2})\setminus B(x,A_0\delta^{k-1})$,
we have
$$
d(y,z)\le A_0\left[d(y,x)+d(x,z)\right]<2A_0\delta^{k-2}.
$$
As $\delta<(2A_0)^{-1}$, we further obtain
$d(y,z)\in[\delta^{k},\delta^{k-3})$.
From this, Lemma \ref{lem-AQ=AQ'}, the definition of $\dd^{s,b}_{\{A_Q\}}(\vec{f})$,
and the doubling property of $\mu$,
we infer that, for any $x\in\cx$ and $k\in\zz$,
\begin{align*}
&\inf_{\vec{\eta}\in\cc^m}\fint_{B(x,\delta^{k})}\lf\|A_{Q}
\vec{f}(y)-\vec{\eta}\,\r\|\,d\mu(y)\\
&\quad\le\fint_{B(x,\delta^{k})}\lf\|A_{Q}\lf[
\vec{f}(y)-\vec{f}_{B(x,\delta^{k-2})\setminus B(x,A_0\delta^{k-1})}\r]\,\r\|\,d\mu(y)\\
&\quad\le\fint_{B(x,\delta^{k})}\fint_{B(x,\delta^{k-2})\setminus B(x,A_0\delta^{k-1})}
\lf\|A_{Q}\vec{f}(y)-A_{Q}\vec{f}(z)\,\r\|\,d\mu(z)d\mu(y)\\
&\quad\ls L_{s,b}(\delta^{k})\sum_{j=k-3}^{k-1}\fint_{B(x,\delta^{k})}\fint_{B(x,\delta^{k-2})\setminus B(x,A_0\delta^{k-1})}
\left[\lf\|A_Q\vec{g}_j(y)\r\|+\lf\|A_Q\vec{g}_j(z)\r\|\right]\,d\mu(z)d\mu(y)\\
&\quad\ls L_{s,b}(\delta^{k})\sum_{j=k-3}^{k-1} \fint_{B(x,\delta^{k-2})}\lf\|A_Q\vec{g}_j(y)\r\|\,d\mu(y).
\end{align*}
This finishes the proof of Lemma \ref{lem-Poin-p>1}.
\end{proof}

Now, we prove Theorem  \ref{lem-Poincare}(i).

\begin{proof}[Proof of Theorem \ref{lem-Poincare}(i)]
Let $\gamma_k$ be the same as in \eqref{eq-def-rj}
and $Q_k^x$ denote the dyadic cube in $\cq_k$ that contains $x$.
Observe that, for  any $x\in\cx$, $k\in\zz$,
$\vec{f}\in[L^0(B(x,\delta^{k-1}))]^m$,
and $\{g_j\}_{j\in\zz}\in\dd^{s,b}_{\{A_Q\}}(\vec{f})$,
\begin{align}\label{eq-Poin-W-AQ}
&\inf_{\vec{\eta}\in\cc^m}\fint_{B(x,\delta^{k})}\lf\|W^{\f1p}(x)
\vec{f}(y)-\vec{\eta}\,\r\|\,d\mu(y)\nonumber\\
&\quad\le\lf\|W^{\f1p}(x)A_{Q_k^x}^{-1}\r\|\inf_{\vec{\xi}\in\cc^m}\fint_{B(x,\delta^{k})}
\lf\|A_{Q_k^x}\vec{f}(y)-\vec{\xi}\,\r\|\,d\mu(y)\nonumber\\
&\quad=\gamma_k(x)\inf_{\vec{\eta}\in\cc^m}\fint_{B(x,\delta^{k})}\lf\|A_{Q_k^x}
\vec{f}(y)-\vec{\eta}\,\r\|\,d\mu(y).
\end{align}
We apply \eqref{eq-Poin-p>1} to \eqref{eq-Poin-W-AQ} and obtain
\begin{align}\label{eq-Poin-pf,p>1}
&\inf_{\vec{\eta}\in\cc^m}\fint_{B(x,\delta^{k})}\lf\|W^{\f1p}(x)
\vec{f}(y)-\vec{\eta}\,\r\|\,d\mu(y)\nonumber\\
&\quad\ls L_{s,b}(\delta^{k})\gamma_k(x)\sum_{j=k-3}^{k-1}
\fint_{B(x,\delta^{k-2})}\lf\|A_{Q_k^x}\vec{g}_j(y)\r\|\,d\mu(y).
\end{align}

By the property of the system of dyadic cubes and the doubling property of $\mu$,
we find that, for any $k\in\zz$ and $x\in\cx$,
there exists a uniform natural number $M$ and at most $M$ dyadic cubes in $\cq_k$ that
cover $B(x,\delta^{k-2})$;
for simplicity, we denote by $\mathcal{R}_{k,x}$ the collection of all these dyadic cubes.
(Here, we assume that, for any $R\in\mathcal{R}_{k,x}$, $R\cap B(x,\delta^{k-2})\neq\emptyset$.)
Obviously,  we have, for any $k\in\zz$, $x\in\cx$, and $Q\in\cq_k$ containing $x$,
$$\bigcup_{R\in\mathcal{R}_{k,x}}R \subset Q^*.$$
Let $\epsilon$ be the same as in Lemma \ref{lem-AQW}, $t\in(p',p'+\epsilon)$, and $\lambda\in[t/(t-1),p)$.
From these, the  H\"{o}lder inequality, Lemma \ref{lem-AQ=AQ'}, \eqref{eq-B subset Q*},
\eqref{eq-Q*<B}, \eqref{eq-def-Ek}, and \eqref{lem-eq-AP,W,p>1}, we deduce that, for any $k\in\zz$ and $x\in\cx$,
\begin{align*}
&\fint_{B(x,\delta^{k-2})}\lf\|A_{Q_k^x}\vec{g}_j(y)\r\|\,d\mu(y)\\
&\quad\le\lf\{\fint_{B(x,\delta^{k-2})}\lf\|A_{Q_k^x}W^{-\frac 1 p}(y)\r\|^{t}\,d\mu(y)\r\}^{\frac{1}{t}}
\lf\{\fint_{B(x,\delta^{k-2})}\lf\|W^{\frac 1 p}(y)\vec{g}_j(y)\r\|^{\frac{t}{t-1}}\,d\mu(y)\r\}^{\frac{t-1}{t}}\\
&\quad\ls\lf\{\sum_{R\in\mathcal{R}_{k,x}}\lf[\fint_{R}\lf\|A_{R}W^{-\frac 1 p}(y)\r\|^{t}\,d\mu(y)\r]^{\frac{1}{t}}\r\}
\sum_{Q\in\cq_k}\lf\{\fint_{Q^*}\lf\|W^{\frac 1 p}(y)\vec{g}_j(y)\r\|^{\frac{t}{t-1}}\,d\mu(y)\r\}^{\frac{t-1}{t}}
\mathbf{1}_Q(x)\\
&\quad\ls M\sup_{Q\in\cq}\lf\{\fint_{Q}\lf\|A_{Q}W^{-\frac 1 p}(y)\r\|^{t}\,d\mu(y)\r\}^{\frac{1}{t}}
\lf[E_k\lf(\lf\|W^{\frac 1 p}\vec{g}_j\r\|^{\lambda}\r)(x)\r]^{\frac{1}{\lambda}}\\
&\quad\ls\lf[E_k\lf(\lf\|W^{\frac 1 p}\vec{g}_j\r\|^{\lambda}\r)(x)\r]^{\frac{1}{\lambda}},
\end{align*}
which, together with \eqref{eq-Poin-pf,p>1}, further implies that
\begin{align*}
&\inf_{\vec{\eta}\in\cc^m}\fint_{B(x,\delta^{k})}\lf\|W^{\f1p}(x)
\vec{f}(y)-\vec{\eta}\,\r\|\,d\mu(y)\\
&\quad\ls L_{s,b}(\delta^{k})\gamma_k(x)\sum_{j=k-3}^{k-1}
 \lf[E_k\lf(\lf\|W^{\frac 1 p}\vec{g}_j\r\|^{\lambda}\r)(x)\r]^{\frac{1}{\lambda}}.
\end{align*}
This finishes the proof of Theorem  \ref{lem-Poincare}(i).
\end{proof}

To show Theorem  \ref{lem-Poincare}(ii),
we need a series of auxiliary lemmas as follows.
\begin{lemma}\label{lem-Poin-s0}
Let $s\in(0,\fz)$,  $\lambda\in(0,D/s)$,
$\lambda^*:=(D\lambda)/(D-s\lambda)$,
$p\in(0,1]$, $W\in A_p(\cx,\cc^m)$, and
$\{A_Q\}_{Q\in\cq}\in\mathcal{RS}_\cq^{W,p}$.
If $A_0\delta^{\lambda/D}<1$, then
there exists a positive constant $C$
such that, for any $x_0\in\cx$, $k\in\zz$, $B_k:=B(x_0,\delta^{k})$,
$Q\in \cq_{k}$ containing $x_0$,
$\vec{f}\in[L^0(\delta^{-1}B_k)]^m$,
and $\{\vec{g}_j\}_{j\in\zz}\in\dd^{s,0}_{\{A_Q\}}(\vec{f})$,
\begin{align}\label{eq-lem-Poin-s0}
&\lf\{\inf_{\vec{\eta}\in\cc^m}\fint_{B_0}\lf\|A_{Q}
\vec{f}(y)-\vec{\eta}\,\r\|^{\lambda^*}\,d\mu(y)\r\}^{\frac{1}{\lambda^*}}\nonumber\\
&\quad\le C\delta^{ks}\lf\{\fint_{\delta^{-1}B_k}\sup_{j\ge k-3}\dsum_{P\in\cq_j}
\lf\|A_{P}\vec{g}_j(y)\r\|^{\lambda} \mathbf{1}_P(y)\,d\mu(y)\r\}^{\frac{1}{\lambda}}.
\end{align}
\end{lemma}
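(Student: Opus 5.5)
The plan is to reduce to the scalar case via Lemma \ref{lem-h,hj in D}, and then run the standard telescoping / fractional Sobolev--Poincaré argument for Haj{\l}asz gradient sequences in the spirit of \cite[Lemma 2.1]{KYZ11} and \cite{AWYY23}. (The $B_0$ on the left-hand side of \eqref{eq-lem-Poin-s0} is read as $B_k$.)

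\textbf{Step 1: reduction to a scalar function.} Fix $\vec\eta\in\cc^m$ and set $u_\eta(y):=\|A_Q\vec f(y)-\vec\eta\|$ on $\delta^{-1}B_k$. By Lemma \ref{lem-h,hj in D} with $k_0=k$ and $b=0$, the functions
\[
h_j=\delta^{(k-j)\wz d_{(p)}/p}\sum_{P\in\cq_j}\|A_P\vec g_j\|\mathbf 1_P \quad (j\ge k-2),
\]
with $h_j=0$ for $j<k-2$, form, up to a constant, an $s$-Haj{\l}asz gradient sequence of $u_\eta$. Since $p\in(0,1]$, \eqref{eq-def-dp} gives $\wz d_{(p)}=0$, so the prefactor disappears and
\[
\sup_j h_j^\lambda\ls \sup_{j\ge k-3}\sum_{P\in\cq_j}\|A_P\vec g_j\|^\lambda\mathbf 1_P .
\]
The key observation is that the pointwise inequality for $u_\eta$ is really a bound for the vector difference $\|A_Q\vec f(y)-A_Q\vec f(z)\|$. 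Therefore the scalar Poincaré argument below, which only uses differences $|u(y)-u(z)|$, carries over verbatim to $A_Q\vec f$ with vector averages, and produces an $\vec\eta=(A_Q\vec f)_{B}$ attaining the infimum.

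\textbf{Step 2: scalar fractional Sobolev--Poincaré on $B_k$.} Write $g:=\sup_{j\ge k-2}h_j$. The goal is to show
\[
\Big(\fint_{B_k}|u-u_{B_k}|^{\lambda^*}\Big)^{1/\lambda^*}\ls\delta^{ks}\Big(\fint_{\delta^{-1}B_k}g^\lambda\Big)^{1/\lambda}.
\]
For a.e.\ Lebesgue point $y\in B_k$, telescope along the balls $B(y,\delta^i)$ for $i\ge k$. Each consecutive difference is controlled by the gradient inequality at levels $i-1,i,i+1$, averaged over pairs at distance about $\delta^i$, which gives
\[
|u(y)-u_{B_k}|\ls\sum_{i\ge k-2}\delta^{is}\Big(\fint_{B(y,\delta^{i-1})}g_i^\lambda\Big)^{1/\lambda}
\]
after a Hölder/$\lambda$-power step (use \eqref{triangle} if $\lambda<1$). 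Next split the sum at a level $i_0$. For the tail $i>i_0$, bound each average by $\cm_{\delta^{-1}B_k}(g^\lambda)(y)^{1/\lambda}$ to get $\delta^{i_0 s}\cm(g^\lambda)^{1/\lambda}(y)$. For the head $i\le i_0$, use the doubling lower bound $\mu(B(y,\delta^i))\gtrsim\delta^{(i-k)D}\mu(B_k)$ to get $\delta^{ks}\delta^{-(i_0-k)D/\lambda}\|g\|_{L^\lambda(\delta^{-1}B_k,\mu/\mu(B_k))}$. Optimizing $i_0$ gives the Hedberg-type pointwise bound
\[
|u(y)-u_{B_k}|\ls\delta^{ks}\big[\cm(g^\lambda)(y)\big]^{\frac{1}{\lambda}(1-\frac{s\lambda}{D})}\|g\|_{\lambda}^{s\lambda/D},
\]
where $\|g\|_\lambda$ denotes the normalized $L^\lambda(\delta^{-1}B_k)$ norm. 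Then the weak type $(1,1)$ bound for $\cm$ yields a weak-$L^{\lambda^*}$ estimate. The hypothesis $A_0\delta^{\lambda/D}<1$ is used here: it guarantees that the balls $B(y,\delta^{i-1})$ stay inside $\delta^{-1}B_k$ and that the geometric factor $A_0$ arising from chaining does not beat the decay from $\delta^{D/\lambda}$.

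\textbf{Step 3: from weak to strong type.} Upgrade the weak-$L^{\lambda^*}$ estimate to a strong one by the truncation method of Haj{\l}asz--Koskela applied to $u_\eta$. Truncations of $u$ have gradient sequences $g\mathbf 1_{\{t<u\le 2t\}}$, and summing over dyadic $t$ gives the strong estimate. Finally, \eqref{triangle} handles $\lambda^*<1$. I expect this step, combined with the chaining bookkeeping in the quasi-metric setting (keeping everything inside $\delta^{-1}B_k$ without using reverse doubling), to be the main obstacle. Taking the infimum over $\vec\eta$ then gives \eqref{eq-lem-Poin-s0}.
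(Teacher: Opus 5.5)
Your Step 3 relies on a truncation property that Haj{\l}asz-type (pointwise) gradients do not have. The truncation $v_t:=\min\{\max\{u-t,0\},t\}$ need not admit $g\mathbf{1}_{\{t<u\le 2t\}}$, or the corresponding sequence, as a gradient. Take $\mathbb{R}$ with Lebesgue measure ($D=1$) and $u:=\mathbf{1}_{[0,\fz)}$. Then $|u(x)-u(y)|\le|x-y|^s[g(x)+g(y)]$ with $g(x):=|x|^{-s}$, and $g\in L^\lambda_{\loc}$ precisely because $s\lambda<D$, i.e., inside the range of this lemma. Yet for $t=1/4$ the band $\{t<u\le 2t\}$ is empty while $v_t=\frac14\mathbf{1}_{[0,\fz)}$ is not constant. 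A pointwise gradient pays for a jump across a level band at points outside the band, so it cannot be localized to the band, and the Haj{\l}asz--Koskela weak-to-strong upgrade collapses.

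This is why the paper uses Haj{\l}asz's level-set argument instead (\cite[Theorem 8.7]{Haj03}, as adapted in \cite[Lemma 3.11]{AWYY23}). It first notes, as you do, that $Cg$ with $g:=\sup_{j\ge k-3}\sum_{P\in\cq_j}\|A_P\vec g_j\|\mathbf{1}_P$ is a single $s$-Haj{\l}asz gradient of $u_\eta$ on $\delta^{-1}B_k$. It then sets $G_j:=\{g\le\delta^{-j}\}$, on which $u_\eta$ is $s$-H\"older with constant $\ls\delta^{-j}$, picks $\vec\eta$ so that $\essinf_{G_{k_0}}u_\eta=0$, and sums over level sets. That yields the strong $L^{\lambda^*}$ bound directly and never needs $\vec f$ to be integrable.

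Step 2 is also not valid as written when $\lambda<1$:
\begin{itemize}
\item Since $\vec f$ is only in $L^0$, the averages $u_{B_k}$ and $u_{B(y,\delta^i)}$ need not exist.
\item Even when they exist, averaging the pointwise inequality bounds $|u_{B'}-u_{B''}|$ by an $L^1$-average of $g$. That is not controlled by $(\fint g^\lambda)^{1/\lambda}$ for $\lambda<1$: H\"older goes the wrong way, and \eqref{triangle} does not help.
\item Comparing consecutive balls only through pairs at distance $\sim\delta^i$ needs an annulus of comparable measure, which is unavailable without reverse doubling.
\item For $i=k-2$ your balls $B(y,\delta^{i-1})$ leave $\delta^{-1}B_k$, where neither $\vec f$ nor the gradient inequality is given.
\end{itemize}

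If you prefer to keep a Hedberg-type route, it can be repaired as follows.
\begin{itemize}
\item Use the single gradient $g$ for all pairs in $\delta^{-1}B_k$ and fix $\lambda'\in(0,\lambda)$.
\item Replace averages by constants $c_B$ minimizing $\fint_B|u-c|^{\lambda'}$. Fubini applied to $\int\int|u(y)-u(z)|^{\lambda'}\,d\mu(y)\,d\mu(z)$ gives $u\in L^{\lambda'}(\delta^{-1}B_k)$.
\item Telescope at $\lambda'$-Lebesgue points over $B(y,\delta^i)$, $i\ge k$.
\item Bound the tail by $[\cm(g^{\lambda'})(y)]^{1/\lambda'}$, and the head, via H\"older and doubling, by $\delta^{-(i-k)D/\lambda}(\fint_{\delta^{-1}B_k}g^\lambda)^{1/\lambda}$.
\end{itemize}
Because $\cm$ is bounded on $L^{\lambda/\lambda'}$, the resulting Hedberg inequality integrates to the strong $L^{\lambda^*}$ estimate with no truncation at all.
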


\begin{proof}
As $p\in(0,1]$, we have $\wz{d}_{(p)}=0$.
Let $x_0\in\cx$, $k\in\zz$, $B_k:=B(x_0,\delta^{k})$, and $Q\in \cq_{k}$ containing $x_0$.
For any given $\eta\in\cc^m$,
define the auxiliary function $u_\eta$ as in \eqref{eq-def-u}.
For any given $\{\vec{g}_j\}_{j\in\zz}\in\dd^{s}_{\{A_Q\}}(\vec{f})$,
define the function $g$ by setting, for any $y\in\delta^{-1}B_0$,
\begin{equation*}
g(y):=\sup_{j\ge k-3}\dsum_{P\in\cq_j}
\lf\|A_{P}\vec{g}_j(y)\r\|\mathbf{1}_P(y).
\end{equation*}
From $\wz{d}_{(p)}=0$ and the proof of Lemma \ref{lem-h,hj in D},
we infer that there exists a positive constant $C$ such that
$Cg$  is an $s$-Haj{\l}asz gradient of $u_\eta$.

Define a sequence of auxiliary sets, $\{G_j\}_{j\in\zz}$, by setting, for any $j\in\zz$,
$$
G_j:=\lf\{x\in\delta^{-1}B_0:\ g(x)\le \delta^{-j}\r\}.
$$
We choose a suitable $\eta\in\cc^m$ such that $\essinf_{G_{k_0}} u_\eta=0$
for some special $k_0$,
where the choice of $k_0$ is the same as in the proof of \cite[Lemma 3.11]{AWYY23}
(see also \cite[Theorem 8.7]{Haj03} for an original proof on $\rn$)
and depends only on $C_\mu,\ D,\ A_0,\ \delta,\ s,\ B$, and $g$.
Thus, following an argument similar to that used in the proof of
\cite[Lemma 3.11]{AWYY23} with $p$ therein replaced by $\lambda$, we obtain
\begin{equation}\label{eq-poin-cla}
\lf\{\fint_{B_0}|u_\eta(y)|^{\lambda^*}\,d\mu(y)\r\}^{\frac{1}{\lambda^*}}
\ls \delta^{ks}\lf\{\fint_{\delta^{-1}B_0}\lf[g(y)\r]^{\lambda}\,d\mu(y)\r\}^{\frac{1}{\lambda}},
\end{equation}
which, combined with the definitions of both $u_\eta$ and $g$ and also with
the arbitrariness of $\eta$, further implies \eqref{eq-lem-Poin-s0}.
This finishes the proof of Lemma \ref{lem-Poin-s0}.
\end{proof}

\begin{lemma}\label{lem-Poin-dd,s,b0}
Let $p\in(0,1]$, $W\in A_p(\cx,\cc^m)$, $\{A_Q\}_{Q\in\cq}\in\mathcal{RS}_\cq^{W,p}$,
$s\in(0,\fz)$, $b:=(b_+,b_-)$ be a pair of real numbers, and $\lambda\in(0,1]$.
If $A_0\delta^{\lambda/D}<1$, then, for any given $\varepsilon,\varepsilon'\in(0,\min\{1,s\})$
with $\varepsilon<\varepsilon'$ and $\lambda^*:=(D\lambda)/(D-\varepsilon\lambda)$,
there exists a positive constant $C$
such that, for any $x_0\in\cx$, $k\in\zz$, $B_k:=B(x_0,\delta^{k})$,
$Q\in \cq_{k}$ containing $x_0$,
$\vec{f}\in[L^0(\delta^{-1}B_k)]^m$,
and $\{\vec{g}_j\}_{j\in\zz}\in\dd^{s,b}_{\{A_Q\}}(\vec{f})$,
\begin{align}\label{eq-lem-poin-dd-s,b0}
&\inf_{\vec{\eta}\in\cc^m}\lf\{\fint_{B_k}
\lf\|A_{Q}\vec{f}(y)-\vec{\eta}\,\r\|^{\lambda^*}\,d\mu(y)\r\}^{\frac{1}{\lambda^*}}\nonumber\\
&\quad\le C\delta^{k\vp'}\sum_{j=k-2}^{\fz}\delta^{-j\vp'}L_{s,b}(\delta^{j})
\lf\{\fint_{\delta^{-1}B_k}\sum_{P\in\cq_j}\lf\|A_{P}\vec{g}_j(y)\r\|^{\lambda}
\mathbf{1}_P(y)\,d\mu(y)\r\}^{\frac{1}{\lambda}}.
\end{align}
\end{lemma}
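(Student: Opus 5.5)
The plan is to reduce Lemma \ref{lem-Poin-dd,s,b0} to Lemma \ref{lem-Poin-s0} with smoothness $\varepsilon$ in place of $s$. We do this by building an $\varepsilon$-Haj{\l}asz gradient sequence (with $b=0$) from the given $(s,b)$-gradient sequence. The key observation is that, for $y,z$ with $d(y,z)\in[\delta^{j+1},\delta^j)$, we have $L_{s,b}(d(y,z))\sim L_{s,b}(\delta^j)$ by the comparability property of $L_{s,b}$ recalled before Definition \ref{def-gradient}. Since $\varepsilon<\varepsilon'<s$, the ratio $L_{s,b}(\delta^j)/\delta^{j\varepsilon'}$ decays like $\delta^{j(s-\varepsilon')}$ up to logarithmic factors as $j\to\fz$. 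So we may rewrite
$$L_{s,b}(d(y,z))\lesssim \delta^{j\varepsilon}\,\delta^{-j\varepsilon}L_{s,b}(\delta^j).$$
Then $\{\vec g^{\,\sharp}_j\}_{j\in\zz}$, with $\vec g^{\,\sharp}_j:=\delta^{-j\varepsilon}L_{s,b}(\delta^j)\vec g_j$, is (a constant multiple of) an element of $\dd^{\varepsilon,0}_{\{A_Q\}}(\vec f)$. The constant is uniform because $L_{s,b}(t)\sim L_{s,b}(\delta^j)$ for $t\in[\delta^{j+1},\delta^j)$, and because $t^{\varepsilon}\ge \delta^{(j+1)\varepsilon}$ there.

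Applying Lemma \ref{lem-Poin-s0} with $s$ replaced by $\varepsilon$ (note $\lambda\le1<D/\varepsilon$ since $\varepsilon<1\le D$ may be assumed, or at least $\lambda<D/\varepsilon$) gives
$$\inf_{\vec\eta}\Big\{\fint_{B_k}\|A_Q\vec f-\vec\eta\|^{\lambda^*}\Big\}^{1/\lambda^*}\lesssim \delta^{k\varepsilon}\Big\{\fint_{\delta^{-1}B_k}\sup_{j\ge k-3}\delta^{-j\varepsilon\lambda}L_{s,b}(\delta^j)^\lambda\sum_{P\in\cq_j}\|A_P\vec g_j\|^\lambda\mathbf 1_P\,d\mu\Big\}^{1/\lambda}.$$
Here $\lambda^*=D\lambda/(D-\varepsilon\lambda)$, as required. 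The small shift between $k-3$ and the lower index $k-2$ in the statement is harmless. We only need the gradient inequality for $d(y,z)<\delta^{k-2}$ within $B_k$, so we may set the terms with $j<k-2$ to zero as in \eqref{eq-def-gj}. If needed, we can also absorb the index $k-3$ into $k-2$ using Lemma \ref{lem-AQ=AQ'}.

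Next we bound the supremum by a sum and pull it out of the $L^\lambda$ average. Since $\lambda\le1$, using $\sup\le\sum$ and \eqref{triangle} in the form $(\sum a_j)^{1/\lambda}$ would go the wrong way. Instead we use the slack $\varepsilon<\varepsilon'$:
$$\sup_{j}\delta^{-j\varepsilon\lambda}a_j\le \sum_j \delta^{(j-k)(\varepsilon'-\varepsilon)\lambda}\,\delta^{-j\varepsilon'\lambda}\delta^{k(\varepsilon'-\varepsilon)\lambda}a_j.$$
After integrating, we apply H\"older in $j$ with the geometric factor $\delta^{(j-k)(\varepsilon'-\varepsilon)\lambda}$. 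More precisely, we write $\big(\sum_j c_j b_j^\lambda\big)^{1/\lambda}\le \sup_j b_j \big(\sum_j c_j\big)^{1/\lambda}$ with $\sum_j c_j\lesssim1$, and then use $\sup\le\sum$ on $b_j$. This yields
$$\delta^{k\varepsilon}\delta^{k(\varepsilon'-\varepsilon)}\sum_{j\ge k-2}\delta^{-j\varepsilon'}L_{s,b}(\delta^j)\Big\{\fint_{\delta^{-1}B_k}\sum_{P\in\cq_j}\|A_P\vec g_j\|^\lambda\mathbf 1_P\Big\}^{1/\lambda},$$
which is \eqref{eq-lem-poin-dd-s,b0}.

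The main obstacle is the first step: verifying carefully that the rescaled sequence satisfies \eqref{eq-def-gradient} with $L_{\varepsilon,0}$, with constants uniform in $j$ despite the logarithmic factors. This uses the two-sided estimate for $L_{s,b}(\delta^{j})/L_{s,b}(\delta^{j+N})$ quoted from \cite{LYY22}. A further point is that Lemma \ref{lem-Poin-s0} internally uses only scales $j\ge k-3$, so the unrestricted sum over $j$ is never needed.
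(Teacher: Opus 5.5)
Your proposal is correct and is essentially the paper's argument. The paper also rewrites $L_{s,b}(\delta^j)$ as $\delta^{j\varepsilon}\cdot\delta^{-j\varepsilon}L_{s,b}(\delta^j)$ and treats $h:=\{\sum_{j\ge k-2}\delta^{-j\varepsilon\lambda}[L_{s,b}(\delta^j)]^\lambda\sum_{P\in\cq_j}\|A_P\vec g_j\|^\lambda\mathbf 1_P\}^{1/\lambda}$ as a single $\varepsilon$-Haj{\l}asz gradient of $u_\eta$ on $\delta^{-1}B_k$. It then applies \eqref{eq-poin-cla} and finishes with the same geometric-series step that exploits $\varepsilon<\varepsilon'$. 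The only difference is packaging: you pass through Lemma \ref{lem-Poin-s0} with its $\sup_j$, zero out the terms with $j<k-2$, and then dominate the supremum by this same sum. Your fallback of absorbing the index $k-3$ via Lemma \ref{lem-AQ=AQ'} is unnecessary, and it would not work anyway, since that lemma cannot turn $\vec g_{k-3}$ into $\vec g_{k-2}$.
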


\begin{proof}
Without loss of generality, we may assume that
the right-hand side of \eqref{eq-lem-poin-dd-s,b0} is finite.
Let $x_0\in\cx$, $k\in\zz$, $B_k:=B(x_0,\delta^{k})$, and $Q\in \cq_{k}$ contain $x$.

For an arbitrary $\vec{\eta}\in\cc^m$, define $u_\eta$ as in \eqref{eq-def-u}
and, for any $\varepsilon\in(0,s)$, define $h$ by setting, for any $y\in\delta^{-1}B_0$,
\begin{equation*}
h(y):=\lf\{\sum_{j=k-2}^{\fz}\delta^{-j\varepsilon \lambda}\lf[L_{s,b}(\delta^j)\r]^{\lambda}
\sum_{P\in\cq_j}\lf\|A_P\vec{g}_j\r\|^{\lambda}\mathbf{1}_P(y)\r\}^{\frac {1}{\lambda}}.
\end{equation*}
As $p\in(0,1]$, we have $\wz{d}_{(p)}=0$.
Thus, by an argument similar to that used in the proof of Lemma \ref{lem-h,hj in D},
we find that $h$
is a positive constant multiple of an $\varepsilon$-Haj{\l}asz gradient of $u_\eta$.
Furthermore,  applying the independence of $\vec{\eta}$ of $h$, \eqref{eq-poin-cla},
and the H\"{o}lder inequality,
we conclude that, for any $\varepsilon'\in(\varepsilon,s)$,
\begin{align*}
&\inf_{\vec{\eta}\in\cc^m}\lf\{\fint_{B_0}\lf\|A_{Q}
\vec{f}(y)-\vec{\eta}\,\r\|^{\lambda^*}\,d\mu(y)\r\}^{\frac{1}{\lambda^*}}\\
&\quad\ls\delta^{k\varepsilon}\lf\{\fint_{\delta^{-1}B_0}
\lf[h(y)\r]^\lambda\,d\mu(y)\r\}^{\frac{1}{\lambda}}\\
&\quad\ls\delta^{k\varepsilon}\lf\{\fint_{\delta^{-1}B_0}
\sum_{j=k-2}^{\fz}\delta^{-j\varepsilon \lambda}\lf[L_{s,b}(\delta^j)\r]^{\lambda}
\sum_{P\in\cq_j}\lf\|A_P\vec{g}_j(y)\r\|^{\lambda}\mathbf{1}_P(y)\,d\mu(y)\r\}^{\frac{1}{\lambda}}\\
&\quad\ls\delta^{k\vp'}\sum_{j=k-2}^{\fz}\delta^{-j\vp'}L_{s,b}(\delta^{j})
\lf\{\fint_{\delta^{-1}B_0}\sum_{P\in\cq_j}\lf\|A_{P}\vec{g}_j(y)\r\|^{\lambda}
\mathbf{1}_P(y)\,d\mu(y)\r\}^{\frac{1}{\lambda}}.
\end{align*}
This finishes the proof of Lemma \ref{lem-Poin-dd,s,b0}.
\end{proof}

Next, we prove Theorem  \ref{lem-Poincare}(ii).

\begin{proof}[Proof of Theorem  \ref{lem-Poincare}(ii)]
Applying \eqref{eq-lem-poin-dd-s,b0},  \eqref{lem-eq-AP,W,p<1},
\eqref{eq-B subset Q*}, and \eqref{eq-Q*<B}, we find that, for any $x\in\cx$ and $k\in\zz$,
\begin{align*}
&\inf_{\vec{\eta}\in\cc^m}\lf\{\fint_{B(x,\delta^{k})}
\lf\|W^{\f1p}(x)\vec{f}(y)-\vec{\eta}\,\r\|^{\lambda^*}\,d\mu(y)\r\}^{\frac{1}{\lambda^*}}\\
&\quad\le\sum_{Q\in\cq_k}\lf\|W^{\f1p}(x)A_{Q}^{-1}\r\|\inf_{\vec{\xi}\in\cc^m}\lf\{\fint_{B(x,\delta^{k})}
\lf\|A_{Q}\vec{f}(y)-\vec{\xi}\,\r\|^{\lambda^*}\,d\mu(y)\r\}^{\frac{1}{\lambda^*}}\mathbf{1}_Q(x)\nonumber\\
&\quad\ls\delta^{k\vp'}\gamma_k(x)\sum_{j=k-2}^{\fz}\delta^{-j\vp'}L_{s,b}(\delta^{j})
\lf\{\fint_{B(x,\delta^{k-1})}\sum_{P\in\cq_j}\lf\|A_{P}\vec{g}_j(y)\r\|^{\lambda}
\mathbf{1}_P(y)\,d\mu(y)\r\}^{\frac{1}{\lambda}}\nonumber\\
&\quad\ls\delta^{k\vp'}\gamma_k(x)\sum_{j=k-2}^{\fz}\delta^{-j\vp'}L_{s,b}(\delta^{j})
\sup_{P\in\cq}\esssup_{y\in P}\lf\|A_{P}W^{-\frac 1 p}(y)\r\|\nonumber\\
&\quad\quad\times\lf\{\fint_{B(x,\delta^{k-1})}
\lf\|W^{\frac 1 p}(y)\vec{g}_j(y)\r\|^{\lambda}
\,d\mu(y)\r\}^{\frac{1}{\lambda}}\nonumber\\
&\quad\ls\delta^{k\vp'}\gamma_k(x)\sum_{j=k-2}^{\fz}\delta^{-j\vp'}L_{s,b}(\delta^{j})
\sum_{Q\in\cq_k}\lf\{\fint_{Q^*}
\lf\|W^{\frac 1 p}(y)\vec{g}_j(y)\r\|^{\lambda}
\,d\mu(y)\r\}^{\frac{1}{\lambda}}\mathbf{1}_Q(x)\nonumber\\
&\quad\ls\delta^{k\vp'}\gamma_k(x)\sum_{j=k-2}^{\fz}\delta^{-j\vp'}L_{s,b}(\delta^{j})
\lf[E_k\lf(\lf\|W^{\frac 1 p}\vec{g}_j\r\|^{\lambda}\r)\r]^{\frac{1}{\lambda}},\nonumber
\end{align*}
which completes the proof of Theorem \ref{lem-Poincare}(ii) and hence Theorem \ref{lem-Poincare}.
\end{proof}

Finally, we introduce the following homogeneous matrix-weighted logarithmic Haj{\l}asz--Besov spaces.

\begin{definition}\label{def-Haj-space}
Let $s\in(0,\fz)$, $b:=(b_+,b_-)$ be a pair of real numbers, $p\in(0,\fz)$, $q\in(0,\fz]$,
$W\in A_p(\cx,\cc^m)$, and
$\{A_Q\}_{Q\in\cq}\in\mathcal{RS}_\cq^{W,p}$.
The \emph{homogeneous matrix-weighted logarithmic Haj{\l}asz--Besov space},
$\dot{N}^{s,b}_{p,q}(\cx,W)$,
is defined as the collection of all $\vec{f}\in [L^0(\cx)]^m$ such that
\begin{equation*}
\lf\|\vec{f}\,\r\|_{\dot{N}^{s,b}_{p,q}(\cx,W)}:=
\inf_{\vec{\mathbf{g}}\in\dd^{s,b}_{\{A_Q\}}(\vec{f})}
\lf\|\lf\{\vec{g}_k\r\}_{k\in\zz}\r\|_{\ell^q(W,L^p)}<\fz
\end{equation*}
with the usual modification made for $q=\fz$.
\end{definition}

\begin{remark}
In the next section, we  prove that $\dot{N}^{s,b}_{p,q}(\cx,W)$ coincides with
the matrix-weighted logarithmic  Besov space (see Theorem \ref{haj-c}),
which further implies that
$\dot{N}^{s,b}_{p,q}(\cx,W)$ is independent of the choice of $\{A_Q\}_{Q\in\cq}\in\mathcal{RS}_\cq^{W,p}$.
\end{remark}

The above Poincar\'{e}-type inequalities and their proofs also imply that
functions in $\dot{N}^{s,b}_{p,q}(\cx,W)$ for any
$p\in(D/[D+s],\fz)$, $q\in(0,\fz]$, and $W\in A_p(\cx,\cc^m)$
are all locally integrable.

\begin{corollary}\label{rm-locinte}
Let $s\in(0,\fz)$, $b:=(b_+,b_-)$ be a pair of real numbers,
$p\in(D/[D+s],\fz)$, $q\in(0,\fz]$, and $W\in A_p(\cx,\cc^m)$.
Then, for any vector-valued function
$\vec{f}\in \dot{N}^{s,b}_{p,q}(\cx,W)$,
$\vec{f}\in[L_\loc^1(\cx)]^m$.
\end{corollary}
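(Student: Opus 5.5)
The plan is to fix a ball and show that $\int_{B}\|\vec f\|\,d\mu<\fz$. It suffices to prove, for every $x_0\in\cx$ and every $k\in\zz$, that $\vec f\in[L^1(B(x_0,\delta^k))]^m$. The key point is that $W^{1/p}(x)$ is invertible for almost every $x$, so that $\|\vec f(y)\|\le\|W^{-1/p}(x)\|\,\|W^{1/p}(x)\vec f(y)\|$. Hence finiteness of the Poincar\'e left-hand side at a single good point $x$ controls $\vec f-\vec\eta'$ in $L^1$ or $L^{\lambda^*}$ on the ball, for some constant vector $\vec\eta'$. Since constants are integrable on balls, this gives local integrability, provided the relevant exponent is at least $1$.

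Fix $\vec{\mathbf g}\in\dd^{s,b}_{\{A_Q\}}(\vec f)$ with $\|\{\vec g_j\}\|_{\ell^q(W,L^p)}<\fz$. Every $j$-term is then finite: $\|W^{1/p}\vec g_j\|\in L^p(\cx)$ for each $j$. The argument splits into two cases.

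\textbf{Case $p\in(1,\fz)$.} Apply Theorem \ref{lem-Poincare}(i) with $\lambda\in[p-\varepsilon,p)$ and $\lambda\ge1$. Since $\lambda<p$ and $\|W^{1/p}\vec g_j\|\in L^p$, the averages $E_k(\|W^{1/p}\vec g_j\|^\lambda)$ are finite by H\"older's inequality on $Q^*$. Moreover $\gamma_k(x)<\fz$ almost everywhere by Lemma \ref{lem-AQW}. So for almost every $x\in B(x_0,\delta^k)$ the right-hand side of the theorem is finite. Choose such an $x$, also with $W(x)$ invertible. Then there is $\vec\eta$ with $W^{1/p}(x)\vec f-\vec\eta\in L^1(B(x_0,\delta^k))$, and $\vec f=W^{-1/p}(x)[W^{1/p}(x)\vec f-\vec\eta]+W^{-1/p}(x)\vec\eta$ is integrable. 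The theorem is stated on $B(x,\delta^k)$; to cover $B(x_0,\delta^k)$, apply it at scale $k-1$, noting $B(x_0,\delta^k)\subset B(x,\delta^{k-1})$.

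\textbf{Case $p\in(D/(D+s),1]$.} Use Theorem \ref{lem-Poincare}(ii) with $\lambda:=p$. Choose $\varepsilon<\varepsilon'<\min\{1,s\}$ with $\varepsilon$ close enough to $s\wedge1$ that $\lambda^*=Dp/(D-\varepsilon p)\ge1$, i.e.\ $p\ge D/(D+\varepsilon)$. This is possible because $p>D/(D+s)$, and $p>D/(D+1)$ as well if $s\ge1$, since $p\le 1$. The condition $A_0\delta^{p/D}<1$ needs care; I expect to obtain it by taking $\delta$ small enough, or by replacing $\delta^k$ with $\delta^{Nk}$ via a coarser level set. This is a technical point I would check against Remark \ref{remark-dyadic}.

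For the right-hand side, each term satisfies
\[
E_k\big(\|W^{1/p}\vec g_j\|^p\big)\lesssim\mu(B(x,\delta^{k-2}))^{-1}\|\vec g_j\|_{L^p(W)}^p.
\]
Hence the series is bounded by a multiple of
\[
\sum_{j\ge k-2}\delta^{-j\varepsilon'}L_{s,b}(\delta^j)\,\|\vec g_j\|_{L^p(W)}.
\]
Now $\delta^{-j\varepsilon'}L_{s,b}(\delta^j)=\delta^{j(s-\varepsilon')}\times(\text{logarithmic factor})$ decays geometrically as $j\to\fz$. Together with $\|\vec g_j\|_{L^p(W)}\le L_{s,b}(\delta^j)\|\vec{\mathbf g}\|_{\ell^q_{s,b}}$ (or merely $\|\vec g_j\|_{L^p(W)}\le\|\vec{\mathbf g}\|_{\ell^q(W,L^p)}$ with $q=\fz$ as the worst case), this makes the sum finite. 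Then the same invertibility argument as in the first case gives $\vec f\in L^{\lambda^*}\subset L^1$ on the ball.

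The main obstacle is this last summation. Which sequence norm the space uses determines whether the geometric factor from $\delta^{-j\varepsilon'}L_{s,b}(\delta^j)$ suffices. If $\|\vec g_j\|_{L^p(W)}$ is only bounded, this requires $\varepsilon'<s$, which has already been arranged. A secondary obstacle is handling the parameter constraints, namely $\lambda\ge1$ in the first case and $\lambda^*\ge1$ together with $A_0\delta^{\lambda/D}<1$ in the second.
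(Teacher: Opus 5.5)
Your proposal is essentially the paper's proof. In both, finiteness of a Poincar\'e-type left-hand side gives a constant $\vec\eta$ such that $M\vec f-\vec\eta$ is integrable on a ball, for some fixed invertible matrix $M$. Both split at $p=1$; in the low-$p$ case both take $\lambda=p$ with $\lambda^*=Dp/(D-\varepsilon p)\ge1$. Both get convergence of $\sum_{j\ge k-2}\delta^{-j\varepsilon'}L_{s,b}(\delta^j)\|\vec g_j\|_{L^p(W)}$ from $\varepsilon'<s$ and $\|\vec g_j\|_{L^p(W)}\le\|\{\vec g_j\}_{j}\|_{\ell^q(W,L^p)}$. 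That bound is the correct one of your two options, since the space uses $\ell^q=\ell^q_{0,0}$.

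The only difference is in routing. The paper uses the reducing-operator inequalities (Lemma \ref{lem-Poin-p>1} and \eqref{eq-lem-poin-dd-s,b0}), applies H\"older with \eqref{lem-eq-AP,W,p>1} or \eqref{lem-eq-AP,W,p<1}, and then inverts $A_Q$ over the finitely many $Q\in\cq_k$ meeting the ball. You instead apply Theorem \ref{lem-Poincare} at a single point $x$ where $W(x)$ is invertible and invert $W^{1/p}(x)$. This skips the cube count at the cost of choosing a good point; both work.

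One justification in your second case is false: $p\le 1$ does not imply $p>D/(D+1)$. Take $s\ge1$ and $p\in(D/(D+s),D/(D+1)]$, for example $D=1$, $s=2$, $p=2/5$. Then every admissible $\varepsilon<\min\{1,s\}=1$ gives $p<D/(D+\varepsilon)$, that is, $\lambda^*<1$. Theorem \ref{lem-Poincare}(ii) as stated then only puts $\vec f-\vec\eta$ in $L^{\lambda^*}$ of the ball, not in $L^1$.

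The paper's proof has the same hole. It picks $\varepsilon\in(0,s)$ with $p>D/(D+\varepsilon)$, but then invokes \eqref{eq-lem-poin-dd-s,b0}, which is stated only for $\varepsilon<\min\{1,s\}$. To close it, either show that the restriction $\varepsilon<1$ in Lemma \ref{lem-Poin-dd,s,b0} can be dropped, or restrict to $s<1$. The latter is all that Theorem \ref{thm-N=AB} needs, since there $s<\beta\wedge\gamma<1$.

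Your worry about $A_0\delta^{p/D}<1$ is also legitimate. Remark \ref{remark-dyadic} only gives $\delta<A_0^{-D}$, i.e.\ $A_0\delta^{1/D}<1$. For $p<1$ this yields $A_0\delta^{p/D}<1$ only when $A_0=1$. The paper simply asserts the condition, so on this point your proposal is no less complete than the paper's argument.
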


\begin{proof}
Let $\{A_Q\}_{Q\in\cq}\in\mathcal{RS}_\cq^{W,p}$,
$\vec{f}\in \dot{N}^{s,b}_{p,q}(\cx,W)$, and $\{\vec{g}_j\}_{j\in\zz}\in\dd^{s,b}_{\{A_Q\}}(\vec{f})$
be such that $$\|\{\vec{g}_j\}_{j\in\zz}\|_{\ell^q(W,L^p)}\le2\|\vec{f}\,\|_{\dot{N}^{s,b}_{p,q}(\cx,W)}.$$
We consider the following two cases on $p$.

When $p\in(1,\fz)$, by \eqref{eq-Poin-p>1}, the  H\"{o}lder inequality, and \eqref{lem-eq-AP,W,p>1},
we find that, for any $x\in\cx$, $k\in\zz$, and $Q\in\cq_k$ containing $x$,
\begin{align*}
&\inf_{\vec{\eta}\in\cc^m}\fint_{B(x,\delta^{k})}\lf\|A_{Q}
\vec{f}(y)-\vec{\eta}\,\r\|\,d\mu(y)\\
&\quad\ls L_{s,b}(\delta^{k})\sum_{j=k-3}^{k-1}
\lf\{\fint_{B(x,\delta^{k-2})}\lf\|W^{\frac{1}{p}}(y)\vec{g}_j(y)\r\|^p\,d\mu(y)\r\}^{\frac 1 p}\\
&\quad\quad\times \sup_{Q\in\cq}
\lf\{\fint_{Q}\lf\|A_{Q}W^{-\frac 1 p}(y)\r\|^{p'}\,d\mu(y)\r\}^{\frac{1}{p'}}\\
&\quad\ls L_{s,b}(\delta^{k})\sum_{j=k-3}^{k-1}
\lf\{\fint_{B(x,\delta^{k-2})}\lf\|W^{\frac{1}{p}}(y)\vec{g}_j(y)\r\|^p\,d\mu(y)\r\}^{\frac 1 p}.
\end{align*}

When $p\in(D/(D+s),1]$, we choose $\varepsilon\in(0,s)$ such that $p>D/(D+\varepsilon)$.
Then $p^*:=Dp/(D-\varepsilon p)\in(1,\fz)$.
Since $A_0\delta^{p/D}<1$,
then, from these, the  H\"{o}lder inequality, \eqref{eq-lem-poin-dd-s,b0}, and \eqref{lem-eq-AP,W,p<1},
it follows that, for any $x\in\cx$, $k\in\zz$, and $Q\in\cq_k$ containing $x$,
\begin{align*}
&\inf_{\vec{\eta}\in\cc^m}\fint_{B(x,\delta^{k})}\lf\|A_{Q}
\vec{f}(y)-\vec{\eta}\,\r\|\,d\mu(y)\\
&\quad\le\inf_{\vec{\eta}\in\cc^m}\lf\{\fint_{B(x,\delta^{k})}\lf\|A_{Q}
\vec{f}(y)-\vec{\eta}\,\r\|^{p^*}\,d\mu(y)\r\}^{\frac 1 p}\\
&\quad\ls\delta^{k\vp'}\sum_{j=k-2}^{\fz}\delta^{-j\vp'}L_{s,b}(\delta^{j})
\lf\{\fint_{B(x,\delta^{k-1})}\lf\|W^{\frac 1 p}(y)\vec{g}_j(y)\r\|^{p}\,d\mu(y)\r\}^{\frac{1}{p}}\\
&\quad\quad\times\sup_{P\in\cq}\esssup_{y\in P}\lf\|A_{P}W^{-\frac 1 p}(y)\r\|\\
&\quad\ls\delta^{k\vp'}\sum_{j=k-2}^{\fz}\delta^{-j\vp'}L_{s,b}(\delta^{j})
\lf\{\fint_{B(x,\delta^{k-1})}\lf\|W^{\frac 1 p}(y)\vec{g}_j(y)\r\|^{p}\,d\mu(y)\r\}^{\frac{1}{p}}.
\end{align*}
Altogether,  by the doubling property of $\mu$, we further obtain,
for any $p\in(D/[D+s],\fz)$,
\begin{align*}
&\int_{B(x,\delta^{k})}\lf\|A_{Q}\vec{f}(y)\,\r\|\,d\mu(y)\\
&\quad\le \mu(B(x,\delta^{k}))\inf_{\vec{\eta}\in\cc^m}
\left\{\fint_{B(x,\delta^{k})}\lf\|A_{Q}
\vec{f}(y)-\vec{\eta}\,\r\|\,d\mu(y) +\lf\|\vec{\eta}\,\r\|\right\}\\
&\quad\ls\lf[\mu(B(x,\delta^{k}))\r]^{1-\frac{1}{p}} L_{s,b}(\delta^{k})
\lf\|\lf\{\vec{g}_k\r\}_{k\in\zz}\r\|_{\ell^q(W,L^p)}+\mu(B(x,\delta^{k}))\lf\|\vec{\eta}\,\r\|
<\fz.
\end{align*}

Notice that the total number of the cubes in the set
$U_{k}:=\{Q\in\cq_{k}:\ Q\cap B(x,\delta^{k})\neq\emptyset\}$
is finite and $\{A_Q^{-1}\}_{Q\in\cq}$ is a family of linear bounded operators.
Thus,  we conclude that, for any $x\in\cx$ and $k\in\zz$,
\begin{align*}
\int_{B(x,\delta^{k})}\lf\|\vec{f}(y)\r\|\,dy
&\le\int_{B(x,\delta^{k})}\sum_{Q\in U_k}
\lf\|A_Q^{-1}\r\|\lf\|A_Q\vec{f}(y)\r\|\,dy<\fz,
\end{align*}
which means that  $\vec{f}\in[L_\loc^1(\cx)]^m$.
This finishes the proof of Corollary \ref{rm-locinte}.
\end{proof}

\section{Characterization of Logarithmic Matrix-Weighted Besov Spaces
via Haj{\l}asz Gradient Sequences}\label{sec-chara}

In this section, we apply the  matrix-weighted Poincar\'{e}-type inequalities in the last section to
establish the Haj{\l}asz pointwise characterization of matrix-weighted logarithmic Besov spaces
on $\cx$.
We first introduce logarithmic matrix-weighted Besov spaces via using
approximations of the identity with exponential decay and then
establish their characterization
via grand maximal functions in Subsection \ref{sec-grand}.
Applying this characterization and the matrix-weighted Poincar\'e inequalities obtained in Section \ref{sec-poincare},
in Subsection \ref{sec-haj},
we further establish the Haj\l asz pointwise characterization of logarithmic matrix-weighted Besov spaces,
that is, we establish the equivalence between logarithmic matrix-weighted Besov spaces
and logarithmic matrix-weighted Haj\l asz--Besov spaces.

\subsection{Grand Maximal Function Characterization of \\ Logarithmic Matrix-Weighted Besov Spaces}\label{sec-grand}

To introduce the matrix-weighted logarithmic Besov spaces
on spaces of homogeneous type, we need to recall the concepts of
both test functions and distributions on $\cx$;
see \cite{HMY06,HMY08}.

\begin{definition}[Test Functions]\label{def-homo-test}
Let $x_1\in\cx$, $r\in(0,\fz)$, $\beta\in(0,1]$, and $\gamma\in(0,\fz)$.
Define $D_{\gamma}$ by setting, for any $x\in\cx$,
\begin{equation}\label{eq-def-D}
D_{\gamma}(x_1,x;r):=\f{1}{V_r(x_1)+V(x_1,x)}\lf[\f{r}{r+d(x_1,x)}\r]^\gamma.
\end{equation}
Then a measurable function $f$ on $\cx$ is called a \emph{test function of type $(x_1,r,\beta,\gamma)$} if
there exists a positive constant $C$ such that
\begin{enumerate}
\item[\rm(i)] for any $x\in\cx$,
\begin{equation}\label{eq-def-decay}
\lf|f(x)\r|\le CD_{\gamma}(x_1,x;r);
\end{equation}
\item[\rm(ii)] for any $x,y\in\cx$ satisfying $d(x,y)\le(2A_0)^{-1}[r+d(x_1,x)]$,
\begin{equation}\label{eq-def-derivative}
\lf|f(x)-f(y)\r|\le C\lf[\f{d(x,y)}{r+d(x_1,x)}\r]^{\beta}D_{\gamma}(x_1,x;r).
\end{equation}
\end{enumerate}
The collection of all   test functions of type $(x_1,r,\beta,\gamma)$ is denoted by $\mathcal{G}(x_1,r,\beta,\gamma)$
and is equipped with the norm, for any $f\in\mathcal{G}(x_1,r,\beta,\gamma)$,
$$
\lf\|f\r\|_{\mathcal{G}(x_1,r,\beta,\gamma)}:=\inf\lf\{C\in(0,\fz):\
\text{both}\ \eqref{eq-def-decay}\ \text{and}\ \eqref{eq-def-derivative}\ \text{hold}\r\}.
$$
Its subspace $\mathring{\mathcal{G}}(x_1,r,\beta,\gamma)$ is defined by setting
\begin{equation}\label{eq-def-Go}
\mathring{\mathcal{G}}(x_1,r,\beta,\gamma):=\lf\{f\in\mathcal{G}(x_1,r,\beta,\gamma):\
\int_{\cx}f(x)\,d\mu(x)=0\r\}
\end{equation}
and is equipped with the same norm as $\mathcal{G}(x_1,r,\beta,\gamma)$.
\end{definition}

It is known that, for any fixed $x_1,x_2\in\cx$ and $r_1,r_2\in(0,\fz)$, one has
$\mathcal{G}(x_1,r_1,\beta,\gamma)=\mathcal{G}(x_2,r_2,\beta,\gamma)$
and $\mathring{\mathcal{G}}(x_1,r_1,\beta,\gamma)=\mathring{\mathcal{G}}(x_2,r_2,\beta,\gamma)$
with equivalent norms, but the positive equivalence constants may depend on $x_1,\ x_2,\ r_1,$ and $r_2$.
In what follows, we may fix $x_1=x_0\in\cx$ and $r=1$ and, for simplicity,
denote $\mathcal{G}(x_0,1,\beta,\gamma)$ and $\mathring{\mathcal{G}}(x_0,1,\beta,\gamma)$, respectively, by
$\mathcal{G}(\beta,\gamma)$ and $\mathring{\mathcal{G}}(\beta,\gamma)$.
Conventionally, both  $\mathcal{G}(\beta,\gamma)$ and $\mathring{\mathcal{G}}(\beta,\gamma)$ are called
the \emph{spaces of test functions} on $\cx$.

For any fixed $\varepsilon\in(0,1]$ and $\beta,\gamma\in(0,\varepsilon]$,
let $\mathcal{G}_0^\varepsilon(\beta,\gamma)$ [resp. $\mathring{\mathcal{G}}_0^\varepsilon(\beta,\gamma)$]
be the completion of the set
$\mathcal{G}(\varepsilon,\varepsilon)$ [resp. $\mathring{\mathcal{G}}(\varepsilon,\varepsilon)$]
in $\mathcal{G}(\beta,\gamma)$ [resp. $\mathring{\mathcal{G}}(\beta,\gamma)$] with the norm defined by setting
$\|\cdot\|_{\mathcal{G}_0^\varepsilon(\beta,\gamma)}:=\|\cdot\|_{\mathcal{G}(\beta,\gamma)}$
[resp. $\|\cdot\|_{\mathring{\mathcal{G}}_0^\varepsilon(\beta,\gamma)}
:=\|\cdot\|_{\mathcal{G}(\beta,\gamma)}$].
Moreover, its dual space $(\mathcal{G}_0^\varepsilon(\beta,\gamma))'$
[resp. $(\mathring{\mathcal{G}}_0^\varepsilon(\beta,\gamma))'$] is defined as the collection of
all continuous linear functionals from $\mathcal{G}_0^\varepsilon(\beta,\gamma)$
[resp. $\mathring{\mathcal{G}}_0^\varepsilon(\beta,\gamma)$] to $\cc$ equipped with the weak-$\ast$ topology.
The spaces $(\mathcal{G}_0^\varepsilon(\beta,\gamma))'$ and $(\mathring{\mathcal{G}}_0^\varepsilon(\beta,\gamma))'$
are called the \emph{spaces of distributions} on $\cx$.

The following lemma contains some basic estimates about $\cx$;
we refer the reader to \cite[Lemma 2.1]{HMY08} or \cite[Lemma 2.4]{HLYY19} for more details.
\begin{lemma}\label{lem-X-basic}
Let $\beta,\gamma\in(0,\fz)$, $\lambda\in(0,1)$, and $D_\gamma$ be
the same as in \eqref{eq-def-D}.
\begin{enumerate}
\item[\rm(i)] For any $x,y\in\cx$ and $r\in(0,\fz)$, $V(x,y)\sim V(y,x)$ and
$$
V_r(x)+V_r(y)+V(x,y)\sim V_r(x)+V(x,y)\sim V_r(y)+V(x,y)\sim\mu(B(x,r+d(x,y)));
$$
moreover, if $d(x,y)\le r$, then $V_r(x)\sim V_r(y)$.
Here, all the positive equivalence constants are independent of $x,\ y$, and $r$.

\item[\rm(ii)] There exists a positive constant $C$ such that, for any $r\in(0,\fz)$
and  $x,y,z\in\cx$ satisfying $d(x,y)\le\lambda[r+d(x,z)]$,
$V_r(z)+V(x,z)\le C[V_r(z)+V(y,z)]$ and
$$
V_r(x)+V(x,z)\le C\lf[V_r(y)+V(y,z)\r].
$$

\item[\rm(iii)]
There exists a positive constant $C$ such that, for any $x\in\cx$ and $R\in(0,\fz)$,
$$
\int_{\{z\in\cx:\ d(x,z)\le R\}}\f{1}{V(x,y)}\lf[\f{d(x,y)}{R}\r]^\beta\,d\mu(y)\le C
$$
and
$$
\int_{\{y\in\cx:\ d(x,y)> R\}}\f{1}{V(x,y)}\lf[\f{R}{d(x,y)}\r]^\beta\,d\mu(y)\le C.
$$
\item[\rm(iv)] There exists a positive constant $C$ such that, for any $x_0\in\cx$ and $r\in(0,\fz)$,
$$
\int_{\cx}D_\gamma\lf(x_0,y;r\r)\,d\mu(y)\le C.
$$
\item[\rm(v)] There exists a positive constant $C$ such that, for any $x_0\in\cx$ and $r,R\in(0,\fz)$,
$$
\int_{\{y\in\cx:\ d(x_0,y)> R\}}D_\gamma\lf(x_0,y;r\r)\,d\mu(y)\le C\lf(\f{r}{r+R}\r)^\gamma.
$$
\end{enumerate}
\end{lemma}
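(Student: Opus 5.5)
The plan is to derive every item from two ingredients only: the quasi-triangle inequality \eqref{eq-def-A0} and the doubling property \eqref{eq-doub}. The integral estimates (iii)--(v) will be handled by decomposing $\cx$ into dyadic annuli centered at the relevant point and summing a geometric series. No reverse doubling is needed, because on each annulus the integrand is controlled by the reciprocal of the measure of the ball that contains that annulus. We also use $\mu(\{x\})=0$, so that the point $y=x$, where $V(x,y)$ is undefined, is negligible.

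For (i), first show $V(x,y)\sim V(y,x)$. With $\rho:=d(x,y)$, we have $B(y,\rho)\subset B(x,2A_0\rho)$, so doubling gives $V(y,x)\le C_\mu(2A_0)^D V(x,y)$, and the reverse bound follows by symmetry. Next, each of $V_r(x)+V(x,y)$, $V_r(y)+V(x,y)$ and $V_r(x)+V_r(y)+V(x,y)$ is bounded above by $3\mu(B(x,r+\rho))$, after using $B(y,r)\subset B(x,A_0(r+\rho))$ and doubling. Conversely, $\mu(B(x,r+\rho))\le\mu(B(x,2\max\{r,\rho\}))\lesssim \max\{V_r(x),V(x,y)\}$, and swapping the roles of $x$ and $y$ (with the first equivalence) handles the version with $V_r(y)$. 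Finally, if $\rho\le r$, then $B(y,r)\subset B(x,2A_0r)$ gives $V_r(y)\lesssim V_r(x)$, and symmetry gives the reverse. For (ii), assume $d(x,y)\le\lambda[r+d(x,z)]$ with $\lambda<1$. By (i), it suffices to compare $\mu(B(z,r+d(x,z)))$ with $\mu(B(z,r+d(y,z)))$. The quasi-triangle inequality gives
$$
r+d(x,z)\le A_0\big[r+d(y,z)\big]+A_0\lambda\big[r+d(x,z)\big],
$$
which, when $A_0\lambda<1$, absorbs to $r+d(x,z)\lesssim r+d(y,z)$, and doubling finishes. For general $\lambda\in(0,1)$ there is a subtlety, since absorption needs $A_0\lambda<1$. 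I would first try the case $d(x,z)\ge Cr$ versus $d(x,z)<Cr$. In the latter case both sides are comparable to $V_r(z)$ (respectively $V_r(y)$), because all the points lie within $O(r)$ of each other. This dichotomy is the one place I expect to need care, and the constant will depend on $\lambda$ and $A_0$. The second inequality of (ii) follows from the first together with (i).

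For (iii), write $\{y:\ d(x,y)\le R\}\setminus\{x\}$ as the union of the annuli $S_k:=\{2^{-k-1}R<d(x,y)\le 2^{-k}R\}$, $k\in\zz_+$. On $S_k$ we have $V(x,y)\ge\mu(B(x,2^{-k-1}R))$ and $[d(x,y)/R]^\beta\le2^{-k\beta}$. Hence the integral over $S_k$ is at most
$$
2^{-k\beta}\,\frac{\mu(B(x,2^{-k}R))}{\mu(B(x,2^{-k-1}R))}\le C_\mu 2^{-k\beta},
$$
and summing over $k$ gives a bound depending only on $C_\mu$ and $\beta$. The second inequality is symmetric: use $S_k:=\{2^kR<d(x,y)\le2^{k+1}R\}$ and the integrand bound $2^{-k\beta}/\mu(B(x,2^kR))$. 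For (iv), split $\cx$ into $B(x_0,r)$ and $\{d(x_0,y)\ge r\}$. On the ball, $D_\gamma\le 1/V_r(x_0)$, which gives a contribution at most $1$. On the complement, $D_\gamma(x_0,y;r)\le V(x_0,y)^{-1}[r/d(x_0,y)]^\gamma$, so the second half of (iii) with $R=r$ applies. For (v), if $R\le r$ the bound follows from (iv), since then $r/(r+R)\ge1/2$. If $R>r$, use the same annular decomposition beyond $R$: this gives $\sum_k 2^{-k\gamma}(r/R)^\gamma C_\mu\lesssim (r/R)^\gamma\sim[r/(r+R)]^\gamma$. The only genuinely delicate step in the whole lemma is the absorption argument in (ii) for $\lambda$ close to $1$; everything else is routine doubling bookkeeping.
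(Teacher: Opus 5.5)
Your arguments for (i), (iii), (iv) and (v) are fine. They are the standard doubling and annulus bookkeeping; the paper gives no proof of this lemma and only cites the literature. The adjustments needed are cosmetic: cover the closed annulus $\{2^{-k-1}R<d(x,y)\le 2^{-k}R\}$ by $B(x,2^{1-k}R)$, and, for the $V_r(y)$ version of (i), note that $\mu(B(x,r+d(x,y)))\sim\mu(B(y,r+d(x,y)))$ by doubling.

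The gap is in (ii), and no amount of care will close it. In the regime $d(x,z)\ge Cr$ of your dichotomy you give no argument, and none exists, because for $\lambda\in[A_0^{-1},1)$ the inequality is false.

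Here is a counterexample. Fix $\lambda\in(0,1)$ and $K\ge 1/\lambda$. Let $\cx:=\mathbb{R}$ with Lebesgue measure, and set $d(a,b):=|a-b|$ if $a\neq 0\neq b$, and $d(0,b)=d(b,0):=K|b|$.
\begin{itemize}
\item Since $|a-b|\le d(a,b)\le K|a-b|$, this $d$ is a quasi-metric with $A_0=K$.
\item Every $d$-ball $B(a,t)$ is, up to the single point $0$, an interval of length $2t$ (or $2t/K$ if $a=0$). Hence $\mu$ is doubling and non-atomic.
\end{itemize}
Take $z:=0$, $x:=1$, $y:=\epsilon\in(0,1)$ and $r\in(0,1)$. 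Then $d(x,y)=1-\epsilon<1\le\lambda K\le\lambda[r+d(x,z)]$, so the hypothesis of (ii) holds. However,
\begin{align*}
V_r(z)+V(x,z)&=\frac{2r}{K}+2K, & V_r(z)+V(y,z)&=\frac{2r}{K}+2K\epsilon,\\
V_r(x)+V(x,z)&=2r+2K, & V_r(y)+V(y,z)&=2r+2K\epsilon,
\end{align*}
so both inequalities of (ii) fail as $r,\epsilon\to0$.

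Item (ii) is true only under $A_0\lambda<1$. For a genuine metric ($A_0=1$) this is exactly $\lambda\in(0,1)$, which is presumably where the stated range comes from. In that range your absorption step already completes the proof: it gives $(1-A_0\lambda)[r+d(x,z)]\le A_0[r+d(y,z)]$, and then doubling together with (i) yields both inequalities, with a constant depending on $A_0\lambda$. This restricted form covers what the paper actually uses, for instance $\lambda=(2A_0)^{-1}$ as in \eqref{eq-def-derivative}. So rather than trying to handle $\lambda$ close to $1$, state (ii) for $\lambda\in(0,A_0^{-1})$ and note that the full range fails in quasi-metric spaces.
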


The following concept of  approximations of the identity with exponential decay,
originated from \cite[Definition 2.7]{HLYY19}, helps to introduce the Besov space
or its variants on $\cx$.

\begin{definition}\label{def-exp-ATI}
A sequence $\{Q_k\}_{k\in\zz}$ of bounded linear integral operators on $L^2(\cx)$ is called an
\emph{approximation of the identity with exponential decay} (for short, exp-ATI) if there exist constants
$C,\nu\in(0,\fz)$, $a\in(0,1]$, and $\theta\in(0,1)$ such that, for any $k\in\zz$,
the kernel of the operator $Q_k$, a function on $\cx\times\cx$ which is still denoted by $Q_k$,
satisfies that
\begin{enumerate}
\item[\rm(i)](the \emph{identity condition})
$$
\sum_{k=-\fz}^{\fz}Q_k=I\ \ \text{in}\ L^2(\cx),
$$
where $I$ denotes the identity operator on $L^2(\cx)$;
\item[\rm(ii)](the \emph{size condition}) for any $x,y\in\cx$,
$$
\lf|Q_k(x,y)\r|\le C\f{1}{\sqrt{V_{\delta^k}(x)V_{\delta^k}(y)}}H_k(x,y);
$$
\item[\rm(iii)](the \emph{regularity condition}) for any $x,x',y\in\cx$ with $d(x,x')\le\delta^k$,
\begin{align*}
&\lf|Q_k(x,y)-Q_k(x',y)\r|+\lf|Q_k(y,x)-Q_k(y,x')\r|\\
&\quad\le C\lf[\delta^{-k}d(x,x')\r]^\theta\f{1}{\sqrt{V_{\delta^k}(x)V_{\delta^k}(y)}}H_k(x,y);
\end{align*}
\item[\rm(iv)](the \emph{second difference regularity condition}) for any $x,x',y,y'\in\cx$ with
both $d(x,x')\le\delta^k$ and $d(y,y')\le\delta^k$,
\begin{align*}
&\lf|\lf[Q_k(x,y)-Q_k(x',y)\r]-\lf[Q_k(x,y')-Q_k(x',y')\r]\r|\\
&\quad\le C\lf[\delta^{-k}d(x,x')\r]^\theta\lf[\delta^{-k}d(y,y')\r]^\theta
\f{1}{\sqrt{V_{\delta^k}(x)V_{\delta^k}(y)}}H_k(x,y);
\end{align*}
\item[\rm(v)](the \emph{cancellation condition}) for any $x,y\in\cx$,
$$
\int_\cx Q_k(x,y')\,d\mu(y')=0=\int_\cx Q_k(x',y)\,d\mu(x').
$$
\end{enumerate}
Here, for any $x,y\in\cx$,
$$
H_k(x,y):=\exp\lf\{-\nu\lf[\delta^{-k}d(x,y)\r]^a\r\}
\exp\lf\{-\nu\lf[\delta^{-k}\max\lf\{d(x,\mathcal{Y}^k),d(y,\mathcal{Y}^k)\r\}\r]^a\r\}
$$
and, for any $k\in\zz$, $\mathcal{Y}^k$ is defined in \eqref{eq-def-yak} and
$$
d(x,\mathcal{Y}^k):=\inf_{y\in\mathcal{Y}_k}d(x,y).
$$
\end{definition}

By \cite[Theorem 7.1]{AH13}, we find that such an exp-ATI
on spaces of homogeneous type always exists
with $\theta$ therein sometimes very small.

About exp-ATIs, we have the following properties which
are frequently used;
we refer the reader to \cite[Proposition 2.10]{HLYY19} for more details.

\begin{lemma}
Let $\{Q_k\}_{k\in\zz}$ be an exp-ATI with $\theta\in(0,1)$.
Then, for any given $\gamma\in(0,\fz)$, there exists a positive constant $C$ such that,
for any $k\in\zz$, the kernel $Q_k$ satisfies
\begin{enumerate}
\item[\rm(i)] for any $x,y\in\cx$,
\begin{equation}\label{eq-ATI-1}
\lf|Q_k(x,y)\r|\le C D_\gamma(x,y;\delta^k),
\end{equation}
where $D_\gamma(x,y;\delta^k)$ is the same as in \eqref{eq-def-D} with $x_1,\ x$, and $r$ replaced,
respectively, by $x$, $y$, and $\delta^k$;
\item[\rm(ii)] for any $x,x',y\in\cx$ with $d(x,x')\le(2A_0)^{-1}[\delta^k+d(x,y)]$,
\begin{align}\label{eq-ATI-2}
\lf|Q_k(x,y)-Q_k(x',y)\r|+\lf|Q_k(y,x)-Q_k(y,x')\r|
\le C\lf[\f{d(x,x')}{\delta^{k}+d(x,y)}\r]^\theta D_\gamma(x,y;\delta^k);
\end{align}
\item[\rm(iii)] for any $x,x',y,y'\in\cx$ with
$$
d(x,x')\le(2A_0)^{-2}[\delta^k+d(x,y)]\ \ \text{and}\ \ d(y,y')\le(2A_0)^{-2}[\delta^k+d(x,y)],
$$
\begin{align*}
&\lf|\lf[Q_k(x,y)-Q_k(x',y)\r]-\lf[Q_k(x,y')-Q_k(x',y')\r]\r|\\
&\quad\le C\lf[\f{d(x,x')}{\delta^{k}+d(x,y)}\r]^\theta\lf[\f{d(y,y')}{\delta^{k}+d(x,y)}\r]^\theta
D_\gamma(x,y;\delta^k)\nonumber.
\end{align*}
\end{enumerate}
\end{lemma}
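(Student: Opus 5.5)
The plan is to deduce all three estimates from the corresponding conditions (ii), (iii), and (iv) of Definition \ref{def-exp-ATI}. The mechanism is that the exponential factor in $H_k$ absorbs any polynomial loss. Throughout, fix $k\in\zz$, write $t:=\delta^{-k}d(x,y)$, and use only $H_k(x,y)\le \exp\{-\nu t^a\}$, discarding the second exponential factor involving $\mathcal{Y}^k$.

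\emph{Step 1: the volume comparison.} By \eqref{eq-doub} and Lemma \ref{lem-X-basic}(i),
$$
V_{\delta^k}(x)+V(x,y)\sim\mu\big(B(x,\delta^k+d(x,y))\big)\ls(1+t)^{D}V_{\delta^k}(x).
$$
The same argument centred at $y$ gives
$$
V_{\delta^k}(x)\le\mu\big(B(y,A_0(\delta^k+d(x,y)))\big)\ls(1+t)^DV_{\delta^k}(y).
$$
Combining these two bounds yields
$$
\frac{1}{\sqrt{V_{\delta^k}(x)V_{\delta^k}(y)}}\ls\frac{(1+t)^{D/2}}{V_{\delta^k}(x)}
\ls\frac{(1+t)^{3D/2}}{V_{\delta^k}(x)+V(x,y)}.
$$
Since $\sup_{t\ge0}(1+t)^{3D/2+\gamma+2\theta}e^{-\nu t^a}<\fz$, the size condition gives \eqref{eq-ATI-1}. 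In fact it gives the stronger bound $|Q_k(x,y)|\ls D_{\gamma+2\theta}(x,y;\delta^k)$, because $\gamma$ is arbitrary. This extra decay is what I reuse in the later steps.

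\emph{Step 2: proof of \eqref{eq-ATI-2}.} I split into two cases.

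If $d(x,x')\le\delta^k$, I apply the regularity condition (iii) directly. I bound its volume factor as in Step 1. I then write
$$
[\delta^{-k}d(x,x')]^\theta=\Big[\frac{d(x,x')}{\delta^k+d(x,y)}\Big]^\theta(1+t)^\theta,
$$
and the factor $(1+t)^\theta$ is again absorbed by $e^{-\nu t^a}$. The term $|Q_k(y,x)-Q_k(y,x')|$ is handled identically, using the symmetry of $H_k$ and of the volume factor.

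If $\delta^k<d(x,x')\le(2A_0)^{-1}[\delta^k+d(x,y)]$, I bound each of the four kernel values separately by Step 1 with decay order $\gamma+\theta$. The quasi-triangle inequality gives $\delta^k+d(x',y)\sim\delta^k+d(x,y)$. Lemma \ref{lem-X-basic}(ii), with $\lambda=(2A_0)^{-1}$, then gives $D_{\gamma+\theta}(x',y;\delta^k)\ls D_{\gamma+\theta}(x,y;\delta^k)$. Finally,
$$
D_{\gamma+\theta}(x,y;\delta^k)=\Big[\frac{\delta^k}{\delta^k+d(x,y)}\Big]^\theta D_\gamma(x,y;\delta^k)
\le\Big[\frac{d(x,x')}{\delta^k+d(x,y)}\Big]^\theta D_\gamma(x,y;\delta^k),
$$
where the inequality uses $d(x,x')>\delta^k$.

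\emph{Step 3: proof of (iii).} I split according to whether each of $d(x,x')$ and $d(y,y')$ is at most $\delta^k$, giving four cases.

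If both are at most $\delta^k$, I use the second difference condition (iv) exactly as in the first case of Step 2.

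If $d(x,x')>\delta^k\ge d(y,y')$, I group the expression as $[Q_k(x,y)-Q_k(x,y')]-[Q_k(x',y)-Q_k(x',y')]$. I apply the $y$-regularity from Step 2 to each bracket, with decay order $\gamma+\theta$, and with base point $x$ or $x'$ respectively. The comparabilities $d(x',y)\sim d(x,y)$ up to $\delta^k$ (from the restriction $(2A_0)^{-2}$) and Lemma \ref{lem-X-basic}(ii) let me return to $D_{\gamma+\theta}(x,y;\delta^k)$. The spare factor $[\delta^k/(\delta^k+d(x,y))]^\theta$ is then at most $[d(x,x')/(\delta^k+d(x,y))]^\theta$. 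The symmetric case $d(y,y')>\delta^k\ge d(x,x')$ is handled the same way with the roles of the variables exchanged.

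If both distances exceed $\delta^k$, I use only the size bound of order $\gamma+2\theta$ on all four terms and extract both $\theta$-factors from the extra decay.

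The main obstacle is the bookkeeping in these mixed cases. One must check that the constraints $d(x,x'),d(y,y')\le(2A_0)^{-2}[\delta^k+d(x,y)]$ are strong enough that applying Step 2 at the shifted base point $x'$ is legitimate. Concretely, this requires $d(y,y')\le(2A_0)^{-1}[\delta^k+d(x',y)]$. It also requires that all resulting $D$-factors remain comparable to $D_{\gamma+\theta}(x,y;\delta^k)$. I expect both to follow from the quasi-triangle inequality \eqref{eq-def-A0} and Lemma \ref{lem-X-basic}(ii).
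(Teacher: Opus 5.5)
Your argument is correct. The paper gives no proof here and simply cites \cite[Proposition 2.10]{HLYY19}; your route is the standard derivation behind such results. You convert the $\sqrt{V_{\delta^k}(x)V_{\delta^k}(y)}$ factor and all polynomial losses into the exponential decay of $H_k$. You then use the regularity and second-difference conditions when $d(x,x'),d(y,y')\le\delta^k$, and size bounds with surplus decay $\gamma+\theta$ or $\gamma+2\theta$ otherwise. The bookkeeping you flagged does close: from $d(x,x')\le(2A_0)^{-2}[\delta^k+d(x,y)]$ the quasi-triangle inequality gives $\delta^k+d(x',y)\ge(2A_0)^{-1}[\delta^k+d(x,y)]$, hence $d(y,y')\le(2A_0)^{-1}[\delta^k+d(x',y)]$, and Lemma \ref{lem-X-basic}(ii), applied once or twice, returns every shifted $D$-factor to one based at $(x,y)$.
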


Based on  exp-ATIs, we introduce the homogeneous logarithmic matrix-weighted Besov spaces.

\begin{definition}
Let $\theta\in(0,1)$ be the same as in Definition \ref{def-exp-ATI}, $\beta,\gamma\in(0,\theta)$,
$s\in(-(\beta\wedge\gamma),\beta\wedge\gamma)$, $b:=(b_+,b_-)$ be a pair of real numbers,
$p\in(p(s,\beta\wedge\gamma),\fz)$,
$q\in(0,\fz]$, $W\in A_p(\cx,\cc^m)$, and $\{Q_k\}_{k\in\zz}$ be an exp-ATI.
The \emph{homogeneous logarithmic matrix-weighted Besov space}, $\dot{B}^{s,b}_{p,q}(\cx,W)$, is defined by setting
$$
\dot{B}^{s,b}_{p,q}(\cx,W):=\lf\{\vec{f}\in\lf[\lf(\mathring{\mathcal{G}}_0^\theta(\beta,\gamma)\r)'\r]^m:\
\lf\|\vec{f}\,\r\|_{\dot{B}^{s,b}_{p,q}(\cx,W)}<\fz\r\},
$$
where, for any $\vec{f}\in[(\mathring{\mathcal{G}}_0^\theta(\beta,\gamma))']^m$,
$$
\lf\|\vec{f}\,\r\|_{\dot{B}^{s,b}_{p,q}(\cx,W)}:=
\lf\|\lf\{Q_k\vec{f}\r\}_{k\in\zz}\r\|_{\ell^q_{s,b}(W,L^p)}
$$
with the usual modification made when $q=\fz$.
\end{definition}

\begin{remark}
The grand characterization in  Theorem \ref{thm-B=AB} below implies that the
space $\dot{B}^{s,b}_{p,q}(\cx,W)$ is independent of the choice of
exp-ATIs.
\end{remark}

Furthermore, we also need the following concept of approximations of the identity
with exponential decay and integration $1$ (see \cite[Definition 2.8]{HHLLYY19}),
whose existence, as argued in \cite[Remark 2.9]{HHLLYY19},
is ensured by \cite[Lemma 10.1]{AH13}.

\begin{definition}\label{def-1expATI}
A sequence $\{Q_k\}_{k\in\zz}$ of bounded linear integral operators on $L^2(\cx)$ is called an
\emph{approximation of the identity with exponential decay and integration $1$} (for short, 1-exp-ATI)
if $\{Q_k\}_{k\in\zz}$ satisfies that
\begin{enumerate}
\item[\rm(i)]
for any $k\in\zz$, $Q_k$ satisfies
(ii), (iii), and (iv) of Definition \ref{def-exp-ATI}
without the decay factor
$$
\exp\lf\{-\nu\lf[\delta^{-k}\max\lf\{d(x,\mathcal{Y}^k),d(y,\mathcal{Y}^k)\r\}\r]^a\r\};
$$

\item[\rm(ii)]
for any $k\in\zz$ and $x\in\cx$,
$$
\int_\cx Q_k(x,y)\,d\mu(y)=1=\int_\cx Q_k(y,x)\,d\mu(y);
$$
\item[\rm(iii)]
$\{P_k\}_{k\in\zz}$, defined by setting, for any $k\in\zz$,
$P_k:=Q_k-Q_{k-1}$, is an exp-ATI.
\end{enumerate}
\end{definition}

As mentioned in the introduction, Auscher and Hyt\"{o}nen \cite{AH13,AH15} constructed
a wavelet system on $\cx$
which has become an essential tool in the  investigations on function spaces
on $\cx$.
Now, we recall their orthonormal wavelet basis of $L^2(\cx)$
(see \cite[Theorem 7.1]{AH13}) and the wavelet reproducing formula
based on this orthonormal wavelet basis (see \cite[Theorem 3.4 and Corollary 3.5]{HLW18}).

\begin{lemma}\label{lem-def-basis}
Let $a:=(1+2\log_2 A_0)^{-1}$. Then there exist positive constants $\theta\in(0,1)$, $\nu\in(0,\fz)$, and $C$
and an orthonormal wavelet basis $\{\psi^k_\alpha:\ k\in\zz,\ \alpha\in\cg_k\}$ of $L^2(\cx)$ such that,
for any $k\in\zz$ and $\alpha\in\cg_k$,
\begin{enumerate}
\item[\rm(i)] $($the \emph{decay condition}$)$ for any $x\in\cx$,
$$
\lf|\psi^k_\alpha(x)\r|\le\f{C}{\sqrt{V_{\delta^k}(y^k_\alpha)}}
\exp\lf\{-\nu\lf[\delta^{-k}d(y^k_\alpha,x)\r]^a\r\};
$$
\item[\rm(ii)] $($the \emph{H\"{o}lder-regularity condition}$)$ for any $x,y\in\cx$ with $d(x,y)\le\delta^k$,
$$
\lf|\psi^k_\alpha(x)-\psi^k_\alpha(y)\r|\le
\f{C}{\sqrt{V_{\delta^k}(y^k_\alpha)}}\lf[\delta^{-k}d(x,y)\r]^\theta
\exp\lf\{-\nu\lf[\delta^{-k}d(y^k_\alpha,x)\r]^a\r\};
$$
\item[\rm(iii)] $($the \emph{cancellation condition}$)$
$$
\int_{\cx}\psi^k_\alpha(x)\,d\mu(x)=0.
$$
\end{enumerate}
\end{lemma}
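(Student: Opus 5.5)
This statement is the Auscher--Hyt\"{o}nen wavelet theorem \cite[Theorem 7.1]{AH13}, and in the paper it is quoted rather than re-proved. The plan is therefore to check that its hypotheses are exactly those of our standing setting, namely a quasi-metric $d$ with constant $A_0$, a doubling measure $\mu$, and the dyadic system $\cq$ of Lemma \ref{lem-dyacube} with the fixed $\delta$ of Remark \ref{remark-dyadic}. The indexing $\alpha\in\cg_k=\ci_{k+1}\setminus\ci_k$ and the points $y_\alpha^k=z_\alpha^{k+1}$ from \eqref{eq-def-yak} are precisely the labels used in \cite{AH13}. For completeness, I would also outline why the construction yields (i)--(iii), in the following order.

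\textbf{Step 1: spline functions.} Use the Hyt\"{o}nen--Kairema random dyadic systems, which are a randomized version of Lemma \ref{lem-dyacube}, to build for each level $k$ and each point $z^k_\alpha$ a ``spline'' function $s^k_\alpha$. It is defined as the probability that $z^k_\alpha$ and $x$ lie in a common random cube of generation $k$. One shows that $\{s^k_\alpha\}_\alpha$ is a partition of unity, that $s^k_\alpha$ is H\"{o}lder continuous of some order $\theta$, and that it is supported in $B(z_\alpha^k,C\delta^k)$. The H\"{o}lder exponent comes from the small-boundary estimate $\mathbb{P}(x\in\partial_\epsilon Q)\lesssim\epsilon^{\eta}$ for random cubes.

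\textbf{Step 2: nested spaces.} The spans $V_k:=\overline{\mathrm{span}}\{s^k_\alpha\}$ form a multiresolution analysis. They increase in $k$ because $\ci_k\subset\ci_{k+1}$ and the cubes are nested. Their union is dense in $L^2(\cx)$ and their intersection is $\{0\}$ or the constants.

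\textbf{Step 3: wavelets.} Take $\psi^k_\alpha$, for $\alpha\in\cg_k$, to be an orthonormal basis of the complement $W_k:=V_{k+1}\ominus V_k$. Obtain it by Gram--Schmidt-type orthogonalization of the new splines $s^{k+1}_\alpha$, $\alpha\in\cg_k$, against $V_k$, followed by an orthonormalization. The cancellation condition (iii) is then automatic, since $\psi^k_\alpha\perp V_k$ and $V_k$ contains the constants locally through the partition of unity.

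\textbf{Main obstacle.} The hard part is the exponential decay in (i), together with its inheritance in the H\"{o}lder estimate (ii). Orthogonalization destroys compact support. The plan is to show that the Gram matrix $\langle s^k_\alpha,s^k_\beta\rangle$, suitably normalized by $\sqrt{V_{\delta^k}}$, is a banded, positive, well-conditioned operator on $\ell^2$. Its inverse square root then has entries decaying like $\exp\{-\nu[\delta^{-k}d(z_\alpha,z_\beta)]^a\}$, by a Demko--Moss--Smith / Jaffard-type argument via the Neumann series.

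The exponent $a=(1+2\log_2A_0)^{-1}$ enters when counting chains of neighbouring points. Since the triangle inequality holds only up to $A_0$, an $n$-step chain reaches distance of order $A_0^{2\log_2 n}\delta^k\sim n^{1/a}\delta^k$ rather than $n\delta^k$. This converts the geometric decay $e^{-cn}$ in the number of steps into a decay of $e^{-\nu[\delta^{-k}d]^a}$ in distance. The H\"{o}lder bound (ii) follows by combining this coefficient decay with the $\theta$-regularity of each $s^k_\alpha$ and summing with the doubling estimate \eqref{eq-doub}.
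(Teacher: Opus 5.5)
Your proposal takes the same route as the paper, which does not prove this lemma but recalls it from \cite[Theorem 7.1]{AH13} in the same setting and with the same labelling $\cg_k=\ci_{k+1}\setminus\ci_k$, $y^k_\alpha=z^{k+1}_\alpha$. Your optional sketch of the construction is only heuristic in places---for instance, $V_k\subset V_{k+1}$ comes from the two-scale relation $s^k_\alpha=\sum_{\beta}p^k_{\alpha\beta}\,s^{k+1}_\beta$, where $p^k_{\alpha\beta}$ is the probability that $(k+1,\beta)$ is a child of $(k,\alpha)$, and this needs the independence of the random parent choices across levels, not just nestedness of cubes and $\ci_k\subset\ci_{k+1}$; also an $n$-step chain reaches distance $\lesssim n\,A_0^{2\log_2 n}\delta^k=n^{1/a}\delta^k$, not $A_0^{2\log_2 n}\delta^k$---but since the citation carries the proof, this does not affect correctness.
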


\begin{lemma}\label{lem-wave-repro}
Let $\theta\in(0,1)$, $\{\psi^k_\alpha:\ k\in\zz,\ \alpha\in\cg_k\}$ be the same as in Lemma \ref{lem-def-basis}, and $\beta,\gamma\in(0,\theta)$. Then, for any $f\in \mathring{\mathcal{G}}_0^\theta(\beta,\gamma)$
[resp. $f\in (\mathring{\mathcal{G}}_0^\theta(\beta,\gamma))'$],
$$
f=\sum_{k\in\zz}\sum_{\alpha\in\cg_k}\lf\langle f,\psi^k_\alpha\r\rangle\psi^k_\alpha
$$
in $\mathring{\mathcal{G}}_0^\theta(\beta,\gamma)$ [resp. $(\mathring{\mathcal{G}}_0^\theta(\beta,\gamma))'$].
\end{lemma}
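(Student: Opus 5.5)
The statement to prove is Lemma \ref{lem-wave-repro}: the wavelet reproducing formula on test functions and on distributions. I would derive it from the orthonormal basis in Lemma \ref{lem-def-basis} together with the standard test-function estimates, in two stages.

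\textbf{Step 1: reduction to a truncated operator on $\mathring{\mathcal{G}}(\theta,\theta)$.} For $f\in\mathring{\mathcal{G}}(\theta,\theta)$, which is dense in $\mathring{\mathcal{G}}_0^\theta(\beta,\gamma)$, the function $f$ lies in $L^2(\cx)$. Hence
\[
f=\sum_{k}\sum_{\alpha\in\cg_k}\langle f,\psi^k_\alpha\rangle\psi^k_\alpha \quad \text{in } L^2(\cx).
\]
For $N\in\nn$, set
\[
D_N:=\sum_{|k|\le N}\sum_{\alpha\in\cg_k}\psi^k_\alpha\otimes\psi^k_\alpha .
\]
Then $f-D_Nf$ is the tail $\sum_{|k|>N}\sum_{\alpha}\langle f,\psi^k_\alpha\rangle\psi^k_\alpha$. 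The task is to show that $\|f-D_Nf\|_{\mathcal{G}(\beta,\gamma)}\to0$.

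\textbf{Step 2: almost-orthogonality of the kernels.} For each $k$, the kernel $D_k(x,y):=\sum_{\alpha\in\cg_k}\psi^k_\alpha(x)\overline{\psi^k_\alpha(y)}$ behaves like an exp-ATI kernel. This follows from Lemma \ref{lem-def-basis}(i)--(iii): the points $y^k_\alpha$ are $\delta^{k+1}$-separated, so summing the exponential decays over $\alpha$ is controlled by doubling. This gives the size bound $|D_k(x,y)|\ls D_\gamma(x,y;\delta^k)$, regularity of order $\theta$ in each variable, and cancellation in each variable.

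Pairing these kernels with $f\in\mathring{\mathcal{G}}(\theta,\theta)$ then gives, via Lemma \ref{lem-X-basic}(iii)--(v),
\[
|D_kf(x)|+\delta^{-k\beta'}\text{-H\"older quotient}\ls \delta^{|k|\sigma}D_{\gamma'}(x_0,x;1)
\]
for some $\sigma>0$ and for any $\beta'<\theta$, $\gamma'<\theta$. Two ingredients drive this estimate. When $k\ge0$, the cancellation of $D_k$ in $y$ is paired against the $\theta$-regularity of $f$. When $k<0$, the cancellation of $f$ is paired against the regularity of $D_k$.

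Summing the geometric factors over $|k|>N$ yields $\|f-D_Nf\|_{\mathcal{G}(\beta,\gamma)}\ls\delta^{N\sigma'}\|f\|_{\mathcal{G}(\theta,\theta)}$. Here one uses $\beta,\gamma<\theta$ to trade a small power when interpolating between the size and regularity bounds. The same bounds show $D_N$ is bounded on $\mathring{\mathcal{G}}_0^\theta(\beta,\gamma)$ uniformly in $N$, so a density argument extends the convergence to all of $\mathring{\mathcal{G}}_0^\theta(\beta,\gamma)$.

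\textbf{Step 3: duality for distributions.} For $f\in(\mathring{\mathcal{G}}_0^\theta(\beta,\gamma))'$, each $\psi^k_\alpha$ lies in $\mathring{\mathcal{G}}(\theta,\theta)$, so the coefficients $\langle f,\psi^k_\alpha\rangle$ are well defined. Take a test function $\varphi$. Because $D_N$ is symmetric up to conjugation, we have $\langle D_Nf,\varphi\rangle=\langle f,D_N\varphi\rangle$, and Step 2 shows $D_N\varphi\to\varphi$ in $\mathring{\mathcal{G}}_0^\theta(\beta,\gamma)$. Continuity of $f$ then gives weak-$*$ convergence. The interchange of the inner $\alpha$-sum with the pairing is justified by the convergence of that sum in the test-function topology, which again follows from the exponential decay.

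\textbf{Main obstacle.} The key difficulty is Step 2, the uniform test-function estimate for $D_kf$ with geometric decay in $|k|$. Spaces of homogeneous type lack reverse doubling, so the decay in $|k|$ must come from the exponential factor and the cancellation rather than from measure growth.

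Rather than recomputing everything from scratch, I would first try to reuse the exp-ATI machinery. This amounts to verifying that $\{D_k\}$ satisfies Definition \ref{def-exp-ATI}(ii)--(v), which holds by \cite{AH13}, and then quoting the known boundedness and convergence of $\sum_{|k|\le N}Q_k$ on test spaces from \cite{HLYY19}. The identity condition is exactly $\sum_k D_k=I$ in $L^2(\cx)$, which holds by orthonormality and completeness of the basis.
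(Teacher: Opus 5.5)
The paper gives no proof of this lemma; it quotes it from \cite[Theorem 3.4 and Corollary 3.5]{HLW18}, with the basis from \cite[Theorem 7.1]{AH13}. Your outline is a self-contained reconstruction of the standard argument behind such formulas:
\begin{enumerate}
\item[(a)] treat $D_k(x,y)=\sum_{\alpha\in\cg_k}\psi^k_\alpha(x)\overline{\psi^k_\alpha(y)}$ as an exp-ATI, which is consistent with how the paper obtains exp-ATIs from \cite[Theorem 7.1]{AH13};
\item[(b)] prove geometric decay in $|k|$ of $\|D_kf\|_{\mathcal{G}(\beta,\gamma)}$ for $f$ in the dense class $\mathring{\mathcal{G}}(\theta,\theta)$, using cancellation of $D_k$ against the regularity of $f$ at fine scales and cancellation of $f$ against the regularity of $D_k$ at coarse scales, where $\gamma<\theta$ produces the gain;
\item[(c)] extend by density and dualize using the symmetry of $D_N$.
\end{enumerate}
Step (b) is sound. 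Compared with the bare citation, your route makes explicit the passage from smooth $f$ to the whole completion $\mathring{\mathcal{G}}_0^\theta(\beta,\gamma)$, which is not automatic. The price is that you must supply the kernel estimates yourself.

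The step whose justification fails as written is the uniform-in-$N$ bound for $D_N:=\sum_{|k|\le N}D_k$ on $\mathring{\mathcal{G}}(\beta,\gamma)$, which you say follows from ``the same bounds''. It does not. Your Step 2 bounds get their factor $\delta^{|k|\sigma}$ from the extra smoothness and decay of $f\in\mathring{\mathcal{G}}(\theta,\theta)$. For $f$ merely in $\mathring{\mathcal{G}}(\beta,\gamma)$, an individual $\|D_kf\|_{\mathcal{G}(\beta,\gamma)}$ need not decay in $|k|$: the H\"older part tested at $d(x,x')\sim\delta^k$ is of the same order as what is allowed for $f$ itself. Summing norms over $|k|\le N$ therefore gives a bound of order $N$.

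The uniform bound has to come from treating $\sum_{|k|\le N}D_k$ as a single Calder\'on--Zygmund kernel. For fixed $x,x'$, sum the pointwise estimates over $k$, splitting at $\delta^k\sim d(x,x')$ for the H\"older quotient and at $\delta^k\sim 1+d(x_0,x)$ for the decay. Use the regularity of $D_k$ above the split, its second-difference regularity at coarse scales (which follows from the product structure of $D_k$), and size bounds below the split. All resulting series are geometric exactly because $\beta,\gamma<\theta$. Equivalently, you can invoke the known boundedness on $\mathring{\mathcal{G}}(\beta,\gamma)$ of operators with $\theta$-regular Calder\'on--Zygmund kernels satisfying $T1=T^*1=0$, applied to $\sum_{|k|\le N}D_k$ with constants independent of $N$. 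Your fallback to \cite{HLYY19} should cite this fact, not an identity condition, since $\sum_k Q_k=I$ is only assumed in $L^2(\cx)$.

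Both your density step and Step 3 rest on this uniform bound. Step 3 needs $D_N\varphi\to\varphi$ for every $\varphi\in\mathring{\mathcal{G}}_0^\theta(\beta,\gamma)$, not only for smooth $\varphi$. Once the bound is supplied, your argument is complete.
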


Next, we introduce the
homogeneous matrix-weighted grand logarithmic Besov spaces.

\begin{definition}
Let $\theta$ be the same as in Definition \ref{def-exp-ATI}, $\beta,\gamma\in(0,\theta)$, $s\in(-\theta,\theta)$,
$b:=(b_+,b_-)$ be a pair of real numbers,
$p\in(0,\fz)$, $W\in A_p(\cx,\cc^m)$, $q\in(0,\fz]$, and, for any $k\in\zz$ and $x\in\cx$,
\begin{equation}\label{eq-def-Fk}
\mathcal{F}_k(x):=\lf\{\phi\in\mathring{\mathcal{G}}_0^\theta(\beta,\gamma):\ \lf\|\phi\r\|_{\mathring{\mathcal{G}}(x,\delta^k,\beta,\gamma)}\le1\r\}.
\end{equation}
The \emph{homogeneous matrix-weighted grand logarithmic Besov space},
$\mathcal{A}\dot{B}^{s,b}_{p,q}(W)$, is defined by setting
$$
\mathcal{A}\dot{B}^{s,b}_{p,q}(W):=\lf\{\vec{f}\in
\lf[\lf(\mathring{\mathcal{G}}_0^\theta(\beta,\gamma)\r)'\r]^m:\
\lf\|\vec{f}\,\r\|_{\mathcal{A}\dot{B}^{s,b}_{p,q}(W)}<\fz\r\},
$$
where, for any $\vec{f}\in[(\mathring{\mathcal{G}}_0^\theta(\beta,\gamma))']^m$,
$$
\lf\|\vec{f}\,\r\|_{\mathcal{A}\dot{B}^{s,b}_{p,q}(W)}:=
\lf\|\lf\{\sup_{\phi\in\mathcal{F}_k(\cdot)}
\lf\|W^{\f 1 p}(\cdot)\lf\langle\vec{f},\phi\r\rangle\r\|\r\}_{k\in\zz}\r\|_{\ell^q_{s,b}(\cx,L^p)}
$$
with the usual modification made when $q=\fz$.
\end{definition}

We have the following results.

\begin{lemma}\label{lem-phi,in,Fk(x)}
Let $\theta$ be the same as in Definition \ref{def-exp-ATI}, $\beta,\gamma\in(0,\theta)$,
and, for any $k\in\zz$ and $x\in\cx$, $\mathcal{F}_k(x)$ be the same as in \eqref{eq-def-Fk}.
\begin{enumerate}
\item[\rm(i)]
Let $\{Q_k\}_{k\in\zz}$ be a 1-exp-ATI.
Then, for any $k\in\zz$ and $x,y\in\cx$ with $d(x,y)\in(0,\delta^k]$,
the function $\phi_k^{x,y}$, defined by setting, for any $z\in\cx$,
$$
\phi_k^{x,y}(z):=Q_k(x,z)-Q_k(y,z),
$$
belongs to $\mathcal{F}_k(x)$.
\item[\rm(ii)]
There exists a positive constant $C$ such that,
for any $k\in\zz$ and $x,y\in\cx$ with $d(x,y)\in(0,\delta^k]$,
$C\phi\in\mathcal{F}_k(y)$ whenever $\phi\in\mathcal{F}_k(x)$.
\end{enumerate}
\end{lemma}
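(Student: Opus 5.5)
The plan is to verify directly that $\phi_k^{x,y}$ satisfies the size condition \eqref{eq-def-decay}, the regularity condition \eqref{eq-def-derivative} with $x_1:=x$ and $r:=\delta^k$, and the cancellation condition in \eqref{eq-def-Go}. Each bound should come with a constant independent of $k,x,y$. After that, (i) follows by normalizing: strictly we only get $C\phi_k^{x,y}\in\mathcal{F}_k(x)$, and the statement should be read up to this harmless constant, as is customary. Part (ii) will be a purely geometric change of base point in the test-function norm.

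\emph{Part (i).} Cancellation is immediate from Definition \ref{def-1expATI}(ii), since $\int_\cx Q_k(x,z)\,d\mu(z)=1=\int_\cx Q_k(y,z)\,d\mu(z)$. Membership in the completion $\mathring{\mathcal{G}}_0^\theta(\beta,\gamma)$ will follow because $Q_k(x,\cdot)$ satisfies the size and regularity estimates with exponents $\theta$ and with any $\gamma$. These are the analogues of \eqref{eq-ATI-1} and \eqref{eq-ATI-2}, obtained from the exponential decay exactly as in \cite[Proposition 2.10]{HLYY19}, so $\phi_k^{x,y}\in\mathring{\mathcal{G}}(\theta,\theta)$.

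For the size bound, I will use \eqref{eq-ATI-2} in the first variable. Since $d(x,y)\le\delta^k\le\delta^k+d(x,z)$, I split into two cases. If $d(x,y)\le(2A_0)^{-1}[\delta^k+d(x,z)]$, then \eqref{eq-ATI-2} directly gives $|\phi_k^{x,y}(z)|\ls D_\gamma(x,z;\delta^k)$. Otherwise $d(x,z)\ls\delta^k$, and I bound each term separately by \eqref{eq-ATI-1}. Lemma \ref{lem-X-basic}(i),(ii) gives $D_\gamma(y,z;\delta^k)\sim D_\gamma(x,z;\delta^k)$ whenever $d(x,y)\le\delta^k$, so this case also closes.

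For regularity, take $z,z'$ with $d(z,z')\le(2A_0)^{-1}[\delta^k+d(x,z)]$. I apply \eqref{eq-ATI-2} in the second variable to $Q_k(x,\cdot)$ and to $Q_k(y,\cdot)$ separately. Then I switch from base point $y$ to base point $x$ using Lemma \ref{lem-X-basic}, which needs $\delta^k+d(y,z)\sim\delta^k+d(x,z)$; this holds because $d(x,y)\le\delta^k$. The result is the bound $[d(z,z')/(\delta^k+d(x,z))]^\theta D_\gamma(x,z;\delta^k)$.

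The only delicate point is that the regularity range in \eqref{eq-ATI-2} is measured from $y$ rather than from $x$. When $d(z,z')$ exceeds the admissible range relative to $y$, I will fall back on the size estimate. In that regime $d(z,z')\gtrsim\delta^k+d(x,z)$, so the factor $[d(z,z')/(\delta^k+d(x,z))]^\theta\gtrsim1$, and the bound still holds. Since $\beta<\theta$, the $\theta$-bounds imply the $\beta$-bounds.

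\emph{Part (ii).} Let $\phi\in\mathcal{F}_k(x)$ and $d(x,y)\le\delta^k$. I need to show $\|\phi\|_{\mathring{\mathcal{G}}(y,\delta^k,\beta,\gamma)}\ls\|\phi\|_{\mathring{\mathcal{G}}(x,\delta^k,\beta,\gamma)}$. The quasi-triangle inequality gives $\delta^k+d(y,z)\sim\delta^k+d(x,z)$, with constants depending only on $A_0$. Together with Lemma \ref{lem-X-basic}(i),(ii), this yields $D_\gamma(x,z;\delta^k)\sim D_\gamma(y,z;\delta^k)$, so the size condition transfers directly.

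For regularity, the admissible sets $\{d(z,z')\le(2A_0)^{-1}[\delta^k+d(\cdot,z)]\}$ for base points $x$ and $y$ differ only by a constant factor. Inside the common range the estimates transfer directly. In the leftover range, where $d(z,z')\sim\delta^k+d(y,z)$, I will use the size bound at both $z$ and $z'$; here $D_\gamma(x,z';\delta^k)\ls D_\gamma(x,z;\delta^k)$ needs checking and I expect it to be the main obstacle. If $z'$ lies far from $z$ relative to $x$ this comparison can fail. In that case I will shrink the radius: apply the $x$-regularity along a chain of boundedly many intermediate points, or simply note that the required range for $y$ is contained in a constant multiple of the range for $x$ and iterate the quasi-triangle inequality a fixed number of times. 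Cancellation is unchanged, and the resulting constant $C$ is uniform in $k,x,y$.
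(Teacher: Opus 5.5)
Your route is the one the paper intends. Part (i) is the kernel computation behind \cite[Proposition 2.10]{HLYY19}, which the paper simply cites. Part (ii) is the change of base point via Lemma \ref{lem-X-basic}(i) and $\delta^k+d(x,z)\sim\delta^k+d(y,z)$. Your remark that (i) holds only up to a harmless constant is also correct.

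The gap is the leftover range in (ii), namely $(2A_0)^{-1}[\delta^k+d(x,z)]<d(z,z')\le(2A_0)^{-1}[\delta^k+d(y,z)]$, which you leave open. You doubt the size comparison at $z'$, assert that it ``can fail,'' and propose chaining through intermediate points. That remedy is unavailable on a general space of homogeneous type: a Cantor-type set has no points in its gaps, so such chains need not exist. It would not help even when they exist. Bounding each link by the $x$-regularity produces factors $D_\gamma(x,w_i;\delta^k)$ that must again be compared with $D_\gamma(y,z;\delta^k)$, which is the very comparison you were trying to avoid.

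The missing observation is that this comparison never fails in the admissible range, provided you run the quasi-triangle inequality from the base point $y$ whose range you are checking.
\begin{itemize}
\item If $d(z,z')\le(2A_0)^{-1}[\delta^k+d(y,z)]$, then $d(y,z)\le A_0d(y,z')+A_0d(z,z')\le A_0d(y,z')+\frac12[\delta^k+d(y,z)]$. Hence $\delta^k+d(y,z)\le 2A_0[\delta^k+d(y,z')]$, and the doubling property gives $D_\gamma(y,z';\delta^k)\lesssim D_\gamma(y,z;\delta^k)$.
\item Since $d(x,y)\le\delta^k$, you also have $D_\gamma(x,w;\delta^k)\sim D_\gamma(y,w;\delta^k)$ for $w\in\{z,z'\}$.
\item Therefore, in the leftover range, $|\phi(z)-\phi(z')|\le D_\gamma(x,z;\delta^k)+D_\gamma(x,z';\delta^k)\lesssim D_\gamma(y,z;\delta^k)$.
\item Moreover, $d(z,z')/[\delta^k+d(y,z)]\ge[2A_0(1+A_0)]^{-1}$ there, so this is exactly \eqref{eq-def-derivative} at base $y$.
\end{itemize}
Run from $x$ instead, the computation does not close in general, because in the leftover range $d(z,z')$ exceeds $(2A_0)^{-1}[\delta^k+d(x,z)]$ by definition. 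This is presumably the source of your worry.

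Your size fallback for $Q_k(y,\cdot)$ in (i) silently relies on the same comparison, $|Q_k(y,z')|\lesssim D_\gamma(x,z;\delta^k)$. There it follows from the identical computation run from $x$, because in (i) the admissible range is $d(z,z')\le(2A_0)^{-1}[\delta^k+d(x,z)]$.
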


Lemma \ref{lem-phi,in,Fk(x)}(i) follows from \cite[Proposition 2.10]{HLYY19} (see also \cite[Lemma 3.22]{AWYY23})
and (ii) is easy to be checked by using Lemma \ref{lem-X-basic}(i) and the fact that,
for any $k\in\zz$ and $x,y,z\in\cx$ with $d(x,y)\in(0,\delta^k]$,
$$
\delta^k+d(x,z)\sim \delta^k+d(y,z).
$$
We omit further details here.

Now, we establish the equivalence between the matrix-weighted logarithmic Besov spaces and
the matrix-weighted grand  logarithmic Besov spaces.

\begin{theorem}\label{thm-B=AB}
Let $\theta$ be the same as in Definition \ref{def-exp-ATI}, $\beta,\gamma\in(0,\theta)$,
$s\in(-[\beta\wedge\gamma],\beta\wedge\gamma)$, $b:=(b_+,b_-)$ be a pair of real numbers,
$p\in(p(s,\beta\wedge\gamma),\fz)$, $q\in(0,\fz]$,
and $W\in A_p(\cx,\cc^m)$. Then
$\dot{B}^{s,b}_{p,q}(W)=\mathcal{A}\dot{B}^{s,b}_{p,q}(W)$
with equivalent quasi-norms.
\end{theorem}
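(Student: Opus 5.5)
The proof has two inequalities; the easy one is
$\|\vec f\|_{\dot{B}^{s,b}_{p,q}(W)}\lesssim\|\vec f\|_{\mathcal{A}\dot{B}^{s,b}_{p,q}(W)}$.
By \eqref{eq-ATI-1}, \eqref{eq-ATI-2} and the cancellation condition of Definition \ref{def-exp-ATI}, for any $k\in\zz$ and $x\in\cx$ the function $Q_k(x,\cdot)$ is, up to a uniform constant $C$, an element of $\mathcal{F}_k(x)$. Here we use that $\beta<\theta$, and that $\gamma$ in \eqref{eq-ATI-1} may be taken arbitrarily large. Hence, pointwise,
$$\|W^{1/p}(x)Q_k\vec f(x)\|\le C\sup_{\phi\in\mathcal F_k(x)}\|W^{1/p}(x)\langle\vec f,\phi\rangle\|.$$
Taking the $\ell^q_{s,b}(\cx,L^p)$ quasi-norm of both sides gives this direction at once.

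For the converse, fix $k$, $x$ and $\phi\in\mathcal F_k(x)$. Expand $\vec f$ by the wavelet reproducing formula of Lemma \ref{lem-wave-repro}:
$$\langle\vec f,\phi\rangle=\sum_{j\in\zz}\sum_{\alpha\in\cg_j}\langle\vec f,\psi^j_\alpha\rangle\langle\psi^j_\alpha,\phi\rangle .$$
The standard almost-orthogonality estimates follow from the decay, regularity and cancellation in Lemma \ref{lem-def-basis} together with the smoothness and cancellation of $\phi$. They give
$$|\langle\psi^j_\alpha,\phi\rangle|\lesssim\delta^{|j-k|\beta'}\sqrt{\mu(Q^j_\alpha)}\,D_{\gamma'}(x,y^j_\alpha;\delta^{j\wedge k})$$
for any $\beta'<\beta\wedge\gamma$ (taken with $|s|<\beta'$) and some $\gamma'>0$. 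Here the exponential decay of the wavelets is what lets us avoid any reverse doubling assumption.

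Next we pass from wavelet coefficients to the exp-ATI. Write $\langle\vec f,\psi^j_\alpha\rangle$ via a Calder\'on-type reproducing formula in terms of $Q_j\vec f$ on the cube containing $y^j_\alpha$. Alternatively, and more simply, bound $\sqrt{\mu(Q^j_\alpha)}\,|\langle\vec f,\psi^j_\alpha\rangle|$ by averages of $\|Q_{j'}\vec f\|$ for $j'$ near $j$, using the reproducing formula of \cite{HLYY19} for exp-ATIs. Then multiply by $W^{1/p}(x)$ and insert reducing operators:
$$\|W^{1/p}(x)\vec v\|\le\|W^{1/p}(x)A_{Q^k_x}^{-1}\|\,\|A_{Q^k_x}A_{Q^j_\alpha}^{-1}\|\,\|A_{Q^j_\alpha}\vec v\|.$$
Estimate the middle factor by Lemma \ref{lem-sharp est} in the form \eqref{eq-sharp-est-p}, which costs powers $\delta^{-|j-k|(d\vee\wz d_{(p)})/p}$ and $(1+d/\ell)^{(d+\wz d_{(p)})/p}$. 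These polynomial losses are absorbed by the exponential decay and by $\delta^{|j-k|\beta'}$, choosing $\beta'$ appropriately. This is where the restriction $p>p(s,\beta\wedge\gamma)$ enters.

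The resulting sum over $\alpha$ is controlled by a Fefferman--Stein/Peetre-type maximal function. For $p\le1$ we use \eqref{triangle} with exponent $r<p$ close to $p$, and the doubling property, so that
$$\sum_\alpha\mu(Q^j_\alpha)D_{\gamma'}(\cdots)\|A_{Q^j_\alpha}\vec c_{j,\alpha}\|\lesssim \delta^{-|j-k|D(1/r-1)_+}\,\big[\cm(\|\cdot\|^r)\big]^{1/r}.$$
We then return from $\gamma_k$ and the $A_Q$ to $W^{1/p}$ via the matrix-weighted maximal operator bounds underlying Lemma \ref{lem-rkEkf} and Lemma \ref{lem-AQW}, that is, the $L^p$-boundedness of the matrix-weighted maximal operator in the version of \cite{BYY23}. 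This yields
$$\Big\|\sup_{\phi\in\mathcal F_k}\|W^{1/p}\langle\vec f,\phi\rangle\|\Big\|_{L^p}\lesssim\sum_{j}\delta^{|j-k|\epsilon_0}\,\|W^{1/p}Q_j\vec f\|_{L^p}$$
for some $\epsilon_0>|s|$. Finally, the factor $L_{s,b}(\delta^k)/L_{s,b}(\delta^j)\lesssim\delta^{(k-j)s}(1+|j-k|)^{|b_+|+|b_-|}$, obtained from the two-sided bound on $L_{s,b}$ recorded at the start of Section \ref{sec-poincare}, is absorbed by the geometric factor. Young's inequality for sequences (or \eqref{triangle} when $q\le1$) then gives the $\ell^q$ bound.

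The main obstacle is the second step: obtaining the almost-orthogonality decay with exponent strictly larger than $|s|+(d\vee\wz d_{(p)})/p+D(1/r-1)_+$ while losing no reverse doubling. The $A_p$ dimension must be exploited sharply there through \eqref{eq-sharp-est-p}. If the exponents do not close, I would first try replacing the crude bound on $\|A_{Q^k_x}A_{Q^j_\alpha}^{-1}\|$ by the averaged estimate \eqref{lem-eq-W,AP',p<1}/\eqref{lem-eq-W,AP',p>1}, applied inside the integral over $x$.
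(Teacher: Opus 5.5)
\paragraph{Where the proposal fails.} Your first direction is correct and is the paper's argument (\cite[Remark 3.2]{AWYY23}). So is the start of the converse: the wavelet expansion together with the almost-orthogonality bound of Lemma \ref{lem-est,psi,phi}. The genuine gap is the next step. There you factor $\|W^{1/p}(x)\vec v\|\le\|W^{1/p}(x)A_{Q^k_x}^{-1}\|\,\|A_{Q^k_x}A_{Q^j_\alpha}^{-1}\|\,\|A_{Q^j_\alpha}\vec v\|$ and bound the middle factor pointwise by \eqref{eq-sharp-est-p}.

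That bound is sharp (Remark \ref{remark-dp}), so it really costs two things:
\begin{itemize}
\item a scale loss $\delta^{-|j-k|d/p}$ or $\delta^{-|j-k|\wz d_{(p)}/p}$;
\item a spatial growth $[1+d(x,y^j_\alpha)/\delta^{j\wedge k}]^{(d+\wz d_{(p)})/p}$.
\end{itemize}
Only two quantities can pay for this. One is $\delta^{|j-k|\beta'}$ with $\beta'<\beta\wedge\gamma<\theta$. The other is the factor $D_{\gamma'}(x,y^j_\alpha;\delta^{j\wedge k})$, whose decay order is at most $\gamma<\theta$. There is no exponential decay in $d(x,y^j_\alpha)$ at this point, because $\phi\in\mathcal F_k(x)$ decays only polynomially.

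Now $\theta$ may be tiny and $d,\wz d$ may be close to $D$, while the hypothesis $p>p(s,\beta\wedge\gamma)$ involves neither $d$ nor $\wz d$. So the exponents do not close in general. Already for $j=k$ you would need $\gamma>(d+\wz d_{(p)})/p$ just to sum over $\alpha$. Consequently this step is not where $p>p(s,\beta\wedge\gamma)$ enters.

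Your fallback does not repair this. The estimates \eqref{lem-eq-W,AP',p<1} and \eqref{lem-eq-W,AP',p>1} only compare $W(x)$ with reducing operators of cubes containing $x$, not with the distant cube $Q^j_\alpha$.

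\paragraph{The missing idea.} No reducing operators are needed in this proof. Below, use the paper's indexing: wavelets at level $k$, and $\phi\in\mathcal F_l(x)$.

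For $p\in(1,\fz)$, keep $W^{1/p}(x)$ and write $W^{1/p}(x)\vec a=W^{1/p}(x)W^{-1/p}(y)[W^{1/p}(y)\vec a]$ for $y\in Q^{k+1}_\alpha$. Splitting into annuli around $x$ bounds the whole $\alpha$-sum by $\cm_{W,p}\big(W^{1/p}\sum_{\alpha}\langle\vec f,\psi^k_\alpha\rangle\wz{\mathbf 1}_{Q^{k+1}_\alpha}\big)(x)$; this is Lemma \ref{lem-est-Mw} with $A=1$. The $L^p$-boundedness of $\cm_{W,p}$, which is equivalent to $W\in A_p(\cx,\cc^m)$, then gives $\|\sum_\alpha\langle\vec f,\psi^k_\alpha\rangle\wz{\mathbf 1}_{Q^{k+1}_\alpha}\|_{L^p(W)}$ with no loss in $|k-l|$.

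For $p\in(p(s,\beta\wedge\gamma),1]$, proceed as follows.
\begin{enumerate}
\item Use \eqref{triangle} and integrate in $x$ first.
\item Apply the $A_p$ condition on the balls $B(y^k_\alpha,c_0\delta^{(k\wedge l)-j})$, which contain $Q^{k+1}_\alpha$. This compares $W(x)$ with $W(z)$, $z\in Q^{k+1}_\alpha$, uniformly in the radius, so it is the lossless direction.
\item The only loss left is the factor $\delta^{[(k\wedge l)-k]D(1-p)}$ in \eqref{eq-claim1}. This is exactly what $p>p(s,\beta\wedge\gamma)$ pays for.
\end{enumerate}

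Finally, pass from these wavelet-coefficient functions to $\|Q_{k'}\vec f\|_{L^p(W)}$ using the estimates of \cite[pp.\,51--52]{BYY23}. These involve all $k'\in\zz$ with decay $\delta^{|k-k'|\eta'}$, and for $p\le1$ also the extra factor $\delta^{[(k\wedge k')-k']D(1-p)}$. They do not involve only $k'$ near $k$, as you wrote.
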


To prove Theorem \ref{thm-B=AB},
we do some preparations.

The following Lemma \ref{lem-bas&test} reveals the relation between
the wavelet basis and the test functions,
which is just \cite[Theorem 3.3]{HLW18}.

\begin{lemma}\label{lem-bas&test}
Let $\theta\in(0,1)$, $\{\psi^k_\alpha:\ k\in\zz,\ \alpha\in\cg_k\}$ be the same as in Lemma \ref{lem-def-basis},
and $\gamma\in(0,\fz)$. Then there exists a positive constant $C$ such that,
for any $k\in\zz$ and $\alpha\in\cg_k$,
$$
\lf\|\psi^k_\alpha\r\|_{\mathcal{G}(y^k_\alpha,\delta^k,\theta,\gamma)}
\le C\lf[\mu\lf(Q^{k+1}_\alpha\r)\r]^{\f12}.
$$
\end{lemma}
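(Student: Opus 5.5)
The plan is to verify the two defining conditions of a test function of type $(y^k_\alpha,\delta^k,\theta,\gamma)$ directly from Lemma \ref{lem-def-basis}. The idea is to absorb the exponential decay into the polynomial factor $[\delta^k/(\delta^k+d(y^k_\alpha,x))]^\gamma$ and to control the volume factors through the doubling property. Throughout write $r:=\delta^k$ and $y:=y^k_\alpha=z^{k+1}_\alpha$. By Lemma \ref{lem-dyacube}(iii) and doubling, $\mu(Q^{k+1}_\alpha)\sim V_{\delta^{k+1}}(y)\sim V_r(y)$, with constants depending only on $\delta,C_\mu,A_0,c_0,C_0$. Hence $[V_r(y)]^{-1/2}\sim [V_r(y)]^{-1}[\mu(Q^{k+1}_\alpha)]^{1/2}$, and it suffices to show, for any $x\in\cx$,
\[
\exp\lf\{-\nu\lf[d(y,x)/r\r]^a\r\}\ls \f{V_r(y)}{V_r(y)+V(y,x)}\lf[\f{r}{r+d(y,x)}\r]^\gamma .
\]
Set $t:=d(y,x)/r$. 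By \eqref{eq-doub}, $V_r(y)+V(y,x)\ls (1+t)^D V_r(y)$, so the right-hand side is at least a constant times $(1+t)^{-D-\gamma}$. Since $e^{-\nu t^a}(1+t)^{D+\gamma}$ is bounded on $[0,\fz)$, this gives the size condition \eqref{eq-def-decay} with constant $C[\mu(Q^{k+1}_\alpha)]^{1/2}$.

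For the regularity condition \eqref{eq-def-derivative}, take $x,x'$ with $d(x,x')\le(2A_0)^{-1}[r+d(y,x)]$ and split into two cases.

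In the first case, $d(x,x')\le r$. Lemma \ref{lem-def-basis}(ii) gives the bound $[V_r(y)]^{-1/2}(d(x,x')/r)^\theta e^{-\nu t^a}$. We need this in the form $[d(x,x')/(r+d(y,x))]^\theta$, so write
\[
\lf(\f{d(x,x')}{r}\r)^\theta=\lf[\f{d(x,x')}{r+d(y,x)}\r]^\theta(1+t)^\theta .
\]
The extra factor $(1+t)^\theta$ is absorbed by the exponential exactly as in the size estimate, now using boundedness of $e^{-\nu t^a}(1+t)^{D+\gamma+\theta}$.

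In the second case, $r<d(x,x')\le(2A_0)^{-1}[r+d(y,x)]$. Here use the size bound on each term: $|\psi(x)-\psi(x')|\le|\psi(x)|+|\psi(x')|$. By the quasi-triangle inequality, $r+d(y,x')\sim r+d(y,x)$, because $d(x,x')$ is at most half of $r+d(y,x)$ up to the $A_0$ factor. Lemma \ref{lem-X-basic}(ii) then gives $V_r(y)+V(y,x')\sim V_r(y)+V(y,x)$, so both terms are $\ls [\mu(Q^{k+1}_\alpha)]^{1/2}D_\gamma(y,x;r)$. Finally $1<d(x,x')/r$, so
\[
1\le \lf[\f{d(x,x')}{r}\r]^\theta=\lf[\f{d(x,x')}{r+d(y,x)}\r]^\theta(1+t)^\theta,
\]
and the factor $(1+t)^\theta$ is again absorbed by running the size argument with the exponent $D+\gamma+\theta$ in place of $D+\gamma$.

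The main obstacle is the bookkeeping in the second case: the comparability $d(y,x')\sim d(y,x)$ must be checked carefully with the $A_0$ constants. When $d(y,x)$ is small compared with $r$, both sides are simply $\sim r$, so that sub-case is harmless; otherwise the quasi-triangle inequality gives the comparison directly. Since $\theta$ is the regularity exponent from Lemma \ref{lem-def-basis}, no loss in the Hölder exponent occurs, and all constants are uniform in $k$ and $\alpha$.
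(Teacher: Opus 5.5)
Your argument is correct. It is not the paper's argument in any real sense, because the paper proves nothing here and simply quotes [HLW18, Theorem 3.3].

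Your proof is a self-contained verification. It rests on three inputs:
\begin{enumerate}
\item the decay and H\"older bounds of Lemma \ref{lem-def-basis};
\item the inclusions of Lemma \ref{lem-dyacube}(iii), which with doubling give $\mu(Q^{k+1}_\alpha)\sim V_{\delta^k}(y^k_\alpha)$ uniformly in $k,\alpha$, so that $[V_{\delta^k}(y^k_\alpha)]^{-1/2}\sim[\mu(Q^{k+1}_\alpha)]^{1/2}/V_{\delta^k}(y^k_\alpha)$;
\item the elementary fact that $e^{-\nu t^a}(1+t)^M$ is bounded on $[0,\infty)$ for every $M\ge0$, which is also why the conclusion holds for every $\gamma\in(0,\infty)$.
\end{enumerate}

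Splitting the regularity condition at $d(x,x')=\delta^k$ is the right move. Lemma \ref{lem-def-basis}(ii) is only available when $d(x,x')\le\delta^k$. In the complementary range, the lower bound $d(x,x')/(\delta^k+d(y^k_\alpha,x))\ge(1+t)^{-1}$ lets the size estimate do the work.

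Two minor remarks on your second case.
\begin{itemize}
\item The condition $\delta^k<d(x,x')\le(2A_0)^{-1}[\delta^k+d(y^k_\alpha,x)]$ forces $t>2A_0-1\ge1$. So the sub-case ``$d(y,x)$ small compared with $r$'' you worry about cannot occur there.
\item You only need the one-sided bounds $\delta^k+d(y^k_\alpha,x)\lesssim\delta^k+d(y^k_\alpha,x')$ and $V_{\delta^k}(y^k_\alpha)+V(y^k_\alpha,x)\lesssim V_{\delta^k}(y^k_\alpha)+V(y^k_\alpha,x')$. The latter is exactly the direction Lemma \ref{lem-X-basic}(ii) gives. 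You can even skip the volume comparison: from $\delta^k+d(y^k_\alpha,x')\sim\delta^k+d(y^k_\alpha,x)$ you get $|\psi^k_\alpha(x')|\lesssim[V_{\delta^k}(y^k_\alpha)]^{-1/2}(1+t)^{-D-\gamma-\theta}$ directly.
\end{itemize}

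Compared with the bare citation, your route costs a paragraph. In exchange it makes explicit that the constant depends only on $C_\mu$, $A_0$, $\delta$, $c_0$, $C_0$, $\nu$, $a$, $\gamma$, $\theta$ and the constant in Lemma \ref{lem-def-basis}.
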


The proof of the following lemma uses some ideas from the proof of \cite[Lemma 3.10]{WHHY21};
for the convenience of the reader,
we give some details.

\begin{lemma}\label{lem-est,psi,phi}
Let $\theta\in(0,1)$, $\{\psi^k_\alpha:\ k\in\zz,\ \alpha\in\cg_k\}$ be the same as in Lemma \ref{lem-def-basis},
and $\beta,\gamma\in(0,\theta)$ as in Lemma \ref{lem-wave-repro}.
Then, for any given $\gamma'\in(0,\beta\wedge\gamma)$,
there exists a positive constant $C$ such that,
for any $k,j\in\zz$, $x_0\in\cx$, and $\phi\in\mathcal{F}_j(x_0)$,
\begin{align*}
\lf|\lf\langle\psi^k_\alpha,\phi\r\rangle\r|\le
C\delta^{|k-j|\gamma'}\lf[\mu\lf(Q_{\alpha}^{k+1}\r)\r]^{\f12}D_\gamma\lf(x_0,y_\alpha^k;\delta^{k\wedge j}\r),
\end{align*}
where $D_\gamma(x_0,x_P;\delta^{k\wedge j})$ is defined as in \eqref{eq-def-D}.
\end{lemma}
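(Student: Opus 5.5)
We split into the two cases $k\ge j$ and $k<j$, and in each case exploit the cancellation of the function living at the smaller scale against the regularity of the function living at the larger scale. By Lemma \ref{lem-bas&test}, $\psi^k_\alpha/[\mu(Q^{k+1}_\alpha)]^{1/2}$ is, up to a constant, a test function of type $(y^k_\alpha,\delta^k,\theta,\gamma)$ with vanishing integral (Lemma \ref{lem-def-basis}(iii)). Also, $\phi\in\mathcal{F}_j(x_0)$ is a test function of type $(x_0,\delta^j,\beta,\gamma)$ with norm at most $1$ and vanishing integral. The claim then reduces to the standard almost-orthogonality estimate for two test functions of types $(x_1,r_1,\beta_1,\gamma)$ and $(x_2,r_2,\beta_2,\gamma)$ with cancellation. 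The factor is $(r_1\wedge r_2)/(r_1\vee r_2)$ raised to a power strictly less than $\min\{\beta_1,\beta_2\}$, so that power can be taken to be any $\gamma'<\beta\wedge\gamma$ (since $\theta>\beta$). The decay factor is $D_\gamma(x_1,x_2;r_1\vee r_2)$. I would follow the proof of \cite[Lemma 3.10]{WHHY21}, which is written for general $\cx$ without reverse doubling.

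\textbf{Case $k\ge j$} ($\psi$ is the finer function). Write
\[
\langle\psi^k_\alpha,\phi\rangle=\int_\cx\psi^k_\alpha(z)\,[\phi(z)-\phi(y^k_\alpha)]\,d\mu(z)
\]
and split $\cx$ into $R_1:=\{z:\ d(z,y^k_\alpha)\le(2A_0)^{-1}[\delta^j+d(x_0,y^k_\alpha)]\}$ and its complement $R_2$. On $R_1$, apply the regularity \eqref{eq-def-derivative} of $\phi$ to bound $|\phi(z)-\phi(y^k_\alpha)|$ by
\[
[d(z,y^k_\alpha)/(\delta^j+d(x_0,y^k_\alpha))]^{\beta}\,D_\gamma(x_0,y^k_\alpha;\delta^j).
\]
Then integrate against the size bound of $\psi^k_\alpha$; Lemma \ref{lem-X-basic}(iii)/(iv) applied to $\psi$'s bound times $d(z,y^k_\alpha)^{\gamma'}$ produces $\delta^{k\gamma'}$. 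Since $\beta>\gamma'$, the resulting factor $[\delta^k/(\delta^j+d(x_0,y_\alpha^k))]^{\gamma'}$ is at most $\delta^{(k-j)\gamma'}$.

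On $R_2$, bound the two terms separately. The term with $\phi(y^k_\alpha)$ is controlled by $D_\gamma(x_0,y^k_\alpha;\delta^j)$ times the tail integral of $\psi$ from Lemma \ref{lem-X-basic}(v), which gives $[\delta^k/(\delta^j+d(x_0,y^k_\alpha))]^\gamma$. For the term with $\phi(z)$, use $d(z,y^k_\alpha)\gtrsim\delta^j+d(x_0,y^k_\alpha)$; the $\psi$-decay then supplies the same small factor times $\mu(B(y^k_\alpha,\delta^j+d(x_0,y^k_\alpha)))^{-1}$, and $\int|\phi|\lesssim1$. The point to watch is getting $D_\gamma$ with the \emph{coarser} scale $\delta^{j}=\delta^{k\wedge j}$. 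Here Lemma \ref{lem-X-basic}(i)/(ii) converts $V_{\delta^j}(y^k_\alpha)+V(y^k_\alpha,x_0)$ into $V_{\delta^j}(x_0)+V(x_0,y^k_\alpha)$.

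\textbf{Case $k<j$} is symmetric. Subtract $\psi^k_\alpha(x_0)$ using $\int\phi=0$, and use the $\theta$-regularity of $\psi$ (Lemma \ref{lem-def-basis}(ii) / Lemma \ref{lem-bas&test}) on the region near $x_0$ of radius $\sim\delta^k+d(x_0,y^k_\alpha)$. This gives $\delta^{(j-k)\gamma'}$ with $\gamma'<\beta\wedge\gamma<\theta$, together with $D_\gamma(y^k_\alpha,x_0;\delta^k)\sim D_\gamma(x_0,y^k_\alpha;\delta^k)$ by Lemma \ref{lem-X-basic}(i).

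The main obstacle is the far-region estimate, where the power $\gamma$ of decay must be preserved while some exponent is spent on $\delta^{|k-j|\gamma'}$. I expect to lose only $\gamma-\gamma'>0$ there, because the factors of the form $[\delta^{k\vee j}/(\delta^{k\wedge j}+d)]^{\gamma}$ are at most $\delta^{|k-j|\gamma'}\,[\delta^{k\wedge j}/(\delta^{k\wedge j}+d)]^{\gamma}$ up to constants; this is exactly why the statement requires $\gamma'<\beta\wedge\gamma$ strictly. If the tail integrals only yield a decay exponent slightly below $\gamma$, I would first run the argument with the decay exponent of $\psi$ taken larger than $\gamma$ (allowed since Lemma \ref{lem-bas&test} holds for every $\gamma$). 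This leaves room in the exponents and absorbs that loss.
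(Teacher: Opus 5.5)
Your proposal is correct and follows essentially the same route as the paper. For $k\ge j$ it uses the cancellation of $\psi^k_\alpha$ against the $\beta$-regularity of $\phi$, with the same near/far split at radius $(2A_0)^{-1}[\delta^j+d(x_0,y^k_\alpha)]$ and the same three pieces controlled by Lemma \ref{lem-X-basic}; for $k<j$ it uses the cancellation of $\phi$ against the $\theta$-regularity of $\psi^k_\alpha$, a case the paper only asserts is similar and which you sketch correctly. The only difference is cosmetic: in the near region the paper takes decay exponent $\gamma'$ for $\psi^k_\alpha$ and applies Lemma \ref{lem-X-basic}(iii), while you lower $\beta$ to $\gamma'$ and integrate $D_{\gamma-\gamma'}$ via Lemma \ref{lem-X-basic}(iv), and both yield $\delta^{(k-j)\gamma'}$.
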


\begin{proof}
By similarity, we only show the case when $k\ge j$.
Applying the cancellation condition of $\psi^k_\alpha$ in Lemma \ref{lem-def-basis}(iii)
[or the definition of $\mathring{\mathcal{G}}_0^\theta(\beta,\gamma)$ for the case when $k\le j$],
we find that, for any $k,j\in\zz,\ \alpha\in\cg_k$, $x_0\in\cx$, and $\phi\in\mathcal{F}_j(x_0)$,
\begin{align*}
\lf|\lf\langle\psi^k_\alpha,\phi\r\rangle\r|&=
\lf|\int_{\cx}\psi^k_\alpha(y)\lf[\phi(y)-\phi(y_\alpha^k)\r]\,d\mu(y)\r|\\
&\le\int_{\{y\in\cx:\ d(y,y_\alpha^k)\le(2A_0)^{-1}[\delta^j+d(x_0,y_\alpha^k)]\}}
\lf|\psi^k_\alpha(y)\r|\lf|\phi(y)-\phi(y_\alpha^k)\r|\,d\mu(y)\\
&\quad+\int_{\{y\in\cx:\ d(y,y_\alpha^k)>(2A_0)^{-1}[\delta^j+d(x_0,y_\alpha^k)]\}}
\lf|\psi^k_\alpha(y)\r|\lf|\phi(y)\r|\,d\mu(y)\\
&\quad+\lf|\phi(y_\alpha^k)\r|
\int_{\{y\in\cx:\ d(y,y_\alpha^k)>(2A_0)^{-1}[\delta^j+d(x_0,y_\alpha^k)]\}}
\lf|\psi^k_\alpha(y)\r|\,d\mu(y)\\
&=:\mathrm{I}_1+\mathrm{I}_2+\mathrm{I}_3.
\end{align*}
For simplicity, in what follows, we write $R:=(2A_0)^{-1}[\delta^j+d(x_0,y_\alpha^k)]$.

Next, we estimate $\mathrm{I}_1,\ \mathrm{I}_2$, and $\mathrm{I}_3$, respectively.
For any given $\gamma'\in(0,\beta\wedge\gamma)$,
by Lemma \ref{lem-bas&test} with $\gamma$ therein replaced by $\gamma'$,
Definition \ref{def-homo-test}, Lemma \ref{lem-X-basic}(iii) with $\beta-\gamma'\in(0,\fz)$,
we have
\begin{align*}
\mathrm{I}_1&\ls\lf[\mu\lf(Q_{\alpha}^{k+1}\r)\r]^{\f12}
\int_{\{y\in\cx:\ d(y,y_\alpha^k)\le R\}}
D_{\gamma'}(y_\alpha^k,y;\delta^k)\lf[\f{d(y,y_\alpha^k)}{\delta^j+d(x_0,y_\alpha^k)}\r]^{\beta}
D_{\gamma}(x_0,y_\alpha^k;\delta^j)\,d\mu(y)\\
&\sim\delta^{(k-j)\gamma'}\lf[\mu\lf(Q_{\alpha}^{k+1}\r)\r]^{\f12} D_{\gamma+\beta}(x_0,y_\alpha^k;\delta^j)\\
&\quad\times
\int_{\{y\in\cx:\ d(y,y_\alpha^k)\le R\}}\f{\delta^{j(\gamma'-\beta)} }{V_{\delta^k}(y_\alpha^k)+V(y_\alpha^k,y)}
\f{[d(y,y_\alpha^k)]^\beta}{[\delta^k+d(y_\alpha^k,y)]^{\gamma'}}\,d\mu(y)\\
&\ls\delta^{(k-j)\gamma'}\lf[\mu\lf(Q_{\alpha}^{k+1}\r)\r]^{\f12}
D_{\gamma+\beta}(x_0,y_\alpha^k;\delta^j)
\int_{\{y\in\cx:\ d(y,y_\alpha^k)\le R\}}\f{\delta^{j(\gamma'-\beta)}}{V(y_\alpha^k,y)}
\lf[d(y,y_\alpha^k)\r]^{\beta-\gamma'}\,d\mu(y)\\
&\sim\delta^{(k-j)\gamma'}\lf[\mu\lf(Q_{\alpha}^{k+1}\r)\r]^{\f12}
D_{\gamma}(x_0,y_\alpha^k;\delta^j)\lf[\f{\delta^j}{\delta^j+d(x_0,y_\alpha^k)}\r]^{\gamma'}\\
&\quad\times\int_{\{y\in\cx:\ d(y,y_\alpha^k)\le R\}}\f{1}{V(y_\alpha^k,y)}
\lf[\f{d(y,y_\alpha^k)}{R}\r]^{\beta-\gamma'}\,d\mu(y)\\
&\ls\delta^{(k-j)\gamma'}\lf[\mu\lf(Q_{\alpha}^{k+1}\r)\r]^{\f12}
D_{\gamma}(x_0,y_\alpha^k;\delta^j).
\end{align*}
Using Lemma \ref{lem-X-basic}(i) and the doubling property of $\mu$,
we find that, for any $y\in\{y\in\cx:\ d(y,y_\alpha^k)> R\}$,
$$
V(y,y^k_\alpha)\sim\mu\lf(B(y^k_\alpha,d(y,y^k_\alpha))\r)\gtrsim
\mu\lf(B(y^k_\alpha,\delta^j+d(x_0,y^k_\alpha))\r)
\sim V_{\delta^j}(x_0)+V(x_0,y^k_\alpha).
$$
Thus, from this,  Lemma \ref{lem-bas&test} with $\gamma\in(0,\theta)$,
Definition \ref{def-homo-test}, Lemma \ref{lem-X-basic}(iv), and $\gamma'\in(0,\beta\wedge\gamma)$,
we deduce that
\begin{align*}
\mathrm{I}_2&\ls
\lf[\mu\lf(Q_{\alpha}^{k+1}\r)\r]^{\f12}
\int_{\{y\in\cx:\ d(y,y_\alpha^k)> R\}}
D_{\gamma}(y_\alpha^k,y;\delta^k)D_{\gamma}(x_0,y;\delta^j)\,d\mu(y)\\
&\ls\delta^{(k-j)\gamma}\lf[\mu\lf(Q_{\alpha}^{k+1}\r)\r]^{\f12}
D_{\gamma}(x_0,y_\alpha^k;\delta^j)\\
&\quad\times\int_{\{y\in\cx:\ d(y,y_\alpha^k)> R\}}
\f{V_{\delta^j}(x_0)+V(x_0,y_\alpha^k)}{V_{\delta^k}(y_\alpha^k)+V(y_\alpha^k,y)}
\lf[\f{\delta^j+d(x_0,y_\alpha^k)}{\delta^k+R}\r]^\gamma
D_{\gamma}(x_0,y;\delta^j)\,d\mu(y)\\
&\ls\delta^{(k-j)\gamma}\lf[\mu\lf(Q_{\alpha}^{k+1}\r)\r]^{\f12}
D_{\gamma}(x_0,y_\alpha^k;\delta^j)\int_{\cx}
D_{\gamma}(x_0,y;\delta^j)\,d\mu(y)\\
&\ls\delta^{(k-j)\gamma'}\lf[\mu\lf(Q_{\alpha}^{k+1}\r)\r]^{\f12}
D_{\gamma}(x_0,y_\alpha^k;\delta^j).
\end{align*}
Finally, by Lemma \ref{lem-bas&test},
Definition \ref{def-homo-test}, Lemma \ref{lem-X-basic}(v), and $\gamma'\in(0,\beta\wedge\gamma)$,
we obtain
\begin{align*}
\mathrm{I}_3&\ls
\lf[\mu\lf(Q_{\alpha}^{k+1}\r)\r]^{\f12}
D_{\gamma}(x_0,y_\alpha^k;\delta^j)
\int_{\{y\in\cx:\ d(y,y_\alpha^k)> R\}}
D_{\gamma}(y_\alpha^k,y;\delta^k)\,d\mu(y)\\
&\ls\lf[\mu\lf(Q_{\alpha}^{k+1}\r)\r]^{\f12}
D_{\gamma}(x_0,y_\alpha^k;\delta^j)
\lf[\f{\delta^k}{\delta^k+R}\r]^{\gamma}\\
&\ls\delta^{(k-j)\gamma}\lf[\mu\lf(Q_{\alpha}^{k+1}\r)\r]^{\f12}
D_{\gamma}(x_0,y_\alpha^k;\delta^j)\\
&\ls\delta^{(k-j)\gamma'}\lf[\mu\lf(Q_{\alpha}^{k+1}\r)\r]^{\f12}
D_{\gamma}(x_0,y_\alpha^k;\delta^j).
\end{align*}
Altogether, we prove the case when $k\ge j$.
The remainder follows from a similar way. This finishes the proof of Lemma \ref{lem-est,psi,phi}.
\end{proof}

For any $p\in(1,\fz)$ and $W\in A_p(\cx,\cc^m)$,
the \emph{matrix-weighted Hardy--Littlewood maximal operator}
$\cm_{W,p}$ on $\cx$
is defined by setting, for any $\vec{f}\in[L^0(\cx)]^m$ and $x\in\cx$,
\begin{equation*}
\cm_{W,p}\lf(\vec{f}\,\r)(x):=\sup_{B\ni x}\fint_B
\lf\|W^{\frac{1}{p}}(x)W^{-\frac{1}{p}}(y)\vec{f}(y)\r\|\,d\mu(y),
\end{equation*}
where the supremum is taken over all  balls of $\cx$ that contain $x$;
see, for instance, \cite[(1.3)]{CG01} and \cite[(14)]{G03}.
We also recall that $W\in A_p(\cx,\cc^m)$ if and only if $\cm_{W,p}$ is bounded
from $[L^p(\cx)]^m$ to $L^p(\cx)$ (see \cite[Theorem 3.3]{BYY23}
and \cite[Theorem 3.2]{G03}).

The following lemma is also needed.

\begin{lemma}\label{lem-est-Mw}
Let $A\in(0,1]$, $\gamma\in(0,\fz)$, $p\in(1,\fz)$, and $W\in A_p(\cx,\cc^m)$.
Then there exists a positive constant $C$ such that,
for any set $\{\vec{a}_{k,\alpha}:\ k\in\zz,\ \alpha\in\cg_k\}\subset\cc^m$, any $k,l\in\zz$,
and any $x\in\cx$,
\begin{align}\label{eq-lem,A}
&\sum_{\alpha\in\cg_k}
\lf[\mu\lf(Q_{\alpha}^{k+1}\r)\r]^{\f A2}
\lf\|W^{\f 1 p}(x)\vec{a}_{k,\alpha}\r\|^A
\lf[D_\gamma\lf(x,y_\alpha^k;\delta^{k\wedge l}\r)\r]^A\nonumber\\
&\quad\le C \sum_{j=0}^{\fz} \delta^{j[\gamma A+D(A-1)]}\delta^{[(k\wedge l)-k]D(1-A)}
\fint_{B(x,\delta^{(k\wedge l)-j})}\lf\|W^{\f 1 p}(x)
\lf[\sum_{\alpha\in\cg_k}\vec{a}_{k,\alpha}
\wz{\mathbf{1}}_{Q_{\alpha}^{k+1}}(y)\r]\r\|^A\,d\mu(y);
\end{align}
especially, when $A=1$,
\begin{align*}
\sum_{\alpha\in\cg_k}
\lf[\mu\lf(Q_{\alpha}^{k+1}\r)\r]^{\f 12}
\lf\|W^{\f 1 p}(x)\vec{a}_{k,\alpha}\r\|
D_\gamma\lf(x,y_\alpha^k;\delta^{k\wedge l}\r)
\le C\cm_{W,p}\,\Bigg(W^{\f 1 p}\sum_{\alpha\in\cg_k}\vec{a}_{k,\alpha}
\wz{\mathbf{1}}_{Q_{\alpha}^{k+1}}\Bigg)\lf(x\r).
\end{align*}
Here and thereafter, for any $k\in\zz$ and $\alpha\in\cg_k$,
$\wz{\mathbf{1}}_{Q_{\alpha}^{k+1}}:=
[\mu(Q_{\alpha}^{k+1})]^{-1/2}\mathbf{1}_{Q_{\alpha}^{k+1}}$
\end{lemma}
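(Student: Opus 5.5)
The plan is to sum over the annuli $\delta^{(k\wedge l)-j}$ around $x$, with $j\in\zz_+$, and to turn the decay of $D_\gamma$ into geometric factors. Write $r:=\delta^{k\wedge l}$. We split $\cg_k$ into $S_0:=\{\alpha:\ d(x,y_\alpha^k)<r\}$ and, for $j\ge1$, $S_j:=\{\alpha:\ \delta^{-j+1}r\le d(x,y_\alpha^k)<\delta^{-j}r\}$. For $\alpha\in S_j$ we have
\begin{equation*}
D_\gamma(x,y_\alpha^k;r)\ls \delta^{j\gamma}\,[\mu(B(x,\delta^{-j}r))]^{-1}.
\end{equation*}
This follows from the definition \eqref{eq-def-D}, Lemma \ref{lem-X-basic}(i), and the doubling property \eqref{eq-doub}. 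Since $Q_\alpha^{k+1}\subset B(y_\alpha^k,2A_0C_0\delta^{k+1})$ and $\delta^{k+1}\le r$, the quasi-triangle inequality shows that $Q_\alpha^{k+1}\subset B(x,\delta^{-j-c}r)$ for a fixed integer $c$. Enlarging the balls by this harmless shift (absorbed by doubling, or by reindexing $j$), every $Q_\alpha^{k+1}$ with $\alpha\in S_j$ lies in $B_j:=B(x,\delta^{(k\wedge l)-j})$.

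Next we rewrite each term as an integral. The cubes $\{Q_\alpha^{k+1}\}_{\alpha\in\cg_k}$ are pairwise disjoint; they are distinct cubes of $\cq_{k+1}$. Hence on $Q_\alpha^{k+1}$ we have $\sum_\beta \vec a_{k,\beta}\wz{\mathbf 1}_{Q_\beta^{k+1}}=[\mu(Q_\alpha^{k+1})]^{-1/2}\vec a_{k,\alpha}$, and so
\begin{equation*}
[\mu(Q_\alpha^{k+1})]^{\frac A2}\|W^{\frac1p}(x)\vec a_{k,\alpha}\|^A
=[\mu(Q_\alpha^{k+1})]^{A-1}\int_{Q_\alpha^{k+1}}\Big\|W^{\frac1p}(x)\sum_{\beta}\vec a_{k,\beta}\wz{\mathbf 1}_{Q_\beta^{k+1}}(y)\Big\|^A d\mu(y).
\end{equation*}
Since $A\le1$, we need a lower bound on $\mu(Q_\alpha^{k+1})$. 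By \eqref{eq-defB(Q)} and \eqref{eq-doub}, $Q_\alpha^{k+1}$ contains a ball of radius $\sim\delta^{k}$ centred inside $B_j$, while $B_j$ has radius $\delta^{(k\wedge l)-j}$. Therefore
\begin{equation*}
\mu(Q_\alpha^{k+1})\gtrsim \delta^{[k-(k\wedge l)+j]D}\mu(B_j),
\end{equation*}
and so $[\mu(Q_\alpha^{k+1})]^{A-1}\ls \delta^{[k-(k\wedge l)+j]D(A-1)}[\mu(B_j)]^{A-1}$.

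Combining these, the contribution of $S_j$ is at most
\begin{equation*}
\delta^{j\gamma A}[\mu(B_j)]^{-A}\,\delta^{[k-(k\wedge l)+j]D(A-1)}[\mu(B_j)]^{A-1}\int_{B_j}\|\cdots\|^A\,d\mu.
\end{equation*}
The disjointness of the cubes lets us sum over $\alpha\in S_j$ inside a single integral over $B_j$. The power of $\mu(B_j)$ is exactly $-1$, which produces the average $\fint_{B_j}$. The remaining factor is $\delta^{j[\gamma A+D(A-1)]}\delta^{[(k\wedge l)-k]D(1-A)}$, and summing over $j\in\zz_+$ gives \eqref{eq-lem,A}.

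For $A=1$ the two $D$-factors disappear. The $j$-sum is then $\sum_j\delta^{j\gamma}\fint_{B_j}\|W^{1/p}(x)\vec F(y)\|\,d\mu(y)$, where $\vec F:=\sum_{\alpha}\vec a_{k,\alpha}\wz{\mathbf 1}_{Q_\alpha^{k+1}}$. Writing $W^{1/p}(x)\vec F(y)=W^{1/p}(x)W^{-1/p}(y)[W^{1/p}(y)\vec F(y)]$ bounds each average by $\cm_{W,p}(W^{1/p}\vec F)(x)$, since $x\in B_j$. The geometric series in $\delta^{j\gamma}$ then converges.

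The main obstacle is the bookkeeping in the lower bound on $\mu(Q_\alpha^{k+1})$ relative to $\mu(B_j)$. We must use the doubling exponent with the correct scale ratio $\delta^{k}/\delta^{(k\wedge l)-j}$, and this is what produces the precise exponents in \eqref{eq-lem,A}. The constant shift in $j$ caused by the inclusion $Q_\alpha^{k+1}\subset B_j$ only changes $C$.
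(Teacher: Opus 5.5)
Your proof is correct and follows essentially the same route as the paper. Both arguments turn each term into an integral over the pairwise disjoint cubes $Q_\alpha^{k+1}$ and decompose into dyadic annuli around $x$ at scales $\delta^{(k\wedge l)-j}$. Both use doubling to bound $[\mu(Q_\alpha^{k+1})]^{A-1}$ by $\delta^{[k-(k\wedge l)+j]D(A-1)}$ times a power of the ball volume, and for $A=1$ both insert $W^{-1/p}(y)W^{1/p}(y)$ to reach $\cm_{W,p}$. The only cosmetic difference is that you partition the index set by $d(x,y_\alpha^k)$, paying a fixed shift in $j$ for the inclusion $Q_\alpha^{k+1}\subset B_j$. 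The paper instead partitions the integration variable $y$ into annuli and transfers the decay via $\delta^{k\wedge l}+d(x,y_\alpha^k)\sim\delta^{k\wedge l}+d(x,y)$.
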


\begin{proof}
Since, for any $k\in\zz$, $\{Q_\alpha^{k+1}:\ \alpha\in\cg_k\}$ are pairwise disjoint,
we then infer that,
for any given $A\in(0,1]$, any $k,l\in\zz$, and any  $x\in\cx$,
\begin{align*}
&\sum_{\alpha\in\cg_k}
\lf[\mu\lf(Q_{\alpha}^{k+1}\r)\r]^{\f A2}
\lf\|W^{\f 1 p}(x)\vec{a}_{k,\alpha}\r\|^A
\lf[D_\gamma\lf(x,y_\alpha^k;\delta^{k\wedge l}\r)\r]^A\\
&\quad=\int_{B(x,\delta^{k\wedge l})}\sum_{\alpha\in\cg_k}
\lf[\mu\lf(Q_{\alpha}^{k+1}\r)\r]^{\frac{A}{2}-1}
\lf\|W^{\f 1 p}(x)\vec{a}_{k,\alpha}\mathbf{1}_{Q_{\alpha}^{k+1}}(y)\r\|^A\\
&\quad\quad\times\lf[\f{1}{V_{\delta^{k\wedge l}}(x)+V(x,y_\alpha^k)}\r]^A
\lf[\f{\delta^{k\wedge l}}{\delta^{k\wedge l}+d(x,y_\alpha^k)}\r]^{\gamma A}\,d\mu(y)\\
&\quad\quad+\sum_{j=0}^{\fz}
\int_{B(x,\delta^{(k\wedge l)-j-1})\setminus B(x,\delta^{(k\wedge l)-j})}\cdots\\
&\quad=:\mathrm{I}+\sum_{j=0}^{\fz}\mathrm{I}_j.
\end{align*}
Now, we estimate $\mathrm{I}$ and $\mathrm{I}_j$ for any $j\in\zz_+$, respectively.

By Lemma \ref{lem-X-basic}(i), the doubling property of $\mu$,
and \eqref{eq-defB(Q)} with a geometrical observation, we find that,
for any $k,l\in\zz$, $x\in\cx$, and $y\in B(x,\delta^{k\wedge l})\cap Q_{\alpha}^{k+1}$
with some $\alpha\in\cg_k$,
\begin{equation}\label{eq-lem,est-ca}
V_{\delta^{k\wedge l}}(x)\sim V_{\delta^{k\wedge l}}(y)\ls
\delta^{[(k\wedge l)-k]D}V_{\delta^k}(y)\sim \delta^{[(k\wedge l)-k]D}\mu\lf(Q_{\alpha}^{k+1}\r).
\end{equation}
Thus, using this and $A\in(0,1]$, we obtain
 \begin{align*}
\mathrm{I}
&\ls\f{1}{V_{\delta^{k\wedge l}}(x)}\int_{B(x,\delta^{k\wedge l})}\sum_{\alpha\in\cg_k}
\lf[\f{\mu(Q_{\alpha}^{k+1})}{V_{\delta^{k\wedge l}}(x)}\r]^{A-1}
\lf\|W^{\f 1 p}(x)\vec{a}_{k,\alpha}\wz{\mathbf{1}}_{Q_{\alpha}^{k+1}}(y)\r\|^A\,d\mu(y)\\
&\ls \delta^{[k-(k\wedge l)]D(A-1)}\fint_{B(x,\delta^{k\wedge l})}\sum_{\alpha\in\cg_k}
\lf\|W^{\f 1 p}(x)\vec{a}_{k,\alpha}\wz{\mathbf{1}}_{Q_{\alpha}^{k+1}}(y)\r\|^A\,d\mu(y)\\
&\sim \delta^{[k-(k\wedge l)]D(A-1)}\fint_{B(x,\delta^{k\wedge l})}
\lf\|W^{\f 1 p}(x)\lf[\sum_{\alpha\in\cg_k}\vec{a}_{k,\alpha}
\wz{\mathbf{1}}_{Q_{\alpha}^{k+1}}(y)\r]\r\|^A\,d\mu(y).
 \end{align*}

On the other hand, to estimate $\mathrm{I}_j$ for $j\in\zz_+$,
notice that, for any $k,l\in\zz$, $\alpha\in\cg_k$, and $y\in Q_{\alpha}^{k+1}$,
we have
$$d(y,y_\alpha^k)\ls \delta^k\ls\delta^{k\wedge l}.$$
It then follows that,
for any $k,l\in\zz$, $\alpha\in\cg_k$, $y\in Q_{\alpha}^{k+1}$, and $x\in\cx$,
\begin{equation}\label{eq-delta+d=,1}
\delta^{k\wedge l}+d(x,y_\alpha^k)\sim\delta^{k\wedge l}+d(x,y)
\end{equation}
and, moreover, by both (i) and (ii) of Lemma \ref{lem-X-basic}, we have
\begin{align}\label{eq-delta+d=,2}
V_{\delta^{k\wedge l}}(x)+V(x,y_\alpha^k)\sim
V_{\delta^{k\wedge l}}(y_\alpha^k)+V(x,y_\alpha^k)\sim
V_{\delta^{k\wedge l}}(y)+V(x,y)\sim
V_{\delta^{k\wedge l}+d(x,y)}(x).
\end{align}
In addition, for any $j\in\zz_+$, if $y\in B(x,\delta^{(k\wedge l)-j-1})\setminus B(x,\delta^{(k\wedge l)-j})$,
then
\begin{equation}\label{eq-delta+d=,3}
\delta^{k\wedge l}+d(x,y)\sim\delta^{(k\wedge l)-j}.
\end{equation}
Thus, from \eqref{eq-delta+d=,1}, \eqref{eq-delta+d=,2}, \eqref{eq-delta+d=,3},
the doubling property of $\mu$, \eqref{eq-lem,est-ca}, and $A\in(0,1]$,
we deduce that, for any $j\in\zz_+$,
\begin{align*}
\mathrm{I}_j
&\sim\int_{B(x,\delta^{(k\wedge l)-j-1})\setminus B(x,\delta^{(k\wedge l)-j})}\sum_{\alpha\in\cg_k}
\lf[\mu\lf(Q_{\alpha}^{k+1}\r)\r]^{\frac{A}{2}-1}
\lf\|W^{\f 1 p}(x)\vec{a}_{k,\alpha}\mathbf{1}_{Q_{\alpha}^{k+1}}(y)\r\|^A\\
&\quad\times\lf[\f{1}{V_{\delta^{k\wedge l}}(x)+V(x,y_\alpha^k)}\r]^A
\lf[\f{\delta^{k\wedge l}}{\delta^{k\wedge l}+d(x,y)}\r]^{\gamma A}\,d\mu(y)\\
&\ls\lf[\f{\delta^{j\gamma}}{V_{\delta^{(k\wedge l)-j-1}}(x)}\r]^A
\int_{B(x,\delta^{(k\wedge l)-j-1})\setminus B(x,\delta^{(k\wedge l)-j})}
\sum_{\alpha\in\cg_k}
\lf[\mu\lf(Q_{\alpha}^{k+1}\r)\r]^{\frac{A}{2}-1}\\
&\quad\times
\lf\|W^{\f 1 p}(x)\vec{a}_{k,\alpha}\mathbf{1}_{Q_{\alpha}^{k+1}}(y)\r\|^A\,d\mu(y)\\
&\ls\delta^{j\gamma A}\fint_{B(x,\delta^{(k\wedge l)-j-1})}\sum_{\alpha\in\cg_k}
\lf[\f{\mu\lf(Q_{\alpha}^{k+1}\r)}{V_{\delta^{(k\wedge l)-j-1}}(x)}\r]^{A-1}
\lf\|W^{\f 1 p}(x)\vec{a}_{k,\alpha}\wz{\mathbf{1}}_{Q_{\alpha}^{k+1}}(y)\r\|^A\,d\mu(y)\\
&\ls\delta^{j[\gamma A+D(A-1)]}\delta^{[(k\wedge l)-k]D(1-A)}
\fint_{B(x,\delta^{(k\wedge l)-j-1})}
\lf\|W^{\f 1 p}(x)\lf[\sum_{\alpha\in\cg_k}\vec{a}_{k,\alpha}\wz{\mathbf{1}}_{Q_{\alpha}^{k+1}}(y)\r]\r\|^A\,d\mu(y).
\end{align*}

Altogether, we obtain \eqref{eq-lem,A}.
Especially, when $A=1$, we further conclude that, for any $x\in\cx$,
\begin{align*}
&\sum_{\alpha\in\cg_k}
\lf[\mu\lf(Q_{\alpha}^{k+1}\r)\r]^{\f 1 2}
\lf\|W^{\f 1 p}(x)\vec{a}_{k,\alpha}\r\|
D_\gamma\lf(x,y_\alpha^k;\delta^{k\wedge l}\r)\\
&\quad\ls\sum_{j=0}^{\fz} \delta^{j\gamma}
\fint_{B(x,\delta^{(k\wedge l)-j})}\lf\|W^{\f 1 p}(x)W^{-\f 1 p}(y)W^{\f 1 p}(y)
\lf[\sum_{\alpha\in\cg_k}\vec{a}_{k,\alpha}\wz{\mathbf{1}}_{Q_{\alpha}^{k+1}}(y)\r]\r\|\,d\mu(y)\\
&\quad\ls\sum_{j=0}^{\fz} \delta^{j\gamma}\cm_{W,p}\,\Bigg(W^{\f 1 p}\sum_{\alpha\in\cg_k}\vec{a}_{k,\alpha}
\wz{\mathbf{1}}_{Q_{\alpha}^{k+1}}\Bigg)\lf(x\r)
\ls\cm_{W,p}\,\Bigg(W^{\f 1 p}\sum_{\alpha\in\cg_k}\vec{a}_{k,\alpha}
\wz{\mathbf{1}}_{Q_{\alpha}^{k+1}}\Bigg)\lf(x\r).
\end{align*}
This finishes the proof of Lemma \ref{lem-est-Mw}.
\end{proof}

With all these preparations,
we next show Theorem \ref{thm-B=AB}.

\begin{proof}[Proof of Theorem \ref{thm-B=AB}]
We first assume that $\vec{f}\in \mathcal{A}\dot{B}^{s,b}_{p,q}(W)$.
Let $\{Q_k\}_{k\in\zz}$ be an exp-ATI.
By \cite[Remark 3.2]{AWYY23}, we find that
$$
\lf\|W^{\f 1 p}Q_k\lf(\vec{f}\r)\r\|\le\sup_{\phi\in\mathcal{F}_k(\cdot)}
\lf\|W^{\f 1 p}(\cdot)\lf\langle\vec{f},\phi\r\rangle\r\|,
$$
which further implies that
$\|\vec{f}\,\|_{\dot{B}^{s,b}_{p,q}(W)}\le\|\vec{f}\,\|_{\mathcal{A}\dot{B}^{s,b}_{p,q}(W)}$.

Conversely, we assume that $\vec{f}\in \dot{B}^{s,b}_{p,q}(W)$.
Applying the vector-valued variant of Lemma \ref{lem-wave-repro}, we conclude that there exists
an orthonormal wavelet basis $\{\psi^k_\alpha:\ k\in\zz,\ \alpha\in\cg_k\}$ as in Lemma \ref{lem-def-basis} such that,
for any  $l\in\zz$, $x\in\cx$, and $\phi\in\mathcal{F}_l(x)$,
\begin{align}\label{eq-pf-repro}
\lf\langle\vec{f},\phi\r\rangle=\sum_{k\in\zz}\sum_{\alpha\in\cg_k}
\lf\langle\vec{f},\psi_\alpha^k\r\rangle\lf\langle\psi_\alpha^k,\phi\r\rangle.
\end{align}
Thus, from \eqref{eq-pf-repro} and Lemma \ref{lem-est,psi,phi},  we infer that,
for any given $\gamma'\in(0,\beta\wedge\gamma)$, any $l\in\zz$, any $x\in\cx$, and any $\phi\in\mathcal{F}_l(x)$,
\begin{align}\label{eq-est-ca,1}
&\lf\|W^{\f 1 p}(x)\lf\langle\vec{f},\phi\r\rangle\r\|\nonumber\\
&\quad\le\sum_{k\in\zz}\sum_{\alpha\in\cg_k}
\lf\|W^{\f 1 p}(x)\lf\langle\vec{f},\psi_\alpha^k\r\rangle\r\|
\lf|\lf\langle\psi_\alpha^k,\phi\r\rangle\r|\nonumber\\
&\quad\ls\sum_{k\in\zz}\delta^{|k-l|\gamma'}\sum_{\alpha\in\cg_k}
\lf[\mu\lf(Q_{\alpha}^{k+1}\r)\r]^{\f 12}
\lf\|W^{\f 1 p}(x)\lf\langle\vec{f},\psi_\alpha^k\r\rangle\r\|
D_\gamma\lf(x,y_\alpha^k;\delta^{k\wedge l}\r).
\end{align}
In what follows,
we fix $\gamma'\in(0\vee s,\beta\wedge\gamma)$.

We first let $p\in(1,\fz)$.
Applying Lemma \ref{lem-est-Mw} to \eqref{eq-est-ca,1},
we find that,
for any $l\in\zz$ and $x\in\cx$,
\begin{align*}
\sup_{\phi\in\mathcal{F}_l(x)}
\lf\|W^{\f 1 p}(x)\lf\langle\vec{f},\phi\r\rangle\r\|
\ls\sum_{k\in\zz}\delta^{|k-l|\gamma'}\cm_{W,p}\,\Bigg(W^{\f 1 p}
\sum_{\alpha\in\cg_k}\lf\langle\vec{f},\psi_\alpha^k\r\rangle
\wz{\mathbf{1}}_{Q_{\alpha}^{k+1}}\Bigg)\lf(x\r),
\end{align*}
where $\wz{\mathbf{1}}_{Q_{\alpha}^{k+1}}:=
[\mu(Q_{\alpha}^{k+1})]^{-1/2}\mathbf{1}_{Q_{\alpha}^{k+1}}$
and $\cm_{W,p}$ is the matrix-weighted Hardy--Littlewood maximal operator.
Thus, from this, the Minkowski inequality, the $L^p(\cx)$-boundedness of $\cm_{W,p}$ (see \cite[Theorem 3.3]{BYY23}),
\eqref{triangle} when $q\in(0,1]$ or the H\"{o}lder inequality when $q\in(1,\fz]$, and the choice of $\gamma'$,
we deduce that
\begin{align*}
\lf\|\vec{f}\,\r\|_{\mathcal{A}\dot{B}^{s,b}_{p,q}(W)}
&\ls \lf\{\sum_{l\in\zz}\lf[L^\delta_{s,b}(\delta^l)\r]^{-q}
\lf\|\sum_{k\in\zz}\delta^{|k-l|\gamma'}\cm_{W,p}\,\Bigg(W^{\f 1 p}
\sum_{\alpha\in\cg_k}\lf\langle\vec{f},\psi_\alpha^k\r\rangle
\wz{\mathbf{1}}_{Q_{\alpha}^{k+1}}\Bigg)\r\|_{L^p(\cx)}^q\r\}^{\f 1 q}\\
&\ls \lf\{\sum_{l\in\zz}\lf[L^\delta_{s,b}(\delta^l)\r]^{-q}
\lf[\sum_{k\in\zz}\delta^{|k-l|\gamma'}\lf\|\cm_{W,p}\,\Bigg(W^{\f 1 p}
\sum_{\alpha\in\cg_k}\lf\langle\vec{f},\psi_\alpha^k\r\rangle
\wz{\mathbf{1}}_{Q_{\alpha}^{k+1}}\Bigg)\r\|_{L^p(\cx)}\r]^q\r\}^{\f 1 q}\\
&\ls \lf[\sum_{l\in\zz}
\lf\{\sum_{k\in\zz}\lf[L^\delta_{s,b}(\delta^l)\r]^{-1}\delta^{|k-l|\gamma'}\lf\|W^{\f 1 p}\sum_{\alpha\in\cg_k}
\lf\langle\vec{f},\psi_\alpha^k\r\rangle
\wz{\mathbf{1}}_{Q_{\alpha}^{k+1}}\r\|_{L^p(\cx)}\r\}^q\r]^{\f 1 q}\\
&\sim\lf[\sum_{l\in\zz}
\lf\{\sum_{k\in\zz}\f{L^\delta_{s,b}(\delta^k)}{L^\delta_{s,b}(\delta^l)}\delta^{|k-l|\gamma'}
\lf[L^\delta_{s,b}(\delta^k)\r]^{-1}\lf\|W^{\f 1 p}\sum_{\alpha\in\cg_k}
\lf\langle\vec{f},\psi_\alpha^k\r\rangle
\wz{\mathbf{1}}_{Q_{\alpha}^{k+1}}\r\|_{L^p(\cx)}\r\}^q\r]^{\f 1 q}\\
&\ls \lf\{\sum_{k\in\zz}\sum_{l\in\zz}
\lf[\f{L^\delta_{s,b}(\delta^k)}{L^\delta_{s,b}(\delta^l)}\r]^{q\wedge1}\delta^{|k-l|\gamma'(q\wedge1)}
\lf[L^\delta_{s,b}(\delta^k)\r]^{-q}\r.\\
&\quad\lf.\times\lf\|W^{\f 1 p}\sum_{\alpha\in\cg_k}
\lf\langle\vec{f},\psi_\alpha^k\r\rangle
\wz{\mathbf{1}}_{Q_{\alpha}^{k+1}}\r\|_{L^p(\cx)}^q\r\}^{\f 1 q}\\
&\ls \lf\{\sum_{k\in\zz}\lf[L^\delta_{s,b}(\delta^k)\r]^{-q}
\lf\|\sum_{\alpha\in\cg_k}\lf\langle\vec{f},\psi_\alpha^k\r\rangle
\wz{\mathbf{1}}_{Q_{\alpha}^{k+1}}\r\|_{L^p(W)}^q\r\}^{\f 1 q},
\end{align*}
which, together with the estimate in \cite[p.\,52]{BYY23}
that, for any fixed $\eta'\in(0\vee s,\beta\wedge\gamma)$,
$$
\lf\|\sum_{\alpha\in\cg_k}\lf\langle\vec{f},\psi_\alpha^k\r\rangle
\wz{\mathbf{1}}_{Q_{\alpha}^{k+1}}\r\|_{L^p(W)}
\ls\sum_{k'\in\zz}\delta^{|k-k'|\eta'}\lf\|Q_{k'}\lf(\vec{f}\,\r)\r\|_{L^p(W)}
$$
with $\{Q_{k'}\}_{k'\in\zz}$ being an exp-ATI,
further implies that
\begin{align*}
\lf\|\vec{f}\,\r\|_{\mathcal{A}\dot{B}^{s,b}_{p,q}(W)}
\ls \lf\{\sum_{k\in\zz}\lf[L^\delta_{s,b}(\delta^k)\r]^{-q}
\lf[\sum_{k'\in\zz}\delta^{|k-k'|\eta'}\lf\|Q_{k'}\lf(\vec{f}\,\r)\r\|_{L^p(W)}\r]^q\r\}^{\f 1 q}\ls\lf\|\vec{f}\,\r\|_{\dot{B}^{s,b}_{p,q}(W)}.
\end{align*}
This proves the case when $p\in(1,\fz)$.

Next, we let $p\in(p(s,\beta\wedge\gamma),1]$.
We first claim that, for any $k,l\in\zz$ and $\alpha\in\cg_k$,
\begin{align}\label{eq-claim1}
&\int_\cx\lf\|W^{\f 1 p}(x)\lf\langle\vec{f},\psi_\alpha^k\r\rangle\r\|^p
\lf[D_\gamma\lf(x,y_\alpha^k;\delta^{k\wedge l}\r)\r]^p\,d\mu(x)\nonumber\\
&\quad\ls\delta^{[(k\wedge l)-k]D(1-p)}
\lf[\mu\lf(Q_{\alpha}^{k+1}\r)\r]^{1- p}\inf_{z\in Q_{\alpha}^{k+1}\setminus E}
\lf\|W^{\f 1 p}(z)\lf\langle\vec{f},\psi_\alpha^k\r\rangle\r\|^p,
\end{align}
where $E$ is some measurable set with $\mu(E)=0$
which comes from the definition of the matrix $A_p$-weight (see Definition \ref{def-Apweight}).
Indeed, by an argument similar to that used in the proof of Lemma \ref{lem-est-Mw},
Lemma \ref{lem-X-basic}(i),
the definition of matrix $A_p$-weights, \eqref{eq-lem,est-ca}, the doubling property of $\mu$,
and $p\in(p(s,\beta\wedge\gamma),1]$,
we find that, for any $k,l\in\zz$, $\alpha\in\cg_k$, and almost every $z\in Q_\alpha^{k+1}$,
\begin{align*}
&\int_\cx\lf\|W^{\f 1 p}(x)\lf\langle\vec{f},\psi_\alpha^k\r\rangle\r\|^p
\lf[D_\gamma\lf(x,y_\alpha^k;\delta^{k\wedge l}\r)\r]^p\,d\mu(x)\\
&\quad=\int_{B(y_\alpha^k,c_0\delta^{k\wedge l})}\lf\|W^{\f 1 p}(x)\lf\langle\vec{f},\psi_\alpha^k\r\rangle\r\|^p
\lf[D_\gamma\lf(x,y_\alpha^k;\delta^{k\wedge l}\r)\r]^p\,d\mu(x)\\
&\quad\quad+\sum_{j=0}^{\fz}\int_{B(y_\alpha^k,c_0\delta^{(k\wedge l)-j-1})\setminus
B(y_\alpha^k,c_0\delta^{(k\wedge l)-j})}\cdots \\
&\quad\ls\lf[V_{c_0\delta^{k\wedge l}}(y_\alpha^k)\r]^{1-p}
\fint_{B(y_\alpha^k,c_0\delta^{k\wedge l})}\lf\|W^{\f 1 p}(x)\lf\langle\vec{f},\psi_\alpha^k\r\rangle\r\|^p \,d\mu(x)\\
&\quad\quad+\sum_{j=0}^{\fz}\delta^{j\gamma p}\lf[V_{c_0\delta^{(k\wedge l)-j-1}}(y_\alpha^k)\r]^{1-p}
\fint_{B(y_\alpha^k,c_0\delta^{(k\wedge l)-j-1})}\cdots \\
&\quad\ls\lf[V_{c_0\delta^{k\wedge l}}(y_\alpha^k)\r]^{1-p}
\lf\|W^{\f 1 p}(z)\lf\langle\vec{f},\psi_\alpha^k\r\rangle\r\|^p
\fint_{B(y_\alpha^k,c_0\delta^{k\wedge l})}\lf\|W^{\f 1 p}(x)W^{-\f 1 p}(z)\r\|^p\,d\mu(x)\\
&\quad\quad+\sum_{j=0}^{\fz}\delta^{j\gamma p}\lf[V_{c_0\delta^{(k\wedge l)-j-1}}(y_\alpha^k)\r]^{1-p}
\lf\|W^{\f 1 p}(z)\lf\langle\vec{f},\psi_\alpha^k\r\rangle\r\|^p
\fint_{B(y_\alpha^k,c_0\delta^{(k\wedge l)-j-1})}\cdots \\
&\quad\ls\lf[\mu\lf(Q_{\alpha}^{k+1}\r)\r]^{1- p}
\lf\|W^{\f 1 p}(z)\lf\langle\vec{f},\psi_\alpha^k\r\rangle\r\|^p
\sum_{j=0}^{\fz}\delta^{j\gamma p}\delta^{[(k\wedge l)-k-j]D(1-p)}\\
&\quad\ls\delta^{[(k\wedge l)-k]D(1-p)}\lf[\mu\lf(Q_{\alpha}^{k+1}\r)\r]^{1- p}
\lf\|W^{\f 1 p}(z)\lf\langle\vec{f},\psi_\alpha^k\r\rangle\r\|^p,
\end{align*}
where $c_0$ is some fixed positive constant such that, for any $k\in\zz$ and $\alpha\in\cg_k$,
$Q_\alpha^{k+1}\subset B(y_\alpha^k,c_0\delta^k)$.
This, together with the  arbitrariness of $z\in Q_\alpha^{k+1}$, proves the claim.

By \eqref{eq-est-ca,1} with the arbitrariness of both $\phi\in\mathcal{F}_l(x)$ and $x\in\cx$,
the  Minkowski  inequality, \eqref{triangle},
and the claim \eqref{eq-claim1},
we conclude that, for any $l\in\zz$,
\begin{align}\label{eq-est-ca,2}
&\int_\cx\sup_{\phi\in\mathcal{F}_l(x)}
\lf\|W^{\f 1 p}(x)\lf\langle\vec{f},\phi\r\rangle\r\|^p\,d\mu(x)\nonumber\\
&\quad\ls\sum_{k\in\zz}\delta^{|k-l|\gamma'p}\sum_{\alpha\in\cg_k}
\lf[\mu\lf(Q_{\alpha}^{k+1}\r)\r]^{\f p2}
\int_\cx \lf\|W^{\f 1 p}(x)\lf\langle\vec{f},\psi_\alpha^k\r\rangle\r\|^p
\lf[D_\gamma\lf(x,y_\alpha^k;\delta^{k\wedge l}\r)\r]^p\,d\mu(x)\nonumber\\
&\quad\ls\sum_{k\in\zz}\delta^{|k-l|\gamma'p}\sum_{\alpha\in\cg_k}
\lf[\mu\lf(Q_{\alpha}^{k+1}\r)\r]^{1-\f p2}\delta^{[(k\wedge l)-k]D(1-p)}\inf_{z\in Q_{\alpha}^{k+1}\setminus E}
\lf\|W^{\f 1 p}(z)\lf\langle\vec{f},\psi_\alpha^k\r\rangle\r\|^p\nonumber\\
&\quad\ls\sum_{k\in\zz}\delta^{|k-l|\gamma'p}\delta^{[(k\wedge l)-k]D(1-p)}\sum_{\alpha\in\cg_k}
\int_{\cx}\lf\|W^{\f 1 p}(z)\lf\langle\vec{f},\psi_\alpha^k\r\rangle
\wz{\mathbf{1}}_{Q_{\alpha}^{k+1}}(z)\r\|^p\,d\mu(z)\nonumber\\
&\quad\ls\sum_{k\in\zz}\delta^{|k-l|[\gamma'p-D(1-p)]}
\lf\|\sum_{\alpha\in\cg_k}\lf\langle\vec{f},\psi_\alpha^k\r\rangle
\wz{\mathbf{1}}_{Q_{\alpha}^{k+1}}\r\|^p_{L^p(W)},
\end{align}
where $E$ is some measurable set with $\mu(E)=0$.
As $s\in(-[\beta\wedge\gamma],\beta\wedge\gamma)$ and
$p\in(p(s,\beta\wedge\gamma),\fz)$, we may
choose $\gamma'\in(0, \beta\wedge\gamma)$ such that
$$
s\in\lf(-\gamma',\gamma'\r)\ \ \text{and}\ \
p\in\lf(\max\lf\{\frac{D}{D+\gamma'},\frac{D}{D+\gamma'+s}\r\},1\r].
$$
Thus, from this, \eqref{eq-est-ca,2},
\eqref{triangle} when $q/p\in(0,1]$ or the H\"{o}lder inequality when $q/p\in(1,\fz)$,
and the choice of $\gamma'$,
it follows that
\begin{align*}
\lf\|\vec{f}\,\r\|_{\mathcal{A}\dot{B}^{s,b}_{p,q}(W)}
&\ls \lf\{\sum_{l\in\zz}\lf[L^\delta_{s,b}(\delta^l)\r]^{-q}
\lf\{\sum_{k\in\zz}\delta^{|k-l|[\gamma'p-D(1-p)]}\r.\r.\\
&\quad\times\lf.\lf.\lf\|\sum_{\alpha\in\cg_k}\lf\langle\vec{f},\psi_\alpha^k\r\rangle
\wz{\mathbf{1}}_{Q_{\alpha}^{k+1}}\r\|^p_{L^p(W)}\r\}^{\f q p}\r\}^{\f 1 q}\\
&\ls \lf\{\sum_{k\in\zz}\lf[L^\delta_{s,b}(\delta^k)\r]^{-q}
\sum_{l\in\zz}\delta^{|k-l|[\gamma'p-D(1-p)](\f q p\wedge 1)}\r.\\
&\quad\lf.\times\lf[\f{L^\delta_{s,b}(\delta^k)}{L^\delta_{s,b}(\delta^l)}\r]^{q\wedge p}
\lf\|\sum_{\alpha\in\cg_k}\lf\langle\vec{f},\psi_\alpha^k\r\rangle
\wz{\mathbf{1}}_{Q_{\alpha}^{k+1}}\r\|_{L^p(W)}^{q}\r\}^{\f 1 q}\\
&\ls \lf\{\sum_{k\in\zz}\lf[L^\delta_{s,b}(\delta^k)\r]^{-q}
\lf\|\sum_{\alpha\in\cg_k}\lf\langle\vec{f},\psi_\alpha^k\r\rangle
\wz{\mathbf{1}}_{Q_{\alpha}^{k+1}}\r\|_{L^p(W)}^q\r\}^{\f 1 q}.
\end{align*}
Using the estimate in \cite[p.\,51]{BYY23}
that, for any fixed $\eta'\in(0\vee s,\beta\wedge\gamma)$,
$$
\lf\|\sum_{\alpha\in\cg_k}\lf\langle\vec{f},\psi_\alpha^k\r\rangle
\wz{\mathbf{1}}_{Q_{\alpha}^{k+1}}\r\|_{L^p(W)}^p
\ls\sum_{k'\in\zz}\delta^{|k-k'|\eta' p}\delta^{[(k\wedge k')-k']D(1-p)}\lf\|Q_{k'}\lf(\vec{f}\,\r)\r\|_{L^p(W)}^p
$$
with $\{Q_{k'}\}_{k'\in\zz}$ being an exp-ATI,
we further obtain
\begin{align*}
&\lf\|\vec{f}\,\r\|_{\mathcal{A}\dot{B}^{s,b}_{p,q}(W)}\\
&\quad\ls\lf\{\sum_{k\in\zz}\lf[L^\delta_{s,b}(\delta^k)\r]^{-q}
\lf[\sum_{k'\in\zz}\delta^{|k-k'|\eta' p}\delta^{[(k\wedge k')-k']D(1-p)}
\lf\|Q_{k'}\lf(\vec{f}\,\r)\r\|_{L^p(W)}^p\r] ^{\frac{q}{p}}\r\}^{\f 1 q}\nonumber\\
&\quad\ls\lf\{\sum_{k'\in\zz}\lf[L^\delta_{s,b}(\delta^{k'})\r]^{-q}
\lf\|Q_{k'}\lf(\vec{f}\,\r)\r\|_{L^p(W)}^q\r\}^{\f 1 q}
\sim\lf\|\vec{f}\,\r\|_{\dot{B}^{s,b}_{p,q}(W)}\nonumber,
\end{align*}
which proves the case when $p\in(p(s,\beta\wedge\gamma),1]$.
This finishes the proof of Theorem \ref{thm-B=AB}.
\end{proof}

\subsection{Pointwise Characterizations of Matrix-Weighted
Logarithmic Besov Spaces}\label{sec-haj}

In this subsection, we apply the  matrix-weighted Poincar\'{e}-type
inequalities established in Section \ref{sec-poincare}
to prove the equivalence between the matrix-weighted logarithmic Haj{\l}asz--Besov spaces
and the matrix-weighted logarithmic grand Besov spaces.

We first recall the concept of the (weak) lower bound; see, for instance,
\cite[(2) and (3)]{AGH20} and \cite[Definition 1.1]{HHHLP21}.

\begin{definition}
The measure $\mu$ is said to \emph{have a lower bound $\lambda$} with $\lambda\in(0,D]$ if
there exists a positive constant $C$ such that, for any $x\in\cx$ and $r\in(0,\fz)$, $\mu(B(x,r))\ge Cr^{\lambda}$;
the measure $\mu$ is said to \emph{have a weak lower bound $\lambda$} with $\lambda\in(0,D]$ if
there exists a point $x_0\in\cx$ and a positive constant $C$ such that,
for any $r\in[1,\fz)$, $\mu(B(x_0,r))\ge Cr^{\lambda}$.
\end{definition}

\begin{remark}
As pointed out in \cite[Proposition 2.15]{AWYY23}, with $D$ defined the same as in \eqref{eq-doub},
the doubling measure $\mu$ has a weak lower bound $\lambda=D$
if and only if it has a lower bound $\lambda=D$.
\end{remark}

The main theorem of this section reads as follows.

\begin{theorem}\label{thm-N=AB}
Assume that the doubling measure $\mu$ of $\cx$ has a weak lower bound $\lambda=D$.
Let $\theta$ be the same as in Definition \ref{def-exp-ATI}, $\beta,\gamma\in(0,\theta)$,
$s\in(0,\beta\wedge\gamma)$, $b:=(b_+,b_-)$ be a pair of real numbers,
$p\in(D/[D+s],\fz)$, $q\in(0,\fz]$,
and $W\in A_p(\cx,\cc^m)$. Then
$\dot{N}^{s,b}_{p,q}(W)=\mathcal{A}\dot{B}^{s,b}_{p,q}(W)$.
\end{theorem}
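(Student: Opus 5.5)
We prove the two inclusions separately, with Theorem \ref{thm-B=AB} letting us work with the grand maximal quasi-norm $\|\cdot\|_{\mathcal{A}\dot{B}^{s,b}_{p,q}(W)}$ throughout.

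\emph{The inclusion $\mathcal{A}\dot{B}^{s,b}_{p,q}(W)\subset\dot{N}^{s,b}_{p,q}(W)$.} Fix $\vec{f}\in\mathcal{A}\dot{B}^{s,b}_{p,q}(W)$. Since $s>0$, the weak lower bound $\lambda=D$ will be used, as in \cite{AWYY23}, to show that $\vec f$ (modulo constants) is represented by a locally integrable function. Specifically, $\vec f=\lim_{k\to\fz}P_k\vec f$ with $\{P_k\}$ a 1-exp-ATI, and the lower bound controls the low-frequency tail $P_{k}\vec f$ as $k\to-\fz$.

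For a.e. $x,y$ with $d(x,y)\in[\delta^{k+1},\delta^k)$ we telescope:
$$\vec f(x)-\vec f(y)=\sum_{j\ge k}\lf[(P_{j+1}-P_j)\vec f(x)-(P_{j+1}-P_j)\vec f(y)\r]+\lf[P_k\vec f(x)-P_k\vec f(y)\r].$$
By Lemma \ref{lem-phi,in,Fk(x)}, each bracket is $\langle\vec f,\phi\rangle$ with $C\phi\in\mathcal{F}_j(x)$ or $\mathcal{F}_j(y)$. After applying $A_Q$ with $Q\in\cq_k$ containing $x$, we move $A_Q$ to $W^{1/p}(z)$ by inserting $\|A_QA_P^{-1}\|$ (Lemma \ref{lem-sharp est}, giving a factor $\delta^{(k-j)\wz d_{(p)}/p}$-type growth) together with the reducing-operator bounds of Lemma \ref{lem-AQW}. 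The natural candidate is then
$$\vec g_k\sim \sum_{j\ge k}\delta^{(j-k)(s'-\dots)}\frac{1}{L_{s,b}(\delta^j)}\,\sup_{\phi\in\mathcal F_j(\cdot)}|\langle \vec f,\phi\rangle|,$$
interpreted in vector form as $A_Q^{-1}$ applied to the scalar size. The exponential factor $\delta^{(j-k)(s-\epsilon)}$ beats the $L_{s,b}$ ratio by the admissible-growth estimates for $L_{s,b}$ recalled before Definition \ref{def-gradient}. The $\ell^q(W,L^p)$ norm is then bounded by a discrete Hardy-type inequality, using \eqref{triangle} or H\"older as in the proof of Theorem \ref{thm-B=AB}.

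The main obstacle in this direction is the vector-valued definition \eqref{eq-def-gradient}. A scalar size cannot be used directly, so we define $\vec g_k$ as a genuine vector and verify $\|A_Q\vec g_k\|\ls$ the weighted supremum. The plan is to choose $\vec g_k(x):=W^{-1/p}(x)\vec e_k(x)$ with $\|\vec e_k(x)\|=G_k(x)$, the scalar weighted grand-maximal sum. Then $\|A_Q\vec g_k(x)\|\ge\|W^{1/p}(x)A_Q^{-1}\|^{-1}G_k(x)$, which is acceptable only after restricting to the good set and averaging. If this fails, we fall back on defining $\vec g_k$ through $A_Q^{-1}$ on each cube and handle $\|W^{1/p}A_Q^{-1}\|$ via Lemma \ref{lem-rkEkf}.

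\emph{The inclusion $\dot{N}^{s,b}_{p,q}(W)\subset\mathcal{A}\dot{B}^{s,b}_{p,q}(W)$.} Let $\vec f\in\dot N^{s,b}_{p,q}(W)$, which is locally integrable by Corollary \ref{rm-locinte}. For $\phi\in\mathcal{F}_k(x)$, the cancellation $\int\phi=0$ gives
$$\langle\vec f,\phi\rangle=\int(\vec f-\vec f_{B(x,\delta^k)})\phi.$$
Split $\cx$ into the annuli $B(x,\delta^{k-i})\setminus B(x,\delta^{k-i+1})$ and use $|\phi|\ls\delta^{i\gamma}/V_{\delta^{k-i}}(x)$ there. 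Telescoping the averages over $B(x,\delta^{k-l})$, $0\le l\le i$, then yields
$$\|W^{1/p}(x)\langle\vec f,\phi\rangle\|\ls\sum_{i\ge0}\delta^{i\gamma}\sum_{l=0}^{i}\inf_{\vec\eta}\fint_{B(x,\delta^{k-l})}\|W^{1/p}(x)\vec f-\vec\eta\|,$$
where the matrix $W^{1/p}(x)$ was moved across scales at the cost $\delta^{-l d/p}$ from Lemma \ref{lem-sharp est}.

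Each Poincar\'e term is bounded by Theorem \ref{lem-Poincare}, using (i) for $p>1$ and (ii) with $\lambda<p$ and $\lambda^*\ge1$ for $p\le1$. This gives $\gamma_{k-l}(x)L_{s,b}(\delta^{k-l})\,[E_{k-l}(\|W^{1/p}\vec g_j\|^\lambda)]^{1/\lambda}$. Since $\gamma>s+d/p$ is not guaranteed, we instead choose $\gamma'$ in the style of Lemma \ref{lem-est,psi,phi}; this is the delicate summation. Taking $L^p$ norms, Lemma \ref{lem-rkEkf} removes $\gamma_{k-l}$ and $E_{k-l}$ and leaves $\|\vec g_j\|_{L^p(W)}$. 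The weights $L_{s,b}(\delta^{k-l})/L_{s,b}(\delta^k)\ls\delta^{-ls}$ (up to log factors) are summable against $\delta^{i\gamma}$ because $s<\beta\wedge\gamma$. A final discrete Young inequality in $\ell^q$ completes the bound.
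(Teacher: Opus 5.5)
The genuine gap is in $\mathcal{A}\dot B^{s,b}_{p,q}(W)\subset\dot N^{s,b}_{p,q}(W)$, at the step you yourself call the main obstacle. No sequence satisfying \eqref{eq-def-gradient} is actually produced, and the routes you sketch do not work.

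Your other inclusion, $\dot N^{s,b}_{p,q}(W)\subset\mathcal{A}\dot B^{s,b}_{p,q}(W)$, follows the paper's route, with one correction. The ``cost $\delta^{-ld/p}$'' is spurious: $W^{1/p}(x)$ is one fixed matrix inside every $y$-average, so telescoping the averages uses only doubling. Then $s<\gamma$ suffices, as your last sentence says; a choice $\gamma'<\gamma$ could not have absorbed a genuine $\delta^{-ld/p}$ anyway.

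Why your constructions fail. Condition \eqref{eq-def-gradient} is pointwise for almost every pair $(x,y)$, so ``restricting to the good set and averaging'' is not available. With an unspecified direction, $\vec g_k=W^{-1/p}\vec e_k$ only gives $\|A_Q\vec g_k(x)\|\ge\gamma_k(x)^{-1}G_k(x)$, and $\gamma_k$ from \eqref{eq-def-rj} is controlled only in $L^r$-averages (Lemma \ref{lem-AQW}). The fallback $\vec g_k:=A_Q^{-1}\vec u$ gives $\|W^{1/p}\vec g_k\|\le\gamma_k S_k$, where $S_k$ is the scalar $A_Q$-size of the telescoped pieces. But Lemma \ref{lem-rkEkf} only controls $\gamma_k$ times a function that is constant on the cubes of $\cq_k$, and the pieces $\sup_{\phi\in\mathcal F_j(\cdot)}$ with $j>k$ are not of that form. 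Routing through $A_P$, $P\in\cq_j$, costs $\|A_QA_P^{-1}\|\ls\delta^{-(j-k)\wz d_{(p)}/p}$ (cf. Remark \ref{remark-dp}). The gain $L_{s,b}(\delta^j)/L_{s,b}(\delta^k)\approx\delta^{(j-k)s}$ (up to logarithmic factors) absorbs this only if $s>\wz d_{(p)}/p$. That is not a hypothesis when $p>1$, and for $p>1$ the quantity $\|A_PW^{-1/p}\|$ has no pointwise bound either.

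The missing idea is that \eqref{eq-def-gradient} and the norm estimate each involve one point $z$ and one matrix. So you never need to compare $A_Q$ with $W^{1/p}$ across points or scales. The paper builds the gradient from the vectors $\langle\vec f,\phi\rangle$ themselves:
\begin{itemize}
\item For $z\in\cx$, $l\in\zz$, and $Q\in\cq_l$ containing $z$, it picks $\wz\phi_{k,z}\in\mathcal F_k(z)$, $k\ge l$, such that $\|\sum_{k\ge l}A_Q\langle\vec f,\wz\phi_{k,z}\rangle\|$ is at least half the supremum over all such sequences.
\item It sets $\vec g_l(z):=[L_{s,b}(\delta^l)]^{-1}\sum_{k\ge l}\langle\vec f,\wz\phi_{k,z}\rangle$.
\item Your telescoping then applies, using $Q_l(x,\cdot)-Q_l(y,\cdot)\in\mathcal F_l(x)$ (Lemma \ref{lem-phi,in,Fk(x)}(i)) and \eqref{eq-Qk-Qk,in,Fj} for the other pieces. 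With $P\in\cq_l$ containing $y$ and $d(x,y)\in[\delta^{l+1},\delta^l)$, it gives $\|A_Q[\vec f(x)-\vec f(y)]\|\ls L_{s,b}(\delta^l)[\|A_Q\vec g_l(x)\|+\|A_P\vec g_l(y)\|]$.
\item Lemma \ref{lem-AQ=AQ'} swaps $A_P$ for $A_Q$.
\item The triangle inequality yields $\|W^{1/p}(z)\vec g_l(z)\|\le G_l(z):=[L_{s,b}(\delta^l)]^{-1}\sum_{k\ge l}\sup_{\phi\in\mathcal F_k(z)}\|W^{1/p}(z)\langle\vec f,\phi\rangle\|$.
\end{itemize}
What remains is a discrete Hardy inequality using only $s>0$, with no cross-scale factor from Lemma \ref{lem-sharp est}.

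Your first plan can also be repaired simply by choosing the direction. In $\vec g_l=W^{-1/p}\vec e_l$, take $\vec e_l(z)$ to be a top right-singular vector of $A_QW^{-1/p}(z)$ with $\|\vec e_l(z)\|=G_l(z)$. Then $\|A_Q\vec g_l(z)\|=\|A_QW^{-1/p}(z)\|\,G_l(z)$, which dominates $[L_{s,b}(\delta^l)]^{-1}\sum_{k\ge l}\sup_{\phi\in\mathcal F_k(z)}\|A_Q\langle\vec f,\phi\rangle\|$, while $\|W^{1/p}(z)\vec g_l(z)\|=G_l(z)$.

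Finally, the weak lower bound does not control a low-frequency tail. It is used in Lemma \ref{lem-Lloc}, for $p<1$, to sum the high-frequency pieces in $L^1_{\loc}$ via $\mu(B(x,\delta^k))\gtrsim\delta^{kD}$. That sum converges because $p>D/(D+s)$.
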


To prove Theorem \ref{thm-N=AB}, we  need the following lemma.

\begin{lemma}\label{lem-Lloc}
Assume that the doubling measure $\mu$ of $\cx$ has a weak lower bound $\lambda=D$.
Let $\theta$ be the same as in Definition \ref{def-exp-ATI}, $\beta,\gamma\in(0,\theta)$, $s\in(0,\beta\wedge\gamma)$,
$b:=(b_+,b_-)$ be a pair of real numbers,
$p\in(D/[D+s],\fz)$, $q\in(0,\fz]$, and $W\in A_p(\cx,\cc^m)$.
Then, for any $\vec{f}\in \ca\dot{B}^{s,b}_{p,q}(W)$,
there exists a function $\vec{f}^*\in[L_\loc^1(\cx)]^m$ such that
$\vec{f}^*=\vec{f}$ in $[(\mathring{\mathcal{G}}_0^\theta(\beta,\gamma))']^m$.
\end{lemma}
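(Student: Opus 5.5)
We plan to follow the scalar argument of \cite[Lemma 3.23]{AWYY23}, adapted to the matrix-weighted setting via reducing operators. Fix a 1-exp-ATI $\{Q_k\}_{k\in\zz}$ and let $P_k:=Q_k-Q_{k-1}$, which is an exp-ATI by Definition \ref{def-1expATI}(iii). For $\vec f\in\ca\dot{B}^{s,b}_{p,q}(W)$ and each $k$, the function $Q_k\vec f(x)=\langle\vec f,Q_k(x,\cdot)\rangle$ is well defined. The goal is to show that, modulo constant vectors, $\{Q_k\vec f\}_{k}$ converges in $[L^1_{\loc}(\cx)]^m$ as $k\to\fz$, and that the constants can be fixed so that the limit $\vec f^*$ pairs with every $\phi\in\mathring{\mathcal G}_0^\theta(\beta,\gamma)$ like $\vec f$.

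\textbf{Step 1 (local increments).} Write $Q_{k}\vec f-Q_{k-1}\vec f=P_k\vec f$. Using Lemma \ref{lem-phi,in,Fk(x)}(i), for $d(x,y)\le\delta^k$ the difference $Q_k\vec f(x)-Q_k\vec f(y)$ equals $\langle\vec f,\phi_k^{x,y}\rangle$ with $\phi_k^{x,y}\in\mathcal F_k(x)$; similarly $P_k\vec f(x)$ is controlled by $\sup_{\phi\in\mathcal F_k(x)}\|\langle\vec f,\phi\rangle\|$ up to a constant. Hence, with $g_k(x):=\sup_{\phi\in\mathcal F_k(x)}\|W^{1/p}(x)\langle\vec f,\phi\rangle\|$, we have $\|W^{1/p}(x)P_k\vec f(x)\|\ls g_k(x)$. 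On a ball $B=B(x_0,R)$, we pass from $W^{1/p}(x)$ to a reducing operator $A_B$ by H\"older's inequality with \eqref{lem-eq-AP,W,p>1} when $p>1$, or \eqref{lem-eq-AP,W,p<1} when $p\le1$ (the latter gives $\|A_QW^{-1/p}(x)\|\ls1$ pointwise on cubes; a finite cover of $B$ by cubes together with Lemma \ref{lem-sharp est} handles $B$). We then obtain
$$\int_B\|A_BP_k\vec f\|\,d\mu\ls C_B\,\mu(B)^{1-1/\min\{p,1\}}\,\|g_k\|_{L^p(\cx)},$$
where for $p<1$ we additionally use that the $P_k\vec f$ are "band-limited" at scale $\delta^k$, so that a Plancherel–Pólya type estimate converts $L^p$ into $L^1$ at the cost of a factor $\delta^{-kD(1/p-1)}$.

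\textbf{Step 2 (summation for $k\ge0$).} Since $\|g_k\|_{L^p}\le L_{s,b}(\delta^k)\|\vec f\|_{\ca\dot B}$ and $L_{s,b}(\delta^k)\sim\delta^{ks}$ up to logarithmic factors, the series $\sum_{k\ge0}P_k\vec f$ converges in $[L^1(B)]^m$ when $p\ge1$. When $p\in(D/(D+s),1)$, the penalty $\delta^{-kD(1/p-1)}$ is beaten exactly because $s>D(1/p-1)$, and the logarithmic factors are harmless. An alternative that avoids Plancherel–Pólya is to use the Poincaré inequality of Theorem \ref{lem-Poincare}(ii) with $\lambda=p$ and $\lambda^*>1$.

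\textbf{Step 3 (the low frequencies, $k\to-\fz$; main obstacle).} Here $P_k\vec f$ for $k<0$ need not decay, so we subtract constants: set $\vec f^*:=\sum_{k\ge1}P_k\vec f+\sum_{k\le0}[P_k\vec f-P_k\vec f(x_0)]$. The regularity bound $\|P_k\vec f(x)-P_k\vec f(x_0)\|\ls[d(x,x_0)\delta^{-k}]^{\theta}\sup_{\mathcal F_k}$ at $x_0$ must be summable over $k\to-\fz$. This is where the weak lower bound $\mu(B(x_0,r))\gtrsim r^D$ enters: the pointwise bound at scale $\delta^k$ costs a factor $\mu(B(x_0,\delta^k))^{-1/p}\ls\delta^{-kD/p}$ times the norm, and we need matrix control of $\|A_{Q}A_{Q_0}^{-1}\|$ from Lemma \ref{lem-sharp est}, with the $A_p$ dimension $d<D$ from Lemma \ref{lem-d<D}. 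Summability then reduces to $\theta> s$-type and $d/p<D/p$-type inequalities. We expect balancing these exponents, including the possible growth $\delta^{-|k|(d+\wz d_{(p)})/p}$ from Lemma \ref{lem-sharp est}, to be the delicate point. Finally, since every $\phi\in\mathring{\mathcal G}_0^\theta(\beta,\gamma)$ has integral zero, the subtracted constants pair to zero, so $\langle\vec f^*,\phi\rangle=\sum_k\langle P_k\vec f,\phi\rangle=\langle\vec f,\phi\rangle$ by the Calder\'on reproducing formula for exp-ATIs. Dominated convergence against the decay of $\phi$ justifies the interchange, and it needs the growth bound on $\vec f^*$ obtained in Step 3.
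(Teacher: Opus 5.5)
\paragraph{Verdict.} Your proposal is correct in outline and follows a genuinely different, and more complete, route than the paper at low frequencies; your high-frequency argument is essentially the paper's.

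\paragraph{Where you agree with the paper.} Your Steps 1--2 are essentially the paper's argument.
\begin{itemize}
\item For $p\ge1$ the paper bounds $\|W^{1/p}(x)\sum_{k\ge0}(Q_k\vec f-Q_{k+1}\vec f)(x)\|$ by $\sum_{k\ge1}\sup_{\phi\in\mathcal F_k(x)}\|W^{1/p}(x)\langle\vec f,\phi\rangle\|$. It then uses Minkowski and H\"older with the reducing operators.
\item For $p<1$ your ``Plancherel--P\'olya'' loss is exactly \eqref{eq-*}. There one writes $\|\cdot\|=\|\cdot\|^{1-p}\|\cdot\|^p$ and bounds the first factor pointwise by $\delta^{-kD/p}$ times an $L^p$ norm. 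The result is summable because $s>D(1/p-1)$.
\end{itemize}

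\paragraph{Where you differ: the low frequencies.} The paper writes $\vec f=Q_0\vec f-\sum_{k\ge0}(Q_k\vec f-Q_{k+1}\vec f)$ with a 1-exp-ATI and never examines $Q_0\vec f$. But $\int_\cx Q_0(x,\cdot)\,d\mu=1$, so $Q_0(x,\cdot)\notin\mathring{\mathcal G}_0^\theta(\beta,\gamma)$. Hence $Q_0\vec f$ is defined only modulo constants, and identifying $\int_\cx\vec f^*\phi\,d\mu$ with $\langle\vec f,\phi\rangle$ needs a growth bound at infinity.
\begin{itemize}
\item Your opening claim that $Q_k\vec f$ is well defined has the same defect. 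It is harmless, because your actual $\vec f^*$ uses only the $P_k\vec f$.
\item Your Step 3 supplies exactly what the paper skips.
\item It is in Step 3 that the weak lower bound does real work. In the paper it enters only at small scales in \eqref{eq-*}, where, for a local statement, doubling on a fixed ball would already suffice.
\end{itemize}

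\paragraph{The exponent balancing you flag does close.}
\begin{itemize}
\item For $k\le0$ the cubes $Q_0\subset Q_k$ containing $x_0$ are nested. So Lemma \ref{lem-sharp est} gives $\|A_{Q_0}A_{Q_k}^{-1}\|\ls\delta^{kd/p}$, with neither $\wz d_{(p)}$ nor a distance factor.
\item Combining this with $\mu(Q_k)^{-1/p}\ls\delta^{-kD/p}$ and Lemma \ref{lem-phi,in,Fk(x)}(ii) gives $\sup_{\phi\in\mathcal F_k(x_0)}\|A_{Q_0}\langle\vec f,\phi\rangle\|\ls\delta^{|k|(D-d)/p}L_{s,b}(\delta^k)\|\vec f\|_{\ca\dot B^{s,b}_{p,q}(W)}$.
\item The full factor $[d(x,x_0)\delta^{-k}]^\theta$, for $d(x,x_0)\le\delta^k$, is legitimate. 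This follows from the second-difference condition and the exponential decay of $P_k$.
\item Hence the series converges because $\theta>s$ and $d<D$.
\end{itemize}

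\paragraph{The dominated-convergence step.} Avoid pointwise bounds at distant cubes. There Lemma \ref{lem-sharp est} costs $[1+d(x,x_0)\delta^{-k}]^{(d+\wz d_{(p)})/p}$, and the $\gamma$-decay of $\phi$ need not absorb it. Instead, average over $B(x_0,R)$ using the reducing operator of that ball, for which $\|A_{B(x_0,R)}^{-1}\|\ls R^{d/p}$. This gives $\fint_{B(x_0,R)}\|\vec f^*\|\,d\mu\ls R^{\max\{s-(D-d)/p,\,0\}}$ up to logarithms, which suffices since $s<\gamma$.

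\paragraph{Drop the Poincar\'e alternative.} The alternative via Theorem \ref{lem-Poincare}(ii) is not available here. It presupposes that $\vec f$ is already a function with a Haj{\l}asz gradient sequence. That is exactly what this lemma is used to produce in the proof of Theorem \ref{thm-N=AB}.
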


\begin{proof}
Let $\vec{f}\in\ca \dot{B}^{s,b}_{p,q}(W)$
and $\{Q_k\}_{k\in\zz}$ be a 1-exp-ATI (see Definition \ref{def-1expATI}).

Recall that $Q_k\vec{f}\rar\vec{f}$ in $[(\mathring{\mathcal{G}}_0^\theta(\beta,\gamma))']^m$ as $k\rar\fz$.
Then
\begin{equation}\label{eq-f=Qkf,in,()'}
\vec{f}=Q_0\vec{f}-
\sum_{k=0}^{\fz}\lf(Q_k\vec{f}-Q_{k+1}\vec{f}\,\r)\ \ \text{in}\
\lf[\lf(\mathring{\mathcal{G}}_0^\theta(\beta,\gamma)\r)'\r]^m.
\end{equation}
Next, we consider the following two cases on $p$.

When $p\in[1,\fz)$,  we find that, for any $x\in\cx$,
\begin{align}\label{eq-pf-p>1-1}
&\lf\|W^{\frac 1 p}(x)\sum_{k=0}^{\fz}\lf[Q_{k}\vec{f}(x)
-Q_{k+1}\vec{f}(x)\r]\r\|\nonumber\\
&\quad\le\sum_{k=0}^{\fz}\lf\|W^{\frac 1 p}(x)\lf[Q_{k}\vec{f}(x)
-Q_{k+1}\vec{f}(x)\r]\r\|\nonumber\\
&\quad=\sum_{k=0}^{\fz}\lf\|W^{\frac 1 p}(x)
\lf\langle\vec{f}(\cdot),Q_{k}(x,\cdot)-Q_{k+1}(x,\cdot)\r\rangle\r\|.
\end{align}
Combining (ii) and (iii) of Definition \ref{def-1expATI}, \eqref{eq-ATI-1}, \eqref{eq-ATI-2},
and \eqref{eq-def-Fk}, we find that, for any $k\in\zz$ and $x\in\cx$,
\begin{equation}\label{eq-Qk-Qk,in,Fj}
Q_{k}(x,\cdot)-Q_{k+1}(x,\cdot)\in \mathcal{F}_{k+1}(x).
\end{equation}
Thus, \eqref{eq-pf-p>1-1} implies that
\begin{align}\label{eq-pf-p>1-2}
&\lf\|W^{\frac 1 p}(x)\sum_{k=0}^{\fz}\lf[Q_{k}\vec{f}(x)
-Q_{k+1}\vec{f}(x)\r]\r\|
\le\sum_{k=1}^{\fz}\sup_{\phi\in\mathcal{F}_{k}(x)}\lf\|W^{\frac 1 p}(x)
\lf\langle\vec{f},\phi\r\rangle\r\|,
\end{align}
which, combined with the Minkowski inequality and $s\in(0,\beta\wedge\gamma)$, further implies that
\begin{align*}
&\lf\|\sum_{k=0}^{\fz}\lf[Q_{k}\vec{f}(x)
-Q_{k+1}\vec{f}(x)\r]\r\|_{L^p(W)}\\
&\quad\ls \lf\|\lf\{\f{1}{L_{s,b}(\delta^{k})}\sup_{\phi\in\mathcal{F}_{k}(x)}\lf\|W^{\frac 1 p}(x)
\lf\langle\vec{f},\phi\r\rangle\r\|\r\}_{k\in\zz}\r\|_{\ell^q(W,L^p)}
\ls\lf\|\vec{f}\,\r\|_{\ca \dot{B}^{s,b}_{p,q}(W)}<\fz\nonumber.
\end{align*}
From this and the H\"{o}lder inequality, it follows that, for any given $i\in\zz$ and $x\in\cx$,
\begin{align*}
&\int_{B(x,\delta^i)}\sum_{Q\in\cq_i}\lf\|A_Q\lf\{\sum_{k=0}^{\fz}\lf[Q_{k}\vec{f}(y)
-Q_{k+1}\vec{f}(y)\r]\r\}\r\|\mathbf{1}_Q(y)\,d\mu(y)\\
&\quad\le\lf\{\int_{B(x,\delta^i)}\sum_{Q\in\cq_i}
\lf\|A_QW^{-\frac 1 p}(y)\r\|^{p'}\mathbf{1}_Q(y)\,d\mu(y)\r\}^{\frac{1}{p'}}\lf\|\sum_{k=0}^{\fz}\lf[Q_{k}\vec{f}(x)
-Q_{k+1}\vec{f}(x)\r]\r\|_{L^p(W)}\\
&\quad\ls\lf\{\sum_{\gfz{Q\in\cq_i}{Q\cap B(x,\delta^i)\neq\emptyset}}\mu(Q)
\fint_{Q}\lf\|A_QW^{-\frac 1 p}(y)\r\|^{p'}\mathbf{1}_Q(y)\,d\mu(y)\r\}^{\frac{1}{p'}}<\fz.
\end{align*}
Notice that $\{A_Q\}_{Q\in\cq}$ and also their inverse operators are linear bounded operators
and that the total number of the set $\{Q\in\cq_i:\ Q\cap B(x,\delta^i)\neq\emptyset\}$ is finite.
It then follows that
$$\sum_{k=0}^{\fz}\lf(Q_{k}\vec{f}
-Q_{k+1}\vec{f}\,\r)\in L^1(B(x,\delta^i)),$$
which means that
$$\sum_{k=0}^{\fz}\lf(Q_{k}\vec{f}
-Q_{k+1}\vec{f}\,\r)\in L_\loc^1(\cx).$$

When $p\in(D/[D+s],1)$, applying Lemmas \ref{lem-AQ=AQ'},  \ref{lem-AQW}(i),
and \ref{lem-phi,in,Fk(x)}(ii) and the assumption of $\mu$,
we find that, for any given $k\in\zz$, any $x\in\cx$, and any $Q\in\cq_k$ containing $x$,
\begin{align*}
&\sup_{\phi\in\mathcal{F}_{k}(x)}\lf\|A_Q\lf\langle\vec{f},\phi\r\rangle\r\|\\
&\quad\le\lf[\fint_{B(x,\delta^k)}\lf\|A_QW^{-\frac 1 p}(y)\r\|^p
\sup_{\phi\in\mathcal{F}_{k}(x)}\lf\|W^{\frac 1 p}(y)
\lf\langle\vec{f},\phi\r\rangle\r\|^p\,d\mu(y)\r]^{\frac 1 p}\\
&\quad\ls\sup_{P\in\cq}\esssup_{y\in P}\lf\|A_PW^{-\frac 1 p}(y)\r\|
\lf[\fint_{B(x,\delta^k)}\sup_{\phi\in\mathcal{F}_{k}(x)}\lf\|W^{\frac 1 p}(y)
\lf\langle\vec{f},\phi\r\rangle\r\|^p\,d\mu(y)\r]^{\frac 1 p}\\
&\quad\ls\lf[\fint_{B(x,\delta^k)}\sup_{\phi\in\mathcal{F}_{k}(y)}
\lf\|W^{\frac 1 p}(y)\lf\langle\vec{f},\phi\r\rangle\r\|^p\,d\mu(y)\r]^{\frac 1 p}\\
&\quad\ls\delta^{-\frac{kD}{p}}\lf[\int_{\cx}\sup_{\phi\in\mathcal{F}_{k}(y)}\lf\|W^{\frac 1 p}(y)
\lf\langle\vec{f},\phi\r\rangle\r\|^p\,d\mu(y)\r]^{\frac 1 p}
\end{align*}
and hence
\begin{align}\label{eq-*}
&\lf\|\sum_{Q\in\cq_k}\sup_{\phi\in\mathcal{F}_{k}(\cdot)}\lf\|A_Q
\lf\langle\vec{f},\phi\r\rangle\r\|\mathbf{1}_Q\,\r\|_{L^1(\cx)}\nonumber\\
&\quad=\int_{\cx}\sum_{Q\in\cq_k}\sup_{\phi\in\mathcal{F}_{k}(x)}
\lf\|A_Q\lf\langle\vec{f},\phi\r\rangle\r\|^{1-p}
\sup_{\phi\in\mathcal{F}_{k}(x)}
\lf\|A_Q\lf\langle\vec{f},\phi\r\rangle\r\|^p\mathbf{1}_Q\,d\mu(x)\nonumber\\
&\quad\ls\delta^{-\frac{(1-p)kD}{p}}
\lf\|\sup_{\phi\in\mathcal{F}_{k}(\cdot)}\lf\|W^{\frac 1 p}(\cdot)
\lf\langle\vec{f},\phi\r\rangle\r\|\,\r\|_{L^p(\cx)}^{1-p}\nonumber\\
&\quad\quad\times\sup_{Q\in\cq}\esssup_{x\in Q}\lf\|A_QW^{-\frac 1 p}(x)\r\|^p
\lf\|\sup_{\phi\in\mathcal{F}_{k}(\cdot)}\lf\|W^{\frac 1 p}(\cdot)
\lf\langle\vec{f},\phi\r\rangle\r\|\,\r\|_{L^p(\cx)}^p\nonumber\\
&\quad\ls\delta^{(D-\frac{D}{p})k}
\lf\|\sup_{\phi\in\mathcal{F}_{k}(\cdot)}\lf\|W^{\frac 1 p}(\cdot)
\lf\langle\vec{f},\phi\r\rangle\r\|\,\r\|_{L^p(\cx)}.
\end{align}
Since, by an argument similar to that used in \eqref{eq-pf-p>1-2} with $W^{1/p}(x)$ replaced by $A_Q$,
and \eqref{eq-sharp-est-p<1}, we conclude that, for any $x\in\cx$ and $Q\in\cq_0$ containing $x$,
\begin{align*}
\lf\|A_Q\sum_{k=0}^{\fz}\lf[Q_{k}\vec{f}(x)
-Q_{k+1}\vec{f}(x)\r]\r\|
&\le\sum_{k=1}^{\fz}\sup_{\phi\in\mathcal{F}_{k}(x)}
\lf\|A_Q\lf\langle\vec{f},\phi\r\rangle\r\|\\
&\ls\sum_{k=1}^{\fz}\sum_{P\in\cq_k}\sup_{\phi\in\mathcal{F}_{k}(x)}
\lf\|A_P\lf\langle\vec{f},\phi\r\rangle\r\|\mathbf{1}_P(x),
\end{align*}
then, from this, \eqref{eq-*}, and $p\in(D/(D+s),1]$, we infer that
\begin{align*}
&\lf\|\sum_{Q\in\cq_0}A_Q\lf(\sum_{k=0}^{\fz}Q_{k}\vec{f}
-Q_{k+1}\vec{f}\r)\mathbf{1}_Q\,\r\|_{L^1(\cx)}\\
&\quad\ls\sum_{k=1}^{\fz}\delta^{(D-\frac{D}{p})k}
\lf\|\sup_{\phi\in\mathcal{F}_{k}(\cdot)}\lf\|W^{\frac 1 p}(\cdot)
\lf\langle\vec{f},\phi\r\rangle\r\|\,\r\|_{L^p(\cx)}\\
&\quad\ls \sum_{k=1}^{\fz}\delta^{(D-\frac{D}{p})k}L_{s,b}(\delta^k)
\lf\|\vec{f}\,\r\|_{\mathcal{A}\dot{B}^{s,b}_{p,q}(W)}
\ls\lf\|\vec{f}\,\r\|_{\mathcal{A}\dot{B}^{s,b}_{p,q}(W)}<\fz.
\end{align*}
Moreover, we use again the fact that $\{A_Q^{-1}\}_{Q\in\cq}$
is a family of linear bounded operators to obtain
$$\sum_{k=0}^{\fz}\lf(Q_{k}\vec{f}
-Q_{k+1}\vec{f}\,\r)\in L_\loc^1(\cx).$$

Finally,
let $$\wz{\vec{f}}:=Q_0\vec{f}-\sum_{k=0}^{\fz}(Q_{k}\vec{f}-Q_{k+1}\vec{f}\,).$$
Then $\wz{\vec{f}}\in L_\loc^1(\cx)$
and, by \eqref{eq-f=Qkf,in,()'},
 $\vec{f}=\wz{\vec{f}}$ in
$[(\mathring{\mathcal{G}}_0^\theta(\beta,\gamma))']^m$.
In this sense, we have $\vec{f}\in L_\loc^1(W)$.
This finishes the proof of Lemma \ref{lem-Lloc}.
\end{proof}

Now, we show Theorem \ref{thm-N=AB}.

\begin{proof}[Proof of Theorem \ref{thm-N=AB}]
Let $\vec{f}\in \dot{N}^{s,b}_{p,q}(W)$, $\{A_Q\}_{Q\in\cq}\in\mathcal{RS}_\cq^{W,p}$, and
$\vec{\mathbf{g}}:=\{\vec{g}_k\}_{k\in\zz}\in\dd^{s,b}_{\{A_Q\}}(\vec{f})$ be such that
$\|\{\vec{g}_k\}_{k\in\zz}\|_{\ell^q(W,L^p)}\ls\|\vec{f}\,\|_{\dot{N}^{s,b}_{p,q}(W)}$.
We first prove $\|\vec{f}\,\|_{\mathcal{A}\dot{B}^{s,b}_{p,q}(W)}\ls\|\vec{f}\,\|_{\dot{N}^{s,b}_{p,q}(W)}$.

By \eqref{eq-def-Fk}, \eqref{eq-def-Go}, \eqref{eq-def-decay}, \eqref{eq-def-D},
and Lemma \ref{lem-X-basic}(i), we find that,
for any $k\in\zz$, $x\in\cx$, and $\phi\in\mathcal{F}_k(x)$,
\begin{align}\label{eq-thmpf-1}
&\lf\|W^{\f 1 p}(x)\lf\langle\vec{f},\phi\r\rangle\r\|\nonumber\\
&\quad\le\int_\cx\lf\|W^{\f 1 p}(x)\lf[\vec{f}(y)-\vec{f}_{B(x,\delta^k)}\r]\r\|\lf|\phi(y)\r|\,d\mu(y)\nonumber\\
&\quad\le\int_\cx\lf\|W^{\f 1 p}(x)\lf[\vec{f}(y)-\vec{f}_{B(x,\delta^k)}\r]\r\|D_\gamma(x,y;\delta^k)\,d\mu(y)\nonumber\\
&\quad\le\int_{B(x,\delta^k)}\lf\|W^{\f 1 p}(x)\lf[\vec{f}(y)-\vec{f}_{B(x,\delta^k)}\r]\r\|
\f{1}{V_{\delta^k}(x)+V(x,y)}\lf[\f{\delta^k}{\delta^k+d(x,y)}\r]^\gamma\,d\mu(y)\nonumber\\
&\quad\quad+\sum_{j=0}^{\fz}\int_{B(x,\delta^{k-j-1})\setminus B(x,\delta^{k-j})}\cdots\nonumber\\
&\quad\ls \sum_{j=0}^{\fz}\delta^{j\gamma}\fint_{B(x,\delta^{k-j})}
\lf\|W^{\f 1 p}(x)\lf[\vec{f}(y)-\vec{f}_{B(x,\delta^k)}\r]\r\|\,d\mu(y).
\end{align}
Furthermore, from the doubling property of $\mu$,
it follows that, for any $k\in\zz$ and $j\in\zz_+$,
\begin{align*}
&\fint_{B(x,\delta^{k-j})}\lf\|W^{\f 1 p}(x)\lf[\vec{f}(y)-\vec{f}_{B(x,\delta^k)}\r]\r\|\,d\mu(y)\\
&\quad\ls\fint_{B(x,\delta^{k-j})}\lf\|W^{\f 1 p}(x)\lf[\vec{f}(y)-\vec{f}_{B(x,\delta^{k-j})}\r]\r\|\,d\mu(y)\\
&\quad\quad+
\fint_{B(x,\delta^{k-j+1})}\lf\|W^{\f 1 p}(x)\lf[\vec{f}(y)-\vec{f}_{B(x,\delta^{k-j})}\r]\r\|\,d\mu(y)\\
&\quad\quad+\cdots+
\fint_{B(x,\delta^{k})}\lf\|W^{\f 1p}(x)\lf[\vec{f}(y)-\vec{f}_{B(x,\delta^{k-1})}\r]\r\|\,d\mu(y)\\
&\quad\ls\sum_{i=0}^{j}\fint_{B(x,\delta^{k-i})}
\lf\|W^{\f 1 p}(x)\lf[\vec{f}(y)-\vec{f}_{B(x,\delta^{k-i})}\r]\r\|\,d\mu(y).
\end{align*}
Inserting this into \eqref{eq-thmpf-1}, we obtain,
for any $k\in\zz$, $x\in\cx$, and $\phi\in\mathcal{F}_k(x)$,
\begin{align}\label{eq-thmpf-2}
\lf\|W^{\f 1 p}(x)\lf\langle\vec{f},\phi\r\rangle\r\|
&\ls\sum_{j=0}^{\fz}\delta^{j\gamma}\sum_{i=0}^{j}\fint_{B(x,\delta^{k-i})}
\lf\|W^{\f 1 p}(x)\lf[\vec{f}(y)-\vec{f}_{B(x,\delta^{k-i})}\r]\r\|\,d\mu(y)\nonumber\\
&\ls\sum_{i=0}^{\fz}\delta^{i\gamma}
\inf_{\vec{\eta}\in\cc^m}\fint_{B(x,\delta^{k-i})}
\lf\|W^{\f 1 p}(x)\lf[\vec{f}(y)-\vec{\eta}\r]\r\|\,d\mu(y).
\end{align}

When $p\in(1,\fz)$, we choose $\lambda\in[p-\varepsilon,p)$
with $\varepsilon\in(0,1)$ as in Theorem \ref {lem-Poincare}(i).
Applying Theorem\ref{lem-Poincare}(i) to \eqref{eq-thmpf-2},
we conclude that, for any $x\in\cx$,
\begin{align*}
\lf\|W^{\f 1 p}(x)\lf\langle\vec{f},\phi\r\rangle\r\|
&\ls\sum_{i=0}^{\fz}\delta^{i\gamma}
L_{s,b}(\delta^{k-i})\gamma_{k-i}(x)\sum_{l=k-i-3}^{k-i-1}
\lf[E_{k-i}\lf(\lf\|W^{\frac 1 p}\vec{g}_l\r\|^{\lambda}\r)(x)\r]^{\frac{1}{\lambda}}.\\
&\ls\sum_{l=1}^3\delta^{k\gamma}\sum_{j=-\fz}^{k}\delta^{-j\gamma}
L_{s,b}(\delta^{j})\gamma_{j}(x) \lf[E_{j}\lf(\lf\|W^{\frac 1 p}\vec{g}_{j-l}\r\|^{\lambda}\r)(x)\r]^{\frac{1}{\lambda}},
\end{align*}
where $\gamma_{j}$ is defined in \eqref{eq-def-rj}
and $E_{j}$ defined in \eqref{eq-def-Ek}.
Thus, from this, the Minkowski inequality,
\eqref{triangle} for any $q\in(0,1]$ or the H\"{o}lder inequality for any $q\in(1,\fz]$,
$s<\gamma$, Lemma \ref{lem-rkEkf}, $\lambda<p$,
and the choice of $\vec{\mathbf{g}}$, we deduce that
\begin{align*}
\lf\|\vec{f}\,\r\|_{\mathcal{A}\dot{B}^{s,b}_{p,q}(W)}
&\ls\sum_{l=0}^3\lf\{\sum_{k\in\zz}\lf[L_{s,b}(\delta^k)\r]^{-q}\delta^{k\gamma q}
\lf\|\sum_{j=-\fz}^{k}\delta^{-j\gamma}
L_{s,b}(\delta^{j})\gamma_{j}
\lf[E_{j}\lf(\lf\|W^{\frac 1 p}\vec{g}_{j-l}\r\|^{\lambda}\r)(x)\r]
^{\frac{1}{\lambda}}\r\|_{L^p(\cx)}^q\r\}^{\frac 1 q}\\
&\ls\sum_{l=0}^3\lf\{\sum_{k\in\zz}\lf[L_{s,b}(\delta^k)\r]^{-q}\delta^{k\gamma q}
\lf[\sum_{j=-\fz}^{k}\delta^{-j\gamma}
L_{s,b}(\delta^{j})\r.\r.\\
&\quad\times\lf.\lf.\lf\|\gamma_{j} \lf[E_{j}\lf(\lf\|W^{\frac 1 p}\vec{g}_{j-l}\r\|
^{\lambda}\r)(x)\r]^{\frac{1}{\lambda}}\r\|_{L^p(\cx)}\r]^q\r\}^{\frac 1 q}\\
&\ls\sum_{l=0}^3\lf\{\sum_{j\in\zz}\sum_{k=j}^{\fz}
\lf[\f{\delta^{k\gamma}}{L_{s,b}(\delta^k)}\r]^{\min\{1,q\}}
\lf[\delta^{-j\gamma}L_{s,b}(\delta^{j})\r]^{\min\{1,q\}}\r.\\
&\quad\times\lf.\lf\|\gamma_{j}
\lf[E_{j}\lf(\lf\|W^{\frac 1 p}\vec{g}_{j-l}\r\|^{\lambda}\r)(x)\r]
^{\frac{1}{\lambda}}\r\|_{L^p(\cx)}^q\r\}^{\frac 1 q}\\
&\ls\sum_{l=0}^3\lf\{\sum_{j\in\zz}\lf\|\gamma_{j}
\lf[E_{j}\lf(\lf\|W^{\frac 1 p}\vec{g}_{j-l}\r\|^{\lambda}\r)(x)\r]
^{\frac{1}{\lambda}}\r\|_{L^p(\cx)}^q\r\}^{\frac 1 q}\\
&\ls\|\{\vec{g}_j\}_{j\in\zz}\|_{\ell^q(W,L^p)}\ls\lf\|\vec{f}\,\r\|_{\dot{N}^{s,b}_{p,q}(W)}.
\end{align*}

When $p\in(D/[D+s],1]$, applying Theorem \ref{lem-Poincare}(ii) to \eqref{eq-thmpf-2},
we obtain
\begin{align*}
\lf\|W^{\f 1 p}(x)\lf\langle\vec{f},\phi\r\rangle\r\|
&\ls \sum_{i=0}^{\fz}\delta^{i\gamma}
\delta^{(k-i)\vp'}\sum_{l=k-i-2}^{\fz}\delta^{-l\vp'}L_{s,b}(\delta^{l})
\gamma_{k-i}(x)\lf[E_{k-i}\lf(\lf\|W^{\frac 1 p}\vec{g}_l\r\|
^{\frac{D}{D+\varepsilon}}\r)(x)\r]^{\frac{D+\varepsilon}{D}}
\end{align*}
for some $\varepsilon,\varepsilon'\in(0,s)$ satisfying $\varepsilon<\varepsilon'$.
Thus, by using this, dividing the sum $\sum_{i=0}^{\fz}\sum_{l=k-i-2}^{\fz}$ into
$\sum_{l=-\fz}^{k-2}\sum_{i=k-l-2}^{\fz}$ and $\sum_{l=k-1}^{\fz}\sum_{i=0}^{\fz}$,
and applying Lemma \ref{lem-rkEkf} with $\lambda$ replaced by $(D+\varepsilon)/D$,
we conclude that, for any $k\in\zz$,
\begin{align*}
&\lf\|\sup_{\phi\in\mathcal{F}_k(\cdot)}\lf\|W^{\f 1 p}\lf\langle\vec{f},\phi\r\rangle\r\|\,\r\|_{L^p(\cx)}\\
&\quad\ls\delta^{k\vp'}\lf\|\sum_{i=0}^{\fz}\delta^{i(\gamma-\varepsilon')}
\sum_{l=k-i-2}^{\fz}\delta^{-l\vp'}L_{s,b}(\delta^{l})
\gamma_{k-i}\lf[E_{k-i}\lf(\lf\|W^{\frac 1 p}\vec{g}_l\r\|
^{\frac{D}{D+\varepsilon}}\r)\r]^{\frac{D+\varepsilon}{D}}\r\|_{L^p(\cx)}\\
&\quad\ls\delta^{k\vp'}\lf\|\sum_{l=-\fz}^{k-2}\sum_{i=k-l-2}^{\fz}\delta^{i(\gamma-\varepsilon')}
\delta^{-l\vp'}L_{s,b}(\delta^{l})
\sup_{j\in\zz}\gamma_{j}\lf[E_{j}\lf(\lf\|W^{\frac 1 p}\vec{g}_l\r\|
^{\frac{D}{D+\varepsilon}}\r)\r]^{\frac{D+\varepsilon}{D}}\r\|_{L^p(\cx)}\\
&\quad\quad+\delta^{k\vp'}\lf\|\sum_{l=k-1}^{\fz}\sum_{i=0}^{\fz}\delta^{i(\gamma-\varepsilon')}
\delta^{-l\vp'}L_{s,b}(\delta^{l})
\sup_{j\in\zz}\gamma_{j}\lf[E_{j}\lf(\lf\|W^{\frac 1 p}\vec{g}_l\r\|
^{\frac{D}{D+\varepsilon}}\r)\r]^{\frac{D+\varepsilon}{D}}\r\|_{L^p(\cx)}\\
&\quad\ls\delta^{k\gamma}\lf\|\sum_{l=-\fz}^{k-2}\delta^{-l\gamma}L_{s,b}(\delta^{l})
\sup_{j\in\zz}\gamma_{j}\lf[E_{j}\lf(\lf\|W^{\frac 1 p}\vec{g}_l\r\|
^{\frac{D}{D+\varepsilon}}\r)\r]^{\frac{D+\varepsilon}{D}}\r\|_{L^p(\cx)}\\
&\quad\quad+\delta^{k\varepsilon'}\lf\|\sum_{l=k-1}^{\fz}\delta^{-l\varepsilon'}L_{s,b}(\delta^{l})
\sup_{j\in\zz}\gamma_{j}\lf[E_{j}\lf(\lf\|W^{\frac 1 p}\vec{g}_l\r\|
^{\frac{D}{D+\varepsilon}}\r)\r]^{\frac{D+\varepsilon}{D}}\r\|_{L^p(\cx)}\\
&\quad\ls\delta^{k\gamma}\lf\{\sum_{l=-\fz}^{k-2}\delta^{-l\gamma p}\lf[L_{s,b}(\delta^{l})\r]^p
\lf\|\sup_{j\in\zz}\gamma_{j}\lf[E_{j}\lf(\lf\|W^{\frac 1 p}\vec{g}_l\r\|
^{\frac{D}{D+\varepsilon}}\r)\r]^{\frac{D+\varepsilon}{D}}
\r\|_{L^p(\cx)}^p\r\}^{\frac 1 p}\\
&\quad\quad+\delta^{k\varepsilon'}\lf\{\sum_{l=k-1}^{\fz}\delta^{-l\varepsilon' p}\lf[L_{s,b}(\delta^{l})\r]^p
\lf\|\sup_{j\in\zz}\gamma_{j}\lf[E_{j}\lf(\lf\|W^{\frac 1 p}\vec{g}_l\r\|
^{\frac{D}{D+\varepsilon}}\r)\r]^{\frac{D+\varepsilon}{D}}
\r\|_{L^p(\cx)}^p\r\}^{\frac 1 p}\\
&\quad\ls\delta^{k\gamma}\lf\{\sum_{l=-\fz}^{k-2}\delta^{-l\gamma p}\lf[L_{s,b}(\delta^{l})\r]^p
\lf\|\vec{g}_l\r\|_{L^p(W)}^p\r\}^{\frac 1 p}\\
&\quad\quad+\delta^{k\varepsilon'}\lf\{\sum_{l=k-1}^{\fz}\delta^{-l\varepsilon' p}\lf[L_{s,b}(\delta^{l})\r]^p
\lf\|\vec{g}_l\r\|_{L^p(W)}^p\r\}^{\frac 1 p},
\end{align*}
which, together with \eqref{triangle} when $q/p\in(0,1]$ or the H\"{o}lder inequality when $q/p\in(1,\fz]$
and with $\varepsilon'<s<\gamma$, further implies that
\begin{align*}
\lf\|\vec{f}\,\r\|_{\mathcal{A}\dot{B}^{s,b}_{p,q}(W)}&\ls
\lf[\sum_{k\in\zz}\lf[\delta^{-k\gamma}L_{s,b}(\delta^k)\r]^{-q}
\lf\{\sum_{l=-\fz}^{k-2}\delta^{-l\gamma p}\lf[L_{s,b}(\delta^{l})\r]^p
\lf\|\vec{g}_l\r\|_{L^p(W)}^p\r\}^{\frac q p}\r]^{\frac 1 q}\\
&\quad+\lf[\sum_{k\in\zz}\lf[\delta^{-k\vp'}L_{s,b}(\delta^k)\r]^{-q}
\lf\{\sum_{l=k-1}^{\fz}\delta^{-l\vp'p}\lf[L_{s,b}(\delta^{l})\r]^p
\lf\|\vec{g}_l\r\|_{L^p(W)}^p\r\}^{\frac q p}\r]^{\frac 1 q}\\
&\ls
\lf[\sum_{l\in\zz}\sum_{k=l+2}^{\fz}\lf[\delta^{-k\gamma}L_{s,b}(\delta^k)\r]^{-\min\{p,q\}}
\lf[\delta^{-l\gamma}L_{s,b}(\delta^{l})\r]^{\min\{p,q\}}
\lf\|\vec{g}_l\r\|_{L^p(W)}^q\r]^{\frac 1 q}\\
&\quad+\lf[\sum_{l\in\zz}\sum_{k=-\fz}^{l+1}\lf[\delta^{-k\vp'}L_{s,b}(\delta^k)\r]^{-\min\{p,q\}}
\lf[\delta^{-l\vp'}L_{s,b}(\delta^{l})\r]^{\min\{p,q\}}
\lf\|\vec{g}_l\r\|_{L^p(W)}^q\r]^{\frac 1 q}\\
&\ls\lf\|\{\vec{g}_l\}_{l\in\zz}\r\|_{\ell^q(W,L^p)}\ls\lf\|\vec{f}\,\r\|_{\dot{N}^{s,b}_{p,q}(W)}.
\end{align*}

Altogether, this finishes the proof of $\|\vec{f}\,\|_{\mathcal{A}\dot{B}^{s,b}_{p,q}(W)}\ls\|\vec{f}\,\|_{\dot{N}^{s,b}_{p,q}(W)}$.

Conversely,  let $\vec{f}\in \mathcal{A}\dot{B}^{s,b}_{p,q}(W)$ and we prove
$\|\vec{f}\,\|_{\dot{N}^{s,b}_{p,q}(W)}\ls\|\vec{f}\,\|_{\mathcal{A}\dot{B}^{s,b}_{p,q}(W)}$.
By Lemma \ref{lem-Lloc}, without loss of generality, we may assume that $\vec{f}\in[L_\loc^1(\cx)]^m$.

Let $x,y\in\cx$ and $k_0\in\zz$ be such that $d(x,y)\in[\delta^{k_0+1},\delta^{k_0})$
and let $Q$ be the unique dyadic cube of $\cq_{k_0}$ that contains $x$.
Notice that there exists a subsequence, which, for simplicity,
we still denote by $\{Q_k\vec{f}\}_{k\in\nn}$, such that,
for almost every $x\in\cx$, $Q_k\vec{f}(x)\rar\vec{f}(x)$ as $k\rar\fz$.
Then, applying \eqref{eq-Qk-Qk,in,Fj} and Lemma \ref{lem-AQ=AQ'},  we find that
\begin{align*}
&\lf\|A_Q\vec{f}(x)-A_Q\vec{f}(y)\r\|\\
&\quad\le\Bigg\|A_Q\Bigg\{\sum_{k=k_0}^{\fz}\lf[Q_{k}\vec{f}(x)-Q_{k+1}\vec{f}(x)\r]
+\lf[Q_{k}\vec{f}(y)-Q_{k+1}\vec{f}(y)\r]\\
&\quad\quad+\lf[Q_{k_0}\vec{f}(x)-Q_{k_0}\vec{f}(y)\r]\Bigg\}\Bigg\|\\
&\quad\ls\Bigg\|\sum_{k=k_0}^{\fz}A_Q\lf\langle\vec{f},Q_k(x,\cdot)-Q_{k+1}(x,\cdot)\r\rangle
+A_Q\lf\langle\vec{f},Q_{k_0}(x,\cdot)-Q_{k_0}(y,\cdot)\r\rangle\Bigg\|\\
&\quad\quad+\Bigg\|\sum_{k=k_0}^{\fz}A_Q\lf\langle\vec{f},Q_k(y,\cdot)-Q_{k+1}(y,\cdot)\r\rangle\Bigg\| \\
&\quad\ls\sup_{\gfz{\{\{\phi_{k,x}\}_{k\ge k_0}:\ \phi_{k,x}\in\mathcal{F}_{k}(x),\,k\ge k_0\}}
{\{\{\phi_{k,y}\}_{k\ge k_0}:\ \phi_{k,y}\in\mathcal{F}_{k}(y),\,k\ge k_0\}}}
\lf\{\lf\|\sum_{k=k_0}^{\fz}A_Q\lf\langle\vec{f},\phi_{k,x}\r\rangle\r\|+
\lf\|\sum_{k=k_0}^{\fz}A_Q\lf\langle\vec{f},\phi_{k+1,y}\r\rangle\r\|\r\}\\
&\quad\ls\sum_{Q\in\cq_{k_0}}\sup_{\{\{\phi_{k,x}\}_{k\ge k_0}:\ \phi_{k,x}\in\mathcal{F}_{k}(x),\,k\ge k_0\}}
\lf\|\sum_{k=k_0}^{\fz}A_Q\lf\langle\vec{f},\phi_{k,x}\r\rangle\r\|\mathbf{1}_Q(x)\\
&\quad\quad+\sum_{P\in\cq_{k_0}}
\sup_{\{\{\phi_{k,y}\}_{k\ge k_0}:\ \phi_{k,y}\in\mathcal{F}_{k}(y),\,k\ge k_0\}}
\lf\|\sum_{k=k_0}^{\fz}A_P\lf\langle\vec{f},\phi_{k+1,y}\r\rangle\r\|\mathbf{1}_P(y).
\end{align*}
By the property of the supremum, for any given $z\in\cx$ and $k_0\in\zz$,
we choose a sequence of functions,
\begin{equation}\label{eq-def-phi,choose}
\lf\{\wz{\phi}_{k,z}:\ \wz{\phi}_{k,z}\in\mathcal{F}_{k}(z)\r\}_{k\ge k_0},
\end{equation}
such that, for any $Q\in\cq_{k_0}$ that contains $z$,
\begin{align*}
\sup_{\{\{\phi_{k,z}\}_{k\ge k_0}:\ \phi_{k,z}\in\mathcal{F}_{k}(z),\,k\ge k_0\}}
\lf\|\sum_{k=k_0}^{\fz}A_Q\lf\langle\vec{f},\phi_{k,z}\r\rangle\r\|
\le 2\lf\|\sum_{k=k_0}^{\fz}A_Q\lf\langle\vec{f},\wz{\phi}_{k,z}\r\rangle\r\|,
\end{align*}
and define the sequence $\{\vec{g}_l\}_{l\in\zz}$ of vector-valued functions by setting,
for any $l\in\zz$,
\begin{equation*}
\vec{g}_l:=\f{1}{ L_{s,b}(\delta^{l})}\sum_{k=l}^{\fz}\lf\langle\vec{f},\wz{\phi}_{k,\cdot}\r\rangle,
\end{equation*}
where  $\{\wz{\phi}_{k,\cdot}\}_{k\ge l}$ is determined the same as in \eqref{eq-def-phi,choose}.
Collecting all these arguments
and using Lemma \ref{lem-AQ=AQ'},
we conclude that, for any $l\in\zz$ and $x,y\in\cx$ with
$d(x,y)\in[\delta^{l+1},\delta^{l})$
\begin{align*}
&\sum_{Q\in\cq_l}\lf\|A_Q\vec{f}(x)-A_Q\vec{f}(y)\r\|\mathbf{1}_Q(x)\\
&\quad\ls L_{s,b}(\delta^{l})\sum_{Q\in\cq_l}\lf\|A_Q\vec{g}_{l}(x)\r\|\mathbf{1}_Q(x)+
\sum_{P\in\cq_l}\lf\|A_P\vec{g}_{l}(y)\r\|\mathbf{1}_P(y)\\
&\quad\ls L_{s,b}\lf(d(x,y)\r)\sum_{Q\in\cq_l}\lf[\lf\|A_Q\vec{g}_{l}(x)\r\|+
\lf\|A_Q\vec{g}_{l}(y)\r\|\r]\mathbf{1}_Q(x),
\end{align*}
which further implies that
$\{\vec{g}_l\}_{l\in\zz}$ is a positive constant multiple of an element of $\dd^{s,b}_{\{A_Q\}}(\vec{f})$.

Finally, when $p\in(1,\fz)$, we apply the Minkowski inequality
and the H\"{o}lder inequality for $q\in(1,\fz]$ or \eqref{triangle} for $q\in(0,1]$ and,
when $p\in(0,1]$, we apply \eqref{triangle} and
the H\"{o}lder inequality for $q/p\in(1,\fz]$ or \eqref{triangle} for $q/p\in(0,1]$ to obtain
\begin{align*}
\|\vec{f}\,\|_{\dot{N}^{s,b}_{p,q}(W)}
&\ls\lf\|\{\vec{g}_l\}_{l\in\zz}\r\|_{\ell^q(W,L^p)}
\ls \lf\|\lf\{\f{1}{L_{s,b}(\delta^{l})}\sum_{k=l}^{\fz}\sup_{\phi\in\mathcal{F}_{k}(\cdot)}
\lf\|W^{\frac{1}{p}}\lf\langle\vec{f},\phi\r\rangle\r\|\r\}_{l\in\zz}\r\|_{\ell^q(\cx,L^p)}\\
&\ls\lf\{\sum_{k\in\zz}\sum_{l=-\fz}^k\lf[\frac{L_{s,b}(\delta^{k})}{L_{s,b}(\delta^{l})}\r]^{\min\{1,p,q\}}
\lf[L_{s,b}(\delta^{k})\r]^{-q}\lf\|\sup_{\phi\in\mathcal{F}_{k}(\cdot)}
\lf\|W^{\frac{1}{p}}\lf\langle\vec{f},\phi\r\rangle\r\|\,\r\|_{L^p(\cx)}^q\r\}^{\frac 1 q}\\
&\ls\lf\{\sum_{k\in\zz}
\lf[L_{s,b}(\delta^{k})\r]^{-q}\lf\|\sup_{\phi\in\mathcal{F}_{k}(\cdot)}
\lf\|W^{\frac{1}{p}}\lf\langle\vec{f},\phi\r\rangle\r\|\,\r\|_{L^p(\cx)}^q\r\}^{\frac 1 q}
\sim\lf\|\vec{f}\,\r\|_{\mathcal{A}\dot{B}^{s,b}_{p,q}(W)}.
\end{align*}
This finishes the proof of $\|\vec{f}\,\|_{\dot{N}^{s,b}_{p,q}(W)}\ls\|\vec{f}\,\|_{\mathcal{A}\dot{B}^{s,b}_{p,q}(W)}$
and hence Theorem \ref{thm-N=AB}.
\end{proof}

As a straight corollary of both Theorems \ref{thm-B=AB} and \ref{thm-N=AB},
the following Haj{\l}asz pointwise characterization of matrix-weighted logarithmic Besov spaces
on $\cx$ is established.

\begin{theorem}\label{haj-c}
Assume that the doubling measure $\mu$ of $\cx$ has a weak lower bound $\lambda=D$.
Let $\theta$ be the same as in Definition \ref{def-exp-ATI}, $\beta,\gamma\in(0,\theta)$,
$s\in(0,\beta\wedge\gamma)$, $b:=(b_+,b_-)$ be a pair of real numbers,
$p\in(D/[D+s],\fz)$, $q\in(0,\fz]$,
and $W\in A_p(\cx,\cc^m)$. Then
$\dot{N}^{s,b}_{p,q}(W)=\dot{B}^{s,b}_{p,q}(W)$.
\end{theorem}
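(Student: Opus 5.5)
Theorem \ref{haj-c} is obtained by chaining the two equivalences already established, so the plan is essentially bookkeeping of hypotheses. First, I would verify that the parameter ranges of Theorem \ref{haj-c} sit inside those of Theorem \ref{thm-B=AB}. Since $s\in(0,\beta\wedge\gamma)$, in particular $s\in(-[\beta\wedge\gamma],\beta\wedge\gamma)$. Because $s>0$, we have
$$p(s,\beta\wedge\gamma)=\max\lf\{\frac{D}{D+(\beta\wedge\gamma)},\frac{D}{D+(\beta\wedge\gamma)+s}\r\}=\frac{D}{D+(\beta\wedge\gamma)}<\frac{D}{D+s},$$
where the last inequality uses $s<\beta\wedge\gamma$. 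Hence any $p\in(D/[D+s],\fz)$ also lies in $(p(s,\beta\wedge\gamma),\fz)$. Theorem \ref{thm-B=AB} then gives $\dot{B}^{s,b}_{p,q}(W)=\mathcal{A}\dot{B}^{s,b}_{p,q}(W)$ with equivalent quasi-norms.

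Second, the hypotheses of Theorem \ref{haj-c} are exactly those of Theorem \ref{thm-N=AB}, including the weak lower bound $\lambda=D$ of $\mu$, so $\dot{N}^{s,b}_{p,q}(W)=\mathcal{A}\dot{B}^{s,b}_{p,q}(W)$ with equivalent quasi-norms. Composing the two identifications yields $\dot{N}^{s,b}_{p,q}(W)=\dot{B}^{s,b}_{p,q}(W)$, with $\|\vec f\,\|_{\dot{N}^{s,b}_{p,q}(W)}\sim\|\vec f\,\|_{\dot{B}^{s,b}_{p,q}(W)}$.

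The only delicate point is the meaning of ``$=$'' between a space of locally integrable functions and a space of distributions in $[(\mathring{\mathcal{G}}_0^\theta(\beta,\gamma))']^m$. I would handle it as in the proof of Theorem \ref{thm-N=AB}. Elements of $\dot{N}^{s,b}_{p,q}(W)$ are locally integrable by Corollary \ref{rm-locinte}, and they act on test functions through integration; the decay of $\phi$ together with the Poincar\'e-type estimates makes this pairing meaningful once $\phi$ has vanishing integral. Conversely, Lemma \ref{lem-Lloc} represents each element of $\dot{B}^{s,b}_{p,q}(W)=\mathcal{A}\dot{B}^{s,b}_{p,q}(W)$ by an $L^1_{\loc}$ function, modulo the identification in $[(\mathring{\mathcal{G}}_0^\theta(\beta,\gamma))']^m$. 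The identification used in Theorem \ref{thm-N=AB} is thus the same one relevant here, and no further argument is needed.
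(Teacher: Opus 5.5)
Your proposal is correct and is the paper's argument: Theorem \ref{haj-c} is stated there as a direct corollary of Theorems \ref{thm-B=AB} and \ref{thm-N=AB}. Your check that $p(s,\beta\wedge\gamma)=D/[D+(\beta\wedge\gamma)]<D/[D+s]$ when $0<s<\beta\wedge\gamma$ supplies the verification, left implicit in the paper, that the hypotheses of Theorem \ref{thm-B=AB} hold, and your identification via Corollary \ref{rm-locinte} and Lemma \ref{lem-Lloc} is the convention used in the proof of Theorem \ref{thm-N=AB}.
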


We point that, in the unweighted scalar-valued case, Theorem \ref{haj-c} when $b=0$ just reduces back to \cite[Theorem 2.16(ii)]{AWYY23}.

\bigskip

\noindent Ziwei Li

\medskip

\noindent Institute of Applied Physics and Computational Mathematics,
Beijing 100088, The People's Republic of China

\smallskip

\noindent{\it E-mail:} \texttt{zwli@buct.edu.cn}

\bigskip

\noindent Dachun Yang and Wen Yuan (Corresponding author)

\medskip

\noindent Laboratory of Mathematics and Complex Systems (Ministry of Education of China),
School of Mathematical Sciences, Beijing Normal University, Beijing 100875,
The People's Republic of China

\smallskip

\noindent{\it E-mails:} \texttt{dcyang@bnu.edu.cn} (D. Yang)

\noindent\phantom{{\it E-mails:} }\texttt{wenyuan@bnu.edu.cn} (W. Yuan)

\end{document}